\newtheorem{theo}{Theorem}[section]
\newtheorem{prop}[theo]{Proposition}
\newtheorem{lem}[theo]{Lemma}
\newtheorem{rem}[theo]{Remark}
\newtheorem{col}[theo]{Corollary}
\newtheorem{defi}[theo]{Definition}
\newtheorem{que}[theo]{Question}
\newtheorem{claim}[theo]{Claim}
\newtheorem{exa}[theo]{Example}
\newcommand{\be}{\begin{equation}}
\newcommand{\ee}{\end{equation}}
\newcommand\bes{\begin{eqnarray}}
\newcommand\ees{\end{eqnarray}}
\newcommand{\bess}{\begin{eqnarray*}}
\newcommand{\eess}{\end{eqnarray*}}
\def\theequation{\arabic{section}.\arabic{equation}}
\begin{document}
 \setlength{\baselineskip}{15pt} \pagestyle{myheadings}

 \title{On Tamed Almost Complex Four-Manifolds\thanks{The work
 is supported by PRC grant NSFC 11701226 (Tan), 11371309, 11771377 (Wang), 11426195 (Zhou), 11471145 (Zhu);
  Natural Science Foundation of Jiangsu Province BK20170519 (Tan); University Scinence Research Project of Jiangsu Province 15KJB110024 (Zhou);
 Foundation of Yangzhou University 2015CXJ003 (Zhou). }}

 \author{Qiang Tan, Hongyu Wang\thanks{E-mail:hywang@yzu.edu.cn}, Jiuru Zhou \,and Peng Zhu\\
  {\small Dedicated to professor Kung-Ching Chang on the occasion of his 85th birthday}}

 \date{}
 \maketitle

 \noindent {\bf Abstract:} {
 This paper proves that on any tamed closed almost complex four-manifold $(M,J)$ whose dimension of $J$-anti-invariant cohomology is equal to the self-dual second Betti number minus one, there exists a new symplectic form compatible with the given almost complex structure $J$. In particular,
 if the self-dual second Betti number is one,
 we give an affirmative answer to a question of Donaldson for tamed closed almost complex four-manifolds.
  Our approach is along the lines used by Buchdahl to give a unified proof of the Kodaira conjecture.
 }\\

 \noindent {\bf Keywords:} $\omega$-tame(compatible) almost complex structure; $J$-anti-invariant cohomology; positive $(1,1)$ current; local symplectic property; $J$-holomorphic curve.\\

 \noindent{\bf 2010 Mathematics Subject Classification:} 53D35, 53C56, 53C65

 \section{Introduction}\label{1}
  Suppose that $M$ is a closed, oriented, smooth $4$-manifold and suppose that $\omega$ is a symplectic form on $M$ that is compatible with the orientation.
  An endomorphism, $J$, of $TM$ is said to be an almost complex structure when $J^2=-id_{TM}$.
  Such an almost complex structure is said to be tamed by $\omega$ when the bilinear form $\omega(\cdot,J\cdot)$ is positive definite.
  The almost complex structure $J$ is said to be compatible (or calibrate) with $\omega$ when this same bilinear form is also symmetric,
  that is, $\omega(\cdot,J\cdot)>0$ and $\omega(J\cdot,J\cdot)=\omega(\cdot,\cdot)$.
  M. Gromov \cite{G3} observed that tamed almost complex structures and also compatible almost complex structures always exist.
  Let $\mathcal{J}(M)$ be the space of all almost complex structures on $M$,
   $\mathcal{J}_c(M,\omega)$ the space of all $\omega$-compatible
   almost complex structures on $M$ and $\mathcal{J}_\tau(M,\omega)$ the space of all $\omega$-tame almost complex structures on $M$.
   Note that $\mathcal{J}_c(M,\omega)$ and $\mathcal{J}_\tau(M,\omega)$ are even contractible,
   and $\mathcal{J}_\tau(M,\omega)$ is open in the space $\mathcal{J}(M)$ (This is defined using the $C^\infty$-Fr\'{e}chet space topology (cf. \cite{A2})).
   S. K. Donaldson \cite{D6} posed the following question:
   If an almost complex structure is tamed by a given symplectic form $\omega$, must it be compatible with a new symplectic form?
  That is, which tamed almost complex 4-manifolds can be calibrated?
  This is a natural question to arise in the context of calibrated geometries \cite{HL1,HL4,HL5}.
  Since any almost complex 4-manifold $(M,J)$ has the local symplectic property \cite{L2,Sikorav}, that is, for any $p\in M$,
  there exists a $J$-compatible symplectic $2$-form $\omega_p$ on a neighborhood $U_p$ of $p$ which can be viewed as a calibration on $U_p$ \cite{HL1,HL4,HL5}.

  Note that there are topological obstructions to the existence of almost complex structures on an even dimensional manifold.
  For a closed 4-manifold, a necessary condition is that $1-b^1+b^+$ be even \cite{BHPV},
   where $b^1$ is the first Betti number and $b^+$ is the number of positive eigenvalues of
   the quadratic form on $H^2(M;\mathbb{R})$ defined by the cup product, hence the condition is either $b^1$ be even and $b^+$ odd,
    or $b^1$ be odd and $b^+$ even.
  It is a well-known fact (that is the Kodaira conjecture \cite{KM}) that any closed complex surface with $b^1$ even is K\"{a}hler.
  The direct proofs have been given by N. Buchdahl \cite{B} and A. Lamari \cite{L1}.
  R. Harvey and H. B. Lawson, Jr. (Theorems 26 and 38 in \cite{HL2}) proved that for any closed complex surface $(M,J)$ with $b^1$ even,
   there exists a symplectic form $\omega$ on $M$ by which $J$ is tamed.
  Thus, Donaldson's question for tamed almost complex 4-manifolds (in particular, $b^+=1$) is related to the
   Kodaira conjecture for complex surfaces (cf. \cite{DLZ1}).

  When $M=\mathbb{C}P^2$ for every tamed almost complex structure $J$, there exists a symplectic form $\Omega$ on $\mathbb{C}P^2$ with which $J$ is compatible.
  It follows from M. Gromov's result \cite{G3} on pseudoholomorphic curves and C. H. Taubes' result \cite{T1} on symplectic forms on $\mathbb{C}P^2$.

  Donaldson suggests in \cite{D6} an approach to his question, one along the lines used by S.-T. Yau in \cite{Y} to prove the Calabi conjecture.
  This approach is considered by V. Tosatti, B. Weinkove, and S.-T. Yau in \cite{TWY,W}.

  Taubes considered in \cite{T2} Donaldson's question as follows:
  Fix a closed almost complex 4-manifold $M$ with $b^+=1$ and with a given symplectic form $\omega$.
  He proves in \cite{T2} the following: The Fr\'{e}chet space, $\mathcal{J}_\tau(M,\omega)$,
     of tamed almost complex structures as defined by $\omega$ has an open and dense subset whose almost
       complex structures are compatible with a new symplectic form that is cohomologous to $\omega$.

  Very recently, T.-J. Li and W. Zhang \cite{LZ2} studied Nakai-Moishezon type question and Donaldson's ``tamed to compatible" question
  for almost complex structures on rational $4$-manifolds.
  By extending Taubes' subvarieties-current-form technique to J-nef genus $0$ classes, they gave affirmative answers of these two questions for all
  tamed almost complex structures on $S^2$ bundles over $S^2$ as well as for many geometrically interesting
   tamed almost complex structures on other rational four manifolds.

  \vskip 6pt

  For a closed almost complex 4-manifold $(M,J)$, T.-J. Li and W. Zhang \cite{LZ} introduced subgroups $H^+_J$ and $H^-_J$,
  of the real degree 2 de Rham cohomology group $H^2(M;\mathbb{R})$,
  as the sets of cohomology classes which can be represented by $J$-invariant and $J$-anti-invariant real 2-forms.
  Let us denote by $h^+_J$ and $h^-_J$ the dimensions of $H^+_J$ and $H^-_J$, respectively.
  T. Draghici, T.-J. Li and W. Zhang \cite{DLZ1} proved that for a closed almost complex 4-manifold $(M,J)$, $$H^2(M;\mathbb{R})=H^+_J\oplus H_J^-.$$
  If $J$ is integrable, the induced decomposition is nothing but the classical real Hodge-Dolbeault
  decomposition of $H^2(M;\mathbb{R})$ (cf. \cite{BHPV,DLZ1}), that is,
  $$
  H^+_J=H^{1,1}_{\bar{\partial}}\cap H^2(M;\mathbb{R})\,\,\, and \,\,\,H^-_J=(H^{2,0}_{\bar{\partial}}\oplus H^{0,2}_{\bar{\partial}})\cap H^2(M;\mathbb{R}).
  $$

  In this paper, we give an affirmative answer to Donaldson's question when $h^-_J=b^+-1$ by using very different approach.
  In particular, if the self-dual second Betti number is one, we give an affirmative answer to the conjecture of Tosatti, Weinkove and Yau \cite{TWY}.
   Our approach is along the lines used by Buchdahl in \cite{B} to give a unified proof of the Kodaira conjecture.
  \begin{theo}\label{1t1}
  Let $M$ be a closed symplectic 4-manifold with symplectic form $\omega$.
  Suppose that $J$ is an $\omega$-tame almost complex structure on $M$ and $h^-_J=b^+-1$.
  Then there exists a new symplectic form $\Omega$ that is compatible with $J$.
  \end{theo}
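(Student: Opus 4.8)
The plan is to recast the statement as the search for a \emph{closed positive real $(1,1)$-form} and then to run a Hahn--Banach/current duality argument modelled on Buchdahl's proof of the Kodaira conjecture. First I would split the taming form by $J$-type, $\omega=\omega^{1,1}+\omega^{(2,0)+(0,2)}$, into its $J$-invariant and $J$-anti-invariant parts. Taming says exactly that $\omega^{1,1}=\tfrac12\big(\omega(\cdot,\cdot)+\omega(J\cdot,J\cdot)\big)$ is a positive, though in general non-closed, $(1,1)$-form, so that producing a compatible symplectic form is the same as producing a closed positive $(1,1)$-form $\Omega$. Since $\dim_{\mathbb C}M=2$ forces $\Lambda^{3,0}=\Lambda^{0,3}=0$, the operator $d$ carries both real $(1,1)$-forms and $J$-anti-invariant $2$-forms into $\Lambda^{2,1}\oplus\Lambda^{1,2}$; hence closedness of $\Omega=\omega^{1,1}+\eta$ is the single linear equation $d\eta=d\,\omega^{(2,0)+(0,2)}$ on a real $(1,1)$-form $\eta$, to be solved together with the open constraint $\omega^{1,1}+\eta>0$.

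To locate the cohomology class I must hit, I would use the decomposition $H^2(M;\mathbb R)=H^+_J\oplus H^-_J$ together with the hypothesis $h^-_J=b^+-1$. A $d$-closed $J$-anti-invariant $2$-form is self-dual for any compatible metric, hence harmonic, so $H^-_J$ embeds in the $b^+$-dimensional space of self-dual harmonic forms and the cup product is positive definite on it; moreover $H^+_J$ and $H^-_J$ are cup-orthogonal for type reasons. Thus $H^-_J$ is a positive-definite subspace of dimension $b^+-1$, and its orthogonal complement $H^+_J$ carries the signature $(1,b^-)$: it has a single positive direction. Since any compatible symplectic class lies in $H^+_J$ with positive square, this is precisely the rigidity that makes the target essentially unique, and it is the structural analogue of the ``$b_1$ even'' hypothesis in Buchdahl's setting. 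A natural candidate class is the $H^+_J$-projection $\mathfrak a$ of $[\omega]$: because $(2,0)+(0,2)$-forms vanish on $J$-holomorphic curves, classes in $H^-_J$ pair trivially with any curve $C$, so $\mathfrak a\cdot[C]=[\omega]\cdot[C]=\int_C\omega>0$ for every $J$-holomorphic curve.

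The remaining task is to upgrade this numerical positivity into an actual form, and here I would appeal to the almost-complex Nakai--Moishezon/positivity-of-currents package developed in the cited circle of ideas (Buchdahl, Lamari, Taubes, Li--Zhang). The functional-analytic core is a Hahn--Banach dichotomy: either the affine family of real $(1,1)$-forms solving $d\eta=d\,\omega^{(2,0)+(0,2)}$ contains a positive (smooth) representative, or there is a nonzero closed positive $(1,1)$-current $T$ obstructing it, with $\langle\omega,T\rangle\le 0$. Using the local symplectic property and $J$-holomorphic curve theory, such a $T$ decomposes into a diffuse part plus integration currents along $J$-holomorphic curves. Tameness then rules the obstruction out: $\omega$ is strictly positive on every nonzero positive $(1,1)$-current, so $\langle\omega,T\rangle>0$, a contradiction. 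This yields the sought closed positive $(1,1)$-form, i.e.\ a compatible symplectic $\Omega$.

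I expect the genuine difficulties to be concentrated in the last two steps. In the non-integrable case there is no $\partial\bar\partial$-lemma, so both the solvability of $d\eta=d\,\omega^{(2,0)+(0,2)}$ inside real $(1,1)$-forms and the regularisation of an obstructing current must be handled through the Gauduchon/Sullivan--Lamari machinery adapted to $J$, and the positive current may concentrate along $J$-holomorphic curves of negative self-intersection, where the positivity of the pairing is most delicate. Equally, one must verify $\mathfrak a^2>0$ for the candidate class; this is exactly where the hypothesis $h^-_J=b^+-1$ is spent, since it confines the anti-invariant (positive-definite) directions to codimension one inside the self-dual cohomology and thereby prevents them from absorbing all of the positive square of $[\omega]$. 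Controlling these curves and closing this positivity estimate, as in Buchdahl's argument, is the heart of the matter.
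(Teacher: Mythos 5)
Your reduction and your cohomological bookkeeping (signature $(1,b^-)$ on $H^+_J$, cup-orthogonality of $H^\pm_J$) are fine, but the Hahn--Banach dichotomy at the heart of your argument is wrong, and the error is exactly the difference between ``tamed'' and ``compatible''. Adding $\omega^{1,1}$ to your affine family $\{\eta:\ d\eta=d\,\omega^{(2,0)+(0,2)}\}$ identifies it with the \emph{linear} space of closed real $(1,1)$-forms, so a separating functional is a nonzero positive $(1,1)$-current $T$ that annihilates every closed real $(1,1)$-form. Such a $T$ is not $d$-closed: by duality it is a weak limit of $(1,1)$-components of boundaries, $T=(dS)^{1,1}$, and the inequality $\langle\omega,T\rangle\le 0$ you attribute to it never appears, because the separation controls the pairing of $T$ only against closed $(1,1)$-forms, and neither $\omega$ nor $\omega^{1,1}$ is one (indeed $d\omega^{1,1}=-d\,\omega^{(2,0)+(0,2)}\neq 0$ when $J$ is non-integrable). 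Concretely, for $T=(dS)^{1,1}$ one has $0=\langle S,d\omega\rangle=\langle T,\omega^{1,1}\rangle+\langle (dS)^{(2,0)+(0,2)},\omega^{(2,0)+(0,2)}\rangle$, and the second term is completely uncontrolled; tameness does give $\langle T,\omega^{1,1}\rangle>0$, but that contradicts nothing. (If the obstruction were an honest boundary $dS$, then $\langle dS,\omega\rangle=0$ would contradict tameness --- that is Sullivan's argument, and it only reproves the existence of a \emph{taming} form, which you already have.) A decisive sanity check: your dichotomy never actually uses $h^-_J=b^+-1$; if tameness alone killed the compatibility obstruction, Donaldson's question would be answered for every tamed almost complex $4$-manifold, which is precisely what is not known.

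By contrast, in the paper the hypothesis enters \emph{inside} the duality argument, not as a side verification of $\mathfrak{a}^2>0$. Because $h^-_J=b^+-1$ forces $\mathcal{H}^+_g=\mathbb{R}\cdot\omega_1\oplus\mathcal{H}^-_J$, one obtains a generalized $\partial\bar{\partial}$-lemma for the operator $\widetilde{\mathcal{D}}^+_J$ (Proposition \ref{3p6'}, Lemma \ref{2l10}) and then intersection-pairing inequalities on weakly $\widetilde{\mathcal{D}}^+_J$-closed forms (Lemma \ref{3l10}, Proposition \ref{3p11}, Corollary \ref{3c12}), all resting on Lemma \ref{particular case} --- this is where $h^-_J=b^+-1$ is spent. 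The Hahn--Banach step (Lemma \ref{3l13}) separates an $L^2$-neighbourhood of normalized positive forms from $\{\varphi+\widetilde{\mathcal{D}}^+_J(f)\}$; its separating current is weakly $\widetilde{\mathcal{D}}^+_J$-closed (the correct almost-complex analogue of $\partial\bar{\partial}$-closed), and the contradiction comes from the intersection inequality of Corollary \ref{3c12}, not from tameness. Applied to $\varphi=\widetilde{\omega}_1-t_0F$ (chosen so that $\int_M\varphi^2=0$ and $\int_M\varphi\wedge F>0$), this produces a \emph{closed} positive current $P=\widetilde{\omega}_1+\widetilde{\mathcal{D}}^+_J(f_0)\geq t_0F$ (Proposition \ref{4p1}); only then do Lelong numbers, Siu decomposition (Theorem \ref{Theorem A}) and Demailly-type regularization (Theorem \ref{theorem B}) enter, applied to this closed positive current to smooth it into an almost K\"ahler form. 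In your proposal these tools are aimed at the obstruction current, which, being non-closed, is not even within the scope of Siu's decomposition.
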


  \begin{rem}\label{1r2}
  If $(M,J)$ is a closed complex surface with $b^1$ even,
   then there exists a symplectic form $\omega$ by which $J$ is tamed (see Theorem {\rm 26} and {\rm 38} in {\rm \cite{HL2})} and $h^-_J=b^+-1$.
   Thus, the above theorem gives an affirmative answer to the Kodaira conjecture in symplectic version.
  \end{rem}

  Note that if $(M,J)$ is a tamed, closed almost complex 4-manifold,
   then it is easy to see that $0\leq h^-_J\leq b^+-1$ (cf. \cite{Wang,TWZZ}),
  thus $h^-_J=b^+-1$ is a technical condition.
  Hence if $b^+=1$, then $h^-_J=b^+-1=0$.
  As a direct consequence of Theorem \ref{1t1}, we have the following corollary which gives an affirmative answer to Conjecture $1.2$
  in \cite{TWY} (see also the description in \cite{W}).

  \begin{col}\label{1c3}
  Let $(M,J)$ be a tamed, closed, almost complex 4-manifold with a taming form $\omega$.
  When $b^+=1$, then exists a new symplectic form $\Omega$ that is compatible with almost complex structure $J$ and cohomologous to $\omega$.
  \end{col}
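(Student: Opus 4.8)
The plan is to deduce the corollary directly from Theorem~\ref{1t1}, since the hypothesis $h^-_J=b^+-1$ is forced automatically when $b^+=1$; the only point demanding genuine argument is the assertion that the resulting form may be taken cohomologous to the taming form $\omega$. First I would verify the applicability of the theorem. As recalled in the text, for any tamed closed almost complex four-manifold one has $0\le h^-_J\le b^+-1$ (cf. \cite{Wang,TWZZ}). Specializing to $b^+=1$ pinches $h^-_J$ between $0$ and $b^+-1=0$, forcing $h^-_J=0=b^+-1$. Hence Theorem~\ref{1t1} applies verbatim and produces a symplectic form $\Omega$ compatible with $J$, and it remains only to control its de Rham class.

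For the cohomological conclusion I would exploit the collapse of the Li--Zhang decomposition. Since $h^-_J=0$, the splitting $H^2(M;\mathbb{R})=H^+_J\oplus H^-_J$ degenerates to $H^2(M;\mathbb{R})=H^+_J$ with $H^-_J=0$, so every class---in particular $[\omega]$---is already $J$-invariant. Writing $[\omega]=a^+ + a^-$ with $a^\pm\in H^\pm_J$, the anti-invariant part $a^-$ vanishes and $a^+=[\omega]$. I would then trace through the construction underlying Theorem~\ref{1t1}: the compatible form is manufactured, in the Buchdahl spirit, as a positive $J$-invariant representative of the invariant part $a^+$ of the taming class, obtained by a duality (Hahn--Banach) argument that rules out obstructing positive $(1,1)$-currents. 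Because $a^+=[\omega]$ here, this forces $[\Omega]=[\omega]$, i.e. $\Omega$ is cohomologous to $\omega$, as claimed.

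I expect the main obstacle to lie precisely in this last identification: one must confirm that the form delivered by Theorem~\ref{1t1} represents $a^+=P_{H^+_J}[\omega]$ rather than some unrelated compatible class, since compatibility alone only guarantees $[\Omega]\in H^+_J$ and, with $H^+_J$ now the whole of $H^2(M;\mathbb{R})$, that constraint is vacuous. This is exactly where the hypothesis $b^+=1$ does the real work: it collapses the self-dual cone to a half-line, so that the positive $J$-invariant class produced by the construction has no room to drift away from $[\omega]$. Once this anchoring of the cohomology class is made explicit, the cohomologous conclusion is immediate and the corollary follows.
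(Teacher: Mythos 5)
Your reduction of the corollary to Theorem~\ref{1t1} is exactly the paper's own proof: on any tamed closed almost complex $4$-manifold one has $0\le h^-_J\le b^+-1$, so $b^+=1$ forces $h^-_J=0=b^+-1$, and the theorem applies. The paper states the corollary as a direct consequence in precisely this way and offers no argument beyond that.

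The gap lies in your justification of the clause ``cohomologous to $\omega$,'' and the step you yourself flag as the crux is the one that fails. The claim that $b^+=1$ ``collapses the self-dual cone to a half-line, so that the positive $J$-invariant class produced by the construction has no room to drift away from $[\omega]$'' is not correct. The intersection form on $H^2(M;\mathbb{R})$ has signature $(1,b^-)$, so $H^2(M;\mathbb{R})$ is $(1+b^-)$-dimensional; the soft constraints available on the class of a $J$-compatible symplectic form $\Omega$ are $[\Omega]^2>0$, $[\Omega]\cdot[\omega]>0$, and $[\Omega]\in H^+_J$, and since $H^+_J=H^2(M;\mathbb{R})$ when $h^-_J=0$ (as you note, this constraint is vacuous), these cut out an open, full-dimensional cone, not the single class $[\omega]$ nor even the ray $\mathbb{R}_{>0}[\omega]$. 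The hypothesis $b^+=1$ pins down only the self-dual harmonic projection of $[\Omega]$ up to positive scale; the anti-self-dual part of the class is completely unconstrained by positivity. So ``no room to drift'' is false, and no soft cone argument can anchor the class.

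What actually anchors the class is bookkeeping inside the proof of Theorem~\ref{1t1} (Section~\ref{last section}), and this is where a genuine additional argument is required. When $h^-_J=0$ one has $\mathcal{Z}^-_J=0$ (a closed $J$-anti-invariant form is self-dual harmonic, and a nonzero harmonic form cannot be exact), hence the component $\alpha_\omega\in\mathcal{Z}^-_J$ of $\omega$ vanishes and $\omega=\omega_1$; moreover the almost K\"ahler current of Proposition~\ref{4p1} is $P=\widetilde{\omega}_1+\widetilde{\mathcal{D}}^+_J(f_0)$ with $\widetilde{\mathcal{D}}^+_J(f_0)=d\widetilde{\mathcal{W}}(f_0)$ exact, so $[P]=[\widetilde{\omega}_1]=[\omega]$. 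But the smooth form ultimately produced is cohomologous to $T=P-\sum_i\nu_iT_{D_i}$, i.e.\ its class is $[\omega]-\sum_i\nu_i[T_{D_i}]$, where the $T_{D_i}$ are currents of integration over the $J$-holomorphic curves in the upper-level sets of Lelong numbers; these classes are nonzero whenever such curves occur with $\nu_i>0$, since $[T_{D_i}]\cdot[\omega]=\int_{D_i}\omega>0$. A complete proof of the cohomological clause must therefore account for the subtracted curve classes (show they do not occur, or reincorporate them after regularization). Your proposal never confronts this, and the heuristic substituted for it does not close the gap.
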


  We have shown that generically $h^-_J=0$ (cf. \cite{TWZZ,TWZ}).
  So when $b^+>1$ the hypothesis of Theorem \ref{1t1} can at best be satisfied by very special almost complex structures (for example, $J$ is integrable).
  Hence, it is natural to ask the following question,

  \begin{que}
  (1) Which is the sufficient and necessary condition for Donaldson's ``tamed to compatible" question?

  (2) Is it possible to construct a closed symplectic $4$-manifold $(M,\omega)$ with $b^+>1$ such that for any $\omega$-compatible almost complex structure $J$,
     $h^-_J$ is strictly less than $b^+-1$?
   \end{que}

   The remainder of the paper is organized as follows:

   \vskip 6pt

   {\bf Section 2: Preliminaries.} In this section, it is similar to $\partial\bar{\partial}$ operator in classical complex analysis,
     we introduce the operators $\widetilde{\mathcal{D}}^+_J$ and $\mathcal{D}^+_J$ on tamed almost complex $4$-manifolds.

   \vskip 6pt

  {\bf Section 3: The intersection pairing on weakly $\widetilde{\mathcal{D}}^+_J$-closed (1,1)-forms.}
   In this section, as done in complex surfaces, we give the notion of weakly $\widetilde{\mathcal{D}}^+_J$-closed $(1,1)$-form which is similar to the weakly
   $\partial\bar{\partial}$-closed $(1,1)$-form in classical complex analysis.
   We investigate the intersection pairing on weakly $\widetilde{\mathcal{D}}^+_J$-closed (1,1)-forms,
   and obtain a key lemma (Lemma \ref{3l13}) as done in compact complex surfaces.

  \vskip 6pt

   {\bf Section 4: The tamed almost complex $4$-manifolds with $h_J^{-}=b^{+}-1$.}
   In this section, based on the key lemma proved in Section $3$,
   we give a proof of our main theorem which follows mainly Buchdahl's proof of the fact that compact complex surfaces with $b_1$ even is K\"{a}hler.

   \vskip 6pt

   To prove the main result, we extend several notions and important theorems from complex analysis to the almost complex setting
   which are necessary for the proof of the main theorem.
   Many of them are interesting by themselves.
   The rest of this paper contains three appendices:

    \vskip 6pt

  {\bf Appendix A: Elementary pluripotential theory}

    A.1: $J$-plurisubharmonic functions on almost complex manifolds.

    A.2: Kiselman's minimal principle for $J$-plurisubharmonic functions.

    A.3: H\"ormander's $L^2$-estimates on tamed almost complex $4$-manifolds.

    A.4: The singularities of $J$-plurisubharmonic functions on tamed almost complex $4$-manifolds.

     \vskip 6pt

     {\bf Appendix B: Siu's decomposition theorem on tamed almost complex $4$-manifolds }

    B.1: Lelong numbers of closed positive $(1,1)$-currents on tamed complex $4$-manifolds.

    B.2: Siu's decomposition formula of closed positive $(1,1)$-currents on tamed almost complex $4$-manifolds.

    \vskip 6pt

     {\bf Appendix C: Demailly's approximation theorem on tamed almost complex $4$-manifolds}

   C.1: Exponential map associated to the second canonical connection.

   C.2: Regularization of quasi-$J$-plurisubharmonic functions on tamed almost Hermitian $4$-manifolds.

   C.3: Regularization of closed positive $(1,1)$-currents on tamed almost complex $4$-manifolds.

   C.4: Demailly's approximation theorem on tamed almost complex $4$-manifolds.

 \section{Preliminaries}\label{2}
 \setcounter{equation}{0}
   Suppose that $M$ is  an almost complex manifold with almost complex structure $J$,
   then for any $x\in M$, $T_x(M)\otimes_\mathbb{R}\mathbb{C}$ which is the complexification of $T_x(M)$,
   has the following decomposition (cf. \cite{A2,KN,LZ}):
   \begin{equation}\label{2eq1}
  T_x(M)\otimes_\mathbb{R}\mathbb{C}=T^{1,0}_x+T^{0,1}_x,
  \end{equation}
  where $T^{1,0}_x$ and $T^{0,1}_x$ are the eigenspaces of $J$ corresponding to the eigenvalues $\sqrt{-1}$ and $-\sqrt{-1}$, respectively.
   A complex tangent vector is of type $(1,0)$ (resp. $(0,1)$) if it belongs to $T^{1,0}_x$ (resp. $T^{0,1}_x$).
   Let $T(M)\otimes_\mathbb{R}\mathbb{C}$ be the complexification of the tangent bundle.
   Similarly, let $T^*M\otimes_{\mathbb{R}}\mathbb{C}$ denote the complexification of the cotangent bundle $T^*M$.
   $J$ can act on $T^*M\otimes_{\mathbb{R}}\mathbb{C}$ as follows:
   $$\forall \alpha\in T^*M\otimes_{\mathbb{R}}\mathbb{C},\,\,\, J\alpha(\cdot)=-\alpha(J\cdot).$$
   Hence $T^*M\otimes_{\mathbb{R}}\mathbb{C}$ has the following decomposition according to the eigenvalues $\mp\sqrt{-1}$:
   \begin{equation}\label{2eq2}
   T^*M\otimes_{\mathbb{R}}\mathbb{C}=\Lambda^{1,0}_J\oplus\Lambda^{0,1}_J.
   \end{equation}
   We can form exterior bundle $\Lambda^{p,q}_J=\Lambda^p\Lambda^{1,0}_J\otimes\Lambda^q\Lambda^{0,1}_J$.
   Let $\Omega^{p,q}_J(M)$ denote the space of $C^\infty$ sections of the bundle $\Lambda^{p,q}_J$.
   The exterior differential operator acts on $\Omega^{p,q}_J$ as follows:
   \begin{equation}\label{2eq3}
   d\Omega^{p,q}_J\subset\Omega^{p-1,q+2}_J+\Omega^{p+1,q}_J+\Omega^{p,q+1}_J+\Omega^{p+2,q-1}_J.
   \end{equation}
  Hence, $d$ has the following decomposition:
  \begin{equation}\label{2eq4}
  d=A_J\oplus\partial_J\oplus\bar{\partial}_J\oplus\bar{A}_J.
  \end{equation}
  Recall that on an almost complex manifold $(M,J)$, there exists the Nijenhuis tensor $\mathcal{N}_J$
   as follows:
  \begin{equation}\label{2eq5}
  4\mathcal{N}_J=[JX,JY]-[X,Y]-J[X,JY]-J[JX,Y],
  \end{equation}
   where $X, Y\in TM$.
   By the Newlander-Nirenberg Theorem \cite{A2}, $\mathcal{N}_J=0$ if and only if $J$ is integrable, that is, $J$ is a complex structure.
   If $J$ is integrable, then $d=\partial_J\oplus\bar{\partial}_J$ (For details, see \cite{A2,KN,LZ}).
   By a direct calculation, we have: For any $\alpha\in(\Omega^{p,q}_J+\Omega^{q,p}_J)_\mathbb{R}\subset\Omega^{p+q}_\mathbb{R}$,
  \begin{equation}\label{2eq6}
  (A_J+\bar{A}_J)(\alpha)(X_1,...,X_{p+q+1})=\sum_{i<j} (-1)^{i+j+1} \alpha(\mathcal{N}_J(X_i,X_j),X_1,...,\hat{X_i},...,\hat{X_j},...,X_{p+q+1}),
  \end{equation}
   where $X_1,...,X_{p+q+1}\in T(M)$ (cf. \cite{KN,TWY,WZ2}).

   \vskip 6pt

   Let $(M,J)$ be an almost complex 4-manifold. After a simple calculation, we can get the following properties:
  \begin{equation}\label{2eq7}
   d:\Omega^0_\mathbb{R}\longrightarrow\Omega^1_\mathbb{R},\,\,\, d=\partial_J+\bar{\partial}_J.
   \end{equation}
   \begin{equation}\label{2eq8}
     A_J\circ\partial_J+\bar{\partial}^2_J+\bar{A}_J\circ\bar{\partial}_J+\partial^2_J=0
     : \Omega^0_\mathbb{R}\longrightarrow(\Omega^{2,0}_J+\Omega^{0,2}_J)_\mathbb{R}.
   \end{equation}
  \begin{equation}\label{2eq9}
  \partial_J\circ\bar{\partial}_J+\bar{\partial}_J\circ\partial_J=0: \Omega^0_\mathbb{R}\longrightarrow\Omega^{1,1}_\mathbb{R}.
  \end{equation}
   \begin{equation}\label{2eq10}
   d:\Omega^1_\mathbb{R}\longrightarrow\Omega^2_\mathbb{R},\,\,\, d=A_J+\partial_J+\bar{\partial}_J+\bar{A}_J.
   \end{equation}
  \begin{equation}\label{2eq11}
   d:(\Omega^{2,0}+\Omega^{0,2})_\mathbb{R}\longrightarrow(\Omega^{1,2}+\Omega^{2,1})_\mathbb{R}, \,\,\,d=A_J+\partial_J+\bar{\partial}_J+\bar{A}_J.
   \end{equation}
   \begin{equation}\label{2eq12}
   d: \Omega^{1,1}_\mathbb{R}\longrightarrow(\Omega^{1,2}+\Omega^{2,1})_\mathbb{R},\,\,\,d=\partial_J+\bar{\partial}_J.
   \end{equation}
   \begin{equation}\label{2eq13}
   \partial_J\circ\bar{\partial}_J+\bar{\partial}_J\circ\partial_J=0: \Omega^{1,1}_\mathbb{R}\longrightarrow\Omega^4_\mathbb{R}.
   \end{equation}

  Suppose that $(M,J)$ is an almost complex 4-manifold.
  One can construct a $J$-invariant Riemannian metric $g$ on $M$, namely, $g(JX,JY)=g(X,Y)$ for all tangent vector fields $X$ and $Y$ on $M$.
  Such a metric $g$ is called an almost Hermitian metric (real) on $(M,J)$.
   This then in turn gives a $J$-compatible nondegenerate 2-form $F$ on $M$ by $F(X,Y)=g(JX,Y)$, called the fundamental 2-form.
  Such a quadruple $(M,g,J,F)$ is called an almost Hermitian 4-manifold.
  Thus an almost Hermitian structure on $M$ is a triple $(g,J,F)$.
  If $J$ is integrable, the triple $(g,J,F)$ is called an Hermitian structure
  (In complex coordinate system, the almost Hermitian metric is written as $h=g-\sqrt{-1}F$.).
   By using almost Hermitian structure $(g,J,F)$, we can define a volume form $d\mu_g=F^2/2$ with $$\int_Md\mu_g=1$$
   by rescaling in the conformal equivalent class $[g]$.
  If the 2-form $F$ is closed, then the triple $(g,J,F)$ is called an almost K\"{a}hler structure.
  When the two conditions hold simultaneously, the $(g,J,F)$ defines a K\"{a}hler structure on $M$ (cf. \cite{A2,KN}).
  Note that although $M$ need not admit a symplectic condition (i.e. $dF=0$),
    P. Gauduchon \cite{G1} has shown that  for a closed almost Hermitian 4-manifold $(M,g,J,F)$ there is a conformal rescaling of the metric $g$,
     unique up to positive constant, such that the associated form satisfies $\bar{\partial}_J\partial_J F=0$.
     This metric is called Gauduchon metric.

   \vskip 6pt

   Let $\Omega^2_\mathbb{R}(M)$ denote the space of real smooth 2-forms on $M$, that is,
  the real $C^\infty$ sections of the bundle $\Lambda^2_\mathbb{R}(M)$.
   The almost complex structure $J$ acts on $\Omega^2_\mathbb{R}(M)$ as an involution by $\alpha(\cdot,\cdot)\mapsto\alpha(J\cdot,J\cdot)$,
   thus we have the splitting into $J$-invariant and $J$-anti-invariant 2-forms respectively
  \begin{equation}\label{2eq14}
  \Lambda^2_\mathbb{R}=\Lambda^+_J\oplus\Lambda^-_J,
  \end{equation}
   where the bundles $\Lambda^{\pm}_J$ are defined by
  $$
  \Lambda^{\pm}_J=\{\alpha\in\Lambda^2_\mathbb{R}\mid \alpha(J\cdot,J\cdot)=\pm\alpha(\cdot,\cdot)\}.
  $$
  We will denote by $\Omega^+_J$ and $\Omega^-_J$, respectively,
   the $C^\infty$ sections of the bundles $\Lambda^+_J$ and $\Lambda^-_J$.
  For $\alpha\in\Omega^2_\mathbb{R}(M)$, denote by $\alpha^+_J$ and $\alpha^-_J$,
   respectively, the $J$-invariant and $J$-anti-invariant components of $\alpha$ with respect to the decomposition (\ref{2eq14}).
   We will also use the notation $\mathcal{Z}^2_\mathbb{R}$ for the space of real closed 2-forms on $M$
   and $\mathcal{Z}^\pm_J=\mathcal{Z}^2_\mathbb{R}\cap\Omega^\pm_J$ for the corresponding projections.

   Li and Zhang have defined in \cite{LZ} the $J$-invariant and $J$-anti-invariant cohomology subgroups $H^\pm_J$ of $H^2(M;\mathbb{R})$ as follows:
  $$H^\pm_J=\{\mathfrak{a}\in H^2(M;\mathbb{R})\mid \exists \alpha\in\mathcal{Z}^\pm_J \,\,\,{\rm such} \,\,\, {\rm that} \,\,\, [\alpha]=\mathfrak{a}\};$$
  $J$ is said to be $C^\infty$-pure if $H^+_J\cap H^-_J=\{0\}$, $C^\infty$-full if $H^+_J+H^-_J=H^2(M;\mathbb{R})$.
  $J$ is $C^\infty$-pure and full if and only if $H^2(M;\mathbb{R})=H^+_J\oplus H^-_J$.
   \begin{prop}\label{2p1}
  (Theorem {\rm 2.2} in {\rm\cite{DLZ1}}) If $M$ is a  closed almost complex 4-manifold $(M,J)$,
   then the almost complex structure $J$ on $M$ is $C^\infty$-pure and full.
   Thus, there is a direct sum cohomology decomposition
   $$H^2(M;\mathbb{R})=H^+_J\oplus H^-_J.$$
   Let us denote by $h^+_J$ and $h^-_J$ the dimensions of $H^+_J$ and $H^-_J$, respectively.
   Then we have $b^2=h^+_J+h^-_J$, where $b^2$ is the second Betti number.
   \end{prop}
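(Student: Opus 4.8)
The plan is to fix a $J$-compatible Riemannian metric $g$ with fundamental form $F$ and run everything through Hodge theory on the closed $4$-manifold $M$. The two basic inputs are the pointwise linear-algebra relations between the Hodge decomposition $\Lambda^2_{\mathbb R}=\Lambda^+\oplus\Lambda^-$ into self-dual and anti-self-dual forms and the $J$-decomposition $\Lambda^2_{\mathbb R}=\Lambda^+_J\oplus\Lambda^-_J$ of (\ref{2eq14}): namely $\Lambda^-_J\subset\Lambda^+$ and $\Lambda^-\subset\Lambda^+_J$, with $\Lambda^+=\mathbb R F\oplus\Lambda^-_J$ and $\Lambda^+_J=\mathbb R F\oplus\Lambda^-$. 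Two consequences I would use repeatedly: first, a closed $J$-anti-invariant $2$-form is self-dual, hence $g$-harmonic (if $*\alpha=\alpha$ and $d\alpha=0$ then $d^*\alpha=-*d*\alpha=0$); second, for a $J$-invariant $\beta$ and a $J$-anti-invariant $\gamma$ one has $\beta\wedge\gamma=0$ pointwise, since in complex dimension $2$ the wedge of a $(1,1)$-form with a $(2,0)+(0,2)$-form has type $(3,1)+(1,3)=0$.

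For $C^\infty$-purity I would argue as follows. Let $\mathfrak a\in H^+_J\cap H^-_J$ and choose representatives $\alpha^+\in\mathcal Z^+_J$ and $\alpha^-\in\mathcal Z^-_J$ of $\mathfrak a$. Computing the self-pairing $Q(\mathfrak a,\mathfrak a)$ with these two representatives and using $\alpha^+\wedge\alpha^-=0$ pointwise gives $Q(\mathfrak a,\mathfrak a)=\int_M\alpha^+\wedge\alpha^-=0$. Computing it instead with $\alpha^-$ alone, and using that $\alpha^-$ is self-dual, gives $Q(\mathfrak a,\mathfrak a)=\int_M\alpha^-\wedge\alpha^-=\int_M\alpha^-\wedge*\alpha^-=\|\alpha^-\|^2_{L^2}$. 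Hence $\alpha^-=0$ and $\mathfrak a=0$, so $H^+_J\cap H^-_J=\{0\}$; in particular $h^+_J+h^-_J\le b^2=b^++b^-$.

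For $C^\infty$-fullness I would Hodge-decompose the harmonic representative $h=h^++h^-$ of an arbitrary class into its self-dual part $h^+$ and anti-self-dual part $h^-$. The part $h^-$ is $J$-invariant and closed, so $[h^-]\in H^+_J$, which already gives $[\mathcal H^-]\subset H^+_J$ and $h^+_J\ge b^-$. By the first consequence above, every closed $J$-anti-invariant form is self-dual harmonic, and two such forms representing the same class differ by an exact harmonic form, hence coincide; thus $H^-_J$ embeds as a subspace $[\mathcal Z^-_J]$ of the positive-definite space $[\mathcal H^+]$, so $h^-_J\le b^+$. It then remains to show that the self-dual harmonic classes not already accounted for by $[\mathcal Z^-_J]$ lie in $H^+_J$, i.e. that the $Q$-orthogonal complement $W:=[\mathcal Z^-_J]^{\perp_Q}\cap[\mathcal H^+]$ consists of $J$-invariant classes.

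The main obstacle is exactly this last step, because the $J$-decomposition does not preserve closedness: writing a self-dual harmonic form as $\psi=\lambda F+\psi^-_J$ with $\psi^-_J\in\Omega^-_J$, one has $d\psi^-_J=-d(\lambda F)\neq0$ in general, so the $J$-anti-invariant part of $\psi$ is not directly a class in $H^-_J$. The resolution must use the action of $d$ on $\Omega^-_J$ together with the Nijenhuis corrections recorded in (\ref{2eq6}) and (\ref{2eq11}): the goal is to prove, by an $L^2$/elliptic argument, that a self-dual harmonic $\psi$ which is $L^2$-orthogonal to every closed $J$-anti-invariant form is cohomologous to a $J$-invariant closed form, i.e. $W\subseteq H^+_J$. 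Granting this, $H^+_J\supseteq[\mathcal H^-]\oplus W$ has dimension at least $b^-+(b^+-h^-_J)$, so $h^+_J+h^-_J\ge b^2$; combined with the purity bound $h^+_J+h^-_J\le b^2=b^++b^-$ this forces $h^+_J+h^-_J=b^2$, which is fullness. Together with purity this yields the $C^\infty$-pure-and-full decomposition $H^2(M;\mathbb R)=H^+_J\oplus H^-_J$ and the count $b^2=h^+_J+h^-_J$.
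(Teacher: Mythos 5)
First, note that the paper itself gives no proof of Proposition \ref{2p1}: it is quoted verbatim from Theorem 2.2 of \cite{DLZ1}, so your proposal can only be measured against the machinery the paper records nearby (Lemma \ref{2l3} and its use in the proof of Theorem \ref{2t4}). Within your write-up, the $C^\infty$-purity half is correct and is the standard argument: for $\mathfrak a\in H^+_J\cap H^-_J$, computing the cup square with mixed representatives gives $0$ (a $J$-invariant form is of type $(1,1)$ and a $J$-anti-invariant form is the real part of a $(2,0)+(0,2)$ form, and their wedge vanishes pointwise in complex dimension two), while computing it with the $J$-anti-invariant representative alone gives its $L^2$-norm, since by (\ref{2eq18}) forms in $\mathcal Z^-_J$ are self-dual, hence harmonic. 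Your two easy fullness inequalities, $[\mathcal H^-_g]\subset H^+_J$ (so $h^+_J\ge b^-$) and the injectivity of $\mathcal Z^-_J\to H^2(M;\mathbb R)$ (so $h^-_J\le b^+$), are also fine.

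However, there is a genuine gap at exactly the point you flag yourself: the statement that every self-dual harmonic class lies in $H^+_J+H^-_J$ (your ``$W\subseteq H^+_J$'') is the actual content of the Draghici--Li--Zhang theorem, and you only announce it as a goal (``the resolution must use \dots the goal is to prove, by an $L^2$/elliptic argument \dots'') without supplying any argument; the Nijenhuis formulas (\ref{2eq6}) and (\ref{2eq11}) by themselves do not produce it. The missing ingredient is precisely Lejmi's operator, recorded in the paper as Lemma \ref{2l3}: $P:\Omega^-_J\to\Omega^-_J$, $P(\psi)=P^-_J(dd^*\psi)$, is self-adjoint strongly elliptic, its kernel consists of the $g$-self-dual-harmonic $J$-anti-invariant forms (this requires the symbol computation and the integration-by-parts identity showing $P\psi=0\Rightarrow d^*\psi=0\Rightarrow d\psi=0$), and consequently $\Omega^-_J=\mathcal Z^-_J\oplus P^-_J(d\Omega^1_{\mathbb R})$. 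Granting this, the step you are missing is short, and it is exactly how the paper uses Lemma \ref{2l3} in the proof of Theorem \ref{2t4}: given $\gamma\in\mathcal H^+_g$, decompose its $J$-anti-invariant part as $\gamma^-_J=\gamma_h+P^-_J(d\theta)$ with $\gamma_h\in\mathcal Z^-_J$ and $\theta\in\Omega^1_{\mathbb R}$; then $\gamma-\gamma_h-d\theta$ is closed and its $J$-anti-invariant part equals $\gamma^-_J-\gamma_h-P^-_J(d\theta)=0$, so it lies in $\mathcal Z^+_J$ and $[\gamma]=[\gamma_h]+[\gamma-\gamma_h-d\theta]\in H^-_J+H^+_J$. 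Without Lemma \ref{2l3} (or an equivalent ellipticity statement, as in \cite{DLZ1} or \cite{L3}), the fullness half of the proposition, and hence the count $b^2=h^+_J+h^-_J$, remains unproven in your proposal.
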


  When $J$ is integrable, there is the Dolbeault decomposition which has long been discovered.

  \begin{rem}\label{2r2}
  (cf. {\rm\cite{BHPV,DLZ1}}) If $J$ is integrable on a closed 4-manifold, then
  $$H^+_J=H^{1,1}_{\bar{\partial}_J}\cap H^2(M;\mathbb{R})\,\,;
   \,\,\, H^-_J=(H^{2,0}_{\bar{\partial}_J}\oplus H^{0,2}_{\bar{\partial}_J})\cap H^2(M;\mathbb{R}).$$
   Let us denote the dimension of $H^{p,q}_{\bar{\partial}_J}$ by $h^{p,q}_{\bar{\partial}_J}$.
   So if $J$ is integrable,
    it follows from the above proposition that $h^+_J=h^{1,1}_{\bar{\partial}_J} ,\,\,\,\,\, h^-_J=2h^{2,0}_{\bar{\partial}_J}$.
   So in this case, using the signature theorem we get
   $$
   h^{+}_{J}=\left\{
    \begin{array}{ll}
   b^{-}+1 & \textrm{if}~ b_1 ~\textrm{even} \\
   b^- & \textrm{if}~ b_1 ~\textrm{odd},
  \end{array}
   \right.
   ~h^{-}_{J}=\left\{
  \begin{array}{ll}
   b^{+}-1 & \textrm{if}~ b_1 ~\textrm{even} \\
   b^+ & \textrm{if} ~b_1~\textrm{odd}.
  \end{array}
  \right.
  $$
  \end{rem}

   Since $(M,g,J,F)$ is a closed almost Hermitian 4-manifold, the Hodge star operator $*_g$ gives the self-dual,
   anti-self-dual decomposition of the bundle of 2-forms (see \cite{D6,DK}):
  \begin{equation}\label{2eq15}
  \Lambda^2_\mathbb{R}=\Lambda^+_g\oplus\Lambda^-_g.
  \end{equation}
   We denote by $\Omega^\pm_g$ the spaces of smooth sections of $\Lambda^\pm_g$,
  and by $\alpha^+_g$ and $\alpha^-_g$ respectively the self-dual and anti-self-dual components of a 2-form $\alpha$.
   Since the Hodge-de Rham Laplacian $\Delta_g=dd^*+d^*d$, where $d^*=-*_gd*_g$ is the codifferential operator with respect to the metric $g$,
    commutes with $*_g$, the decomposition (\ref{2eq15}) holds for the space $\mathcal{H}_g$ of harmonic 2-forms as well.
    By Hodge theory, this induces cohomology decomposition by the metric $g$:
  $$
    \mathcal{H}_g=\mathcal{H}_g^+\oplus\mathcal{H}_g^-.
  $$
  Suppose $\alpha\in \Omega^+_g$ and its Hodge decomposition \cite{D6,DK} is:
  $$
    \alpha=\alpha_h+d\theta+d^*\psi=\alpha_h+d\theta+*_gd\varphi,
  $$
   where $\alpha_h$ is a harmonic 2-form and $\varphi=-*_g\psi$ .
   Then, since $*_g\alpha=\alpha$, the uniqueness of the Hodge decomposition gives that $\theta=\varphi$,
    and $\alpha_h=*_g\alpha_h$, so $\alpha=\alpha_h+d^+_g(2\theta)$, where
  $$
  d^\pm_g:\Omega^1_\mathbb{R}\rightarrow\Omega^\pm_g
  $$
  is the first-order differential operator formed from the composite of the exterior derivative
   $d: \Omega^1_\mathbb{R}\rightarrow\Omega^2_\mathbb{R}$ with the algebraic projections $P^\pm_g=\frac{1}{2}(1\pm*_g)$
   from $\Omega^2_\mathbb{R}$ to $\Omega^\pm_g$, where $d_g^\pm=P^\pm_gd$.
   So we can get the following Hodge decompositions (see \cite{DK}):
   \begin{equation}\label{2eq16}
   \Omega^+_g=\mathcal{H}^+_g\oplus d^+_g(\Omega^1), \,\,\, \Omega^-_g=\mathcal{H}^-_g\oplus d^-_g(\Omega^1).
  \end{equation}
    Note that
   \begin{equation}\label{2eq16'}
  d^\pm_gd^*:\Omega^\pm_g\rightarrow\Omega^\pm_g
   \end{equation}
  are self-adjoint strongly elliptic operators and $\ker d^\pm_gd^*=\mathcal{H}^\pm_g$.
  If $d^+_gu$ is $d$-closed, that is, $dd^+_gu=0$, then
  $$
   0=\int_Mdd^+_gu\wedge u=-\int_Md^+_gu\wedge du=-\int_M|d^+_gu|^2,
  $$
  so $d^+_gu=0$. Similarly, for any $u\in \Omega^1_\mathbb{R}$, if $d^+_gu=0$,
    \begin{equation}\label{2eq17}
   0=\int_Mdu\wedge du=\int_M|d^+_gu|^2-\int_M|d^-_gu|^2=-\int_M|d^-_gu|^2,
  \end{equation}
  so $d^-_gu=0$ too, therefore we can get $du=0$ (cf. \cite{D6,DK}).

   We define,
  $$
    H^\pm_g=\{ \mathfrak{a}\in H^2(M;\mathbb{R})\mid\exists\alpha\in\mathcal{Z}^\pm_g:=\mathcal{Z}_{\mathbb{R}}^2\cap\Omega^\pm_g
     \,\,\, {\rm such} \,\,\, {\rm that} \,\,\, \mathfrak{a}=[\alpha]\}.
   $$
    There are the following relations between the decompositions (\ref{2eq14}) and (\ref{2eq15}) on an almost Hermitian 4-manifold:
   \begin{equation}\label{2eq18}
  \Lambda^+_J=\mathbb{R}\cdot F\oplus\Lambda^-_g,\,\,\,
  \Lambda^+_g=\mathbb{R}\cdot F\oplus\Lambda^-_J,
  \end{equation}
  \begin{equation}\label{2eq19}
  \Lambda^+_J\cap\Lambda^+_g=\mathbb{R}\cdot F, \,\,\, \Lambda^-_J\cap\Lambda^-_g=\{0\}.
  \end{equation}
  It is easy to see that $H^-_J\subset H^+_g$ and $H^-_g\subset H^+_J$ (cf. \cite{DLZ2,TWZZ}).

   \vskip 6pt

   Let $b^+$ the self-dual Betti number, and $b^-$ the anti-self-dual Betti number of $M$, hence $b^2=b^++b^-$.
   Thus, for a closed almost Hermitian 4-manifold $(M,g,J,F)$, we have (cf. \cite{TWZZ}):
   $$
   \mathcal{Z}^-_J\subset\mathcal{Z}^+_g, \,\,\, \mathcal{Z}^-_g\subset\mathcal{Z}^+_J, b^++b^-=h^+_J+h^-_J, \,\,\, h^+_J\geq b^-, \,\,\, 0\leq h^-_J\leq b^+.
   $$
   M. Lejmi \cite{L3} recognizes $\mathcal{Z}^-_J$ as the kernel of an elliptic operator on $\Omega^-_J$.
  \begin{lem}\label{2l3}
  (Lemma {\rm4.1} in {\rm\cite{L3}}) Let $(M,g,J,F)$ be a closed almost Hermitian 4-manifold.
  Let operator $P:\Omega^-_J\rightarrow\Omega^-_J$ be defined by
  \begin{eqnarray*}
  P(\psi)=P^-_J(dd^*\psi),
  \end{eqnarray*}
  where $P^-_J: \Omega^2_\mathbb{R}\rightarrow\Omega^-_J$ is the projection.
   Then $P$ is a self-adjoint strongly elliptic linear operator with kernel the $g$-self-dual-harmonic, $J$-anti-invariant 2-forms.
  Hence,
  \begin{eqnarray*}
  \Omega^-_J=kerP\oplus P^-_J(d\Omega^1_\mathbb{R})=H^-_J\oplus P^-_J(d\Omega^1_\mathbb{R}).
  \end{eqnarray*}
   \end{lem}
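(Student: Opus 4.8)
The plan is to regard $P$ as a genuine second-order operator on the bundle $\Lambda^-_J$ and to read off all three assertions from three facts: the inclusion $\Lambda^-_J\subset\Lambda^+_g$ coming from (\ref{2eq18}), the orthogonality of the projection $P^-_J$, and the pointwise nondegeneracy of self-dual $2$-forms. First I would record that every $\psi\in\Omega^-_J$ is self-dual, $*_g\psi=\psi$, since (\ref{2eq18}) gives $\Lambda^-_J\subset\Lambda^+_g$. Self-adjointness of $P$ is then immediate from the orthogonality of $P^-_J$: for $\psi,\phi\in\Omega^-_J$, using $\phi\in\Omega^-_J$ and integration by parts,
$$\langle P\psi,\phi\rangle=\langle P^-_J(dd^*\psi),\phi\rangle=\langle dd^*\psi,\phi\rangle=\langle d^*\psi,d^*\phi\rangle,$$
which is manifestly symmetric in $\psi$ and $\phi$.

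The heart of the argument is strong ellipticity, which I would establish by a principal-symbol computation. At a nonzero covector $\xi$ the symbols of $d$ and $d^*$ are, up to the usual factor of $\sqrt{-1}$, exterior multiplication $\xi\wedge\,\cdot$ and its metric adjoint, contraction $\iota_{\xi^\sharp}\,\cdot$; in the composition $dd^*$ the two factors of $\sqrt{-1}$ cancel, so $\sigma_\xi(dd^*)=\xi\wedge\iota_{\xi^\sharp}$ and hence $\sigma_\xi(P)\psi=P^-_J(\xi\wedge\iota_{\xi^\sharp}\psi)$ for $\psi\in\Lambda^-_J$. Combining the adjointness of $\xi\wedge\,\cdot$ and $\iota_{\xi^\sharp}\,\cdot$ with the orthogonality of $P^-_J$,
$$\langle\sigma_\xi(P)\psi,\psi\rangle=\langle\xi\wedge\iota_{\xi^\sharp}\psi,\psi\rangle=|\iota_{\xi^\sharp}\psi|^2.$$
To see this is strictly positive for $\psi\ne0$ I would use that a nonzero self-dual $2$-form is pointwise nondegenerate: $\psi\wedge\psi=|\psi|^2_g\,\mathrm{vol}_g>0$ wherever $\psi\ne0$, so $\iota_{\xi^\sharp}\psi\ne0$ for every $\xi\ne0$. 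Thus $\langle\sigma_\xi(P)\psi,\psi\rangle>0$ for $\xi,\psi\ne0$, and a compactness argument on the unit cosphere bundle upgrades this to a uniform G\aa rding estimate. This nondegeneracy step, which rests entirely on $\Lambda^-_J\subset\Lambda^+_g$, is the part I expect to be the \emph{crux}.

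Next I would identify the kernel. If $P\psi=0$ then $0=\langle P\psi,\psi\rangle=\|d^*\psi\|^2$, so $d^*\psi=0$; since $\psi$ is self-dual, $d^*\psi=-*_g d*_g\psi=-*_g d\psi$, whence $d\psi=0$ as well, so $\psi$ is harmonic, self-dual, and $J$-anti-invariant. Conversely such a $\psi$ has $d^*\psi=0$, hence $P\psi=0$. This is exactly the advertised kernel. To match it with $H^-_J$ I would observe that any closed $J$-anti-invariant form $\alpha\in\mathcal{Z}^-_J$ is self-dual, so $d^*\alpha=-*_g d\alpha=0$ and $\alpha$ is automatically harmonic; thus $\mathcal{Z}^-_J=\ker P$, and the cohomology-class map $\ker P\to H^-_J$ is an isomorphism (injective because a harmonic exact form vanishes, surjective by the definition of $H^-_J$).

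Finally, the splitting follows from the elliptic theory. Since $P$ is self-adjoint and strongly elliptic on the closed manifold $M$, one has the orthogonal decomposition $\Omega^-_J=\ker P\oplus\mathrm{Im}\,P$ with $\ker P=(\mathrm{Im}\,P)^\perp$. On one hand $\mathrm{Im}\,P\subseteq P^-_J(d\Omega^1_\mathbb{R})$ trivially, because $dd^*\psi\in d\Omega^1_\mathbb{R}$. On the other hand, for $\psi\in\ker P$ and any $\beta\in\Omega^1_\mathbb{R}$,
$$\langle\psi,P^-_J(d\beta)\rangle=\langle\psi,d\beta\rangle=\langle d^*\psi,\beta\rangle=0,$$
so $P^-_J(d\Omega^1_\mathbb{R})\perp\ker P$, i.e. $P^-_J(d\Omega^1_\mathbb{R})\subseteq(\ker P)^\perp=\mathrm{Im}\,P$. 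Hence $\mathrm{Im}\,P=P^-_J(d\Omega^1_\mathbb{R})$, and combining with the kernel identification gives
$$\Omega^-_J=\ker P\oplus P^-_J(d\Omega^1_\mathbb{R})=H^-_J\oplus P^-_J(d\Omega^1_\mathbb{R}),$$
as claimed.
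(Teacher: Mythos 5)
The paper itself contains no proof of this lemma---it is quoted verbatim from Lejmi \cite{L3}, Lemma 4.1---so the only meaningful comparison is with the cited source, and against that your proof is correct and takes essentially the same route: self-duality of $J$-anti-invariant forms from (\ref{2eq18}), the principal-symbol computation $\langle\sigma_\xi(P)\psi,\psi\rangle=|\iota_{\xi^\sharp}\psi|^2$ with positivity coming from the pointwise nondegeneracy of nonzero self-dual $2$-forms, and the standard kernel/image splitting for a self-adjoint strongly elliptic operator on a closed manifold. Your additional care in identifying $\ker P=\mathcal{Z}^-_J$ with $H^-_J$ (injectivity because a harmonic exact form vanishes, surjectivity by definition of $H^-_J$) correctly justifies the abuse of notation in the displayed decomposition.
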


   Suppose that $(M,J)$ is a closed complex surface, that is, $J$ is integrable.
   Theorem 2.13 of \cite{BHPV} shows that the cup product form on $H^2(M,\mathbb{R})$,
  restricted to $H^{1,1}_{\mathbb{R}}(M)$, is nondegenerate of type $(1, h^{1,1}-1)$ if $b^1$ is even and of type $(0, h^{1,1})$ if $b^1$ is odd.
   For closed almost complex 4-manifolds, by using Proposition \ref{2p1} and Lemma \ref{2l3}, we have the following analogous theorem:

   \begin{theo}\label{2t4}
 (Signature Theorem) Let $(M,J)$ be a closed almost complex 4-manifold. Then the cup-product form on $H^2(M;\mathbb{R})$
   restricted to $H^+_J$ is nondegenerate of type $(b^+-h^-_J,b^-)$.
   \end{theo}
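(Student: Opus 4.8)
The plan is to transport the classical argument for complex surfaces (Theorem~2.13 of \cite{BHPV}) to the almost complex setting, using only the $C^\infty$-pure and full decomposition of Proposition~\ref{2p1} together with the pointwise relations (\ref{2eq18})--(\ref{2eq19}). Write $Q(\mathfrak{a},\mathfrak{b})=\int_M\alpha\wedge\beta$ for the cup-product pairing on $H^2(M;\mathbb{R})$, where $\alpha,\beta$ are closed representatives of $\mathfrak{a},\mathfrak{b}$. By Poincar\'e duality $Q$ is nondegenerate of type $(b^+,b^-)$. The strategy is threefold: (i) show that $H^+_J$ and $H^-_J$ are $Q$-orthogonal; (ii) compute $Q$ on $H^-_J$ directly; (iii) subtract from the total signature to read off $Q$ on $H^+_J$.

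For step (i), I would pick $\mathfrak{a}\in H^+_J$ and $\mathfrak{b}\in H^-_J$ with closed representatives $\alpha\in\mathcal{Z}^+_J$ and $\beta\in\mathcal{Z}^-_J$. Pointwise $\alpha$ is a real $(1,1)$-form while $\beta\in(\Omega^{2,0}_J+\Omega^{0,2}_J)_\mathbb{R}$, so $\alpha\wedge\beta$ has bidegree $(3,1)+(1,3)$ and hence vanishes identically on a $4$-manifold (where $\dim_\mathbb{C}M=2$). Therefore $Q(\mathfrak{a},\mathfrak{b})=0$, and with respect to the direct sum $H^2(M;\mathbb{R})=H^+_J\oplus H^-_J$ the form $Q$ is block diagonal.

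For step (ii), the relations (\ref{2eq18})--(\ref{2eq19}) give $\Lambda^-_J\subset\Lambda^+_g$; equivalently, every $J$-anti-invariant $2$-form is $g$-self-dual. A closed self-dual form is harmonic (indeed $d^*\alpha=-*_gd*_g\alpha=-*_gd\alpha=0$), so each class in $H^-_J$ is represented by a self-dual harmonic form, which is exactly the inclusion $H^-_J\subset H^+_g$ noted after (\ref{2eq19}). For such an $\alpha$ one has $\int_M\alpha\wedge\alpha=\int_M\alpha\wedge*_g\alpha=\int_M|\alpha|^2\,d\mu_g$, which is positive unless $\alpha=0$. Hence $Q|_{H^-_J}$ is positive definite of rank $h^-_J$.

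Finally, for step (iii): since $Q$ is block diagonal and nondegenerate, its determinant factors, forcing $Q|_{H^+_J}$ to be nondegenerate as well; and for an orthogonal direct sum the positive and negative indices add. The $h^-_J$ positive directions supplied by $Q|_{H^-_J}$ account for $h^-_J$ of the $b^+$ positive directions of $Q$, leaving $Q|_{H^+_J}$ with exactly $b^+-h^-_J$ positive and $b^-$ negative directions. As $\dim H^+_J=h^+_J=(b^+-h^-_J)+b^-$ by the identity $b^++b^-=h^+_J+h^-_J$, these directions exhaust $H^+_J$, so $Q|_{H^+_J}$ is nondegenerate of type $(b^+-h^-_J,b^-)$, as claimed. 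The only steps needing care are the two pointwise algebraic facts in (i) and (ii)---the bidegree vanishing of $\alpha\wedge\beta$ and the self-duality of anti-invariant forms---together with the harmonicity of closed self-dual forms; once these are in place the signature bookkeeping is routine, so I expect no genuine analytic obstacle.
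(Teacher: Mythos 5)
Your proof is correct, but it takes a genuinely different route from the paper's. The paper computes the restriction of the cup product to $H^+_J$ directly: using the inclusions $H^-_g\subset H^+_J$, $H^-_J\subset H^+_g$ and Lejmi's elliptic operator (Lemma \ref{2l3}), it splits $H^+_J=H^-_g\oplus(H^+_J\cap H^+_g)$, obtains $\dim(H^+_J\cap H^+_g)=b^+-h^-_J$ by writing each self-dual harmonic class as a closed $J$-anti-invariant form plus a class admitting a $J$-invariant closed representative, and then exhibits an explicit basis $\{\widetilde{\omega}_1,\dots,\widetilde{\omega}_{b^+-h^-_J},\beta_1,\dots,\beta_{b^-}\}$ whose Gram matrix for the cup product is $\left(\begin{smallmatrix} I_{b^+-h^-_J} & 0\\ 0 & -I_{b^-}\end{smallmatrix}\right)$. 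You never touch $H^+_J$ directly: you block-diagonalize $Q$ with respect to $H^2(M;\mathbb{R})=H^+_J\oplus H^-_J$ via the pointwise bidegree vanishing $\alpha\wedge\beta=0$, show $Q|_{H^-_J}$ is positive definite of rank $h^-_J$ using $\mathcal{Z}^-_J\subset\mathcal{H}^+_g$, and then read off the type of $Q|_{H^+_J}$ by subtracting from the total signature $(b^+,b^-)$. What your argument buys is economy: beyond Proposition \ref{2p1} and the relations (\ref{2eq18})--(\ref{2eq19}) it needs only standard Hodge theory, and in particular avoids Lemma \ref{2l3} and the construction of $J$-invariant closed representatives cohomologous to self-dual harmonic forms. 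What the paper's constructive detour buys is material that is reused later: the representatives $\widetilde{\omega}_i\in\mathcal{Z}^+_J$ and the identification of $H^+_J\cap H^+_g$ reappear in Section \ref{3} when bases of $\mathcal{H}^+_g$ adapted to the weakly $\widetilde{\mathcal{D}}^+_J$-closed machinery are set up. Both of your pointwise inputs (the bidegree vanishing and the self-duality, hence harmonicity, of closed $J$-anti-invariant forms) are facts the paper itself records, so there is no gap in your argument.
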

  \begin{proof}
   We define an almost Hermitian structure $(g,J,F)$ on $M$.
   By Proposition \ref{2p1}, we have
   $$
   H^2(M;\mathbb{R})=H^+_g\oplus H^-_g=H^+_J\oplus H^-_J.
  $$
   So we can get
    $$
  H^+_J=H^-_g\oplus(H^+_J\cap H^+_g), \,\,\, dim(H^+_J\cap H^+_g)=b^+-h^-_J.
  $$
  For any $[\gamma]\in H^+_g$, $\gamma\in \mathcal{H}^+_g$,
  $$\gamma_J^-=\frac{1}{2}(\gamma(\cdot,\cdot)-\gamma(J\cdot,J\cdot))\in \Omega^-_J,$$
  by Lemma \ref{2l3} ,
  $$\gamma_J^-=\gamma_h+d^-_J(v_\gamma+\bar{v}_\gamma),$$
  where $$\gamma_h\in \mathcal{Z}^-_J\subseteq \mathcal{H}^+_g, \,\,\, v_\gamma\in\Omega^{0,1}_J.$$
  $\gamma-\gamma_h$ is still a self-dual harmonic 2-form.
  $$\gamma-\gamma_h-d(v_\gamma+\bar{v}_\gamma)\in H^+_J.$$

   By the discussion above, we can choose $[\omega_1],...,[\omega_{b^+-h^-_J}]$,
   where $(\omega_i,\omega_j)_g=\delta_{ij}$ for a standard orthonormal basis of $H^+_J\cap H^+_g$  with respect to the cup product.
   Let $\widetilde{\omega}_i\in\mathcal{Z}^+_J$ cohomologous to $\omega_i$.
   So
   \begin{equation}\label{2eq20}
   \int_M\widetilde{\omega}_i\wedge\widetilde{\omega}_j=\int_M\omega_i\wedge\omega_j=\int_M\omega_i\wedge *_g\omega_j=(\omega_i,\omega_j)_g=\delta_{ij}.
   \end{equation}

   Let $\beta_1,...,\beta_{b^-}\in \mathcal{H}^-_g$ be a standard orthonormal basis of $\mathcal{H}^-_g$ with respect to the integration by $g$, i.e. ,
   \begin{equation}\label{2eq21}
  (\beta_i,\beta_j)_g=\int_M\beta_i\wedge\ast_g\beta_j =\delta_{ij}.
  \end{equation}
    So $[\beta_1],...,[\beta_{b^-}]$ is standard orthonormal basis of $H^-_g$ with respect to the cup product.

   It is easy to see that $(\widetilde{\omega}_i,\beta_j)_g=0$ pointwise.
   So $\{\widetilde{\omega}_1,...,\widetilde{\omega}_{b^+-h^-_J},\beta_1,...,\beta_{b^-}\}$ is a standard orthonormal
   basis of $\mathcal{Z}^+_J$ with respect to the cup product.
  The matrix of the cup-product form on $H^2(M;\mathbb{R})$ restricted to $H^+_J$ under the above basis is
  \begin{equation}\label{2eq22}
  \left(
  \begin{array}{cc}
    I_{b^+-h^-_J} & 0 \\
    0 & -I_{b^-} \\
  \end{array}
  \right).
  \end{equation}
  This completes the proof of Theorem \ref{2t4}.
   \end{proof}

   We define the following operators:
   \begin{eqnarray}\label{2eq23}
  d^+_J&=&P^+_Jd: \,\,\,\Omega^1_{\mathbb{R}}\longrightarrow\Omega^{1,1}_\mathbb{R}, \nonumber\\
  d^-_J&=&P^-_Jd: \,\,\,\Omega^1_{\mathbb{R}}\longrightarrow(\Omega^{2,0}_J+\Omega^{0,2}_J)_\mathbb{R},
  \end{eqnarray}
  where $P^\pm_J: \Omega^2_\mathbb{R}\longrightarrow\Omega^\pm_J$.

  Suppose that $(M,g,J,F)$ is a closed almost Hermitian 4-manifold, and that the given almost complex structure $J$ is also tamed by a symplectic form $\omega$.
  By Lemma \ref{2l3}, $\omega$ can be decomposed as follows:
   $$\omega=F+d^-_J(v+\bar{v})+\alpha_\omega,$$
  where $\alpha_\omega\in\mathcal{Z}^-_J\subset\mathcal{H}^+_g$, $v\in\Omega^{0,1}_J$, $F^2>0$.
  Set $\omega_1=\omega-\alpha_\omega$.
  It is clear that $J$ is also an $\omega_1$-tame almost complex structure.
  Set
  $$\widetilde{\omega}_1=\omega_1-d(v+\bar{v})=F-d^+_J(v+\bar{v})\in\mathcal{Z}^+_J.$$
  Thus $[\widetilde{\omega}_1]\in H^+_g\cap H^+_J$.
  It is easy to see that $0\leq h^-_J\leq b^+-1$ (cf. \cite{TWZZ}).
  We may assume without loss of generality that
  $$\int_MF^2=2$$ and $$\int_M|d^-_J(v+\bar{v})|^2d\mu_g=2a>0,$$
  for if $a=0$, then $F$ is a symplectic form compatible with $J$.

   \vskip 6pt

   Let $(g,J,F)$ be an almost Hermitian structure on a closed 4-manifold $M$,
   $\omega_1=F+d^-_J(v+\bar{v})$ a symplectic form on $M$ by which $J$ is tamed, where $v\in\Omega^{0,1}_J$.
   Suppose $\psi\in\Lambda^{1,1}_\mathbb{R}\otimes L^2(M)$ is $d$-exact with
   \begin{equation}\label{2eq24}
   \psi=d(u+\bar{u})=d^+_J(u+\bar{u}), \,\,\,i.e., \,\,\,d^-_J(u+\bar{u})=0,
   \end{equation}
   for some $u\in\Lambda^{0,1}_J\otimes L^2_1(M)$.
   Let $$f_\psi=\frac{1}{2}\psi\wedge F/d\mu_g-\frac{1}{2}\int_M\psi\wedge F,$$
   then $$\int_Mf_\psi d\mu_g=0.$$
   Define $$L^2_2(M)_0:=\{f\in L^2_2(M)| \int_Mfd\mu_g=0 \}.$$
   It is easy to see that $f_\psi\in L^2_2(M)_0$.
   Recall that if $J$ is integrable, in classical complex analysis,
    it follows that $dJdf_\psi=2\sqrt{-1}\partial_J\bar{\partial}_Jf_\psi$.
   For general case (i.e., $J$ is not integrable),
   by Lemma \ref{2l3}, there exists $\eta^1_\psi\in\Lambda^{0,2}_J\otimes L^2_2(M)$ such that
    $$d^-_JJdf_\psi+d^-_Jd^*(\eta^1_\psi+\overline{\eta}^1_\psi)=0.$$
     Then, by Lemma \ref{2l3} and the Hodge decomposition
    $\Omega^+_g=\mathcal{H}^+_g\oplus d^+_g(\Omega^1)$ (cf. \cite{D6,DK}),
   since
   $$d_g^+d^*: \Omega^+_g\longrightarrow\Omega^+_g$$
   is a strongly self-adjoint elliptic operator,
  there are
   $\eta^2_\psi\in\Lambda^{0,2}_J\otimes L^2_2(M)4$
  satisfying
  \begin{equation}\label{2eq25}
   d^+_g(u+\bar{u})=d^+_gd^*[f_\psi\omega_1+(\eta^1_\psi+\eta^2_\psi+\overline{\eta}^1_\psi+\overline{\eta}^2_\psi)],
  \end{equation}
   where $$f_\psi\omega_1+(\eta^1_\psi+\eta^2_\psi)+(\overline{\eta}^1_\psi+\overline{\eta}^2_\psi)\in\Omega^+_g.$$
   Note that
   \begin{eqnarray}
     d^*(f_\psi\omega_1) &=& -*_gd(f_\psi\omega_1) \nonumber \\
      &=&  -*_g(d f_\psi\wedge\omega_1) \nonumber \\
      &=& -*_g(d f_\psi\wedge F) \nonumber \\
      &=& Jdf_\psi- *_g(df_\psi\wedge d^-_J(v+\bar{v})).
   \end{eqnarray}
   By (\ref{2eq17}) and (\ref{2eq25}), we have
   \begin{eqnarray*}
  \psi&=&d(u+\bar{u})\\
   &=&dd^*[f_\psi\omega_1+(\eta^1_\psi+\eta^2_\psi+\overline{\eta}^1_\psi+\overline{\eta}^2_\psi)] \\
    &=& dJdf_\psi+dd^*(\eta^1_\psi+\overline{\eta}^1_\psi)-d*_g(df_\psi\wedge d^-_J(v+\bar{v}))+dd^*(\eta^2_\psi+\overline{\eta}^2_\psi),
   \end{eqnarray*}
   where, by Lemma \ref{2l3},
  $$-d^-_J*_gdf_\psi\wedge d^-_J(v+\bar{v})+d^-_Jd^*(\eta^2_\psi+\overline{\eta}^2_\psi)=0.$$
  Thus, by the above discussion,
  we can define two operators $$\mathcal{D}^+_J \,\,{\rm and}\,\, \widetilde{\mathcal{D}}^+_J: L^2_2(M)_0\longrightarrow\Lambda^{1,1}_\mathbb{R}\otimes L^2(M).$$
  \begin{defi}\label{2d5}
  Set $\mathcal{W}: L^2_2(M)_0\longrightarrow\Lambda^1_\mathbb{R}\otimes L^2_1(M)$,
   $$\mathcal{W}(f)=Jdf+d^*(\eta^1_f+\overline{\eta}^1_f),\,\,\,\, \eta^1_f\in\Lambda^{0,2}_J\otimes L^2_2(M),$$ satisfying
   $$d^-_J\mathcal{W}(f)=0.$$
  \,\,\,\,\,\,\, Define $\mathcal{D}^+_J: L^2_2(M)_0\longrightarrow\Lambda^{1,1}_\mathbb{R}\otimes L^2(M)$, \,\,\,$\mathcal{D}^+_J(f)=d\mathcal{W}(f)$.\\

  Set $\mathcal{\widetilde{W}}: L^2_2(M)_0\longrightarrow\Lambda^1_\mathbb{R}\otimes L^2_1(M)$,
  $$\mathcal{\widetilde{W}}(f)=\mathcal{W}(f)-*_g(df\wedge d^-_J(v+\bar{v}))+d^*(\eta^2_f+\overline{\eta}^2_f),
   \,\,\, \eta^2_f\in\Lambda^{0,2}_J\otimes L^2_2(M),$$ satisfying
   $$d^*\mathcal{\widetilde{W}}(f)=0, \,\,\, d^-_J\mathcal{\widetilde{W}}(f)=0.$$
  \,\,\,\,\,\,\, Define $\widetilde{\mathcal{D}}^+_J: L^2_2(M)_0\longrightarrow\Lambda^{1,1}_\mathbb{R}\otimes L^2(M)$,
   \,\,\,\, $\widetilde{\mathcal{D}}^+_J(f)=d\mathcal{\widetilde{W}}(f)$.
  \end{defi}
  \begin{rem}\label{2r6}
  Notice that $d^-_J\mathcal{\widetilde{W}}=0=d^-_J\mathcal{W}$, by the above formula,
  it implies that $$d^-_J(*_g(df\wedge d^-_J(v+\bar{v}))+d^*(\eta^2_f+\overline{\eta}^2_f))=0.$$
  If $dF=0$, then $\mathcal{D}^+_J=\widetilde{\mathcal{D}}^+_J$ since $d^-_J(v+\bar{v})=0$.
  If $J$ is integrable, $\bar{\partial}^2_J=\partial^2_J=0$ and $\partial_J\bar{\partial}_J+\bar{\partial}_J\partial_J=0$,
  then $dJdf=2\sqrt{-1}\partial_J\bar{\partial}_Jf=\mathcal{D}^+_J(f)$, that is, $\eta^1_f=0$.
  (cf. {\rm\cite{TWY,WZ2}}).
  For the higher dimensional closed almost K\"{a}hler manifold $(M,g,J,\omega)$, could one define the similar operator
   $\mathcal{D}^+_J$ with the strongly self-adjoint elliptic operator?
  \end{rem}

   Denote by $\mathbb{G}$ the Green operator associated to $\Delta_g$ (cf. \cite{Kod}).
   The Hodge operator $*_g$ commutes with $\Delta_g$.
  It follows that $*_g$ commutes with $\mathbb{G}$.
  It is clear that $d$ and $d^*$ commute with $\mathbb{G}$. Lejmi \cite{L3} proved a generalized $\partial\bar{\partial}$-lemma for almost K\"ahler 4-manifolds under the condition $h_J^{-}=b^{+}-1$, and in the following, we generalize this result to almost Hermitian manifolds $(M,g,J,F)$ with $J$ tamed by $\omega_1$,
  where $\omega_1$ is the form defined earlier.
  \begin{prop}\label{3p6'}
  (cf. Proposition 2.5 in {\rm \cite{L4}})
  If $h^-_J=b^+-1$, then $\widetilde{\mathcal{D}}^+_J(f)$ can be rewritten as
  $$\widetilde{\mathcal{D}}^+_J(f)=2d\mathbb{G}d^*(f'F)=2\mathbb{G}dd^*(f'F)=2 dd^*\mathbb{G}(f'F),$$ and
  $\widetilde{\mathcal{W}}(f)$ can be rewritten as $$\widetilde{\mathcal{W}}(f)=2\mathbb{G}d^*(f'F)=2d^*\mathbb{G}(f'F),$$ where
  $f'\in L^2(M)_0,~f\in L^2_2(M)_0$.
  \end{prop}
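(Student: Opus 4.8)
The plan is to reduce everything to a single identity for $\widetilde{\mathcal{W}}(f)$ and then to overcome one scalar/cohomological obstruction using the hypothesis $h^-_J=b^+-1$. First, the three expressions for $\widetilde{\mathcal{D}}^+_J(f)$ and the two for $\widetilde{\mathcal{W}}(f)$ differ only by the order in which $d$, $d^*$ and $\mathbb{G}$ are applied; since $\mathbb{G}$ commutes with both $d$ and $d^*$, all of them coincide once any one is established. Moreover $\widetilde{\mathcal{D}}^+_J(f)=d\widetilde{\mathcal{W}}(f)$, so it suffices to prove the single formula $\widetilde{\mathcal{W}}(f)=2d^*\mathbb{G}(f'F)$ and then apply $d$.

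Next I would rewrite $\widetilde{\mathcal{W}}(f)$ in divergence form. Using $Jdf=-*_g(df\wedge F)$ together with the self-duality of $\omega_1=F+d^-_J(v+\bar{v})$ (which holds because $\Lambda^-_J\subset\Lambda^+_g$ by (\ref{2eq18})), the definition of $\widetilde{\mathcal{W}}$ collapses to
\[
\widetilde{\mathcal{W}}(f)=d^*\bigl(f\omega_1+\eta^1_f+\eta^2_f+\overline{\eta}^1_f+\overline{\eta}^2_f\bigr),
\]
so $\widetilde{\mathcal{W}}(f)$ is coexact and in particular has no $\mathcal{H}_g$-component. Combining this with the defining property $d^*\widetilde{\mathcal{W}}(f)=0$ and the Hodge decomposition, the exact part of $\widetilde{\mathcal{W}}(f)$ also vanishes, whence $\widetilde{\mathcal{W}}(f)=d^*\mathbb{G}\,d\widetilde{\mathcal{W}}(f)=d^*\mathbb{G}\widetilde{\mathcal{D}}^+_J(f)$. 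Consequently the target identity $\widetilde{\mathcal{W}}(f)=2d^*\mathbb{G}(f'F)$ is equivalent to $d^*\mathbb{G}\bigl(\widetilde{\mathcal{D}}^+_J(f)-2f'F\bigr)=0$, i.e. to the coclosedness of $\widetilde{\mathcal{D}}^+_J(f)-2f'F$.

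To turn this into a usable condition I would split $\widetilde{\mathcal{D}}^+_J(f)$ into its $g$-self-dual and anti-self-dual parts. Since $\widetilde{\mathcal{D}}^+_J(f)\in\Omega^{1,1}_\mathbb{R}=\Omega^+_J$, its self-dual part lies in $\Lambda^+_J\cap\Lambda^+_g=\mathbb{R}\cdot F$ by (\ref{2eq19}), so we may write $\widetilde{\mathcal{D}}^+_J(f)=\lambda F+\gamma$ with $\lambda$ a function and $\gamma\in\Omega^-_g$. Because $\widetilde{\mathcal{D}}^+_J(f)$ is exact, hence closed, applying $*_g$ and using $d\gamma=-d(\lambda F)$ shows that $\widetilde{\mathcal{D}}^+_J(f)-2f'F$ is coclosed if and only if $d\bigl((f'-\lambda)F\bigr)=0$. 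Now $(f'-\lambda)F$ is automatically $g$-self-dual and $J$-invariant, and a closed self-dual form is harmonic; thus the whole problem reduces to producing $f'\in L^2(M)_0$ with $(f'-\lambda)F\in\mathcal{H}^+_g\cap\Omega^+_J$.

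This last step is the heart of the matter and the place where the hypothesis enters. By the Signature Theorem \ref{2t4} and (\ref{2eq19}), the space $\mathcal{H}^+_g\cap\Omega^+_J$ has dimension at most $b^+-h^-_J$, which is exactly $1$ precisely when $h^-_J=b^+-1$; this single dimension is just enough room to adjust $f'$ subject to the normalization $\int_M f'\,d\mu_g=0$. The main obstacle is to show that this one-dimensional space is genuinely spanned by a form of the shape $\phi_0F$ with $\int_M\phi_0\,d\mu_g\neq0$ (so that $f'=\lambda+c\phi_0$ can be normalized), or equivalently that the quantity $\int_M\widetilde{\mathcal{D}}^+_J(f)\wedge F=\int_M\widetilde{\mathcal{W}}(f)\wedge dF$ is compatible with the normalization. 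This is precisely the content of Lejmi's generalized $\partial\bar{\partial}$-lemma under $h^-_J=b^+-1$, and I expect it to require working in the Gauduchon gauge so that the relevant self-dual $(1,1)$-form is harmonic. Once such an $f'$ is produced, $\widetilde{\mathcal{W}}(f)=2d^*\mathbb{G}(f'F)$ follows, and applying $d$ together with the commutation of $\mathbb{G}$ with $d,d^*$ yields all the stated formulas for $\widetilde{\mathcal{D}}^+_J(f)$.
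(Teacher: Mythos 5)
Your reduction is sound up to its last step, and it is a genuinely different route from the paper's: the divergence form $\widetilde{\mathcal{W}}(f)=d^*\bigl(f\omega_1+(\eta^1_f+\overline{\eta}^1_f)+(\eta^2_f+\overline{\eta}^2_f)\bigr)$, the consequence $\widetilde{\mathcal{W}}(f)=d^*\mathbb{G}\widetilde{\mathcal{D}}^+_J(f)$, and the equivalence of the target with $d\bigl((f'-\lambda)F\bigr)=0$, where $\widetilde{\mathcal{D}}^+_J(f)=\lambda F+\gamma$, $\gamma\in\Omega^-_g$, are all correct. (The paper argues in the opposite direction: it first shows, for an \emph{arbitrary} $f'\in L^2(M)_0$, that $(f'F)_H=0$ under $h^-_J=b^+-1$, hence $P^+_g(2d\mathbb{G}d^*(f'F))=f'F$, which is (\ref{3eq11'}), and then matches $\widetilde{\mathcal{D}}^+_J(f)$ with $2d\mathbb{G}d^*(f'_\psi F)$ via (\ref{2eq17}).) The gap is your final paragraph. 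Invoking ``Lejmi's generalized $\partial\bar{\partial}$-lemma'' is circular: Proposition \ref{3p6'} \emph{is} the extension of that lemma from the almost K\"ahler to the tamed almost Hermitian setting, so it cannot be quoted to finish the proof. Worse, the concrete mechanism you propose cannot work: an element of $\mathcal{H}^+_g\cap\Omega^+_J$ is a closed form $\phi_0F$, and under $h^-_J=b^+-1$ one has $\mathcal{H}^+_g=\mathbb{R}\omega_1\oplus\mathcal{H}^-_J$, so $\phi_0F=c\omega_1+\alpha$ forces $\phi_0\equiv c$ and $\alpha=-c\,d^-_J(v+\bar{v})$; closedness of $\alpha$ then gives $c\,dF=0$, so in the case of interest ($a>0$, i.e.\ $dF\neq0$) this space is $\{0\}$, not one-dimensional, and no $\phi_0$ with $\int_M\phi_0\,d\mu_g\neq0$ exists.

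The repair is much simpler than what you attempt: take $f'=\lambda$ itself, which trivially satisfies $d\bigl((f'-\lambda)F\bigr)=0$; all that remains is the normalization $\int_M\lambda\,d\mu_g=0$, and here you estimated the wrong quantity. Instead of $\int_M\widetilde{\mathcal{D}}^+_J(f)\wedge F=\int_M\widetilde{\mathcal{W}}(f)\wedge dF$, use tameness: $\widetilde{\mathcal{D}}^+_J(f)\in\Omega^+_J$ and $d^-_J(v+\bar{v})\in\Omega^-_J$ wedge to zero pointwise, so $2\int_M\lambda\,d\mu_g=\int_M\widetilde{\mathcal{D}}^+_J(f)\wedge F=\int_M\widetilde{\mathcal{D}}^+_J(f)\wedge\omega_1=\int_M d\widetilde{\mathcal{W}}(f)\wedge\omega_1=0$ by Stokes --- exactly how the paper normalizes its $f'_\psi$. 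With this choice your argument closes, but note that it then never uses $h^-_J=b^+-1$ and only yields the representation for the particular $f'$ attached to $f$; the paper's proof establishes the stronger identity (\ref{3eq11'}) for \emph{every} $f'\in L^2(M)_0$ (equivalently $(f'F)_H=0$, which genuinely requires the hypothesis), and it is that stronger form, not the bare representation, that is invoked later in Claim \ref{weak strong}.
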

  \begin{proof}
  First of all, we prove that for any $f'\in L^2(M)_0$, $d\mathbb{G}d^*(f'F)$ is $J$-invariant if $h^-_J=b^+-1$.
   Without loss of generality, we choose $f'\in C^\infty(M)_0$.
   \begin{eqnarray*}
   (dGd^*(f'F))^-_J &=& P^+_g(d\mathbb{G}d^*(f'F))-\frac{1}{2}(P^+_g(d\mathbb{G}d^*(f'F)),F)_gF  \\
                   &=&  \frac{1}{2}(1+*_g)(-\mathbb{G}d*_gd*_g(f'F))-\frac{1}{4}(1+*_g)(-\mathbb{G}d*_gd*_g(f'F),F)_gF  \\
                   &=&  \frac{1}{2}\mathbb{G}\Delta_g(f'F)-\frac{1}{4}(\mathbb{G}\Delta_g(f'F),F)_gF \\
                   &=&  \frac{1}{2}(f'F)- \frac{1}{2}(f'F)_H-\frac{1}{4}(f'F-(f'F)_H,F)_gF \\
                   &=&  \frac{1}{2}(f'F)- \frac{1}{2}(f'F)_H-\frac{1}{2}(f'F)+\frac{1}{4}((f'F)_H,F)_gF \\
                   &=& - \frac{1}{2}(f'F)_H+\frac{1}{4}((f'F)_H,F)_gF,
   \end{eqnarray*}
   where $(f'F)_H$ denotes the harmonic part with respect to $\Delta_g$.
  Under the assumption $h^-_J=b^+-1$, it follows that $(f'F)_H=0$ for any smooth function $f'$ with zero integral for the following reason.
  In this case, $$\mathcal{H}^2_g=\mathbb{R}\cdot\omega_1\oplus\mathcal{H}^-_J\oplus\mathcal{H}^-_g.$$
   Since
  $$\int_Mf'F\wedge\omega_1=\int_Mf'F\wedge F=2\int_Mf'd\mu_g=0,$$
  $f'F\wedge\alpha\equiv 0$ for any $\alpha\in\mathcal{H}^-_J$
  and $f'F\wedge\beta\equiv 0$ for any $\beta\in\mathcal{H}^-_g$,
  by Hodge decomposition (cf. \cite{DK}), we can get $(f'F)_H=0$.
  By the above calculation, it is easy to see that
  \begin{equation}\label{3eq11'}
  P^+_g(2d\mathbb{G}d^*(f'F))=P^+_g(2dd^*\mathbb{G}(f'F))=\mathbb{G}\Delta_g(f'F)=f'F.
   \end{equation}

 Second, let $\psi$ be a smooth $J$-invariant $2$-form which is $d$-exact,
 i.e., $\psi=d(u+\bar{u})$ and $d^-_J(u+\bar{u})=0$, where $u\in\Omega^{0,1}_J$.
 Then $P^+_g(\psi)=f'_{\psi}F$, $f'_{\psi}\in C^\infty(M)_0$,
 since $\omega_1=F+d^-_J(v+\bar{v})$, $v\in\Omega^{0,1}_J$ and
 $$2\int_Mf'_{\psi}d\mu_g=\int_M\psi\wedge F=\int_M\psi\wedge\omega_1=\int_Md(u+\bar{u})\wedge\omega_1=0.$$
 Therefore, by (\ref{3eq11'}), $$P^+_g(\psi)=f'_{\psi}F=P^+_g(d2\mathbb{G}d^*(f'_{\psi}F)).$$
 Hence $$\psi=d(u+\bar{u})=d2\mathbb{G}d^*(f'_{\psi}F),$$ since
 $P^+_g(\psi-d2\mathbb{G}d^*(f'_{\psi}F))=0$ and $\psi-d2\mathbb{G}d^*(f'_{\psi}F)$ is $d$-exact (cf. (\ref{2eq17}) or \cite{DK}).
 According to the construction of $\widetilde{\mathcal{D}}^+_J$, there exists a function $f_{\psi}\in L^2_2(M)_0$ such that
  $\psi=\widetilde{\mathcal{D}}^+_J(f_{\psi})=2dd^*\mathbb{G}(f'_{\psi}F)$.
 \end{proof}

 \begin{rem}\label{3r7'}
 (1) If $(M,g,J,\omega)$ is a K\"{a}hler surface, then $h^-_J=b^+-1$ and
  $$\mathcal{D}^+_J(f)=\widetilde{\mathcal{D}}^+_J(f)=2d\mathbb{G}d^*(f'\omega)
  =2d\mathbb{G}J(df')=2d\mathbb{G}d^cf'=2 dd^c\mathbb{G}f'=2\sqrt{-1}\partial_J\bar{\partial}_Jf,$$ where $f=\mathbb{G}f'$.
 Hence, the above proposition can be viewed as a generalized $\partial\bar{\partial}$-lemma and
 $$P^+_g(2d\mathbb{G}d^*(f'_{\psi}F))=P^+_g(2d d^*\mathbb{G}(f'_{\psi}F))=P^+_g(2\mathbb{G}dd^*(f'_{\psi}F))=f'_{\psi}F.$$

 (2) $\mathbb{G}(f'_{\psi}F)\in\Lambda^2_{\mathbb{R}}\otimes L^2_2(M)$, where $f'_{\psi}\in L^2(M)_0$.
 \end{rem}

  Suppose that $(M,g,J,F)$ is tamed by $\omega_1=F+d^-_J(v+\bar{v})$, where $v\in \Omega^{0,1}_J$,
   suppose that $[\alpha_1],\cdot\cdot\cdot,[\alpha_{h_J^-}]$ is a basis of $H^-_J$,
   and $[\omega_1],\cdot\cdot\cdot,[\omega_{b^+-h_J^-}]$ is a basis of $H^+_g\cap H^+_J$, where $0\leq h^-_J\leq b^+-1$.
   Let $\psi\in\Lambda^{1,1}_\mathbb{R}\otimes L^2(M)$ be a real d-exact (1,1)-form, that is,
   there exists $u_\psi\in\Omega^{0,1}_J$ such that $\psi=d(u_\psi+\bar{u}_\psi)$, hence $d^-_J(u_\psi+\bar{u}_\psi)=0$.
    It is clear that
   $$
  \psi\wedge\alpha_j=0,\,\,\,1\leq j\leq h_J^-.
   $$
   Hence,
   \begin{equation}\label{2e26}
  \int_M\psi\wedge\alpha_j=0,\,\,\,1\leq j\leq h_J^-,
  \end{equation}
  \begin{equation}\label{2e27}
  \int_M\psi\wedge\omega_i=0,\,\,\,1\leq i\leq b^+-h_J^-.
  \end{equation}
  Thus $\psi$ is orthogonal to the self-dual harmonic 2-forms, $\mathcal{H}^+_g$, with respect to the cup product.
  By Hodge decomposition (cf. \cite{DK}), there exist $$f_\psi\in L^2_2(M)_0,\,\,\, \eta^1_\psi,\,\,\, \eta^2_\psi\in \Lambda^-_J\otimes L^2_2(M)$$ such that
  $$P^+_g\psi=d^+_g(u_\psi+\bar{u}_\psi)=d^+_gd^*(f_\psi\omega_1+(\eta^1_\psi+\bar{\eta}^1_\psi)+(\eta^2_\psi+\bar{\eta}^2_\psi))$$
  satisfying
  \begin{equation}\label{2e28}
  d^-_Jd^*(f_\psi\omega_1+(\eta^1_\psi+\bar{\eta}^1_\psi)+(\eta^2_\psi+\bar{\eta}^2_\psi))=0,
  \end{equation}
   and it follows that
  \begin{equation}\label{2e29}
  \psi=dd^*(f_\psi\omega_1+(\eta^1_\psi+\bar{\eta}^1_\psi)+(\eta^2_\psi+\bar{\eta}^2_\psi)).
  \end{equation}

    By Definition \ref{2d5} and Proposition \ref{3p6'}, we have the following lemma,

  \begin{lem}\label{2l10}
  Let $(M,J)$ be a tamed closed almost complex $4$-manifold with $h^-_J=b^+-1$.
  Suppose that $\psi\in\Lambda^{1,1}_\mathbb{R}\otimes L^2(M)$ is $d$-exact.
  Then there exists $f_{\psi}\in L^2_2(M)_0$ and $f'_\psi\in L^2(M)_0$
   such that $$\psi=\widetilde{\mathcal{D}}^+_J(f_\psi)=d\widetilde{\mathcal{W}}(f_\psi)=2dd^*\mathbb{G}(f'_\psi F).$$
  \end{lem}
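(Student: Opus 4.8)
The plan is to assemble the Hodge-theoretic construction already carried out in the discussion immediately preceding the lemma together with the rewriting furnished by Proposition \ref{3p6'}; essentially no new analysis is required beyond checking that everything survives at the stated $L^2$ regularity. First I would record the two orthogonality facts. Because $\psi$ is of type $(1,1)$, the wedge of $\psi$ with any $J$-anti-invariant representative $\alpha_j$ is of type $(3,1)+(1,3)$ and hence vanishes pointwise, giving (\ref{2e26}); and the exactness $\psi=d(u_\psi+\bar u_\psi)$ with Stokes' theorem forces $\int_M\psi\wedge\gamma=0$ for every closed $\gamma$, in particular (\ref{2e27}). Under the hypothesis $h^-_J=b^+-1$ the self-dual harmonic space splits as $\mathcal{H}^+_g=\mathbb{R}\cdot\omega_1\oplus\mathcal{H}^-_J$, the one extra dimension being spanned by the harmonic representative of $[\omega_1]\in H^+_g\cap H^+_J$ and the rest by the $g$-self-dual-harmonic $J$-anti-invariant forms of Lemma \ref{2l3}. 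Since these are exactly the classes tested in (\ref{2e26}) and (\ref{2e27}), it follows that $\psi$ is $L^2$-orthogonal to $\mathcal{H}^+_g$, i.e.\ $P^+_g\psi$ carries no harmonic part.

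Second, I would feed this orthogonality into the Hodge decomposition $\Omega^+_g=\mathcal{H}^+_g\oplus d^+_g(\Omega^1)$ of (\ref{2eq16}). Because $d^+_gd^*:\Omega^+_g\to\Omega^+_g$ is self-adjoint, strongly elliptic with $\ker=\mathcal{H}^+_g$ (see (\ref{2eq16'})), the equation $P^+_g\psi=d^+_gd^*(\,\cdot\,)$ is solvable, and elliptic regularity lands the solution in the correct Sobolev class, producing $f_\psi\in L^2_2(M)_0$ and $\eta^1_\psi,\eta^2_\psi\in\Lambda^-_J\otimes L^2_2(M)$ realizing (\ref{2e28}) and (\ref{2e29}). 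The zero-mean normalization of $f_\psi$ is automatic, and the auxiliary constraint $d^-_Jd^*(\cdots)=0$ in (\ref{2e28}) together with the self-adjointness used to enforce $d^*\widetilde{\mathcal{W}}(f_\psi)=0$ is precisely the defining requirement of $\widetilde{\mathcal{W}}$ in Definition \ref{2d5}. Passing from the self-dual identity to the full identity $\psi=dd^*(\cdots)$ is done exactly as in the displayed derivation of (\ref{2e29}), i.e.\ via (\ref{2eq17}); comparing the result with Definition \ref{2d5} identifies $\psi=d\widetilde{\mathcal{W}}(f_\psi)=\widetilde{\mathcal{D}}^+_J(f_\psi)$.

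Third, I would invoke Proposition \ref{3p6'}. Setting $f'_\psi:=\tfrac{1}{2}(\psi,F)_g$, so that $P^+_g\psi=f'_\psi F$ (as $\psi\in\Lambda^+_J=\mathbb{R}\cdot F\oplus\Lambda^-_g$ with $|F|^2=2$), the computation $\int_M f'_\psi\,d\mu_g=\tfrac{1}{2}\int_M\psi\wedge F=0$ by exactness shows $f'_\psi\in L^2(M)_0$. Proposition \ref{3p6'} then rewrites $\widetilde{\mathcal{D}}^+_J(f_\psi)=2dd^*\mathbb{G}(f'_\psi F)$, which chains with the previous step to yield the asserted string $\psi=\widetilde{\mathcal{D}}^+_J(f_\psi)=d\widetilde{\mathcal{W}}(f_\psi)=2dd^*\mathbb{G}(f'_\psi F)$.

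The main obstacle I anticipate is bookkeeping rather than conceptual: one must verify that the entire Hodge/elliptic construction is legitimate for $\psi$ merely in $L^2$ (not smooth), so that $f_\psi$ genuinely lands in $L^2_2(M)_0$ and $f'_\psi$ in $L^2(M)_0$, and one must confirm that the anti-invariant correction terms $\eta^1_\psi,\eta^2_\psi$ can be arranged so that $d^-_J\widetilde{\mathcal{W}}(f_\psi)=0$ and $d^*\widetilde{\mathcal{W}}(f_\psi)=0$ hold simultaneously, matching Definition \ref{2d5} precisely. The remaining points—the splitting of $\mathcal{H}^+_g$ under $h^-_J=b^+-1$ and the fact that $f_\psi,f'_\psi$ are the same data already manufactured in Proposition \ref{3p6'}—are routine consequences of the earlier results.
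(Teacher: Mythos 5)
Your proposal is correct and takes essentially the same approach as the paper: the paper's proof of Lemma \ref{2l10} consists exactly of the displayed discussion (\ref{2e26})--(\ref{2e29}) preceding it (orthogonality of the exact form $\psi$ to $\mathcal{H}^+_g$, solvability of $P^+_g\psi=d^+_gd^*(\,\cdot\,)$ through the elliptic operator (\ref{2eq16'}), and identification of the result with $d\widetilde{\mathcal{W}}(f_\psi)$ from Definition \ref{2d5}) combined with the rewriting supplied by Proposition \ref{3p6'}. The only small imprecision is your claim that $\int_M\psi\wedge F=0$ holds ``by exactness'': since $dF\neq 0$ this requires, as in the paper's proof of Proposition \ref{3p6'}, the pointwise type identity $\psi\wedge F=\psi\wedge\omega_1$ followed by Stokes' theorem applied with the closed form $\omega_1$.
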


 \section{The intersection pairing on weakly $\widetilde{\mathcal{D}}^+_J$-closed (1,1)-forms }\label{3}
 \setcounter{equation}{0}
  In this section, we shall investigate the intersection paring on weakly $\widetilde{\mathcal{D}}^+_J$-closed $(1,1)$-forms defined below as done in Buchdahl's paper \cite{B}.
  First, we consider the following technical lemma (compare Lemma 1 in \cite{B} or $\S$ 3.2 in \cite{GH}):
  \begin{lem}\label{3l1}
  Suppose that $(M,g,J,F)$ is a closed almost Hermitian 4-manifold.
  Then $$d^+_J: \Lambda^1_\mathbb{R}\otimes L^2_1(M)\longrightarrow\Lambda^{1,1}_\mathbb{R}\otimes L^2(M)$$ has closed range.
  \end{lem}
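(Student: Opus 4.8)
The plan is to realize $d^+_J$ as the second arrow of a complex that is elliptic in the middle, and to extract closed range from the associated second--order Laplacian. First I would note that, since $d^2=0$ and $d^+_J=P^+_Jd$, one has $d^+_J\circ d=0$ on functions, so there is a complex $\Omega^0_\mathbb{R}\xrightarrow{d}\Omega^1_\mathbb{R}\xrightarrow{d^+_J}\Omega^{1,1}_\mathbb{R}$. The heart of the matter is the principal symbol $\sigma(\xi)(v)=P^+_J(\xi\wedge v)$ of $d^+_J$ at a nonzero covector $\xi$, for which I claim $\ker\sigma(\xi)=\mathbb{R}\xi$. Indeed, $P^+_J(\xi\wedge v)=0$ means $\xi\wedge v\in\Lambda^-_J$; but $\xi\wedge v$ is decomposable, hence $(\xi\wedge v)\wedge(\xi\wedge v)=0$, whereas by (\ref{2eq18}) we have $\Lambda^-_J\subset\Lambda^+_g$, so every $\alpha\in\Lambda^-_J$ is self-dual and satisfies $\alpha\wedge\alpha=\alpha\wedge *_g\alpha=|\alpha|^2_g\,d\mu_g\ge0$, with equality only for $\alpha=0$. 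Thus $\xi\wedge v=0$, forcing $v\in\mathbb{R}\xi$, and conversely $\xi\in\ker\sigma(\xi)$ trivially. This is exactly the point where the relations (\ref{2eq18})--(\ref{2eq19}) between the $J$-- and $g$--splittings enter.

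Granting this, the self-adjoint operator $\square:=dd^*+(d^+_J)^*d^+_J$ on $\Omega^1_\mathbb{R}$ has symbol satisfying $\langle\sigma_\square(\xi)v,v\rangle=|\sigma_{d^*}(\xi)v|^2+|\sigma(\xi)v|^2$, which vanishes only if $v\perp\xi$ and $v\in\mathbb{R}\xi$, i.e.\ $v=0$; hence $\square$ is strongly elliptic on the closed manifold $M$. Standard elliptic Hodge theory then provides a finite-dimensional space of smooth solutions $\mathcal{H}^1=\ker d^*\cap\ker d^+_J$, the G\aa rding inequality $\|u\|^2_{L^2_1}\le C(\|d^*u\|^2_{L^2}+\|d^+_Ju\|^2_{L^2}+\|u\|^2_{L^2})$, and the $L^2$-orthogonal decomposition $\Omega^1_\mathbb{R}=\mathcal{H}^1\oplus\overline{\mathrm{im}\,d}\oplus\overline{\mathrm{im}\,(d^+_J)^*}$ with closed summands. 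Note that $\mathrm{im}\,d\subseteq\ker d^+_J$ (since $d^+_Jd=0$) and, dually, $\mathrm{im}\,(d^+_J)^*\subseteq\ker d^*$ (since $d^*(d^+_J)^*=(d^+_Jd)^*=0$).

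Finally I would convert this into closed range. For $u\in L^2_1$ with $u\perp_{L^2}\ker d^+_J$, the inclusion $\mathrm{im}\,d\subseteq\ker d^+_J$ gives $\langle u,df\rangle=0$ for all $f$, hence $d^*u=0$, and $\mathcal{H}^1\subseteq\ker d^+_J$ gives $u\perp\mathcal{H}^1$. The G\aa rding inequality then reduces to $\|u\|^2_{L^2_1}\le C(\|d^+_Ju\|^2_{L^2}+\|u\|^2_{L^2})$, and a Rellich compactness argument absorbs the zeroth-order term: if no such estimate held, a sequence with $\|u_n\|_{L^2_1}=1$, $\|d^+_Ju_n\|_{L^2}\to0$, $d^*u_n=0$, $u_n\perp\mathcal{H}^1$ would, after passing to a subsequence, converge in $L^2_1$ to an element of $\mathcal{H}^1$ that is orthogonal to $\mathcal{H}^1$, hence zero, contradicting $\|u_n\|_{L^2_1}=1$. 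This yields the coercive estimate $\|u\|_{L^2_1}\le C\|d^+_Ju\|_{L^2}$ on $\overline{\mathrm{im}\,(d^+_J)^*}=(\ker d^+_J)^{\perp}$, which together with the direct-sum decomposition $L^2_1=\ker d^+_J\oplus\big(\overline{\mathrm{im}\,(d^+_J)^*}\cap L^2_1\big)$ shows that $d^+_J$ has closed range. I expect the only nonroutine step to be the symbol identity $\ker\sigma(\xi)=\mathbb{R}\xi$; once nonzero decomposable $J$-anti-invariant $2$-forms are ruled out via (\ref{2eq18})--(\ref{2eq19}), the remainder is the standard elliptic machinery applied to the auxiliary Laplacian $\square$.
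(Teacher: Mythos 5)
Your proof is correct, but it takes a genuinely different route from the paper's. The paper argues sequentially: given $\psi_i=d^+_Jw_i\to\psi$ in $L^2$, it first bounds $\|dw_i\|_{L^2}$ in terms of $\|\psi_i\|_{L^2}$ by combining Stokes' theorem ($\int_M dw_i\wedge dw_i=0$) with the pointwise Lefschetz identities (\ref{3eq1})--(\ref{3eq3}), whose geometric content is exactly your key fact that $\Lambda^-_J\subset\Lambda^+_g$; it then gauge-fixes, replacing $w_i$ by its component $\widetilde w_i$ perpendicular to $\ker d$, so that $d^*\widetilde w_i=0$ and the classical elliptic estimate for $d\oplus d^*$ gives a uniform $L^2_1$ bound; finally a weak $L^2_1$ limit $\widetilde w$ of a subsequence satisfies $d^+_J\widetilde w=\psi$ because $d^+_J\widetilde w_i=\psi_i$ converges strongly. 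You instead use the same geometric input pointwise, as the symbol identity $\ker\sigma_{d^+_J}(\xi)=\mathbb{R}\xi$ (a nonzero $J$-anti-invariant $2$-form is self-dual by (\ref{2eq18}), hence never decomposable), deduce ellipticity of the second-order operator $dd^*+(d^+_J)^{*}d^+_J$ --- equivalently of the overdetermined first-order system $d^*\oplus d^+_J$, whose ellipticity the paper itself invokes in the Claim inside the proof of Lemma \ref{3l2} --- and then conclude by standard Fredholm/Hodge machinery: G\aa rding plus Rellich yields a coercive estimate $\|u\|_{L^2_1}\le C\|d^+_Ju\|_{L^2}$ off the kernel, and closed range follows abstractly. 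What the paper's route buys is an explicit a priori bound valid for \emph{every} $1$-form (no orthogonality hypothesis) and a direct construction of a preimage of the limit, using only the classical Hodge theory of $d\oplus d^*$; what your route buys is modularity and extra output (finite-dimensionality of $\ker d^*\cap\ker d^+_J$, the three-space decomposition, Fredholm-type statements) which the paper develops separately, e.g.\ via Lemma \ref{2l3}.

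One step you should make explicit: the direct-sum decomposition $L^2_1=\ker d^+_J\oplus\bigl(\overline{\mathrm{im}\,(d^+_J)^*}\cap L^2_1\bigr)$ is not pure Hilbert-space formalism, because the $L^2$-orthogonal projection onto the closure of $\ker d^+_J$ need not a priori preserve $L^2_1$. It does follow from the elliptic decomposition with one extra order of regularity: writing $u=h+dd^*v+(d^+_J)^*d^+_Jv$ with $v\in\Lambda^1_{\mathbb{R}}\otimes L^2_3(M)$, the first two terms lie in $\ker d^+_J\cap L^2_1$ (since $d^+_Jd=0$) and the last term is $L^2$-orthogonal to $\ker d^+_J$ and lies in $L^2_1$. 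Since you invoke elliptic Hodge theory for precisely this decomposition, this is a presentational refinement rather than a gap, but it is the one place where elliptic regularity, and not just abstract functional analysis, is genuinely needed in your closing argument.
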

  \begin{proof}
  Let $\{w_i\}$ be a sequence of real 1-forms on $M$ with coefficients
   in $L^2_1$ such that $\psi_i=d^+_Jw_i$ is converging in $L^2$ to some $\psi\in\Lambda^{1,1}_\mathbb{R}\otimes L^2(M)$.
   Write $w_i=u_i+\bar{u}_i$ for some $(0,1)$-form $u_i$, so $\psi_i=d^+_J(u_i+\bar{u}_i)=\partial_J u_i+\bar{\partial}_J\bar{u}_i$.

 By smoothing and diagonalising, it can be assumed without loss of generality that $u_i$ is smooth for each $i$.
 Note that
 \begin{equation}\label{3eq1}
 F\wedge\psi_i=(\wedge\psi_i)F^2/2,
 \end{equation}
 \begin{equation}\label{3eq2}
 *_g\psi_i=(\wedge\psi_i)F-\psi_i,
 \end{equation}
 \begin{equation}\label{3eq3}
 |\psi_i|^2d\mu_g=(\wedge\psi_i)^2F^2/2-\psi_i^2,
 \end{equation}
 where $\wedge:\Omega_{\mathbb{R}}^{1,1}\longrightarrow\Omega^0_{\mathbb{R}}$ is an algebraic operator in Lefschetz decomposition (cf. \cite{GH}).
 Using Stokes' Theorem,
 $$\int_M|\psi_i|^2d\mu_g=\int_M(\wedge\psi_i)^2d\mu_g+2\int_M(\bar{\partial}_Ju_i+A_J\bar{u}_i)^2,$$
 $$\int_Mdw_i\wedge\ast_gdw_i=\int_M\psi_i\wedge*_g\psi_i+2\int_M(\bar{\partial}_Ju_i+A_J\bar{u}_i)^2.$$
 So it follows that $dw_i=d^+_Jw_i+d^-_Jw_i$ is bounded in $L^2$.
 Let $\widetilde{w}_i$ be the $L^2$-projection of $w_i$ perpendicular to the kernel of $d$, so $d^*\widetilde{w}_i=0$ and $\widetilde{w}_i$ is perpendicular to the harmonic 1-forms.
 Hence $d\widetilde{w}_i=dw_i$ and there exists a constant $C$ such that
 \begin{equation}\label{3eq4}
 \|\widetilde{w}_i\|^2_{L^2_1(M)}\leq C(\|d\widetilde{w}_i\|^2_{L^2(M)}+\|d^*\widetilde{w}_i\|^2_{L^2(M)})=C\|dw_i\|^2_{L^2(M)}<Const.,
 \end{equation}
 so a subsequence of the sequence $\{\widetilde{w}_i\}$ converges weakly in $L^2_1$ to some $\widetilde{w}\in\Lambda^1_\mathbb{R}\otimes L^2_1(M)$. Since $d^+_J\widetilde{w}_i=d^+_Jw_i=\psi_i$, it follows $d^+_J\widetilde{w}=\psi$, proving the claim.
 \end{proof}
 We now consider the closed tamed almost Hermitian 4-manifold $(M,g,J,F)$.
 We may assume without loss of generality that $\omega_1=F+d^-_J(v+\bar{v})$, $v\in\Omega^{0,1}_J$, $F$ is the fundamental form with
 \begin{equation}\label{3eq5}
 \int_MF^2=2, g(\cdot,\cdot)=F(\cdot,J\cdot),\,\,\, \int_M\omega^2_1=2(1+a),\,\,\, 2a=\int_M|d^-_J(v+\bar{v})|^2d\mu_g>0,
 \end{equation}
 where $d\mu_g$ is the volume form defined by $g$;
 if $a=0$, then $F$ is a $J$-compatible symplectic form.
 It is clear that $0\leq h^-_J\leq b^+-1$ (cf. \cite{TWZZ}).
 Denote by
 \begin{equation}\label{3eq6}
 \widetilde{\omega}_1:=\omega_1-d(v+\bar{v})=F-d^+_J(v+\bar{v}),
 \end{equation}
 then $\widetilde{\omega}_1\in\mathcal{Z}^+_J$ being cohomologous to $\omega_1$,
 \begin{equation}\label{3eq7}
 \int_M\widetilde{\omega}_1^2=\int_M\omega^2_1=2(1+a),
 \end{equation}
 $$
 -\int_M(d^+_J(v+\bar{v}))^2=\int_M|d^-_J(v+\bar{v})|^2d\mu_g=2a>0,
 $$
 and
 \begin{equation}\label{3eq8}
 \int_Md^+_J(v+\bar{v})\wedge F=-2a.
 \end{equation}
 Choose $\alpha_j\in \mathcal{Z}_J^-\subset\mathcal{Z}_g^+=\mathcal{H}^+_g$ such that
 $$\int_M\alpha_i\wedge\alpha_j=\delta_{ij},\,\,\, 1\leq j\leq h^-_J.$$
 We can find $\omega_2,\cdot\cdot\cdot,\omega_{b^+-h^-_J}\in \mathcal{Z}^+_g\setminus \mathcal{Z}^-_J$, such that $$\int_M\omega_j\wedge\omega_k=\delta_{jk},\,\,\, 2\leq j, k\leq b^+-h^-_J,$$
 $$\int_M\omega_1\wedge\omega_j=0,\,\,\, 2\leq j\leq b^+-h^-_J.$$
 Hence $\mathcal{H}^+_g=Span\{\omega_1,\cdot\cdot\cdot,\omega_{b^+-h^-_J},\alpha_1,\cdot\cdot\cdot,\alpha_{h^-_J}\}$.
 Let $\widetilde{\omega}_i\in\mathcal{Z}^+_J$ be cohomologous to $\omega_i$, $1\leq i\leq b^+-h^-_J$,
 so
 \begin{equation}\label{3eq9}
 \int_M\widetilde{\omega}_1\wedge F=2(1+a)
 \end{equation}
 and
 \begin{equation}\label{3eq10}
 \int_M\widetilde{\omega}_j\wedge F=0,\,\,\,2\leq j\leq b^+-h^-_J.
 \end{equation}

 In Section \ref{2}, we define $\mathcal{D}^+_J$ and $\widetilde{\mathcal{D}}^+_J: L^2_2(M)_0\longrightarrow\Lambda^{1,1}_\mathbb{R}\otimes L^2(M)$.
 Analogous to Lemma \ref{3l1}, we have:
 \begin{lem}\label{3l2}
 $\widetilde{\mathcal{D}}^+_J: L^2_2(M)_0\longrightarrow\Lambda^{1,1}_\mathbb{R}\otimes L^2(M)$ has closed range.
 If $J$ is integrable, then $$\mathcal{D}^+_J=dJdf=2\sqrt{-1}\partial_J\bar{\partial}_Jf,$$
 hence $\mathcal{D}^+_J$ has closed range too.
 \end{lem}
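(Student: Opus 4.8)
The plan is to follow the template of Lemma \ref{3l1}, taking advantage of the fact that, in contrast to a general preimage under $d^+_J$, the one-form $\widetilde{\mathcal{W}}(f)$ built in Definition \ref{2d5} is already coclosed and annihilates the $J$-anti-invariant part of its differential. So first I would take a sequence $\{f_i\}\subset L^2_2(M)_0$ with $\psi_i=\widetilde{\mathcal{D}}^+_J(f_i)$ converging in $L^2$ to some $\psi\in\Lambda^{1,1}_\mathbb{R}\otimes L^2(M)$, and set $w_i=\widetilde{\mathcal{W}}(f_i)\in\Lambda^1_\mathbb{R}\otimes L^2_1(M)$. By Definition \ref{2d5} one has $d^*w_i=0$ and $d^-_Jw_i=0$, whence $dw_i=d^+_Jw_i=\psi_i$; moreover, by Proposition \ref{3p6'}, $w_i=2d^*\mathbb{G}(f'_iF)$ is coexact and therefore automatically orthogonal to the harmonic $1$-forms, so no preliminary projection (as in the passage from $w_i$ to $\widetilde{w}_i$ in Lemma \ref{3l1}) is needed.

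Since $w_i$ is coclosed and perpendicular to the harmonic $1$-forms, the Gaffney-type elliptic estimate $(\ref{3eq4})$ of Lemma \ref{3l1} supplies a constant $C$ with $\|w_i\|^2_{L^2_1(M)}\le C(\|dw_i\|^2_{L^2(M)}+\|d^*w_i\|^2_{L^2(M)})=C\|\psi_i\|^2_{L^2(M)}$, which is bounded because $\psi_i$ converges. Passing to a subsequence, $w_i\rightharpoonup w$ weakly in $L^2_1$, and the linear constraints pass to the weak limit: $d^*w=0$, $d^-_Jw=0$, and, comparing the weak limit of $dw_i$ with the strong limit of $\psi_i$, $dw=\psi$. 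Hence $\psi$ is a $d$-exact $(1,1)$-form.

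The remaining, and delicate, point is to exhibit an actual preimage, that is, to write $\psi=\widetilde{\mathcal{D}}^+_J(f)$ for some $f\in L^2_2(M)_0$. For this I would invoke Lemma \ref{2l10} (which rests on Proposition \ref{3p6'} and the hypothesis $h^-_J=b^+-1$): every $d$-exact $(1,1)$-form equals $\widetilde{\mathcal{D}}^+_J(f_\psi)=2dd^*\mathbb{G}(f'_\psi F)$, so $\psi$ lies in $\mathrm{Im}\,\widetilde{\mathcal{D}}^+_J$ and the range is closed. A shorter route, bypassing the compactness step, is to note from $(\ref{3eq11'})$ that $P^+_g\widetilde{\mathcal{D}}^+_J(f)=f'F$; since $|F|^2=2$ pointwise, this gives the coercive bound $\|\widetilde{\mathcal{D}}^+_J(f)\|_{L^2}\ge\|f'F\|_{L^2}=\sqrt{2}\,\|f'\|_{L^2}$, so the bounded operator $f'\mapsto 2dd^*\mathbb{G}(f'F)$ on $L^2(M)_0$ is bounded below and hence has closed range, which coincides with $\mathrm{Im}\,\widetilde{\mathcal{D}}^+_J$ again by Lemma \ref{2l10}.

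Finally, the integrable case is immediate from Remark \ref{2r6}: when $J$ is a complex structure, $\bar{\partial}^2_J=\partial^2_J=0$ and $\partial_J\bar{\partial}_J+\bar{\partial}_J\partial_J=0$ force $\eta^1_f=0$, so $\mathcal{D}^+_J(f)=dJdf=2\sqrt{-1}\partial_J\bar{\partial}_Jf$; its closed range then follows either from the identical coclosed-representative-plus-elliptic-estimate argument or directly from the ellipticity of the associated Laplacian. I expect the genuine obstacle to be the identification in the third paragraph: the weak-limit argument by itself only locates $\psi$ within the $d$-exact $(1,1)$-forms, and surjecting onto that entire space — equivalently, producing the preimage $f$ — is exactly where the assumption $h^-_J=b^+-1$ becomes indispensable, through Proposition \ref{3p6'}.
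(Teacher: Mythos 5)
Your compactness step (paragraphs one and two) is sound and is essentially the paper's: bound the coclosed potentials $\widetilde{\mathcal{W}}(f_i)$ in $L^2_1$ by the Gaffney-type estimate of Lemma \ref{3l1}, extract a weak limit $w$, and conclude that the $L^2$-limit $\psi=dw$ is a $d$-exact $(1,1)$-form. The genuine gap is in the surjectivity step, where you must show that every $d$-exact $\psi\in\Lambda^{1,1}_\mathbb{R}\otimes L^2(M)$ is $\widetilde{\mathcal{D}}^+_J$-exact: there you invoke Lemma \ref{2l10} and Proposition \ref{3p6'} (and, in your ``shorter route'', the identity (\ref{3eq11'})), all of which carry the hypothesis $h^-_J=b^+-1$. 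Lemma \ref{3l2} is stated, and proved in the paper, for an arbitrary closed tamed almost Hermitian $4$-manifold with no assumption on $h^-_J$, so your argument establishes only the special case $h^-_J=b^+-1$. The same hypothesis is also smuggled into your first paragraph, where Proposition \ref{3p6'} is used to claim $\widetilde{\mathcal{W}}(f_i)=2d^*\mathbb{G}(f'_iF)$ is coexact so that no projection is needed; without that hypothesis one simply projects away the harmonic part as in Lemma \ref{3l1}, which is harmless since the differential is unchanged.

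Your closing assertion that $h^-_J=b^+-1$ is ``indispensable'' for producing the preimage $f$ is exactly what the paper's own proof avoids. The paper proves an unconditional Claim (labelled ``cf.\ Lemma \ref{2l10}'' but stated without the cohomological hypothesis): given $d$-exact $\psi$, choose by Hodge theory a potential $\tilde u_{\psi}+\bar{\tilde u}_{\psi}$ with $d^*(\tilde u_{\psi}+\bar{\tilde u}_{\psi})=0$ and $d^-_J(\tilde u_{\psi}+\bar{\tilde u}_{\psi})=0$, compute the formal adjoint $\widetilde{\mathcal{W}}^*A=-(\wedge d^+_JA)$, derive the a priori estimate $\|A\|^2_{L^2}\leq C(\|\widetilde{\mathcal{W}}^*A\|^2_{L^2}+2\|d^-_JA\|^2_{L^2})$ for coclosed $A$, and then solve the $\widetilde{\mathcal{W}},d^-_J$-problem (the H\"ormander-type $L^2$-estimate machinery of Appendix \ref{Hormander}, resting on the ellipticity of $d^+_g\oplus d^*$) to get $f_{\psi}$ with $\widetilde{\mathcal{W}}(f_{\psi})=\tilde u_{\psi}+\bar{\tilde u}_{\psi}$, hence $\widetilde{\mathcal{D}}^+_J(f_{\psi})=\psi$. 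This route replaces the Green-operator identity you rely on and makes no use of $h^-_J=b^+-1$. Your version (including the bounded-below observation, which is a nice simplification) is fine in the setting $h^-_J=b^+-1$ — which is all that the application in Lemma \ref{3l13} needs — but as a proof of Lemma \ref{3l2} as stated it is incomplete, and the claimed indispensability of the hypothesis is incorrect.
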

 \begin{proof}
 Let $\{f_i\}$ be a sequence of real functions on $M$ in $L^2_2(M)_0$.
 By Definition \ref{2d5}, $\{\mathcal{\widetilde{W}}(f_i)\}$ is a sequence of real 1-forms on $M$ with coefficients in $L^2_1$ such that $$\psi_i=d\mathcal{\widetilde{W}}(f_i)=\widetilde{\mathcal{D}}^+_J(f_i)\in\Lambda^{1,1}_\mathbb{R}\otimes L^2(M)$$
 is converging in $L^2$ to some $\psi\in\Lambda^{1,1}_\mathbb{R}\otimes L^2(M)$.
 It is clear that $d^*\mathcal{\widetilde{W}}(f_i)=0$.
 By the proof of Lemma \ref{3l1}, $\{\mathcal{\widetilde{W}}(f_i)\}$ is bounded in $L^2_1$, so a subsequence of $\{\mathcal{\widetilde{W}}(f_i)\}$ converges weakly in $L^2_1$ to some $\mathcal{\widetilde{W}}\in\Lambda^1_\mathbb{R}\otimes L^2_1(M)$.
 Since $d\mathcal{\widetilde{W}}(f_i)\in\Lambda^{1,1}_\mathbb{R} \otimes L^2(M)$, it follows that $$d\mathcal{\widetilde{W}}=\psi\in\Lambda^{1,1}_\mathbb{R}\otimes L^2(M).$$
 To complete the proof of Lemma \ref{3l2}, we need the following claim:

 {\bf Claim} (cf. Lemma \ref{2l10}): {\it Suppose that $\psi\in\Lambda^{1,1}_{\mathbb{R}}\otimes L^2(M)$ is $d$-exact,
 that is, there is $u_{\psi}\in\Lambda^{0,1}_J\otimes L^2_1(M)$ such that $\psi=d(u_{\psi}+\bar{u}_{\psi})$.
 Then $\psi$ is $\widetilde{\mathcal{D}}^+_J$-exact, that is, there exists $f_{\psi}\in L^2_2(M)_0$ such that $\psi=\widetilde{\mathcal{D}}^+_J(f_{\psi})$.}

 Indeed, let $A\in\Omega^1_{\mathbb{R}}(M)$, $dA=d^+_JA+d^-_JA$.
 By (\ref{3eq1})-(\ref{3eq3}), we have
 $$
 \int_M|d^+_JA|^2d\mu_g=\int_M(\wedge d^+_JA)^2d\mu_g+\int_M|d^-_JA|^2d\mu_g,
 $$
 $$
 \int_M|dA|^2d\mu_g=\int_M|d^+_JA|^2d\mu_g+\int_M|d^-_JA|^2d\mu_g.
 $$
 Let $\tilde{A}$ be the $L^2$-projection of $A$ perpendicular to the kernel of $d$,
 by Hodge decomposition, $d^*\tilde{A}=0$ and $\tilde{A}$ are
  perpendicular to the harmonic $1$-forms.
  Hence $d\tilde{A}=dA$ and there exists a constant $C$ such that
  \begin{equation}\label{non equ}
    \|\tilde{A}\|^2_{L^2}\leq \|\tilde{A}\|^2_{L^2_1}\leq C(\|d\tilde{A}\|^2_{L^2}+\|d^*\tilde{A}\|^2_{L^2})\leq {\rm Const.}(dA).
  \end{equation}
  Recall the definition of $\mathcal{\widetilde{W}}$ (cf. Definition \ref{2d5}):
  $f\in L^2_2(M)_0$, $\eta^1_f,\eta^2_f\in\Lambda^{0,2}_J\otimes L^2_2(M)$ such that
  $$\mathcal{\widetilde{W}}(f)=d^*(f\omega_1+(\eta^1_f+\bar{\eta}^1_f)+(\eta^2_f+\bar{\eta}^2_f))$$
  satisfying $d^-_J\mathcal{\widetilde{W}}(f)=0$, $d^*\mathcal{\widetilde{W}}(f)=0$
  and $d\mathcal{\widetilde{W}}(f)=d^+_J\mathcal{\widetilde{W}}(f)\in\Lambda^{1,1}_{\mathbb{R}}\otimes L^2(M)$.
  As done in Appendix \ref{Hormander}, without loss of generality, we may assume that if $A\in \Omega^1_{\mathbb{R}}(M)$, $d^*A=0$ and $d^-_JA=0$,
  then
   \begin{eqnarray*}
    (\mathcal{\widetilde{W}}(f),A) &=& -\int_M A\wedge d[f\omega_1+(\eta^1_f+\bar{\eta}^1_f)+(\eta^2_f+\bar{\eta}^2_f)]\\
     &=& -\int_M d(A)\wedge[f\omega_1+(\eta^1_f+\bar{\eta}^1_f)+(\eta^2_f+\bar{\eta}^2_f)]  \\
     &=& -\int_M d^+_J(A)\wedge fF\\
     &=& (f, \mathcal{\widetilde{W}}^*A).
  \end{eqnarray*}
  Thus, the formal $L^2$-adjoint operator of $\mathcal{\widetilde{W}}$ is
  \begin{equation}\label{formal ope}
 \mathcal{\widetilde{W}}^*A=\frac{-2F\wedge d^+_JA}{F^2}=-(\wedge d^+_JA).
  \end{equation}
  By (\ref{non equ}), (\ref{formal ope}), we have: If $A\in\Lambda^1_{\mathbb{R}}\otimes L^2_1(M)$,
  $d^*A=0$, then
 \begin{equation}\label{non equ1}
    \|A\|^2_{L^2}\leq C(\|\mathcal{\widetilde{W}}^*A\|^2_{L^2}+2\|d^-_JA\|^2_{L^2})\leq {\rm Const.}(\wedge d^+_JA, d^-_JA).
  \end{equation}
    Now suppose that $\psi\in\Lambda^{1,1}_{\mathbb{R}}\otimes L^2(M)$ is $d$-exact,
    then there exists $u_{\psi}\in\Lambda^{0,1}_J\otimes L^2_1(M)$ such that $\psi=d(u_{\psi}+\bar{u}_{\psi})$,
    $d^-_J(u_{\psi}+\bar{u}_{\psi})=0$. By Hodge decomposition, there exists $\tilde{u}_{\psi}\in\Lambda^{0,1}_J\otimes L^2_1(M)$
    satisfying that
    $$
    \psi=d(\tilde{u}_{\psi}+\bar{\tilde{u}}_{\psi}),\,\,\,d^-_J(\tilde{u}_{\psi}+\bar{\tilde{u}}_{\psi})=0,\,\,\,d^*(\tilde{u}_{\psi}+\bar{\tilde{u}}_{\psi})=0.
    $$
    By (\ref{non equ1}),
    $$
    \|\tilde{u}_{\psi}+\bar{\tilde{u}}_{\psi}\|_{L^2}\leq C \|\wedge\psi\|_{L^2}=C\|P^+_g\psi\|_{L^2}.
    $$
    Since $d^+_g\oplus d^*:\Lambda^1_{\mathbb{R}}\rightarrow\Lambda^{1,1}_{\mathbb{R}}\oplus\Lambda^0_{\mathbb{R}}$
    is an elliptic system,
    we can solve $\mathcal{\widetilde{W}}, d^-_J$-problem
    (that is similar to $\bar{\partial}$-problem in classical complex analysis \cite{Hormander})
    for closed almost Hermitian $4$-manifold $(M,g,J,F)$ tamed by the symplectic form $\omega_1$ (more details see Appendix \ref{Hormander}), that is,
    there exists $f_{\psi}\in L^2_2(M)_0$ such that $\mathcal{\widetilde{W}}(f_{\psi})=\tilde{u}_{\psi}+\bar{\tilde{u}}_{\psi}$,
    $P^+_gd\mathcal{\widetilde{W}}(f_{\psi})=P^+_g\psi$.
    Since $\psi\in \Lambda^{1,1}_{\mathbb{R}}\otimes L^2(M)$ is $d$-exact, it follows that
    $d\mathcal{\widetilde{W}}(f_{\psi})=\psi$. This completes the proof of the above Claim.

  We now return to the proof of Lemma \ref{3l2}. By the above claim which is similar to
  Lemma \ref{2l10}, there exists $f\in L^2_2(M)_0$ such that $\widetilde{\mathcal{D}}^+_J(f)=d\mathcal{\widetilde{W}}(f)=\psi$.

 If $J$ is integrable, after a simple calculation, we can get
 $$\mathcal{D}^+_J(f)=dJdf=2\sqrt{-1}\partial_J\bar{\partial}_Jf$$ and $$2\sqrt{-1}\partial_J\bar{\partial}_Jf\wedge F=\Delta_gf\cdot\frac{F^2}{2}.$$
 So by Poincar\'{e}'s Inequality and Interpolation Inequality, we can immediately get that $\mathcal{D}^+_J$ has closed range.
  \end{proof}

 \begin{defi}\label{3d3}
 $\psi\in\Lambda^{1,1}_\mathbb{R}\otimes L^2(M)$ is said to be weakly $\widetilde{\mathcal{D}}^+_J$-closed if and only if for any $f\in L^2_2(M)_0$,
 $$\int_M\psi\wedge\widetilde{\mathcal{D}}^+_J(f)=0.$$
 \end{defi}
 Let $(\Lambda^{1,1}_\mathbb{R}\otimes L^2(M))_{w}$ denote the space of weakly $\widetilde{\mathcal{D}}^+_J$-closed $(1,1)$-forms.
 It is easy to get the following lemma since
 $$\widetilde{\mathcal{D}}^+_J(f)=d\mathcal{\widetilde{W}}(f)\in\Lambda^{1,1}_\mathbb{R}\otimes L^2(M).$$
 \begin{lem}\label{3l4}
 $F$, $d^+_J(u+\bar{u})$ where $u\in\Lambda^{0,1}_J\otimes L^2_1(M)$ are weakly $\widetilde{\mathcal{D}}^+_J$-closed.
 \end{lem}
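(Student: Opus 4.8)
The plan is to exploit two properties of the test object $\widetilde{\mathcal{D}}^+_J(f)$. First, by Definition~\ref{2d5} we have $\widetilde{\mathcal{D}}^+_J(f)=d\widetilde{\mathcal{W}}(f)$, so it is $d$-exact (in particular $d$-closed). Second, it is of type $(1,1)$. The decisive pointwise observation is that on a four-manifold, where the complex dimension is two, any form of type $(2,0)+(0,2)$ wedged with a $(1,1)$-form vanishes identically, since the product would have type $(3,1)+(1,3)$. Both assertions of the lemma then reduce to writing the given $(1,1)$-form as a $d$-closed (or $d$-exact) piece plus a $(2,0)+(0,2)$ correction.

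For the fundamental form $F$, I would use $\omega_1=F+d^-_J(v+\bar v)$ from (\ref{3eq5}) to write $F=\omega_1-d^-_J(v+\bar v)$, so that
$$\int_M F\wedge\widetilde{\mathcal{D}}^+_J(f)=\int_M\omega_1\wedge d\widetilde{\mathcal{W}}(f)-\int_M d^-_J(v+\bar v)\wedge\widetilde{\mathcal{D}}^+_J(f).$$
The first integral vanishes by Stokes' theorem because $\omega_1$ is symplectic, hence $d$-closed, giving $d(\omega_1\wedge\widetilde{\mathcal{W}}(f))=\omega_1\wedge d\widetilde{\mathcal{W}}(f)$, which integrates to zero over the closed manifold $M$. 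The second integral vanishes pointwise by the type argument above, since $d^-_J(v+\bar v)\in(\Omega^{2,0}_J+\Omega^{0,2}_J)_\mathbb{R}$ while $\widetilde{\mathcal{D}}^+_J(f)$ is of type $(1,1)$.

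For $\psi=d^+_J(u+\bar u)$ with $u\in\Lambda^{0,1}_J\otimes L^2_1(M)$, I would write $d^+_J(u+\bar u)=d(u+\bar u)-d^-_J(u+\bar u)$ and split
$$\int_M d^+_J(u+\bar u)\wedge\widetilde{\mathcal{D}}^+_J(f)=\int_M d(u+\bar u)\wedge d\widetilde{\mathcal{W}}(f)-\int_M d^-_J(u+\bar u)\wedge\widetilde{\mathcal{D}}^+_J(f).$$
The first integral vanishes because $d\widetilde{\mathcal{W}}(f)$ is $d$-closed, so $d\big((u+\bar u)\wedge d\widetilde{\mathcal{W}}(f)\big)=d(u+\bar u)\wedge d\widetilde{\mathcal{W}}(f)$ integrates to zero by Stokes; the second vanishes pointwise by the same type argument, as $d^-_J(u+\bar u)$ is of type $(2,0)+(0,2)$.

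The only point requiring care is the rigor of these integrations by parts, since $u$ and $\widetilde{\mathcal{W}}(f)$ lie only in $L^2_1$ and $d\widetilde{\mathcal{W}}(f)$ only in $L^2$. As in the proof of Lemma~\ref{3l1}, I would handle this by smoothing and passing to the limit: approximate $u+\bar u$ and $\widetilde{\mathcal{W}}(f)$ by smooth forms in the $L^2_1$-norm, apply Stokes' theorem in the smooth category, and conclude by continuity of the $L^2$-pairing together with the fact that $\widetilde{\mathcal{D}}^+_J$ and $d^+_J$ send the approximants to the desired limits. I expect this approximation step, rather than the algebraic vanishing, to be the only place demanding attention; the statement itself holds with no use of the hypothesis $h^-_J=b^+-1$.
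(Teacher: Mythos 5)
Your proof is correct and is essentially the paper's own argument: the paper's two-line proof consists precisely of the identities $\int_MF\wedge\widetilde{\mathcal{D}}^+_J(f)=\int_M\omega_1\wedge\widetilde{\mathcal{D}}^+_J(f)=0$ and $\int_Md^+_J(u+\bar{u})\wedge\widetilde{\mathcal{D}}^+_J(f)=\int_Md(u+\bar{u})\wedge\widetilde{\mathcal{D}}^+_J(f)=0$, where the first equality in each chain is your pointwise type argument (a $(2,0)+(0,2)$ form wedged with a $(1,1)$-form vanishes in complex dimension two) and the second is Stokes' theorem applied to the exact form $\widetilde{\mathcal{D}}^+_J(f)=d\widetilde{\mathcal{W}}(f)$. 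Your additional remarks on the approximation needed to justify Stokes at $L^2$ regularity, and on the fact that the hypothesis $h^-_J=b^+-1$ is not used, are accurate refinements of the same proof rather than a different route.
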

 \begin{proof}
 Notice that
 $$
 \int_MF\wedge\widetilde{\mathcal{D}}^+_J(f)=\int_M\omega_1\wedge\widetilde{\mathcal{D}}^+_J(f)=0,
 $$
 and
 $$
 \int_Md^+_J(u+\bar{u})\wedge\widetilde{\mathcal{D}}^+_J(f)=\int_Md(u+\bar{u})\wedge\widetilde{\mathcal{D}}^+_J(f)=0.
 $$
 \end{proof}
 \begin{rem}\label{3r5}
 If $J$ is integrable, then $\partial_J^2=0=\bar{\partial}_J^2$, $\partial_J\bar{\partial}_J+\bar{\partial}_J\partial_J=0$.
 Hence $d^+_J(u+\bar{u})$ is also weakly $\partial_J\bar{\partial}_J$-closed.
 Since $\widetilde{\omega}_1=F-d^+_J(v+\bar{v})$ is a smooth d-closed $(1,1)$-form, $\widetilde{\omega}_1$ is also $\partial_J\bar{\partial}_J$-closed, hence, $F$ is weakly $\partial_J\bar{\partial}_J$-closed.
 Thus, the notation of weakly $\widetilde{\mathcal{D}}^+_J$-closed is a generalization of the notation of weakly $\partial_J\bar{\partial}_J$-closed
  defined in {\rm \cite{B}} (also see {\rm\cite{HL2}}).
 \end{rem}

 \begin{defi}
  $(\Lambda^{1,1}_\mathbb{R}\otimes L^2(M))^0_{w}:=\{cF+\psi \mid \,\,\,c\in\mathbb{R},\,\,\,
   \psi\in(\Lambda^{1,1}_\mathbb{R}\otimes L^2(M))_{w}$
  $$
   {\rm satisfies}\,\,\,
    P^+_g(\psi)\perp\mathcal{H}^+_g  \,\,\,{\rm with\,\,\, respect\,\,\, to\,\,\, the\,\,\, integration}\}
  $$
  \end{defi}

  It is clear that $(\Lambda^{1,1}_\mathbb{R}\otimes L^2(M))^0_{w}\subset(\Lambda^{1,1}_\mathbb{R}\otimes L^2(M))_{w}$,
  since $F\in(\Lambda^{1,1}_\mathbb{R}\otimes L^2(M))_{w}$.
  Let $\psi\in(\Lambda^{1,1}_\mathbb{R}\otimes L^2(M))^0_{w}$ and
  set
  $$
  c_{\psi}=\frac{1}{2}\int_M\psi\wedge F.
   $$
  Since $\psi\in(\Lambda^{1,1}_\mathbb{R}\otimes L^2(M))^0_{w}$ and
  $$
  \Lambda^+_g=\mathbb{R}\cdot F\oplus \Lambda^-_J,\,\,\, \Lambda^+_J=\mathbb{R}\cdot F\oplus \Lambda^-_g
  $$
  we can get that $P^+_g(\psi-c_{\psi}F)$ is orthogonal to $\mathcal{H}_g^+(M)$ with respect to the integration.
  By Hodge decomposition,
  there exists $f_{\psi}\in L^2_2(M)_0$
  such that
  \begin{equation}\label{3eq15}
   P^+_g(\psi-c_{\psi}F)=P^+_g(\widetilde{\mathcal{D}}_J^+(f_{\psi}))
  \end{equation}
  holds in $\Lambda^{1,1}_\mathbb{R}\otimes L^2(M)$.
  If $\psi$ is smooth, then $f_{\psi}$ is also smooth.
    By (\ref{3eq15}), we will find that
  $$
   \psi-c_{\psi}F-\widetilde{\mathcal{D}}^+_J(f_{\psi})=P^-_g(\psi-c_{\psi}F-\widetilde{\mathcal{D}}^+_J(f_{\psi}))\in\Lambda^-_g\otimes L^2(M)
   $$
  since $P^+_g(\psi-c_{\psi}F-\widetilde{\mathcal{D}}^+_J(f_{\psi}))=0$.
  By Hodge decomposition again, we have the following decomposition
  $$
   \psi-c_{\psi}F-\widetilde{\mathcal{D}}^+_J(f_{\psi})=\beta_{\psi}+d^-_g(\gamma_\psi)
   $$
  where $\beta_{\psi}\in\mathcal{H}^-_g(M)$, $\gamma_\psi\in\Lambda^1_\mathbb{R}\otimes L^2_1(M)$.
   Hence,
  $$
  \psi=c_{\psi}F+\beta_{\psi}+d^-_g(\gamma_\psi)+\widetilde{\mathcal{D}}^+_J(f_{\psi}).
   $$
  It is easy to see that $d^-_g(\gamma_\psi)\in(\Lambda^{1,1}_\mathbb{R}\otimes L^2(M))^0_{w}$,
        since $\psi$, $F$, $\beta_{\psi}$, $\widetilde{\mathcal{D}}^+_J(f_{\psi})\in(\Lambda^{1,1}_\mathbb{R}\otimes L^2(M))^0_{w}$.
   Let $$\psi'=\psi-d^-_g(\gamma_\psi)=c_{\psi}F+\beta_{\psi}+\widetilde{\mathcal{D}}^+_J(f_{\psi}).$$
  $\psi'$ is also in $(\Lambda^{1,1}_\mathbb{R}\otimes L^2(M))^0_{w}$.
  If $\psi$ is smooth,  both $\psi'$ and $f_{\psi}$ are smooth.
    Then, we have the following equation
  \begin{equation}\label{smooth case}
  F\wedge(\psi'-c_{\psi}F-\widetilde{\mathcal{D}}^+_J(f_{\psi}))=0.
  \end{equation}
   If $\psi$ is not smooth, in $\Lambda^{1,1}_\mathbb{R}\otimes L^2(M)$,
   we still have
   $$
  \psi=c_{\psi}F+\beta_{\psi}+d^-_g(\gamma_\psi)+\widetilde{\mathcal{D}}^+_J(f_{\psi}),
   $$
    where $\beta_{\psi}\in\mathcal{H}^-_g(M)$, $c_\psi$ is a constant, $f_\psi\in L^2_2(M)_0$,
     $\gamma_\psi\in\Lambda^1_\mathbb{R}\otimes L^2_1(M)$, and $d^-_g(\gamma_\psi)\in(\Lambda^{1,1}_\mathbb{R}\otimes L^2(M))^0_{w}$.
     Let  $ \psi'=c_{\psi}F+\beta_{\psi}+\widetilde{\mathcal{D}}^+_J(f_{\psi})$,
     then $\psi=\psi'+d^-_g(\gamma_\psi)$.
     Since $d^-_g(\gamma_\psi)\in(\Lambda^{1,1}_\mathbb{R}\otimes L^2(M))^0_{w}$, it is easy to see that
     $$
     \int_M\psi'\wedge d^-_g(\gamma_\psi)=0
     $$
     and
  \begin{eqnarray*}
     \int_M\psi^2&=& \int_M(\psi'+d^-_g(\gamma_\psi))^2 \\
     &=&\int_M\psi'^2 -\|d^-_g(\gamma_\psi)\|^2_{L^2(M)}.
  \end{eqnarray*}
  Also, we can find a smooth sequence of $\{f_{\psi,j}\}\subset C^\infty(M)_0$ such that
  $$\psi'_j=c_{\psi}F+\beta_{\psi}+\widetilde{\mathcal{D}}^+_J(f_{\psi,j})$$
  is converging to $\psi'$ in $L^2(M)$.
   By the above statement, we get the following lemma,
   \begin{lem}\label{3l7}
  If $\psi\in(\Lambda^{1,1}_\mathbb{R}\otimes L^2(M))^0_{w}$, then $\psi$ could be written as
  $$
  \psi=cF+\beta_\psi+\widetilde{\mathcal{D}}^+_J(f_\psi)+d^-_g(\gamma_\psi),
  $$
  where $f_\psi\in L^2_2(M)_0$, $\beta_{\psi}\in\mathcal{H}^-_g(M)$,
    $d^-_g(\gamma_\psi)\in(\Lambda^{1,1}_\mathbb{R}\otimes L^2(M))^0_{w}$,
    $\gamma_\psi\in\Lambda^1_\mathbb{R}\otimes L^2_1(M)$ and $c$ is a constant.
  Denote $\psi-d^-_g(\gamma_\psi)$ by $\psi'$.
  Then
   $$
     \int_M\psi^2=\int_M\psi'^2 -\|d^-_g(\gamma_\psi)\|^2_{L^2(M)},
  $$
  and there is a smooth sequence of $\{f_{\psi,j}\}\subset C^\infty(M)_0$ such that
  $$
  \psi'_j=cF+\beta_\psi+\widetilde{\mathcal{D}}^+_J(f_{\psi,j})
  $$
  is converging to $\psi'$ in $L^2$.
  \end{lem}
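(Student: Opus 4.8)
\emph{Proof proposal.} The plan is to produce the decomposition by two successive Hodge decompositions, peeling off first the $\widetilde{\mathcal{D}}^+_J$-exact self-dual part and then analysing the anti-self-dual remainder; the norm identity and the smooth approximation will follow from orthogonality relations forced by the weakly $\widetilde{\mathcal{D}}^+_J$-closed condition. First I would set $c=c_\psi=\tfrac12\int_M\psi\wedge F$ and use the algebraic splittings $\Lambda^+_g=\mathbb{R}\cdot F\oplus\Lambda^-_J$ and $\Lambda^+_J=\mathbb{R}\cdot F\oplus\Lambda^-_g$ from (\ref{2eq18}) to check that $P^+_g(\psi-cF)$ is $L^2$-orthogonal to $\mathcal{H}^+_g$; this is exactly where the hypothesis $\psi\in(\Lambda^{1,1}_\mathbb{R}\otimes L^2(M))^0_w$ is consumed. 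By the Hodge decomposition $\Omega^+_g=\mathcal{H}^+_g\oplus d^+_g(\Omega^1)$ of (\ref{2eq16}), this orthogonal form lies in $d^+_g(\Omega^1)$, and, combined with the closed range of $\widetilde{\mathcal{D}}^+_J$ (Lemma \ref{3l2}) and the $\widetilde{\mathcal{W}}$-solvability packaged in its proof, I can solve for $f_\psi\in L^2_2(M)_0$ with $P^+_g(\psi-cF)=P^+_g(\widetilde{\mathcal{D}}^+_J(f_\psi))$.

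Next the residual $\psi-cF-\widetilde{\mathcal{D}}^+_J(f_\psi)$ has vanishing self-dual projection, hence is anti-self-dual, and a second use of $\Omega^-_g=\mathcal{H}^-_g\oplus d^-_g(\Omega^1)$ produces a harmonic term $\beta_\psi\in\mathcal{H}^-_g$ and an exact term $d^-_g(\gamma_\psi)$. I would then verify that $d^-_g(\gamma_\psi)$ again belongs to $(\Lambda^{1,1}_\mathbb{R}\otimes L^2(M))^0_w$: it is of type $(1,1)$ because every other term in the identity is $J$-invariant, using $\Lambda^-_g\subset\Lambda^+_J=\Lambda^{1,1}_\mathbb{R}$; it is weakly $\widetilde{\mathcal{D}}^+_J$-closed because $\psi$ is weakly closed by hypothesis, $F$ by Lemma \ref{3l4}, the harmonic form $\beta_\psi$ by Stokes, and the $\widetilde{\mathcal{D}}^+_J$-exact form $\widetilde{\mathcal{D}}^+_J(f_\psi)$ by Stokes; and its self-dual projection is zero, so the condition $P^+_g(\,\cdot\,)\perp\mathcal{H}^+_g$ is automatic.

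Setting $\psi'=\psi-d^-_g(\gamma_\psi)=cF+\beta_\psi+\widetilde{\mathcal{D}}^+_J(f_\psi)$, the three summands of $\psi'$ each pair to zero against the anti-self-dual, $d^-_g$-exact form $d^-_g(\gamma_\psi)$: the term $F$ because it is self-dual and hence pointwise orthogonal to an anti-self-dual form, $\beta_\psi$ because harmonic forms are $L^2$-orthogonal to $d^-_g(\Omega^1)$ in (\ref{2eq16}), and $\widetilde{\mathcal{D}}^+_J(f_\psi)$ because $d^-_g(\gamma_\psi)$ is weakly $\widetilde{\mathcal{D}}^+_J$-closed tested against $f_\psi$. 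Thus $\int_M\psi'\wedge d^-_g(\gamma_\psi)=0$, and expanding $\int_M\psi^2=\int_M(\psi'+d^-_g(\gamma_\psi))^2$ together with the identity $\int_M\eta\wedge\eta=-\|\eta\|^2_{L^2}$ for anti-self-dual $\eta$ gives $\int_M\psi^2=\int_M\psi'^2-\|d^-_g(\gamma_\psi)\|^2_{L^2(M)}$. For the final approximation I would use density of $C^\infty(M)_0$ in $L^2_2(M)_0$ and boundedness of $\widetilde{\mathcal{D}}^+_J$ as an operator $L^2_2\to L^2$ to choose $f_{\psi,j}\in C^\infty(M)_0$ with $f_{\psi,j}\to f_\psi$, so that $\psi'_j=cF+\beta_\psi+\widetilde{\mathcal{D}}^+_J(f_{\psi,j})$ converges to $\psi'$ in $L^2$.

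I expect the main obstacle to be the very first step, the solvability yielding $f_\psi$, since this is where the analytic content concentrates: one needs the closed-range property of $\widetilde{\mathcal{D}}^+_J$ (Lemma \ref{3l2}) together with the elliptic estimate (\ref{non equ1}) and the consequences of $h^-_J=b^+-1$ entering through Proposition \ref{3p6'}. Everything after that is bookkeeping with the self-dual/anti-self-dual and $J$-invariant/anti-invariant splittings.
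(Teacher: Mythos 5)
Your proposal is correct and follows essentially the same route as the paper: the same constant $c_\psi=\tfrac12\int_M\psi\wedge F$, the same two-stage Hodge decomposition (first solving $P^+_g(\psi-c_\psi F)=P^+_g(\widetilde{\mathcal{D}}^+_J(f_\psi))$ via the machinery of Lemma \ref{3l2}, then splitting the anti-self-dual remainder as $\beta_\psi+d^-_g(\gamma_\psi)$), and the same orthogonality argument $\int_M\psi'\wedge d^-_g(\gamma_\psi)=0$ yielding the norm identity and the smooth approximation. You merely fill in details the paper leaves implicit (the type and weak-closedness checks on $d^-_g(\gamma_\psi)$ and the continuity of $\widetilde{\mathcal{D}}^+_J$ used for the approximating sequence), which is consistent with the paper's intent.
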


It is similar to the argument of Buchdahl in \cite{B}, we need the following lemmas and propositions,

  \begin{lem}\label{3l10}
  (cf. Lemma \rm{4} in \rm{\cite{B}}) If $\psi\in(\Lambda^{1,1}_\mathbb{R}\otimes L^2(M))^0_{w}$,
   then
  $$
  (\int_MF\wedge\psi)^2\geq(\int_MF^2)(\int_M\psi^2)
  $$
  with equality if and only if $\psi=cF+\widetilde{\mathcal{D}}^+_J(f)$ for some constant $c$ and some $f\in L^2_2(M)_0$.
  \end{lem}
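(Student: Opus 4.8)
The plan is to read everything off the structural decomposition already established in Lemma \ref{3l7}. Writing $\psi=cF+\beta_\psi+\widetilde{\mathcal{D}}^+_J(f_\psi)+d^-_g(\gamma_\psi)$ with $\beta_\psi\in\mathcal{H}^-_g(M)$, $f_\psi\in L^2_2(M)_0$, and $d^-_g(\gamma_\psi)\in(\Lambda^{1,1}_\mathbb{R}\otimes L^2(M))^0_{w}$, I set $\psi'=cF+\beta_\psi+\widetilde{\mathcal{D}}^+_J(f_\psi)$. Lemma \ref{3l7} already records that $\int_M\psi^2=\int_M\psi'^2-\|d^-_g(\gamma_\psi)\|^2_{L^2(M)}$, so the whole task reduces to evaluating $\int_MF\wedge\psi$ and $\int_M\psi'^2$ and then comparing the two sides of the asserted inequality.

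For the linear term I would note that $\beta_\psi$ and $d^-_g(\gamma_\psi)$ lie in $\Lambda^-_g$ while $F\in\Lambda^+_g$, so they are pointwise orthogonal to $F$ and contribute nothing to $\int_MF\wedge(\cdot)$; meanwhile $\int_MF\wedge\widetilde{\mathcal{D}}^+_J(f_\psi)=0$ because $F$ is weakly $\widetilde{\mathcal{D}}^+_J$-closed (Lemma \ref{3l4}). Hence $\int_MF\wedge\psi=\int_MF\wedge\psi'=c\int_MF^2=2c$, so that $(\int_MF\wedge\psi)^2=4c^2$. For the quadratic term I would expand $\int_M\psi'^2$ using the same orthogonality together with two Stokes-type vanishings: $\int_M\widetilde{\mathcal{D}}^+_J(f_\psi)^2=0$ since $\widetilde{\mathcal{D}}^+_J(f_\psi)=d\widetilde{\mathcal{W}}(f_\psi)$ is exact, and $\int_M\beta_\psi\wedge\widetilde{\mathcal{D}}^+_J(f_\psi)=0$ since a closed form wedged with an exact form integrates to zero. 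With $\int_M\beta_\psi^2=-\|\beta_\psi\|^2_{L^2(M)}$ (as $\beta_\psi$ is anti-self-dual) and the vanishing cross terms $\int_MF\wedge\beta_\psi=\int_MF\wedge\widetilde{\mathcal{D}}^+_J(f_\psi)=0$, this leaves $\int_M\psi'^2=2c^2-\|\beta_\psi\|^2_{L^2(M)}$.

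Combining the two computations yields
$$
\Big(\int_MF\wedge\psi\Big)^2-\Big(\int_MF^2\Big)\Big(\int_M\psi^2\Big)
=4c^2-2\big(2c^2-\|\beta_\psi\|^2_{L^2(M)}-\|d^-_g(\gamma_\psi)\|^2_{L^2(M)}\big)
=2\|\beta_\psi\|^2_{L^2(M)}+2\|d^-_g(\gamma_\psi)\|^2_{L^2(M)}\geq 0,
$$
which is exactly the claimed inequality. Equality forces $\beta_\psi=0$ and $d^-_g(\gamma_\psi)=0$, so that $\psi=cF+\widetilde{\mathcal{D}}^+_J(f_\psi)$; conversely, for such $\psi$ both error norms vanish and equality holds, giving the stated characterization.

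The one genuine technical point is that these Stokes integrations require the forms to be regular enough, whereas $f_\psi$ is only in $L^2_2(M)_0$, so $\widetilde{\mathcal{D}}^+_J(f_\psi)$ need not be smooth. I would circumvent this exactly as Lemma \ref{3l7} anticipates: approximate $\psi'$ in $L^2$ by the smooth forms $\psi'_j=cF+\beta_\psi+\widetilde{\mathcal{D}}^+_J(f_{\psi,j})$ with $f_{\psi,j}\in C^\infty(M)_0$, verify the two integral identities for each $\psi'_j$ (where integration by parts is unproblematic), and pass to the $L^2$ limit in the pairing $(\alpha,\beta)\mapsto\int_M\alpha\wedge\beta$, which is continuous on $L^2$. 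Since the pointwise orthogonality $\Lambda^+_g\perp\Lambda^-_g$ and the weak closedness of $F$ are already in hand, this approximation step is the only place demanding care, and I expect it to be the main (though routine) obstacle.
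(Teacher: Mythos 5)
Your proof is correct and takes essentially the same route as the paper's: both rest on the decomposition of Lemma \ref{3l7}, the same vanishing identities ($\int_M F\wedge\widetilde{\mathcal{D}}^+_J(f_\psi)=0$, $\int_M(\widetilde{\mathcal{D}}^+_J(f_\psi))^2=0$, orthogonality of $\Lambda^+_g$ and $\Lambda^-_g$), and the smooth approximating sequence $\psi'_j$ from Lemma \ref{3l7} to justify the Stokes-type integrations. The only difference is cosmetic: the paper packages the algebra as the nonnegativity of $\|\psi'-cF-\widetilde{\mathcal{D}}^+_J(f_\psi)\|^2_{L^2(M)}$, whereas you expand $\int_M\psi'^2=2c^2-\|\beta_\psi\|^2_{L^2(M)}$ directly, which in fact makes the equality case slightly cleaner.
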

  \begin{proof}
  Let
  $$c=\frac{1}{2}\int_MF\wedge\psi.$$
  By Lemma \ref{3l7}, we can get
  \begin{eqnarray*}
      \psi &=& \psi'+d^-_g(\gamma_\psi)\\
           &=& cF+\beta_\psi+\widetilde{\mathcal{D}}^+_J(f_\psi)+d^-_g(\gamma_\psi),
  \end{eqnarray*}
  where $f_\psi\in L^2_2(M)_0$, $\beta_{\psi}\in\mathcal{H}^-_g(M)$,
    $d^-_g(\gamma_\psi)\in(\Lambda^{1,1}_\mathbb{R}\otimes L^2(M))^0_{w}$
   and $\gamma_\psi\in\Lambda^1_\mathbb{R}\otimes L^2_1(M)$.
   Then $$P^+_g(\psi'-cF-\widetilde{\mathcal{D}}^+_J(f_\psi))=0.$$
  If $\psi'$ is smooth,
   there is a smooth solution $f_\psi$ to the equation
  $$F\wedge(\psi'-cF-\widetilde{\mathcal{D}}^+_J(f_\psi))=0.$$
   Hence,
  \begin{eqnarray*}
  \|\psi'-cF-\widetilde{\mathcal{D}}^+_J(f_\psi)\|^2_{L^2(M)} &=& -\int_M(\psi'-cF-\widetilde{\mathcal{D}}^+_J(f_\psi))^2 \\
  &=& -\int_M(\psi')^2+2c\int_MF\wedge\psi'-2c^2\\
  &=& -\int_M(\psi')^2+2c^2\\
  &=& -\int_M(\psi')^2+(\int_MF\wedge\psi')^2/(\int_MF^2).
  \end{eqnarray*}
  Since
  $$\|\psi'-cF-\widetilde{\mathcal{D}}^+_J(f_\psi)\|^2_{L^2(M)}\geq 0,$$
  we can easily get $$(\int_MF\wedge\psi')^2\geq (\int_MF^2)\int_M(\psi')^2.$$
 If $\psi'$ is not smooth, the inequality follows from smooth case after approximating $\psi'$ by using Lemma \ref{3l7}.
   Hence
   $$(\int_MF\wedge\psi)^2=(\int_MF\wedge\psi')^2\geq(\int_MF^2)\int_M(\psi')^2\geq(\int_MF^2)(\int_M\psi^2).$$

  Suppose $$(\int_MF\wedge\psi)^2=(\int_MF^2)(\int_M\psi^2).$$
  By Lemma \ref{3l7},
    $$\int_M\psi^2=\int_M(\psi')^2-\|d^-_g(\gamma_\psi)\|^2$$
  and
  \begin{eqnarray*}
     (\int_MF\wedge\psi)^2 &=& (\int_MF\wedge\psi')^2 \\
       &\geq& (\int_MF^2)\int_M(\psi')^2 \\
       &\geq& (\int_MF^2)(\int_M\psi^2),
   \end{eqnarray*}
  which implies that
      \begin{equation}\label{equality}
   d^-_g(\gamma_\psi)=0, \,\,\,\,\,(\int_MF\wedge\psi')^2=(\int_MF^2)\int_M(\psi')^2\,\,\,\,\,
   {\rm and}\,\,\, \,\,\psi=\psi'.
      \end{equation}
  By $$(\int_MF\wedge\psi')^2=(\int_MF^2)\int_M(\psi')^2,$$ we have
 $4c^2=4c^2-2\|\beta_\psi\|^2_{L^2(M)}$, which implies that $\beta_\psi=0$.
  Hence, $\psi=cF+\widetilde{\mathcal{D}}^+_J(f_\psi)$.
   \end{proof}

  By Lemma \ref{3l7}, we have the following proposition.
  \begin{prop}\label{3p11}
  Let $\psi_1,\,\,\,\psi_2\in(\Lambda^{1,1}_\mathbb{R}\otimes L^2(M))^0_{w}$
    and satisfy $$\int_M\psi^2_j\geq 0 \,\,\,and\,\,\, \int_MF\wedge\psi_j\geq 0$$ for $j=1,2$.
  Then
  $$
  \int_M\psi_1\wedge\psi_2\geq(\int_M\psi_1^2)^{\frac{1}{2}}(\int_M\psi_2^2)^{\frac{1}{2}},
  $$
  with equality if and only if $\psi_1$ and $\psi_2$ are linearly dependent modulo the image of $\widetilde{\mathcal{D}}^+_J$.
  \end{prop}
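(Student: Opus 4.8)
The plan is to read Lemma~\ref{3l10} as a Hodge--index statement and run the standard Lorentzian reverse Cauchy--Schwarz argument. Both $B(\psi,\chi)=\int_M\psi\wedge\chi$ and $Q(\psi)=\int_M\psi^2$ are well defined on the vector space $(\Lambda^{1,1}_\mathbb{R}\otimes L^2(M))^0_w$, since the wedge of two $L^2$ $(1,1)$-forms is integrable. Writing $k=\int_MF^2>0$, $a_j=\int_MF\wedge\psi_j$ and $q_j=\int_M\psi_j^2$, Lemma~\ref{3l10} says precisely that $B$ has Lorentzian signature with $F$ spanning the positive direction: indeed for any $\phi$ in this space with $B(F,\phi)=0$ the lemma forces $Q(\phi)\le0$. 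Moreover, for $f\in L^2_2(M)_0$ one has $Q(\widetilde{\mathcal D}^+_J(f))=\int_Md\widetilde{\mathcal W}(f)\wedge d\widetilde{\mathcal W}(f)=0$ by Stokes' theorem and $B(F,\widetilde{\mathcal D}^+_J(f))=0$ by Lemma~\ref{3l4}, so $\mathrm{Im}(\widetilde{\mathcal D}^+_J)$ consists of $B$-null vectors orthogonal to $F$; the equality clause of Lemma~\ref{3l10} shows these are exactly the null vectors in $F^\perp$, hence the whole radical of $B$.

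For the inequality I would apply Lemma~\ref{3l10} to the pencil $\psi_1+t\psi_2\in(\Lambda^{1,1}_\mathbb{R}\otimes L^2(M))^0_w$, obtaining, for every $t\in\mathbb{R}$,
$$(a_1+a_2t)^2\ \ge\ k\,(q_1+2Bt+q_2t^2),$$
where $B=\int_M\psi_1\wedge\psi_2$. Equivalently, the quadratic $(a_2^2-kq_2)t^2+2(a_1a_2-kB)t+(a_1^2-kq_1)$ is nonnegative for all $t$, its leading and constant coefficients being $\ge0$ by Lemma~\ref{3l10} applied to $\psi_2$ and $\psi_1$ separately. Hence its discriminant is nonpositive, i.e. $(a_1a_2-kB)^2\le(a_1^2-kq_1)(a_2^2-kq_2)$, which as a quadratic inequality in $B$ (leading coefficient $k>0$) places $B$ between its two roots; in particular
$$B\ \ge\ \frac{a_1a_2-\sqrt{(a_1^2-kq_1)(a_2^2-kq_2)}}{k}.$$
The degenerate case $a_2^2-kq_2=0$, where the pencil gives only a linear constraint, forces $a_1a_2=kB$ and is disposed of directly.

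It then remains to verify the elementary inequality that this lower bound is $\ge\sqrt{q_1q_2}$. Setting $p_j=a_j/\sqrt k$, Lemma~\ref{3l10} gives $p_j\ge\sqrt{q_j}\ge0$, and the claim becomes $p_1p_2-\sqrt{(p_1^2-q_1)(p_2^2-q_2)}\ge\sqrt{q_1q_2}$; since $p_1p_2\ge\sqrt{q_1q_2}$ the left factor is nonnegative, and squaring reduces everything to the manifestly true $(p_1\sqrt{q_2}-p_2\sqrt{q_1})^2\ge0$. This yields $B\ge\sqrt{q_1q_2}$, which is the asserted inequality.

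For the equality statement I would use the $B$-orthogonal decomposition $\psi_j=\tfrac{a_j}{k}F+\phi_j$, $B(F,\phi_j)=0$, so that $Q(\phi_j)=q_j-a_j^2/k\le0$. Tracing the chain above, $B=\sqrt{q_1q_2}$ forces simultaneously the Cauchy--Schwarz equality $B(\phi_1,\phi_2)=-\sqrt{Q(\phi_1)Q(\phi_2)}$ for the negative semidefinite form $-B$ on $F^\perp$ and the vanishing of the perfect square $p_1\sqrt{q_2}=p_2\sqrt{q_1}$. The first makes $\phi_1,\phi_2$ proportional modulo the radical $\mathrm{Im}(\widetilde{\mathcal D}^+_J)$ (or places one of them in it), while the second matches the $F$-components with the same ratio, so $\psi_1$ and $\psi_2$ are linearly dependent modulo $\mathrm{Im}(\widetilde{\mathcal D}^+_J)$; the converse is a direct computation using that $\widetilde{\mathcal D}^+_J(f)$ is $B$-null and $F$-orthogonal. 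I expect the equality analysis to be the main obstacle: the degenerate null cases $q_1=0$ or $q_2=0$ must be separated out, and the argument rests entirely on the precise identification of the radical of $B$ with $\mathrm{Im}(\widetilde{\mathcal D}^+_J)$ established in the first step.
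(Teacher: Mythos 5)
Your proposal is correct, but it takes a genuinely different route from the paper's proof. The paper never argues abstractly from Lemma~\ref{3l10}; instead it returns to the Hodge-theoretic decomposition of Lemma~\ref{3l7}, writes (after dispatching the case $\int_MF\wedge\psi_j=0$, where $\psi_j$ is shown to be $\widetilde{\mathcal{D}}^+_J$-exact, and perturbing by $\psi_j+\varepsilon F$ to assume $a_j>0$, $\int_M\psi_j^2>0$) $\psi_j=a_jF+\beta_{\psi_j}+\widetilde{\mathcal{D}}^+_J(f_{\psi_j})+d^-_g(\gamma_{\psi_j})$, and observes that the specific combination $a_2\psi_1-a_1\psi_2-\widetilde{\mathcal{D}}^+_J(a_2f_{\psi_1}-a_1f_{\psi_2})$ is anti-self-dual; hence $\int_M(a_2\psi_1-a_1\psi_2)^2\leq 0$, which expanded together with the arithmetic–geometric mean inequality $a_2^2\int_M\psi_1^2+a_1^2\int_M\psi_2^2\geq 2a_1a_2(\int_M\psi_1^2)^{1/2}(\int_M\psi_2^2)^{1/2}$ gives the inequality, and equality forces the anti-self-dual part to vanish, yielding the explicit relation $a_2\psi_1-a_1\psi_2=\widetilde{\mathcal{D}}^+_J(a_2f_{\psi_1}-a_1f_{\psi_2})$. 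You instead use Lemma~\ref{3l10} as a black-box index statement: identifying $\mathrm{Im}(\widetilde{\mathcal{D}}^+_J)$ with the radical of the intersection form (via weak $\widetilde{\mathcal{D}}^+_J$-closedness and the equality clause of Lemma~\ref{3l10}), then running the pencil/discriminant argument and the semidefinite Cauchy--Schwarz on $F^\perp$; this makes Proposition~\ref{3p11} a purely formal consequence of Lemma~\ref{3l10}, avoids re-invoking Lemma~\ref{3l7}, and avoids the paper's $\varepsilon$-perturbation, whereas the paper's route buys explicitness (concrete potentials in the dependence relation) and a one-line treatment of the degenerate cases. Two points to attend to in your write-up: first, your pencil argument is logically superseded by the chain $B=p_1p_2+B(\phi_1,\phi_2)\geq p_1p_2-\sqrt{(p_1^2-q_1)(p_2^2-q_2)}\geq\sqrt{q_1q_2}$ that your equality analysis requires anyway, so you can drop the discriminant step entirely; second, in the converse direction the proportionality constant must be shown nonnegative (this uses the hypotheses $\int_MF\wedge\psi_j\geq 0$ together with Lemma~\ref{3l10}), and the degenerate cases needing separation are those with $Q(\phi_j)=0$, i.e.\ equality in Lemma~\ref{3l10} for $\psi_j$, not only $q_j=0$.
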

  \begin{proof}
   It can be assumed that $$a_j=\frac{1}{2}\int_MF\wedge\psi_j$$ are strictly positive for $j=1,2$ else $\psi_j$ are $\widetilde{\mathcal{D}}^+_J$-exact for $j=1,2$.
  Indeed, if $a_j=0$ for $j=1,2$, then by Lemma \ref{3l7}, we have
  \begin{eqnarray*}
      \psi_j &=& \psi'_j+d^-_g(\gamma_{\psi_j})\\
           &=& \beta_{\psi_j}+\widetilde{\mathcal{D}}^+_J(f_{\psi_j})+d^-_g(\gamma_{\psi_j}),
  \end{eqnarray*}
  where $f_{\psi_j}\in L^2_2(M)_0$, $\beta_{\psi_j}\in\mathcal{H}^-_g(M)$,
    $d^-_g(\gamma_{\psi_j})\in(\Lambda^{1,1}_\mathbb{R}\otimes L^2(M))^0_{w}$
   and $\gamma_{\psi_j}\in\Lambda^1_\mathbb{R}\otimes L^2_1(M)$ for $j=1,2$.
  Hence $\psi'_j-\widetilde{\mathcal{D}}^+_J(f_{\psi_j})=\beta_{\psi_j}$ are anti-self-dual smooth harmonic $2$-forms, $j=1,2$.
  Then, by Lemma \ref{3l7},
  \begin{eqnarray*}
    0 &\geq& -\|\psi'_j-\widetilde{\mathcal{D}}^+_J(f_{\psi_j})\|^2_{L^2(M)} \\
     &=& \int_M(\psi'_j-\widetilde{\mathcal{D}}^+_J(f_{\psi_j}))^2 \\
     &=& \int_M(\psi'_j)^2\\
     &=& \int_M\psi_j^2+\|d^-_g(\gamma_{\psi_j})\|^2_{L^2(M)}\geq 0,
  \end{eqnarray*}
  and it follows that $d^-_g(\gamma_{\psi_j})=0$, $\beta_{\psi_j}=0$ and $\psi_j=\psi'_j=\widetilde{\mathcal{D}}^+_J(f_{\psi_j})$ for $j=1,2$.

  To prove the inequality, after replacing $\psi_j$ by $\psi_j+\varepsilon F$ and taking the limit as $\varepsilon\rightarrow 0$,
  it can be assumed that $$\int_M\psi^2_j>0$$ and $$a_j=\frac{1}{2}\int_MF\wedge\psi_j>0$$ for $j=1,2$.
  By Lemma \ref{3l7}, we have the following decompositions
         \begin{equation}\label{key deco}
              \psi_j= a_jF+\beta_{\psi_j}+\widetilde{\mathcal{D}}^+_J(f_{\psi_j})+d^-_g(\gamma_{\psi_j}),
            \end{equation}
  where $$f_{\psi_j}\in L^2_2(M)_0, \,\,\beta_{\psi_j}\in\mathcal{H}^-_g(M),\,\,
    d^-_g(\gamma_{\psi_j})\in(\Lambda^{1,1}_\mathbb{R}\otimes L^2(M))^0_{w},$$
    and $\gamma_{\psi_j}\in\Lambda^1_\mathbb{R}\otimes L^2_1(M)$ for $j=1,2$.

   By (\ref{key deco}), we have
   \begin{equation}\label{3eq17}
      a_2\psi_1-a_1\psi_2=a_2\beta_{\psi_1}-a_1\beta_{\psi_2}+\widetilde{\mathcal{D}}^+_J(a_2f_{\psi_1}-a_1f_{\psi_2})+d^-_g(a_2\gamma_{\psi_1}-a_1\gamma_{\psi_2}).
  \end{equation}
  It follows that
   $$
    a_2\psi_1-a_1\psi_2-\widetilde{\mathcal{D}}^+_J(a_2f_{\psi_1}-a_1f_{\psi_2})=(a_2\beta_{\psi_1}-a_1\beta_{\psi_2})+d^-_g(a_2\gamma_{\psi_1}-a_1\gamma_{\psi_2})
    $$
   is an anti-self-dual $2$-form.
   So
               \begin{eqnarray*}
                  0 &\geq&  -\|a_2\beta_{\psi_1}-a_1\beta_{\psi_2}\|^2_{L^2(M)}-\|d^-_g(a_2\gamma_{\psi_1}-a_1\gamma_{\psi_2})\|^2_{L^2(M)}\\
                       &=& \int_M(a_2\psi_1-a_1\psi_2-\widetilde{\mathcal{D}}^+_J(a_2f_{\psi_1}-a_1f_{\psi_2}))^2  \\
                   &=& \int_M(a_2\psi_1-a_1\psi_2)^2\\
                   &=& a_2^2\int_M\psi_1^2+a_1^2\int_M\psi_2^2-2a_1a_2\int_M\psi_1\wedge\psi_2 \\
                   &\geq& 2a_1a_2(\int_M\psi_1^2)^{\frac{1}{2}}(\int_M\psi_2^2)^{\frac{1}{2}}-2a_1a_2\int_M\psi_1\wedge\psi_2,
                \end{eqnarray*}
  giving the desired inequality $$\int_M\psi_1\wedge\psi_2\geq(\int_M\psi_1^2)^{\frac{1}{2}}(\int_M\psi_2^2)^{\frac{1}{2}}.$$
  If
  \begin{equation}
  \int_M\psi_1\wedge\psi_2=(\int_M\psi_1^2)^{\frac{1}{2}}(\int_M\psi_2^2)^{\frac{1}{2}},
  \end{equation}
  we obtain that
  $a_2\beta_{\psi_1}-a_1\beta_{\psi_2}=0$ and $d^-_g(a_2\gamma_{\psi_1}-a_1\gamma_{\psi_2})=0$.
  Hence, by (\ref{3eq17}),
   we get $$a_2\psi_1-a_1\psi_2=\widetilde{\mathcal{D}}^+_J(a_2f_{\psi_1}-a_1f_{\psi_2}).$$
  This completes the proof of Proposition \ref{3p11}.
 \end{proof}

  It is easy to see the following corollary,

 \begin{col}\label{3c12}
 If $\psi\in(\Lambda^{1,1}_\mathbb{R}\otimes L^2(M))^0_{w}$ and satisfies
 $$\int_M\psi^2>0 \,\,\,and \,\,\,\int_M\psi\wedge F>0,$$
 then $$\int_M\psi\wedge\varphi>0$$ for any other such form $\varphi\in(\Lambda^{1,1}_\mathbb{R}\otimes L^2(M))^0_{w}$
  satisfying $$\int_M\varphi^2\geq 0\,\,\, and \,\,\,\int_M\varphi\wedge F>0.$$
 \end{col}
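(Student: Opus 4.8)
The plan is to extract the corollary from the quantitative estimate established \emph{inside} the proof of Proposition \ref{3p11}, rather than from its final inequality alone, since the latter degenerates exactly in the case $\int_M\varphi^2=0$. Applying Proposition \ref{3p11} directly to $\psi_1=\psi$, $\psi_2=\varphi$ gives $\int_M\psi\wedge\varphi\geq(\int_M\psi^2)^{\frac12}(\int_M\varphi^2)^{\frac12}\geq 0$, which already settles the case $\int_M\varphi^2>0$ because $\int_M\psi^2>0$; but when $\int_M\varphi^2=0$ this only yields $\int_M\psi\wedge\varphi\geq 0$, so I would retain the sharper intermediate bound.

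Concretely, set $a_\psi=\tfrac12\int_MF\wedge\psi>0$ and $a_\varphi=\tfrac12\int_MF\wedge\varphi>0$, both positive by hypothesis. Since $\psi,\varphi\in(\Lambda^{1,1}_\mathbb{R}\otimes L^2(M))^0_{w}$, Lemma \ref{3l7} furnishes decompositions
$$
\psi=a_\psi F+\beta_\psi+\widetilde{\mathcal{D}}^+_J(f_\psi)+d^-_g(\gamma_\psi),\qquad
\varphi=a_\varphi F+\beta_\varphi+\widetilde{\mathcal{D}}^+_J(f_\varphi)+d^-_g(\gamma_\varphi),
$$
with $\beta_\psi,\beta_\varphi\in\mathcal{H}^-_g(M)$. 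Forming $a_\varphi\psi-a_\psi\varphi$ the $F$-terms cancel, and subtracting $\widetilde{\mathcal{D}}^+_J(a_\varphi f_\psi-a_\psi f_\varphi)$ leaves the anti-self-dual form $(a_\varphi\beta_\psi-a_\psi\beta_\varphi)+d^-_g(a_\varphi\gamma_\psi-a_\psi\gamma_\varphi)$. Because every form in $\Lambda^-_g$ has nonpositive self-intersection and because the cross terms vanish (the combination $a_\varphi\psi-a_\psi\varphi$ is weakly $\widetilde{\mathcal{D}}^+_J$-closed by Definition \ref{3d3}, while $\widetilde{\mathcal{D}}^+_J(\cdot)$ is $d$-exact and hence integrates to zero against any $d$-closed form), this is precisely the key estimate
$$
\int_M\big(a_\varphi\psi-a_\psi\varphi\big)^2\leq 0.
$$

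Expanding the left-hand side and dividing by $2a_\psi a_\varphi>0$ then gives
$$
\int_M\psi\wedge\varphi\;\geq\;\frac{a_\varphi}{2a_\psi}\int_M\psi^2+\frac{a_\psi}{2a_\varphi}\int_M\varphi^2.
$$
Both summands are nonnegative, and the first is strictly positive since $\int_M\psi^2>0$ and $a_\psi,a_\varphi>0$; hence $\int_M\psi\wedge\varphi>0$, as required. I expect the main obstacle to be exactly the borderline case $\int_M\varphi^2=0$, where strictness cannot be read off from the final inequality of Proposition \ref{3p11} and must instead be recovered from the term $\tfrac{a_\varphi}{2a_\psi}\int_M\psi^2>0$ surviving from the strict positivity of $\int_M\psi^2$. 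The remaining care goes into justifying that the cross terms genuinely drop out — that weakly $\widetilde{\mathcal{D}}^+_J$-closed forms annihilate the image of $\widetilde{\mathcal{D}}^+_J$, and that $\int_M\eta\wedge\eta=-\|\eta\|^2_{L^2}\leq 0$ for $\eta\in\Lambda^-_g$ — so that the anti-self-dual remainder really forces $\int_M(a_\varphi\psi-a_\psi\varphi)^2\leq 0$.
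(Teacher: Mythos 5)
Your proof is correct, and it is worth recording how it sits relative to the paper. The paper gives no explicit argument — Corollary \ref{3c12} is presented as an immediate consequence of Proposition \ref{3p11} — and the intended derivation is presumably: if $\int_M\varphi^2>0$, the stated inequality $\int_M\psi\wedge\varphi\geq(\int_M\psi^2)^{\frac12}(\int_M\varphi^2)^{\frac12}$ already gives strict positivity; if $\int_M\varphi^2=0$, one must invoke the equality clause of the proposition: $\int_M\psi\wedge\varphi=0$ would force $a_\varphi\psi-a_\psi\varphi=\widetilde{\mathcal{D}}^+_J(h)$ for some $h$, and pairing this identity with $\psi$ (using Definition \ref{3d3}) yields $a_\varphi\int_M\psi^2=a_\psi\int_M\psi\wedge\varphi=0$, contradicting $a_\varphi>0$ and $\int_M\psi^2>0$. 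You correctly diagnosed this degeneracy and resolved it differently: instead of citing the equality case, you re-open the proof of Proposition \ref{3p11}, keep the intermediate estimate $\int_M(a_\varphi\psi-a_\psi\varphi)^2\leq 0$, and expand it to obtain the quantitative bound $\int_M\psi\wedge\varphi\geq\frac{a_\varphi}{2a_\psi}\int_M\psi^2+\frac{a_\psi}{2a_\varphi}\int_M\varphi^2>0$, which settles both cases at once. The ingredients are identical to the paper's — the decomposition of Lemma \ref{3l7}, nonpositivity of $\int_M\eta^2$ for anti-self-dual $\eta$, vanishing of pairings against the image of $\widetilde{\mathcal{D}}^+_J$ (weak closedness of $\psi$ and $\varphi$, plus $\int_M(\widetilde{\mathcal{D}}^+_J(h))^2=0$ since $\widetilde{\mathcal{D}}^+_J(h)=d\widetilde{\mathcal{W}}(h)$ is $d$-exact) — so this is less a different route than a sharper packaging: it buys an explicit lower bound and avoids the equality-case argument, at the cost of not being a formal consequence of the proposition's statement. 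Note also that your derivation of the key estimate never needs the preliminary reduction (replacing $\psi_j$ by $\psi_j+\varepsilon F$) used in the paper's proof of Proposition \ref{3p11}, since the decomposition and the sign arguments require no positivity hypotheses.
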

 In order to get the desired key lemma (Lemma \ref{3l13}), we need the following technical lemma,
 \begin{lem}\label{particular case}
  If $h^-_J=b^+-1$, then
  $$
  (\Lambda^{1,1}_\mathbb{R}\otimes L^2(M))^0_{w}=(\Lambda^{1,1}_\mathbb{R}\otimes L^2(M))_{w}.
  $$
 \end{lem}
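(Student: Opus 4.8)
The plan is to establish the nontrivial inclusion $(\Lambda^{1,1}_\mathbb{R}\otimes L^2(M))_{w}\subseteq(\Lambda^{1,1}_\mathbb{R}\otimes L^2(M))^0_{w}$; the reverse inclusion already holds since $F\in(\Lambda^{1,1}_\mathbb{R}\otimes L^2(M))_{w}$ by Lemma \ref{3l4}. So fix $\chi\in(\Lambda^{1,1}_\mathbb{R}\otimes L^2(M))_{w}$; I will find a constant $c$ for which $\psi:=\chi-cF$ still lies in $(\Lambda^{1,1}_\mathbb{R}\otimes L^2(M))_{w}$ (automatic, since this space is linear and contains $F$) and satisfies $P^+_g(\psi)\perp\mathcal{H}^+_g$, so that $\chi=cF+\psi$ exhibits $\chi$ as an element of $(\Lambda^{1,1}_\mathbb{R}\otimes L^2(M))^0_{w}$.

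First I would pin down $\mathcal{H}^+_g$. Because $h^-_J=b^+-1$, the proof of Theorem \ref{2t4} gives $\dim(H^+_J\cap H^+_g)=b^+-h^-_J=1$, spanned by $[\omega_1]=[\widetilde\omega_1]$; let $\gamma_0$ be its harmonic representative, which is self-dual, hence $\gamma_0\in\mathcal{H}^+_g$ with $\|\gamma_0\|^2_{L^2}=\int_M\gamma_0^2=[\omega_1]^2=2(1+a)$ by (\ref{3eq5}). With $\alpha_1,\dots,\alpha_{h^-_J}$ the basis of $\mathcal{Z}^-_J\subset\mathcal{H}^+_g$ from (\ref{3eq5}), the $J$-invariance of $\widetilde\omega_1$ against the $J$-anti-invariance of $\alpha_j$ forces $\widetilde\omega_1\wedge\alpha_j=0$ pointwise, so $\langle\gamma_0,\alpha_j\rangle=[\omega_1]\cup[\alpha_j]=0$; a dimension count then yields the orthogonal splitting $\mathcal{H}^+_g=\mathcal{Z}^-_J\oplus\mathbb{R}\gamma_0$.

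Next, the $\mathcal{Z}^-_J$-part of the required orthogonality is free: for any $(1,1)$-form the wedge against $\alpha_j\in\Omega^-_J$ lands in $\Lambda^{3,1}_J+\Lambda^{1,3}_J=0$, so $\int_M\chi\wedge\alpha_j=\int_M F\wedge\alpha_j=0$ and hence $P^+_g(\chi-cF)\perp\mathcal{Z}^-_J$ for every $c$. Thus only the $\gamma_0$-direction remains, and it suffices to solve $\int_M(\chi-cF)\wedge\gamma_0=0$; this is possible, with $c=\big(\int_M\chi\wedge\gamma_0\big)\big/\big(\int_M F\wedge\gamma_0\big)$, as soon as $\int_M F\wedge\gamma_0\neq0$.

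The crux, and the step I expect to be the main obstacle, is precisely this nonvanishing. Writing $F=\omega_1-d^-_J(v+\bar v)$ from (\ref{3eq6}) and using that $\gamma_0$ is closed and cohomologous to $\omega_1$ while both $d^-_J(v+\bar v)$ and $\gamma_0$ are self-dual (cf. (\ref{2eq18})), I get
$$\int_M F\wedge\gamma_0=\int_M\omega_1\wedge\gamma_0-\langle d^-_J(v+\bar v),\gamma_0\rangle=2(1+a)-\langle d^-_J(v+\bar v),\gamma_0\rangle.$$
Cauchy--Schwarz together with $\|d^-_J(v+\bar v)\|^2_{L^2}=2a$ and $\|\gamma_0\|^2_{L^2}=2(1+a)$ from (\ref{3eq5}) then gives
$$\int_M F\wedge\gamma_0\geq 2(1+a)-\sqrt{2a}\,\sqrt{2(1+a)}=2\sqrt{1+a}\,\big(\sqrt{1+a}-\sqrt a\big)>0,$$
because $1+a>a$. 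Hence $c$ is well defined, $\psi=\chi-cF$ is weakly $\widetilde{\mathcal{D}}^+_J$-closed with $P^+_g(\psi)\perp\mathcal{H}^+_g$, and $\chi=cF+\psi\in(\Lambda^{1,1}_\mathbb{R}\otimes L^2(M))^0_{w}$, as desired. The hypothesis $h^-_J=b^+-1$ is used exactly to guarantee that $\gamma_0$ spans the single transverse direction of $\mathcal{H}^+_g$ not already annihilated by the $(1,1)$-type of $\chi$ and $F$.
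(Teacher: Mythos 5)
Your proof is correct and is essentially the paper's own argument: subtract $cF$, observe that the pairing of a $(1,1)$-form with the $J$-anti-invariant harmonic forms vanishes for bidegree reasons, and use $h^-_J=b^+-1$ to identify $\mathcal{H}^+_g$ as the span of those forms together with the class of $\omega_1$. The only thing you missed is that $\omega_1=F+d^-_J(v+\bar v)$ is itself closed and self-dual, hence harmonic, so your $\gamma_0$ equals $\omega_1$ and the ``crux'' nonvanishing is immediate --- $\int_M F\wedge\gamma_0=\int_M F\wedge\omega_1=\int_M F^2=2$, since $F\wedge d^-_J(v+\bar v)=0$ pointwise by type --- which makes the Cauchy--Schwarz estimate unnecessary (the paper takes $c=\frac{1}{2}\int_M F\wedge\varphi$ and obtains orthogonality to $\omega_1$ for free in exactly this way).
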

  \begin{proof}
   It is clear that $(\Lambda^{1,1}_\mathbb{R}\otimes L^2(M))^0_{w}\subset(\Lambda^{1,1}_\mathbb{R}\otimes L^2(M))_{w}$.
   For any $\varphi\in(\Lambda^{1,1}_\mathbb{R}\otimes L^2(M))_{w}$, set $$c=\frac{1}{2}\int_MF\wedge\varphi$$
   and let $\widetilde{\varphi}=\varphi-cF$.
   Then we will find that $$\int_M\widetilde{\varphi}\wedge \omega_1=\int_M\widetilde{\varphi}\wedge F=0.$$
   Thus, $P^+_g(\widetilde{\varphi})\bot\mathcal{H}^+_g$ since $h^-_J=b^+-1$,
    that is, $$\mathcal{H}^+_g=Span\{\omega_1,\alpha_1,\cdot\cdot\cdot,\alpha_{h^-_J}\}.$$
  $\varphi=cF+\widetilde{\varphi}\in(\Lambda^{1,1}_\mathbb{R}\otimes L^2(M))^0_{w}$.
   Hence $(\Lambda^{1,1}_\mathbb{R}\otimes L^2(M))^0_{w}=(\Lambda^{1,1}_\mathbb{R}\otimes L^2(M))_{w}$.
  \end{proof}

   With Corollary \ref{3c12} and Lemma \ref{particular case}, as done in the proof of Lemma 7 in \cite{B}, we can get the following key lemma,

  \begin{lem}\label{3l13}
  (Compare Lemma {\rm7} in {\rm\cite{B}})
  Let $(M,J)$ be a closed tamed almost complex 4-manifold with $h^-_J=b^+-1$.
  Suppose $\varphi\in(\Lambda^{1,1}_\mathbb{R}\otimes L^2(M))_{w}$ and satisfies
  $$
   \int_M\varphi\wedge F\geq 0\,\,\,{\rm and} \,\,\,\int_M\varphi^2\geq 0.
   $$
   For each $\varepsilon>0$ there is a positive $(1,1)$-form $p_\varepsilon$ and a function $f_\varepsilon$
   such that
  $$
  \|\varphi+\widetilde{\mathcal{D}}^+_J(f_\varepsilon)-p_\varepsilon\|_{L^2(M)}<\varepsilon.
  $$
   Moreover, $p_\varepsilon$ and $f_\varepsilon$ can be assumed to be smooth.
   \end{lem}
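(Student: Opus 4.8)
The plan is to realize $\varphi$ as an $L^2$-limit of forms $p-\widetilde{\mathcal D}^+_J(f)$, with $p$ a positive $(1,1)$-form and $f\in L^2_2(M)_0$, by a Hahn--Banach separation argument in the Hilbert space $H=\Lambda^{1,1}_{\mathbb R}\otimes L^2(M)$ equipped with its $L^2$ inner product $\langle\alpha,\beta\rangle_{L^2}=\int_M\alpha\wedge\ast_g\beta$. Consider the convex cone $K=\{\,p-\widetilde{\mathcal D}^+_J(f)\mid p\ \text{a positive }(1,1)\text{-form in }L^2,\ f\in L^2_2(M)_0\,\}$. It is stable under addition and positive scaling, and it contains the subspace $\mathrm{im}\,\widetilde{\mathcal D}^+_J$ (take $p=0$), which is closed by Lemma \ref{3l2}. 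The assertion of the lemma is precisely that $\varphi\in\overline K$, so it suffices to rule out $\varphi\notin\overline K$.

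Suppose, for contradiction, that $\varphi\notin\overline K$. Separating the point $\varphi$ from the closed convex cone $\overline K$ produces $S\in H$ with $\langle\psi,S\rangle_{L^2}\ge 0$ for all $\psi\in K$ and $\langle\varphi,S\rangle_{L^2}<0$. Put $T=\ast_gS$, again a $(1,1)$-form in $L^2$ since $\ast_g$ preserves $\Lambda^{1,1}_{\mathbb R}$, so that $\langle\alpha,S\rangle_{L^2}=\int_M\alpha\wedge T$ for every $\alpha$. Testing the inequality against positive $p$ (with $f=0$) gives $\int_M p\wedge T\ge 0$ for every positive $(1,1)$-form $p$; by the pointwise linear-algebra fact that $\int_M p\wedge T\ge 0$ for all $p\ge 0$ forces $T\ge 0$ in complex dimension two, $T$ is a positive $(1,1)$-form. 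Testing against $\pm\widetilde{\mathcal D}^+_J(f)$ (with $p=0$, using that $\mathrm{im}\,\widetilde{\mathcal D}^+_J\subset K$ is a subspace) gives $\int_M\widetilde{\mathcal D}^+_J(f)\wedge T=0$ for all $f$, so $T\in(\Lambda^{1,1}_{\mathbb R}\otimes L^2(M))_w$ by Definition \ref{3d3}. Finally $\int_M\varphi\wedge T=\langle\varphi,S\rangle_{L^2}<0$.

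Here I would bring in the hypothesis $h^-_J=b^+-1$: by Lemma \ref{particular case} the weakly $\widetilde{\mathcal D}^+_J$-closed forms coincide with $(\Lambda^{1,1}_{\mathbb R}\otimes L^2(M))^0_w$, so both $\varphi$ and $T$ lie in that space. Since $T$ is positive we have $\int_MT\wedge F\ge 0$ and $\int_MT^2\ge 0$, while by hypothesis $\int_M\varphi\wedge F\ge 0$ and $\int_M\varphi^2\ge 0$. Proposition \ref{3p11} applied to the pair $(\varphi,T)$ then yields $\int_M\varphi\wedge T\ge(\int_M\varphi^2)^{1/2}(\int_MT^2)^{1/2}\ge 0$, contradicting $\int_M\varphi\wedge T<0$. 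Hence $\varphi\in\overline K$, which is exactly the claimed $L^2$-approximation by $p_\varepsilon-\widetilde{\mathcal D}^+_J(f_\varepsilon)$. To make the approximants smooth I would note that smooth positive $(1,1)$-forms are $L^2$-dense in the cone of $L^2$ positive forms (local mollification, with any small loss of positivity absorbed by adding a multiple $\delta F$, which is positive), and that $C^\infty(M)_0$ is dense in $L^2_2(M)_0$ with $\widetilde{\mathcal D}^+_J$ bounded $L^2_2\to L^2$; thus $p_\varepsilon,f_\varepsilon$ may be taken smooth without enlarging the error beyond $\varepsilon$.

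The main obstacle is the correct identification, through $\ast_g$, of the separating functional as a genuinely pointwise-positive, weakly $\widetilde{\mathcal D}^+_J$-closed $(1,1)$-form $T$: this is where the closed-range input of Lemma \ref{3l2} and the positivity characterization $\int_M p\wedge T\ge 0\ (\forall p\ge 0)\Rightarrow T\ge 0$ are essential, and it is precisely the hypothesis $h^-_J=b^+-1$, via Lemma \ref{particular case}, that places $T$ in the space where the reverse Cauchy--Schwarz inequality of Proposition \ref{3p11} is available and thereby closes the argument.
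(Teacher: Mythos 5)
Your proof is correct and is essentially the paper's own argument: both rest on a Hahn--Banach separation in $H=\Lambda^{1,1}_{\mathbb{R}}\otimes L^2(M)$, the identification of the separating functional as a pointwise-positive, weakly $\widetilde{\mathcal{D}}^+_J$-closed $(1,1)$-form (with $h^-_J=b^+-1$ entering exactly where you put it, through Lemma \ref{particular case}), and the intersection-pairing inequality of Proposition \ref{3p11} (the paper invokes its Corollary \ref{3c12}) to contradict the strict negativity coming from the separation. The only difference is packaging: by separating the single point $\varphi$ from the closed convex cone $\{p-\widetilde{\mathcal{D}}^+_J(f)\}$, rather than, as the paper does, separating the affine set $\mathcal{H}_\varphi$ from an $\varepsilon$-neighborhood of the positive forms normalized by $\int_M p\wedge F=1$, you bypass the paper's preliminary rescaling $\int_M\varphi\wedge F=1$ and its separate treatment of the degenerate case $\int_M\varphi\wedge F=0$, and you do not actually need the closed-range Lemma \ref{3l2} that the paper uses to realize the supremum on $\mathcal{H}_\varphi$.
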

   \begin{proof}
  Since $h^-_J=b^+-1$, by Lemma \ref{particular case}, we can get $\varphi\in(\Lambda^{1,1}_\mathbb{R}\otimes L^2(M))^0_{w}$.
   If $$\int_M\varphi\wedge F=0,$$ by Lemma \ref{3l7}, it follows that
  \begin{equation}
     \varphi=\beta_{\varphi}+\widetilde{\mathcal{D}}^+_J(f_{\varphi})+d^-_g(\gamma_{\varphi}).
  \end{equation}
   Then
  $$
  0\geq -\|\beta_{\varphi}\|^2_{L^2(M)}-\|d^-_g(\gamma_{\varphi})\|^2_{L^2(M)}
    =\int_M(\varphi-\widetilde{\mathcal{D}}^+_J(f_{\varphi}))^2=\int_M\varphi^2\geq 0,
  $$
  and we can get $\varphi=\widetilde{\mathcal{D}}^+_J(f_{\varphi})$, that is,
  $\varphi$ is $\widetilde{\mathcal{D}}^+_J$ exact.
  In this case the result follows from the denseness of the smooth functions in $L^2_2(M)_0$.

   \vskip 6pt

  We may assume without loss of generality that $$\int_M\varphi\wedge F>0.$$
  After rescaling $\varphi$ if necessary, it can be supposed that
  $$\int_M\varphi\wedge F=1.$$
  Let
   \begin{equation}\label{3eq20}
  \mathcal{P}:=\{p\in\Lambda^{1,1}_\mathbb{R}\otimes L^2(M)\mid  p\geq 0,\,\,\, a.e.,\,\,\, \int_Mp\wedge F=1\};
   \end{equation}
  \begin{equation}\label{3eq21}
  \mathcal{P}_\varepsilon:=\{\rho\in\Lambda^{1,1}_\mathbb{R}\otimes L^2(M)\mid  \|\rho-p\|_{L^2(M)}<\varepsilon \,\,\, for\,\,\, some \,\,\,p\in\mathcal{P}\};
  \end{equation}
  \begin{equation}\label{3eq22}
  \mathcal{H}_\varphi:=\{\varphi+\widetilde{\mathcal{D}}^+_J(f)\mid  f\in L^2(M)_0\}.
  \end{equation}
  Then $\mathcal{P}_\varepsilon$ is an open convex subset of the Hilbert space $H:=\Lambda^{1,1}_\mathbb{R}\otimes L^2(M)$,
   and $\mathcal{H}_\varphi$ is a closed convex subset since $\widetilde{\mathcal{D}}^+_J$ has closed range by Lemma \ref{3l2}.
  If $\mathcal{P}_\varepsilon\cap\mathcal{H}_\varphi=\emptyset$,
  the Hahn-Banach Theorem implies that there exists $\phi\in H$ and a constant $c\in\mathbb{R}$ such that
  \begin{equation}\label{3eq23}
  \int_M\phi\wedge h\leq c, \,\,\,\int_M\phi\wedge p>c,
  \end{equation}
  for every $h\in\mathcal{H}_\varphi$,
   and every $p\in\mathcal{P}_\varepsilon$ (Compare Proof of Theorem I.7 in D. Sullivan \cite{S3} and Proof of Lemma 7 in N. Buchdahl \cite{B}).

   In terms of (\ref{3eq22}) and (\ref{3eq23}), there exists a $f_{\phi}\in L^2_2(M)_0$ such that $h_{\phi}=\varphi+\widetilde{\mathcal{D}}^+_J(f_{\phi})$ and
  $$\int_M\phi\wedge h_{\phi}=c,$$
  since $\mathcal{H}_\varphi$ is a closed space.
  Since $h\in\mathcal{H}_\varphi$, it follows that $h-h_{\phi}$ is in the image of $\widetilde{\mathcal{D}}^+_J$.
  Hence,
  \begin{equation}\label{3eq24}
  \int_M\phi\wedge(h-h_{\phi})\leq 0,\,\,\,\int_M\phi\wedge(h_{\phi}-h)\geq 0.
   \end{equation}
  It follows immediately that $\phi$ is weakly $\widetilde{\mathcal{D}}^+_J$-closed, that is,
  $$\int_M\phi\wedge\widetilde{\mathcal{D}}^+_J(f)=0$$
  for any $f\in L^2_2(M)_0$.
  By Lemma \ref{particular case}, $\phi\in(\Lambda^{1,1}_{\mathbb{R}}\otimes L^2(M))^0_{w}$ since $h^-_J=b^+-1$.

  Let $$\phi_0:=\phi-cF\in(\Lambda^{1,1}_{\mathbb{R}}\otimes L^2(M))^0_{w},$$
  then by (\ref{3eq20}) and (\ref{3eq23}), we have
  \begin{equation}\label{3eq26}
  \int_M\phi_0\wedge\varphi\leq c-c=0
   \end{equation}
  and
  \begin{equation}\label{3eq27}
  \int_M\phi_0\wedge p_0>0
  \end{equation}
   for any $p_0\in\mathcal{P}$.
  So $\phi_0$ is strictly positive almost everywhere.
  Hence
  $$\int_M\phi_0^2>0\,\,\, and \,\,\, \int_M\phi_0\wedge F>0.$$
  It follows from Corollary \ref{3c12} that
  $$\int_M\phi_0\wedge \varphi>0,$$
   giving a contradiction (see (\ref{3eq26})).
  Therefore $\mathcal{P}_\varepsilon\cap\mathcal{H}_\varphi$ can not be empty proving the existence of $p_\varepsilon$ and $f_\varepsilon$.
   The last statement of the lemma follows from denseness of the smooth positive $(1,1)$-forms
    in the $L^2$-positive $(1,1)$-forms and of the smooth functions in $L^2_2(M)_0$. This completes the proof of Lemma \ref{3l13}.
  \end{proof}

  In next section, we will devote to proving main theorem, i.e. Theorem \ref{1t1}.
  The proof of Theorem \ref{1t1} follows mainly Buchdahl's unified proof of the Kodaira conjecture.

\section{The tamed almost complex $4$-manifolds with $h^-_J=b^+-1$}\label{last section}
 \setcounter{equation}{0}
 This section is devoted to proving Theorem \ref{1t1} which follows mainly Buchdahl's unified proof of Kodaira conjecture..
 Throughout this section, we assume that $(M,J)$ is a closed tamed almost complex 4-manifold with
 $h^-_J= b^+-1$.
 Without loss of generality, we may assume that $J$ is tamed by a symplectic form $\omega_1=F+d^-_J(v+\bar{v})$, where $F$ is a fundamental 2-form,
 $$F^2>0,\,\,\, \int_MF^2=2,\,\,\, \int_Md_J^-(v+\bar{v})\wedge d_J^-(v+\bar{v})=2a>0,\,\,\, v\in\Omega^{0,1}_J.$$
 Set $g(\cdot,\cdot)=F(\cdot,J\cdot)$ that is an almost Hermitian metric on $(M,J)$.
 Denote by $d\mu_g$ the volume form defined by $g$.
 Set $\widetilde{\omega}_1=\omega_1-d(v+\bar{v})=F-d^+_J(v+\bar{v})\in\mathcal{Z}^+_J$,
 \begin{equation}\label{4eq1}
 \int_M\omega^2_1=2(1+a)=\int_M\widetilde{\omega}_1^2.
 \end{equation}
 It is easy to see that
 \begin{equation}\label{4eq2}
 \int_Md_J^+(v+\bar{v})\wedge d_J^+(v+\bar{v})=-2a,
 \end{equation}
 \begin{equation}\label{4eq3}
 \int_MF\wedge d_J^+(v+\bar{v})=-2a.
 \end{equation}
 From Section \ref{3}, we know that $\widetilde{\omega}_1$ is in $\mathcal{Z}^+_J$ and cohomologous to $\omega_1$ \,\,satisfying $$\int_M\widetilde{\omega}_1^2=2(1+a),\,\,\, \int_M\widetilde{\omega}_1\wedge F=2(1+a).$$
 By Lemma \ref{particular case}, since $h^-_J=b^+-1$, we have that $\widetilde{\omega}_1\in(\Lambda^{1,1}_\mathbb{R}\otimes L^2(M))^0_{w}$.
 Let $\phi=\widetilde{\omega}_1-(1+a)F$,
 it is easy to see that
 $$
 \int_MP^+_g(\phi)\wedge\omega_1=\int_M\phi\wedge\omega_1=0.
 $$
 Hence $P^+_g(\phi)$ is orthogonal to $\mathcal{H}_g^+(M)$ with respect to the integration since $h^-_J=b^+-1$.
  Moreover, note that both $F$ and $\widetilde{\omega}_1$ are weakly $\widetilde{\mathcal{D}}^+_J$-closed,
  so $\phi$ is weakly $\widetilde{\mathcal{D}}^+_J$-closed.

 For
 $$0<t_0=1+a-\sqrt{(1+a)^2-(1+a)}=(1+\sqrt{\frac{a}{1+a}})^{-1}<1,$$
 the smooth $(1,1)$-form
 $$\varphi=\widetilde{\omega}_1-t_0F=(\sqrt{a(1+a)}-a)F-d^+_J(v+\bar{v})$$
 is still in $(\Lambda^{1,1}_\mathbb{R}\otimes L^2(M))^0_{w}$.
 \begin{eqnarray*}
   \int_M\varphi^2&=& 2(\sqrt{a(1+a)}-a)^2+4(\sqrt{a(1+a)}-a)a-2a \\
   &=&  2a(1+a)-4a\sqrt{a(1+a)}+2a^2+4a\sqrt{a(1+a)}-4a^2-2a\\
 &=&0,
 \end{eqnarray*}
 \begin{eqnarray*}
   \int_MF\wedge\varphi&=& 2(\sqrt{a(1+a)}-a)+2a \\
   &=&  2\sqrt{a(1+a)}>0.
 \end{eqnarray*}
 By Lemma \ref{3l13}, for each $m\in\mathbb{N}$ there is a smooth positive $(1,1)$-form $p_m$ and a smooth function $f_m\in C^\infty(M)_0$ such that
 $$\|\varphi+\widetilde{\mathcal{D}}^+_J(f_m)-p_m\|_{L^2}<\frac{1}{m}.$$
 Since
 \begin{eqnarray*}
   \int_Mp_m\wedge F&=& -\int_M(\varphi+\widetilde{\mathcal{D}}^+_J(f_m)-p_m)\wedge F+\int_M(\varphi+\widetilde{\mathcal{D}}^+_J(f_m))\wedge F \\
   &=&-\int_M(\varphi+\widetilde{\mathcal{D}}^+_J(f_m)-p_m)\wedge F+\int_M\varphi\wedge F\\
 &=&-\int_M(\varphi+\widetilde{\mathcal{D}}^+_J(f_m)-p_m)\wedge F+2\sqrt{a(1+a)}
 \end{eqnarray*}
 and
 \begin{equation}\label{4eq4}
 |-\int_M(\varphi+\widetilde{\mathcal{D}}^+_J(f_m)-p_m)\wedge F|\leq\|\varphi+\widetilde{\mathcal{D}}^+_J(f_m)-p_m\|_{L^2}\|F\|_{L^2}<\frac{\sqrt{2}}{m},
 \end{equation}
 the integral
 $$\int_Mp_m\wedge F$$ is converging to $2\sqrt{a(1+a)}>0$ and
  by Lemma \ref{2l10} and Lemma \ref{3l1},
  the positive functions $(\wedge p_m)^{\frac{1}{2}}$ are uniformly bounded in $L^2$,
  where $\wedge:\Omega_{\mathbb{R}}^{1,1}\longrightarrow\Omega^0_{\mathbb{R}}$ is an algebraic operator in Lefschetz decomposition (cf. \cite{GH}).
 So a subsequence can be found converging weakly in $L^2$.
 The forms $p_m/(\wedge p_m)$ are bounded in $L^\infty$, so subsequence of these forms can also be found converging weakly in $L^4$.
 The sequence $\{\widetilde{\mathcal{D}}^+_J(f_m)\}=\{d\widetilde{\mathcal{W}}(f_m)\}$ is uniformly bounded in $L^1$.
 The uniform $L^1$ bound on $\widetilde{\mathcal{D}}^+_J(f_m)$ does not imply an $L^2$ bound on $f_m$,
 it really needed to find a subsequence converge in the sense of currents.
 Hence, we have the following claim.

 \begin{claim}\label{weak strong}
 Given any $s<\frac{4}{3}$ and $t<2$, there is a subsequence of $\{f_m\}$ that converges weakly in $L^s_1$, and
 strongly in $L^t$ to a limiting function $f_0$.
 \end{claim}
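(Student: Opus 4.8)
The plan is to convert the uniform $L^1$ bound on $\widetilde{\mathcal{D}}^+_J(f_m)$ into a uniform bound for $f_m$ in $L^s_1(M)$ for every $s<\tfrac{4}{3}$, and then to read off weak $L^s_1$-convergence from reflexivity and strong $L^t$-convergence from the Rellich--Kondrachov theorem. The reason one lands exactly on the exponents $\tfrac{4}{3}$ and $2$ is that the elliptic regularity here must be run at the $L^1$ endpoint, where only ``almost'' two derivatives are gained. As a first step I would extract a uniform $L^1$ bound on the potentials $f'_m$: by Proposition \ref{3p6'} (see (\ref{3eq11'})) one has $P^+_g(\widetilde{\mathcal{D}}^+_J(f_m))=f'_mF$, and since $P^+_g=\tfrac{1}{2}(1+*_g)$ is a pointwise algebraic projection it is bounded on $L^1$; hence $\|f'_mF\|_{L^1}\le\|\widetilde{\mathcal{D}}^+_J(f_m)\|_{L^1}$, so $\{f'_m\}$ is bounded in $L^1(M)_0$.

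Next comes the crux. By Proposition \ref{3p6'} (equivalently Lemma \ref{2l10}) one has $\widetilde{\mathcal{W}}(f_m)=2d^*\mathbb{G}(f'_mF)$, and $d^*\mathbb{G}$ is a pseudodifferential operator of order $-1$ on the closed $4$-manifold $M$, essentially the Riesz-type operator $\nabla\Delta_g^{-1}$. At $p=1$ the Calder\'on--Zygmund estimate fails, but the weak-type endpoint survives: $d^*\mathbb{G}$ maps $L^1$ into the Lorentz space $L^{4/3,\infty}$ (the Sobolev/Riesz-potential endpoint for $n=4$, order $1$). Because $M$ has finite volume, $L^{4/3,\infty}\subset L^s$ for every $s<\tfrac{4}{3}$, so $\{\widetilde{\mathcal{W}}(f_m)\}$ is bounded in $L^s$ for each such $s$. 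This is the step where the reflexive $L^p$-theory used in Lemmas \ref{3l1} and \ref{3l2} is genuinely unavailable, and the weak-type $(1,\tfrac{4}{3})$ mapping property is exactly what forces the stated ranges; I expect it to be the main obstacle.

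Finally I would recover $f_m$ and conclude. The first-order operator $f\mapsto\widetilde{\mathcal{W}}(f)$ has the same principal symbol as $Jd$ --- the correction terms $-*_g(df\wedge d^-_J(v+\bar{v}))$ and $d^*(\eta^i_f+\overline{\eta}^i_f)$ of Definition \ref{2d5} do not affect the leading symbol --- hence it is overdetermined elliptic. On the interior range $1<s<\tfrac{4}{3}$ the usual $L^s$ elliptic estimate applies, and together with the Poincar\'e inequality on $L^s_1(M)_0$ and a standard compactness argument to absorb the lower-order remainder it yields $\|f_m\|_{L^s_1}\le C_s$ uniformly in $m$, for every $s<\tfrac{4}{3}$. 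Since $L^s_1$ is reflexive for $1<s<\tfrac{4}{3}$, a subsequence converges weakly there to a limit $f_0$; since $L^s_1\hookrightarrow L^t$ compactly whenever $t<\tfrac{4s}{4-s}$, a further subsequence converges strongly in $L^t$. As $s\uparrow\tfrac{4}{3}$ the exponent $\tfrac{4s}{4-s}\uparrow 2$, so a diagonal argument over a sequence $s_k\uparrow\tfrac{4}{3}$ (with corresponding $t_k\uparrow 2$) produces a single subsequence valid for all $s<\tfrac{4}{3}$ and $t<2$.
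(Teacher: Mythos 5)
Your first two steps and the closing diagonal argument are correct, and they take a genuinely different route from the paper's: the paper obtains the $L^t$ bound ($t<2$) on the potentials by duality, pairing $f'_m$ against $h\in L^{t'}(M)_0$ with $t'>2$ and using the sup-norm bound on $\mathbb{G}(hF)$ coming from $L^{t'}_2\hookrightarrow C^0$, and then bounds $df_m$ in $L^s$, $s<\tfrac43$, by pairing $df_m\wedge\omega_1$ against forms $\widetilde{\mathcal{W}}(h)\in L^{s'}$, $s'>4$, via $\int_M\widetilde{\mathcal{D}}^+_J(h)\wedge f_m\omega_1=-\int_M\widetilde{\mathcal{W}}(h)\wedge df_m\wedge\omega_1$; your weak-type $(1,\tfrac43)$ endpoint estimate for the order $-1$ operator $d^*\mathbb{G}$ is the predual manifestation of the same endpoint phenomenon and is sound. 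The genuine gap is in Step 3, where you convert the $L^s$ bound on $\widetilde{\mathcal{W}}(f_m)$ into an $L^s_1$ bound on $f_m$: the parenthetical claim that the correction terms of Definition \ref{2d5} do not affect the leading symbol of $\widetilde{\mathcal{W}}$ is false.

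Concretely, $f\mapsto -*_g(df\wedge d^-_J(v+\bar v))$ is a first-order operator whose principal symbol is, up to sign, the contraction $\iota_{\xi^{\sharp}}\bigl(d^-_J(v+\bar v)\bigr)$ (recall $\Lambda^-_J\subset\Lambda^+_g$), which is nonzero wherever $d^-_J(v+\bar v)\neq 0$, and this form is not identically zero since $\int_M|d^-_J(v+\bar v)|^2d\mu_g=2a>0$. Worse, $f\mapsto d^*(\eta^2_f+\bar\eta^2_f)$ is \emph{also} first order in $f$ and nonlocal: the source $d^-_J*_g(df\wedge d^-_J(v+\bar v))$ that determines $\eta^2_f$ through Lemma \ref{2l3} has nonvanishing second-order symbol $P^-_J\bigl(\xi\wedge\iota_{\xi^{\sharp}}d^-_J(v+\bar v)\bigr)$ — in contrast with $d^-_JJdf$ defining $\eta^1_f$, whose second-order symbol $P^-_J(\xi\wedge J\xi)$ vanishes because $\xi\wedge J\xi$ is $J$-invariant; this is exactly why $\eta^1_f$ is harmless while $\eta^2_f$ is not. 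Hence $\widetilde{\mathcal{W}}$ is a first-order \emph{pseudodifferential} operator whose symbol differs from that of $Jd$, so your overdetermined-ellipticity claim, and with it the estimate $\|f_m\|_{L^s_1}\le C_s$, does not follow as written; in addition, even granting ellipticity, absorbing the lower-order term requires the kernel of $\widetilde{\mathcal{W}}$ on $L^s_1(M)_0$ to be trivial (or $f_m$ to be projected off it beforehand, which is legitimate since adding kernel elements does not change $\widetilde{\mathcal{D}}^+_J(f_m)$), and Poincar\'e only removes constants. The step is repairable: both offending symbols take values pointwise orthogonal to $J\xi$, because $\beta(\xi^{\sharp},J\xi^{\sharp})=0$ for every $\beta\in\Lambda^-_J$, so the symbol of $\widetilde{\mathcal{W}}$ is still injective, provided you carry this out in the pseudodifferential calculus; more cleanly, apply ellipticity to the trace operator $f\mapsto f'$ determined by $P^+_g\widetilde{\mathcal{D}}^+_J(f)=f'F$ (Proposition \ref{3p6'}), which \emph{is} a scalar second-order elliptic operator with symbol a nonzero multiple of $|\xi|^2$ — the same orthogonality makes every correction drop out of its principal part — and combine your Step 1 bound $\|f'_m\|_{L^1}\le C$ with the embedding $L^1\hookrightarrow L^s_{-1}$ for $s<\tfrac43$ (dual to $L^{s'}_1\hookrightarrow C^0$, $s'>4$); or else fall back on the paper's duality pairing for the derivative bound. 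As submitted, however, Step 3 is a gap.
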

 \begin{proof}
 If $J$ is integrable, $\mathcal{D}^+_J=\sqrt{-1}\partial_J\bar{\partial}_J$.
 Xiaowei Xu \cite{X} pointed out that the uniform $L^1$ bound on $\sqrt{-1}\partial_J\bar{\partial}_J(f_m)$
 does not imply an $L^2$ bound on $f_m$.
 It means that in Buchdahl \cite[p.296]{B} there exists a gap.
 Buchdahl gave a new argument (cf. X. Xu \cite{X}).
 In the follows, we will give a proof of the above claim
 which follows the argument of N. Buchdahl (cf. X. Xu \cite{X}).

 Since $h^-_J=b^+-1$, $J$ is tamed by $\omega_1=F+d^-_J(v+\bar{v})$,
 by Proposition \ref{3p6'},
 $$\widetilde{\mathcal{D}}^+_J(f_m)=d\widetilde{\mathcal{W}}(f_m)=2 dd^*\mathbb{G}(f'_mF)=2 d\mathbb{G}d^*(f'_mF)$$
 and
 $$
 P^+_g\widetilde{\mathcal{D}}^+_J(f_m)=2P^+_gdd^*\mathbb{G}(f'_mF)=\Delta_g\mathbb{G}(f'_mF)=f'_mF,
 $$
 where $f'_m\in L^2(M)_0$ and $\mathbb{G}$ is the Green operator associated to $\Delta_g$ (cf. \cite{Kod}).
 First, take any real number $t'>2$ and let $h$ be any function in $L^{t'}(M)_0$, that is, $$\int_Mhd\mu_g=0$$ and $h\in L^{t'}(M)$,
 so $$hF^2=2P^+_gdd^*\mathbb{G}(f'_mF)\wedge F=  \Delta_g\mathbb{G}(f'_mF)\wedge F$$
 and
 $\mathbb{G}(hF)\in L^{t'}_2$.
 This is standard linear elliptic theory.
 By the Sobolev embedding theorem, the fact $t'>2$ implies that $L^{t'}_2$ is compactly embedded in $C^0$,
 so there is a uniform $C^0$ bound on $\mathbb{G}(hF)$ in terms of its $L^{t'}_2$ norm, and that in turn is uniformly bounded by a
 constant times the $L^{t'}$ norm of $2dd^*\mathbb{G}(hF)$
 by ellipticity and the fact that $hF$ has been chosen to orthogonal to the kernal in $L^2$.
 So the sup norm of $\mathbb{G}(hF)$ is bounded by a fixed constant times the $L^{t'}$ norm of $h$.
 Then
 \begin{eqnarray*}
   \int_Mf'_mhF^2 &=& \int_Mf'_mF\wedge hF \\
    &=& \int_Mf'_mF\wedge  \Delta_g\mathbb{G}(hF)\\
   &=&\int_M \Delta_g\mathbb{G}(f'_mF)\wedge hF \\
    &=&  \int_M 2d\mathbb{G}d^*(f'_mF)\wedge hF \\
    &=& \int_M\widetilde{\mathcal{D}}^+_J(f_m)\wedge hF.
 \end{eqnarray*}
 Since $p_m$ is uniformly bounded in $L^1$ and $\varphi+\widetilde{\mathcal{D}}^+_J(f_m)-p_m$ is converging to $0$
  in $L^2$, it follows that $\widetilde{\mathcal{D}}^+_J(f_m)$ is uniformly bounded in $L^1$.
  Therefore $$|\int_Mf'_mhF^2|\leq Const. \|h\|_{L^{t'}},$$
   which shows that the sequence $\{f'_m\}$ (resp. $\{f_m\}$) is weakly bounded in $L^t$,
  where $\frac{1}{t}+\frac{1}{t'}=1$.
  Since it is weakly bounded, it is bounded, and therefore we can find a subsequence converging weakly in $L^t$.
  We now have to do the same thing with the first derivatives.
  Recall that $$\widetilde{\mathcal{W}}(h)=2\mathbb{G}d^*(hF),\,\,
  \widetilde{\mathcal{D}}^+_J(h)=d\widetilde{\mathcal{W}}(h)=2d\mathbb{G}d^*(hF).$$
 Since
 \begin{eqnarray*}
    \int_M\widetilde{\mathcal{D}}^+_J(h)\wedge f_m\omega_1 &=& \int_Md\widetilde{\mathcal{W}}(h)\wedge f_m\omega_1  \\
    &=&  -\int_M\widetilde{\mathcal{W}}(h)\wedge df_m\wedge\omega_1.
 \end{eqnarray*}
 As done in Lemma \ref{3l2}, we can prove that $\widetilde{\mathcal{W}}(h)$ has closed range.
 This time we take any $\widetilde{\mathcal{W}}(h)$ that lies in $L^{s'}$ where $s'>4$.
 Then, following the same reason as above, we get $\{df_m\}$ uniformly bounded in $L^s$
 for $\frac{1}{s}+\frac{1}{s'}=1$ and therefore $\{df_m\}$ strongly bounded in $L^s$.
 We can then use the compactness part of the Sobolev embedding theorem to pick out a subsequence that
 converges strongly in $L^q$, where $q<2$.
 This completes the proof of the claim.
 \end{proof}
 By Claim \ref{weak strong}, the subsequence of positive $(1,1)$-forms $\{p_m\}$ in the sense of currents to define a positive
 $(1,1)$- current $p=\varphi+\widetilde{\mathcal{D}}^+_J(f_0)$, $f_0\in L^q_2(M)_0$ for some fixed $q\in (1,2)$.
 Note that since $\wedge p\in L^1$ and $p/(\wedge p)\in\Lambda^{1,1}_\mathbb{R}\otimes L^\infty$, the current $$P=p+t_0F=\widetilde{\omega}_1+\widetilde{\mathcal{D}}^+_J(f_0)$$ is a closed $(1,1)$-current which lies in $L^1$ satisfying $P\geq t_0F$.
 Thus, $P$ is called an almost K\"{a}hler current (cf. \cite{D2,HL3,HL4,HL5,LZ2,Pali,RT2,S3,T2}).
 In summary, we have the following proposition:
 \begin{prop}\label{4p1}
 (see Theorem {\rm11} in {\rm\cite{B}} and Lemma {\rm1.7} in {\rm\cite{S3}})
   Suppose that $(M,J)$ is a closed almost complex $4$-manifold with
   $h^-_J=b^+-1$ which is tamed by a symplectic form $\omega_1$.
 As defined the above,
 $$P=p+t_0F=\widetilde{\omega}_1+\widetilde{\mathcal{D}}^+_J(f_0)$$
 is a closed positive almost complex (1,1)-current in $L^1$ (almost K\"{a}hler current) and satisfies $P\geq t_0F$, where
 $f_0\in L^q_2(M)_0$ for some fixed $q\in(1,2)$ and
 $$0<t_0=(1+\sqrt{\frac{a}{1+a}})^{-1}<1.$$
 $P$ is homologous to $\widetilde{\omega}_1$ in the sense of current.
 \end{prop}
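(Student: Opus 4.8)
The plan is to recognize Proposition~\ref{4p1} as the assembly of the construction carried out immediately above, and to check that each asserted property of $P$ survives the limiting process. First I would fix $t_0=(1+\sqrt{a/(1+a)})^{-1}$, note $0<t_0<1$, and form the smooth $(1,1)$-form $\varphi=\widetilde{\omega}_1-t_0F$. The two elementary computations already recorded, $\int_M\varphi^2=0$ and $\int_MF\wedge\varphi=2\sqrt{a(1+a)}>0$, show that $\varphi$ satisfies the hypotheses $\int_M\varphi^2\geq 0$ and $\int_M\varphi\wedge F\geq 0$ of the key Lemma~\ref{3l13}. Applying that lemma produces, for every $m\in\mathbb{N}$, a smooth positive $(1,1)$-form $p_m$ and a smooth $f_m\in C^\infty(M)_0$ with $\|\varphi+\widetilde{\mathcal{D}}^+_J(f_m)-p_m\|_{L^2(M)}<1/m$.

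Next I would extract the limiting current. The mass estimate $\int_Mp_m\wedge F\to 2\sqrt{a(1+a)}$, combined with the Lefschetz-type identities from Lemma~\ref{3l1} and Lemma~\ref{2l10}, yields a uniform $L^2$ bound on $(\wedge p_m)^{1/2}$ and hence a uniform $L^1$ bound on $\wedge p_m$, while $p_m/(\wedge p_m)$ stays bounded in $L^\infty$; a subsequence of $\{p_m\}$ therefore converges weakly as currents. The decisive input is Claim~\ref{weak strong}: because $\widetilde{\mathcal{D}}^+_J(f_m)$ is controlled only in $L^1$, one routes the argument through the representation $\widetilde{\mathcal{D}}^+_J(f_m)=2dd^*\mathbb{G}(f'_mF)$ of Proposition~\ref{3p6'} and tests against $hF$ with $h\in L^{t'}$, $t'>2$, obtaining weak $L^s_1$ and strong $L^t$ convergence $f_m\to f_0$ for all $s<4/3$ and $t<2$. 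Elliptic regularity of $\mathbb{G}$ applied to $f'_0\in L^q$ then upgrades $f_0$ to $L^q_2(M)_0$ for a fixed $q\in(1,2)$, and the limit defines $p=\varphi+\widetilde{\mathcal{D}}^+_J(f_0)$ as a positive $(1,1)$-current with $\wedge p\in L^1$ and $p/(\wedge p)\in\Lambda^{1,1}_\mathbb{R}\otimes L^\infty$, so that $p$ itself lies in $L^1$.

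Finally I would read off the four properties of $P:=p+t_0F=\widetilde{\omega}_1+\widetilde{\mathcal{D}}^+_J(f_0)$. Positivity of $p$ as a weak limit of the nonnegative $p_m$ gives $P-t_0F=p\geq 0$, i.e.\ $P\geq t_0F$, and the $L^1$ property follows from $\wedge P=\wedge p+t_0\wedge F\in L^1$ with $P/(\wedge P)\in L^\infty$. For closedness I note that $\widetilde{\omega}_1\in\mathcal{Z}^+_J$ is $d$-closed and that $\widetilde{\mathcal{D}}^+_J(f_0)=d\widetilde{\mathcal{W}}(f_0)$ is $d$-exact, so $dP=0$ in the sense of currents; the same exactness exhibits $P-\widetilde{\omega}_1$ as a $d$-exact current, whence $P$ is homologous to $\widetilde{\omega}_1$. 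I expect the genuine difficulty to be concentrated entirely in the compactness step of the second paragraph: the only a priori bound on $\widetilde{\mathcal{D}}^+_J(f_m)$ is in $L^1$, which (as Xu observed in the integrable case) does \emph{not} control $f_m$ in $L^2$, so the passage to the limit must be carried out through the weak-$L^s_1$/strong-$L^t$ estimates of Claim~\ref{weak strong} in order to guarantee both that the limit current $p$ is well defined and that $f_0$ carries enough regularity to make the identity $p=\varphi+\widetilde{\mathcal{D}}^+_J(f_0)$ meaningful.
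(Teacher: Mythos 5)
Your proposal is correct and follows essentially the same route as the paper: the paper's proof of Proposition \ref{4p1} is precisely the construction you describe, namely forming $\varphi=\widetilde{\omega}_1-t_0F$, checking $\int_M\varphi^2=0$ and $\int_MF\wedge\varphi=2\sqrt{a(1+a)}>0$, applying Lemma \ref{3l13} to get the approximating pairs $(p_m,f_m)$, and passing to the limit in the sense of currents via Claim \ref{weak strong}. Your identification of the compactness step as the genuine difficulty — the $L^1$-only control on $\widetilde{\mathcal{D}}^+_J(f_m)$, handled through the representation of Proposition \ref{3p6'} and the duality estimates against $hF$ — matches the paper's own emphasis exactly.
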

 \begin{rem}\label{4r2}
 (1)If $J$ is integrable, which is tamed by $\omega_1$, then $h^-_J=b^+-1$ since $\omega_1\in\mathcal{H}^+_g(M)$.
 By the Dolbeault decomposition (cf. Remark \ref{2r2}, or \cite{BHPV,DLZ1}),
 it is easy to see that $b^1=even$.
 On the other hand, for any compact complex surface, if $b^1=even$, then there exists a symplectic from $\omega$
 by which the integrable complex structure $J$ is tamed.
 Therefore, for any compact complex surface, $b^1=even$ if and only if there exists a symplectic form $\omega$ by which
 the integrable complex structure $J$ is tamed.
 Hence Theorem {\rm\ref{1t1}} is an affirmative answer to the Kodiria conjecture.
 The key ingredients in the unified proof of the Kodaira conjecture by N. Buchdahl in {\rm\cite{B}} are Theorem {\rm11} in {\rm\cite{B}} (i.e., Proposition {\rm\ref{4p1}}), Y.-T. Siu's theorem {\rm\cite{S2}} on the analyticity of the sets associated with the Lelong numbers of closed positive currents, and J.-P. Demailly's result {\rm\cite{D2}} on the smoothing of closed positive (1,1)-currents.

 (2)Taubes studies Donaldson's ``tamed to compatible" question in {\rm\cite{T2}}.
 He constructs an almost K\"{a}hler form in the class $[\omega]$ for a generic almost complex structure tamed by a symplectic form $\omega$ on a 4-manifold $M$ with $b^+=1$.
 To construct the almost K\"{a}hler form, Taubes' strategy is first to construct a closed positive $(1,1)$ current $\Phi_{\mathcal{K}}$ in class $[\omega]$ by irreducible $J$-holomorphic subvarieties.
 This special current satisfies
 $$\mathcal{K}^{-1}t^4<\Phi_{\mathcal{K}}(\sqrt{-1}f_B\sigma\wedge\overline{\sigma})<\mathcal{K}t^4,$$
 where $\mathcal{K}>1$ is a constant, $B$ is a ball of radius $t$, $\sigma$ denotes a unit length section of $\Lambda^{1,0}M\mid_B$ and $f_B$ denotes the characteristic function of $B$ (cf. Proposition {\rm1.3} in {\rm\cite{T2})}.
 To obtain a genuine almost K\"{a}hler form, Taubes smooths currents by a compact supported, closed 4-form on $TM$ which represents the Thom class in the compactly supported cohomology of $TM$ (cf. {\rm\S 1.6} of {\rm\cite{BT}}).
 \end{rem}

 M. Lejmi \cite{L2} shows that any almost complex manifold $(M,J)$ of dimension 4 has the local symplectic property, i.e. $\forall p\in M$, there is a local symplectic form $\omega_p=d{\tau_p}$ compatible with $J$ on a neighborhood, $U_p$, of $p$, where $\tau_p\in\Omega^1_{\mathbb{R}}|_{U_p}$. Note that as a trivial example, any complex manifold has the local symplectic property, hence almost complex manifolds with the local symplectic property can be regarded as a generalization of complex manifold.
 On the other hand, R. Bryant, M. Lejmi \cite{Br1,Br2,L2} showed that the almost complex structure underlying a non-K\"{a}hler, nearly K\"{a}hler 6-manifold ( in particular, the standard almost complex structure of $S^6$) can not be compatible with any symplectic form, even locally.
 Recall that for any closed positive $(1,1)$-current on an analytic variety, one can define Lelong number (cf. \cite{D3,GH,K}).
 By using locally symplectic form $\omega_p$, we will define Lelong number for any closed positive almost complex (1,1)-current on an almost Hermitian 4-manifold $(M,g,J,F)$ in Appendix \ref{Lelong} (cf. \cite{D4,Elk2,HL4,HL5,HL3,LZ2,RT2,Zha}).

 \vskip 6pt

 In the remainder of this section, we will devote to proving our main theorem (Theorem \ref{1t1}).
   To prove Theorem \ref{1t1}, we will study strictly $J$-plurisubharmonic functions,
   closed strictly positive $(1,1)$-current $\widetilde{\mathcal{D}}^+_J(f)$,
    Lelong numbers, the decomposition theorem
    and the regularization of almost K\"{a}hler currents in appendices A, B, C. With the results in appendices,
 we now prove Theorem \ref{1t1} by the similar method in \cite{B}, in particular, by using Proposition \ref{4p1}.

 {\bf Proof of Theorem \ref{1t1}}: By Proposition \ref{4p1}, we have a positive $d$-closed almost complex $(1,1)$-current
 \begin{equation}\label{4eq5}
 P=p+t_0F=\widetilde{\omega}_1+\widetilde{\mathcal{D}}^+_J(f_0)\geq t_0F
 \end{equation}
 on $(M,g,J,F)$ which is tamed by the symplectic form $\omega_1$, it follows that
  $P$ is an almost K\"{a}hler current and $SuppP=M$.
  To complete the proof of Theorem \ref{1t1}, by using the almost K\"{a}hler current $P$ we will construct an almost K\"{a}hler form.
 Let $\nu_1(x,P)$ denote the Lelong number of $P$ at $x$ defined as follows: If $x\in supp~P$, we define $$\nu_1(x,\omega_1,r,P)=\int_{B(x,r)}P\wedge\omega_1,$$
 where $B(x,r):=\{ y\in M | \rho_g(x,y)\leq r \}$, $\rho_g(x,y)$ is the geodesic distance of points $x,y$ with respect to the almost K\"ahler metric $g$.
  And $$\nu_1(x,P)=\lim_{r\rightarrow 0}r^{-2}\nu_1(x,\omega_1,r,P).$$
  For more details, see Definition \ref{Lelong 1} in Appendix \ref{Lelong}.
 For $c>0$, the upperlevel set
 \begin{equation}\label{4eq6}
 E_c(P):=\{x\in M \, |\,\nu_1(x,P)\geq c \}
 \end{equation}
 is a $J$-analytic subset (cf. Appendix \ref{Lelong} or \cite[Definition 2]{Elk2} for the definition) of $M$ of dimension (complex) $\leq 1$
  by the decomposition theorem (will be proven in Appendix \ref{Siu}, see Theorem \ref{Theorem A} and Remark \ref{7.6})
 which is analogous to Siu's Decomposition Formula \cite{S2}.

 \vskip 6pt

  By F. Elkhadhra's result (see Theorem $2$ in \cite{Elk2} or Lemma \ref{lemma2} in Appendix \ref{Lelong}),
  if $D$ is an irreducible $J$-holomorphic curve in $E_c(P)$, $$\nu_0:=inf\{\nu_1(x,P)\, |\, x\in D\},\,\,\, \nu_1(x,P)=\nu_0$$
   for almost all $x\in D$.
 If $D_1,\cdot\cdot\cdot,D_n$ are the irreducible $J$-holomorphic curves in $E_c(P)$ and $$\nu_i:=inf\{\nu_1(x,P)\, |\, x\in D_i\},$$ the $d$-closed $(1,1)$-current
 \begin{equation}\label{decom of current}
  T=P-\Sigma\nu_iT_{D_i}
 \end{equation}
 is positive and the $c$-upperlevel set $E_c(T)$ of this current are isolated singular points by Theorem \ref{Theorem A} and Remark \ref{7.6} in Appendix \ref{Siu}
  as in classical complex analysis.
  Here $T_{D_i}$ are the currents of integration on $D_i$.

  \vskip 6pt

  As done in \cite{B}, it is always possible to approximate the closed positive current $T$ by smooth real currents
  admitting a small negative part and that this negative part can be estimated in terms of the Lelong numbers of $T$
  and the geometry of $(M,g,J,F)$ (cf. Theorem \ref{theorem B} and Remark \ref{last remar} in Appendix \ref{Approximation}).
  Fix a number $K\geq 0$ such that the (1,1) curvature form, $R^{\nabla^1}$, of the second canonical connection $\nabla^1$ with respect to the metric
  $g$ (cf. \cite{G2})
 on $TM$ satisfies $R^{\nabla^1}\geq-KF\otimes Id_{TM}$
 and let $c>0$ be such that $t_0-cK>0$,
   where $R^{\nabla^1}=R^j_{ik\bar{l}}\theta^k\wedge\bar{\theta}^l$, $1\leq i,j,k,l\leq 2$,
   and $\{\theta^1,\theta^2\}$ is a coframe for $\Lambda^{1,0}_J$
    (see \cite{TWY} or Appendix \ref{Exponential}).
    Since the approximation theorem is locally proved, we can consider $J$-pseudoconvex domain.
   Notice that $(M,g,J,F)$ is a closed $\omega_1$-tamed almost Hermitian $4$-manifold which has the local
   symplectic property \cite{L2},
    hence for $\forall x \in M$, there is a neighborhood $U_x$ of $x$ and a $J$-compatible symplectic form $\omega_x$ on $U_x$
    such that $$\omega_x|_x=F|_x,\,\, F|_{U_x}=f_x\omega_x|_{U_x},$$ where $f_x\in C^\infty(U_x)$, $f_x(x)=1$.
    Fix a point $y\in U_x$. We may assume that $r$ is small enough  such that $B(y,r)\subset U_x$.
    On symplectic $4$-manifold $(U_x,\omega_x)$, we can define Lelong number for closed positive $(1,1)$-current on
    $(U_x,\omega_x)$
    $$
    \nu_2(y,\omega_x,r,T)=\frac{2}{r^2}\int_{B(y,r)}T\wedge\omega_x
    $$
    and
    $$
    \nu_2(y,x,T)=\lim_{r\rightarrow 0}\nu_2(y,\omega_x,r,T).
    $$
    Also we may assumed that $U_x$ is very samll and a strictly $J$-pseudoconvex domain,
     hence we can solve $\widetilde{\mathcal{W}},d^-_J$-problem on $U_x$
     (similar to $\bar{\partial}$-problem in classical complex analysis \cite{Hormander}).
     More details, see Appendix \ref{Hormander}.
     Thus, there exists a strictly $J$-plurisubharmonic function $f'_0$ on $U_x$ such that
     $$
      \widetilde{\mathcal{D}}^+_J(f_0)=\widetilde{\mathcal{D}}^+_J(f'_0),
     $$
     where $\widetilde{\mathcal{D}}^+_J(f_0)$ is defined in the equality (\ref{4eq5}),
     the solution $f'_0$ satisfies the above equation with respect to the metric $g_x(\cdot,\cdot)=\omega_x(\cdot,J\cdot)$.
     By Remark \ref{2r6}, $\widetilde{\mathcal{D}}^+_J(f'_0)=\mathcal{D}^+_J(f'_0)$ since $(U_x,g_x,J,\omega_x)$
     is an almost K\"{a}hler $4$-manifold.
     By Theorem \ref{comparison theo} in Appendix \ref{Lelong},
     $\nu_1(y,T)=f_x(y)\nu_2(y,x,T)$, $\forall y \in \sup T\cap U_x$.

     As done in classical complex analysis, using the regularization of almost K\"{a}hler currents (For more details, we refer to Appendix \ref{Demailly}, \ref{Approximation}. Notice that  Theorem \ref{theorem B} in Appendix \ref{Approximation} still holds for  $\widetilde{\mathcal{D}}^+_J(f_0)$
     since the approximation theorem is locally proved, see Remark \ref{last remar} in Appendix \ref{Approximation}.),
     there is a $1$-parameter family $T_{c,\varepsilon}$ of $d$-closed positive
     $(1,1)$-currents in the same homology class as $T=P-\Sigma\nu_iT_{D_i}$ in the sense of currents which weakly converges to $T$
     as $\varepsilon\rightarrow 0^+$, with $T_{c,\varepsilon}$ smooth off $E_c(T)$
     $$
     T_{c,\varepsilon}\geq (t_0-\min\{\lambda_{\varepsilon},c\}K-\delta_\varepsilon)F
     $$
     for some continuous functions $\lambda_\varepsilon$ on $M$ and constants $\delta_\varepsilon$ satisfying
     $\lambda_\varepsilon(x)\searrow\nu_1(x,T)$ for each $x\in M$ and $\delta_\varepsilon\searrow 0$ (see Buchdahl \cite[P.296]{B} or Appendix \ref{Demailly approximation}).
     Moreover, $\nu_1(x, T_{c,\varepsilon})=(\nu_1(x, T)-c)_+$ at each point $x$.
     For $\varepsilon$ sufficiently small therefore, $T_{c,\varepsilon}\geq t_1F$ for some $t_1>0$,
     where $t_1$ can be chosen arbitrarily close to $t_0$ if $c$ and $\varepsilon$ are small enough (see Buchdahl \cite[P.296]{B} or Appendix \ref{Demailly approximation}).

   The current $T_{c,\varepsilon}$ is smooth off the zero-dimensional singular set $E_c(T)$, that is, off a finite set of points since $M$ is compact.
   More details, see Appendix \ref{Siu decomposition}.
   Without loss of generality, we may assume that $E_c(T)=\{p_0\}$.
   There is a neighbourhood, $U_{p_0}$, of $p_0$ and a locally symplectic form $\omega_{p_0}=d\tau_{p_0}$ on $U_{p_0}$ that
     is compatible with $J|_{U_{p_0}}$, where $\tau_{p_0}\in\Omega^1_{\mathbb{R}}|_{U_{p_0}}$.
   Without loss of generality, we may assume that $U_{p_0}$ is $\omega_{p_0}$-convex which is also called $J$-pseudoconvex
   (for the definition of $J$-pseudoconvex we refer to Appendix \ref{psh}, and for more details,
      please see \cite{EG,HL1,Pali}).
   Moreover, we assume that $U_{p_0}$ is a strictly $J$-pseudoconvex domain in the almost complex $4$-manifold $(\mathbb{R}^4,J)$ (also see Appendix \ref{Hormander}).
  By Lemma \ref{current app} (which solves $\widetilde{\mathcal{W}},d^-_J$-problem),
    there exists a strictly $J$-plurisubharmonic function $f$ such that $T_{c,\varepsilon}=d\widetilde{\mathcal{W}}(f)=\widetilde{\mathcal{D}}^+_J(f)$ since $T_{c,\varepsilon}|_{U_{p_0}}$
    is a closed positive $(1,1)$-current.
    Also we have the following estimate (see Theorem \ref{app 1} in Appendix \ref{Hormander}):
   \begin{equation}\label{estimate f}
  \|f\|_{L^2(U_{p_0},\varphi)}\leq\frac{1}{\sqrt{c}}\|\widetilde{\mathcal{W}}(f)\|_{L^2(U_{p_0},\varphi)},
   \end{equation}
   where $\varphi$ is a strictly $J$-plurisubharmonic function
   satisfying $$\sum_{i,j}(\partial_{J_i}\bar{\partial}_{J_j}\varphi)\xi^i\bar{\xi}^j\geq c\sum_i|\xi^i|^2,$$ $\xi\in\mathbb{C}^2$.
    Note that when $U_{p_0}$ is very small, we can choose $\varphi=|z|^2=|z_1|^2+|z_2|^2$, $(z_1,z_2)\in\mathbb{C}^2$
   which is the Darboux coordinate chart on $(U_{p_0},\omega_{p_0})$ (see Proposition 6.4 in \cite{HL3}).
   Using a standard modifying function as in \cite[p.147]{GT},
    $f$ can be smoothed in a neighbourhood of $p_0$ to a family $f_t$ of smooth strictly $J$-plurisubharmonic functions converging to $f$.

    Recently, F.R. Harvey, H.B. Lawson JR. and S. Pli\'{s} got a result in \cite{HLP}  (see Theorem 4.1 in \cite{HLP} or Proposition \ref{plurisubharmonic prop}): Suppose $(X,J)$ is an almost complex manifold which is $J$-pseudoconvex, and let $f$ be a $J$-plurisubharmonic function on $(X,J)$.
    Then there exists a decreasing sequence $f_j$ of smooth strictly $J$-plurisubharmonic
         functions point-wise decreasing down to $f$.

    On an annular region surrounding $p_0$ the convergence of this sequence is uniform in $C^k$ for any $k$
   with respect to the almost K\"{a}hler metric $g'_J(\cdot,\cdot):=\omega_{p_0}(\cdot,J\cdot)$.
         (by Lemma 4.1 and the accompanying discussing in \cite{GT}).
       Choose two small neighbourhoods, $U'_{p_0}$ and $U''_{p_0}$ of $p_0$ satisfying $p_0\in U'_{p_0}\subset\subset U''_{p_0}\subset\subset U_{p_0}$.
   Construct a cut-off function:
    \begin{equation}
 \rho(x)=\left\{
  \begin{array}{ll}
  1 & ~ x\in M\setminus U_{p_0}'',  \\
  0 & ~ x\in \overline{U}_{p_0}'.
  \end{array}
 \right.
 \end{equation}
  It is clear that $\rho f+(1-\rho)f_t$ is a smooth strictly $J$-plurisubharmonic function for $t$ sufficiently small which agrees with $f$
    outside the annulus.
   Construct a smooth closed strictly positive $(1,1)$ form $\tau_{c,\varepsilon}$ for $\varepsilon>0$ as follows:
    \begin{equation}
 \tau_{c,\varepsilon}=\left\{
  \begin{array}{ll}
  T_{c,\varepsilon} &  {\rm on}~ M\setminus U_{p_0},  \\
  d\widetilde{\mathcal{W}}(\rho f+(1-\rho)f_t) & {\rm on}~ \overline{U}_{p_0}.
  \end{array}
 \right.
 \end{equation}
  Hence the current $T_{c,\varepsilon}$ is $\widetilde{\mathcal{D}}^+_J$-homologous to the smooth closed strictly positive $(1,1)$-form $\tau_{c,\varepsilon}$.
    Moreover, for $0<t_1<t_0$, there is some $c$ and $\varepsilon$ such that $\tau_{c,\varepsilon}\geq t_1F$ (see Buchdahl \cite[P.296]{B}).
  Thus, $\tau_{c,\varepsilon}$ is a smooth almost K\"{a}hler form on $(M,J)$.
    This completes the proof of Theorem \ref{1t1}. ~~~~$\Box$

   \vskip 6pt

  In the following three appendixes, we will discuss $J$-plurisubharmonic functions as in classical complex analysis,
  minimal principle for $J$-plurisubharmonic functions,
  Lelong numbers of closed positive $(1,1)$-currents on almost complex $4$-manifold,
  Siu's decomposition theorem for closed positive $(1,1)$-currents on tamed almost complex $4$-manifold and Demailly's regularization theorem for closed positive $(1,1)$-currents on tamed almost complex $4$-manifold.
  These notations and results extend various foundational notations and results from pluripotential theory,
   used in the main argument in Section \ref{last section}, to the almost-complex case.

\begin{appendices}

\section{Elementary pluripotential theory}
\renewcommand{\theequation}{A.\arabic{equation}}
\setcounter{equation}{0}
  This appendix is devoted to discussing $J$-plurisubharmonic functions,
   minimal principle for $J$-plurisubharmonic functions,
    $\mathcal{\widetilde{W}}$, $d^-_J$-problem on tamed almost complex $4$-manifolds,
    and the singularities of $J$-plurisubharmonic functions.

  \medskip

\subsection{$J$-plurisubharmonic functions on almost complex manifolds}\label{psh}

  In this subsection, we will discuss $J$-plurisubharmonic functions on almost complex manifolds as done in classical complex analysis.
 We will adopt classical notations from geometric measure theory \cite{DS,Elk,Elk2,HL3,HL4,HL5,IR,Pali,Rosay,Rosay2}.

 Let $(M,J)$ be an almost complex manifold of real dimension $2n$.
 We let $\mathcal{D}^{p,q}(M)$ denote the space of $C^\infty$ $(p,q)$-forms on $M$ with compact support
 and let $\mathcal{D}'^{p,q}(M)=\mathcal{D}^{n-p,n-q}(M)'$ be the space $(p,q)$-currents on $(M,J)$.
 We also let $\mathcal{E}^{p,q}(M)$ be the space of $C^\infty$ $(p,q)$-forms on $(M,J)$ and $\mathcal{E}'^{p,q}(M)=\mathcal{E}^{n-p,n-q}(M)'$ denote the space of compactly supported $(p,q)$-currents on $(M,J)$.
 Suppose $T\in\mathcal{D}'^{p,q}(M)$.
 We let Singsupp$T$ denote the smallest closed subset $A$ of $M$ such that $T$ is a smooth current on $M\setminus A$.
 For $\varphi\in\mathcal{D}^{n-p,n-q}(M)$, we let $(T,\varphi)=T(\varphi)$ denote the pairing of $T$ and $\varphi$.
 We note that if $M$ is a closed manifold and $T$, $\varphi$ is closed, then $(T,\varphi)=(T\cdot\varphi)$, where $(T\cdot\varphi)$ is the intersection number given by the cup-product (cf. \cite{BT,HL3,HL4,HL5,GH,Pali}).
 \begin{defi}\label{5A2}
   (cf. {\rm\cite{JS,Elk2}})
 (1) A real $(p,p)$-form on $(M,J)$ is strictly positive (positive) if it is strictly positive (positive) at each point.
          A real $(p,p)$-current $T$ on $M$ is positive if $(T,\varphi)$ is positive for all test strictly positive $(n-p,n-p)$-forms $\varphi$ on $(M,J)$.\\
 (2) A real $(p,p)$-current $T$ on $(M,J)$ is strictly positive if there is a strictly positive $(1,1)$-form $F$ on $(M,J)$such that
     $T-F^p$ is positive;
     $T$ is said to be strictly positive at a point $x\in M$ if there is a neighborhood $U$ of $x$ such that $T|_U$ is a strictly positive current on $U$.
 \end{defi}

 Note that $T$ is strictly positive on $(M,J)$ if and only if $T$ is strictly positive at each point of $M$.
 By the definition above, a smooth form is strictly positive (positive) as a form if and only if it is strictly positive (positive) as a current.
 If a $(p,p)$-current $T$ is strictly positive (positive), we write $T>0$ $(T\geq 0)$.
 We also write $S>T$ $(S\geq T)$ if $S-T>0$ $(S-T\geq0)$, for $(p,p)$-currents $S$, $T$.
 A strictly positive $(1,1)$-current on an almost complex manifold is called an almost K\"{a}hler current \cite{T2,Zha}
 (Since a strictly positive $(1,1)$-current on a complex manifold $(M,J)$ is called K\"{a}hler current \cite{D3,GH}.).

  \vskip 6pt

 In fact, for any real-valued $C^\infty$-function $u$ we have $\partial_J\bar{\partial}_Ju=-\bar{\partial}_J\partial_Ju$ (see (\ref{2eq9})).
  We can define the complex Hessian operator (cf. Harvey-Lawson \cite{HL3})
  $$\mathcal{H}: C^\infty(M)\rightarrow \Gamma(M,\Lambda^{1,1}_J)$$
  by $\mathcal{H}(u)(X,Y):=(\partial_J\bar{\partial}_Ju)(X,\overline{Y})$ for $X,Y\in TM^{1,0}$.
  The real form $H(u)$ of the complex Hessian $\mathcal{H}$ is given by the polarization of the real quadratic form
  $$
  H(u)(X,Y):={\rm Re}\mathcal{H}(u)(X-\sqrt{-1}JX,Y-\sqrt{-1}JY),
  $$
  where $X,Y\in TM$.
 Of course, it is enough to define the quadratic form
 $$
  H(u)(X,X):={\rm Re}\mathcal{H}(u)(X-\sqrt{-1}JX,X-\sqrt{-1}JX)
  $$
  for all real vector fields $X$ and it is a real-valued form.
  By a simple calculation (\cite[Lemma 4.1]{HL3}), we can obtain that $H(u)$ is given by
  $$
  H(u)(X,X)=\{XX+(JX)(JX)+J([X,JX])\}u
  $$
  defined for all $X\in TM$ (see \cite[Lemma 4.1]{HL3}).
  \begin{defi}
  (cf. {\rm Harvey-Lawson \cite{HL3}})
  A smooth function $u$ on $(M,J)$ is called
   $J$-plurisubharmonic if $H_x(u)\geq 0$ for each $x\in M$.
  \end{defi}
   This notion extends directly to the space of distributions by requiring $\sqrt{-1}\partial_J\bar{\partial}_Ju$ to be positive.
  The definition of $J$-plurisubharmonic function could be broadened to the space of upper semi-continuous functions on $M$ takinng values in
  $[-\infty,\infty)$.
  Denote by ${\rm USC(M)}$ the space of upper semi-continuous functions on $M$.
  A function $\varphi$ which is $C^2$ in a neighborhood of $x\in M$ is called a test function for $u\in {\rm USC(M)}$ at $x$
  if $u-\varphi\leq 0$ near $x$ and $u=\varphi$ at $x$.
  A function $u\in {\rm USC(M)}$ is called $J$-plurisubharmonic on $M$
  if for each $x\in M$ and each test function $\varphi$ for $u$ at $x$ we have $H_x(\varphi)\geq 0$.
  On the other hand, an upper semi-continuous function $u$ on $(M,J)$ is said to be $J$-plurisubharmonic in the standard sense
  if its restriction to each $J$-holomorphic curve in $(M,J)$ is subharmonic (for detials, see \cite{HL3,Pali}).
  If the function $u$ is of class $C^2$, there is a simple characterization.
  For any tangent vector field $X\in TM$ one must have
  \begin{equation}\label{defi psh}
     dd^c_Ju(X,JX)\geq 0,
  \end{equation}
  where the twisted exterior differential $d^c_J=(-1)^pJdJ$ acting on $p$-forms, in particular $d^c_Ju(X)=-du(JX)$.
  We say that a function $u$ of class $C^2$ is strictly $J$-plurisubharmonic if $dd^c_Ju(X,JX)> 0$.
  The manifold $(M,J)$ is said to be (strictly) $J$-pseudoconvex if it admits a smooth exhaustion function $\phi:M\rightarrow \mathbb{R}$
   which is (strictly) $J$-plurisubharmonic.
  If $J=J_{st}$ is the standard complex structure on $\mathbb{C}^n$
  , $d^c_{J_{st}}=d^c$.
  Moreover, we have the following integration by parts formula.

  \begin{prop}\label{integration by part}
    (cf. {\rm Demailly \cite[Formula 3.1 in Chapter 3]{D3}})
  Let $(M,J)$ be a closed almost complex $2n$-manifold and let $\alpha,\beta$ be smooth forms of pure bidegrees $(p,p)$ and $(q,q)$ with $p+q=n-1$.
  Then
  $$
  \int_M\alpha\wedge dd^c_J\beta-dd^c_J\alpha\wedge \beta=0.
  $$
  \end{prop}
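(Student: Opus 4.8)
The plan is to establish the identity by integrating by parts twice — once with the exterior derivative $d$ and once with the twisted operator $d^c_J$ — which reduces the whole statement to the anticommutator $dd^c_J+d^c_Jd$, and then to observe that the term by which this anticommutator fails to vanish (the genuinely almost-complex, Nijenhuis contribution) is annihilated by the bidegree condition $p+q=n-1$.

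First I would record two structural properties of $d^c_J=(-1)^kJdJ$ on $k$-forms. Since $J$ is extended to forms as an algebra automorphism, acting on $\Lambda^{r,s}_J$ as the scalar $(\sqrt{-1})^{s-r}$ (so that $J^2=(-1)^k$ on $k$-forms), a direct computation shows that $d^c_J$ is a graded antiderivation, $d^c_J(\eta\wedge\zeta)=d^c_J\eta\wedge\zeta+(-1)^{\deg\eta}\eta\wedge d^c_J\zeta$. Moreover $d^c_J$ obeys a Stokes theorem on the closed manifold $M$: for any smooth $(2n-1)$-form $\gamma$ the form $dJ\gamma$ has top degree, hence type $(n,n)$, on which $J$ acts as the identity, so $d^c_J\gamma=-JdJ\gamma=-dJ\gamma$ and $\int_M d^c_J\gamma=0$ by the ordinary Stokes theorem.

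With these in hand I would integrate by parts. Stokes for $d$ gives $\int_M\alpha\wedge dd^c_J\beta=-\int_M d\alpha\wedge d^c_J\beta$; then Stokes for $d^c_J$ together with the Leibniz rule (and $\deg d\alpha=2p+1$) gives $\int_M d\alpha\wedge d^c_J\beta=\int_M d^c_Jd\alpha\wedge\beta$. Combining these,
$$\int_M\alpha\wedge dd^c_J\beta-\int_M dd^c_J\alpha\wedge\beta=-\int_M(dd^c_J+d^c_Jd)\alpha\wedge\beta,$$
so it remains only to see that the integrand on the right vanishes.

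The heart of the matter is the anticommutator $dd^c_J+d^c_Jd$, and this is the step I expect to be the main obstacle, since in the non-integrable case it does not vanish. Expanding $d=A_J+\partial_J+\bar{\partial}_J+\bar{A}_J$ as in (\ref{2eq4}) and the companion expansion $d^c_J=\sqrt{-1}(\bar{\partial}_J-\partial_J-A_J+\bar{A}_J)$, and feeding in the bidegree-graded consequences of $d^2=0$ (namely $A_J^2=\bar{A}_J^2=0$, $\{A_J,\partial_J\}=-\bar{\partial}_J^2$ and $\{\bar{\partial}_J,\bar{A}_J\}=-\partial_J^2$), every mixed Nijenhuis term cancels and one is left with $dd^c_J+d^c_Jd=4\sqrt{-1}(\bar{\partial}_J^2-\partial_J^2)$. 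The crucial observation is that this operator shifts bidegree by $(0,2)$ or $(2,0)$. Since $\alpha$ is of type $(p,p)$ and $\beta$ of type $(q,q)$ with $p+q=n-1$, the forms $\bar{\partial}_J^2\alpha\wedge\beta$ and $\partial_J^2\alpha\wedge\beta$ lie in $\Lambda^{n-1,n+1}_J$ and $\Lambda^{n+1,n-1}_J$, which vanish identically on an $n$-dimensional (complex) manifold. Hence $(dd^c_J+d^c_Jd)\alpha\wedge\beta=0$ pointwise and the identity follows. I note that only the structure of the anticommutator — that it is built from $\partial_J^2$ and $\bar{\partial}_J^2$, which are pure bidegree shifts — is load-bearing, so the precise constant $4\sqrt{-1}$ is immaterial to the argument.
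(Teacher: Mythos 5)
Your proof is correct, but it is organized differently from the paper's. The paper applies Stokes' theorem only once, to $d\bigl(\alpha\wedge d^c_J\beta-d^c_J\alpha\wedge\beta\bigr)$, and then closes the argument with the \emph{pointwise, first-order} identity $d\alpha\wedge d^c_J\beta=-d^c_J\alpha\wedge d\beta$: both sides are expanded into their bidegree components (including the Nijenhuis cross-terms $A_J\alpha\wedge\bar{A}_J\beta$ and $\bar{A}_J\alpha\wedge A_J\beta$), and only the $(n,n)$-components of each survive, which match up to sign. You instead integrate by parts twice — which requires the additional observations that $d^c_J$ is a graded antiderivation and that it satisfies its own Stokes theorem, both of which you justify correctly (on $(2n-1)$-forms, $d^c_J\gamma=-d(J\gamma)$ since $J$ is the identity on top-degree forms) — thereby reducing everything to the \emph{second-order operator} identity $dd^c_J+d^c_Jd=4\sqrt{-1}(\bar{\partial}_J^2-\partial_J^2)$, which you derive from the graded pieces of $d^2=0$ (your identities $A_J^2=\bar{A}_J^2=0$, $\{A_J,\partial_J\}=-\bar{\partial}_J^2$, $\{\bar{\partial}_J,\bar{A}_J\}=-\partial_J^2$ are the correct ones for the paper's convention $A_J:\Omega^{p,q}_J\to\Omega^{p-1,q+2}_J$ in (\ref{2eq3})--(\ref{2eq4})), and then kill the integrand because $\bar{\partial}_J^2\alpha\wedge\beta$ and $\partial_J^2\alpha\wedge\beta$ have bidegrees $(n-1,n+1)$ and $(n+1,n-1)$. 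Both arguments ultimately rest on the same two facts — Stokes' theorem and the vanishing of top-degree forms of bidegree $\neq(n,n)$ — but the trade-off is clear: the paper's route is shorter and needs less machinery, since it never touches second-order operators or a Stokes theorem for $d^c_J$; your route costs more setup but isolates a reusable structural fact, namely the exact anticommutator $dd^c_J+d^c_Jd$, which cleanly quantifies the failure of the classical $\partial\bar{\partial}$-formalism in the non-integrable case and makes transparent that the hypothesis $p+q=n-1$ is precisely what annihilates it.
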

  \begin{proof}
  Note that
  $$
  d(\alpha\wedge d^c_J\beta-d^c_J\alpha\wedge \beta)=\alpha\wedge dd^c_J\beta-dd^c_J\alpha\wedge \beta+(d\alpha\wedge d^c_J\beta+d^c_J\alpha\wedge d\beta).
  $$
  Hence, by Stokes' theorem, we get
  $$
  \int_M\alpha\wedge dd^c_J\beta-dd^c_J\alpha\wedge \beta=-\int_Md\alpha\wedge d^c_J\beta+d^c_J\alpha\wedge d\beta.
  $$
  As all forms of total degree $2n$ and bidegree$\neq(n,n)$ are zero, we have
  $$
  d\alpha\wedge d^c_J\beta=-\sqrt{-1}\cdot(\partial_J\alpha\wedge\bar{\partial}_J\beta-\bar{\partial}_J\alpha\wedge\partial_J\beta
  +A_J\alpha\wedge \bar{A_J}\beta-\bar{A_J}\alpha\wedge A_J\beta)
  $$
  and
   $$
 d^c_J\alpha\wedge d\beta=\sqrt{-1}\cdot(\partial_J\alpha\wedge\bar{\partial}_J\beta-\bar{\partial}_J\alpha\wedge\partial_J\beta
  +A_J\alpha\wedge \bar{A_J}\beta-\bar{A_J}\alpha\wedge A_J\beta),
  $$
  where $A_J$ and $\bar{A_J}$ are defined in Section $2$ (cf. (\ref{2eq4})).
  Therefore,
  $ d\alpha\wedge d^c_J\beta=-d^c_J\alpha\wedge d\beta$.
  \end{proof}

  By a simple calculation, we get
  $$dd^c_Ju=2\sqrt{-1}\partial_J\bar{\partial}_Ju+\sqrt{-1}(\bar{A_J}\bar{\partial}_Ju-\partial^2_Ju)+\sqrt{-1}(\bar{\partial}^2_Ju-A_J\partial_Ju)$$
  and
   $$d^c_Jdu=-2\sqrt{-1}\partial_J\bar{\partial}_Ju+\sqrt{-1}(\bar{A_J}\bar{\partial}_Ju-\partial^2_Ju)+\sqrt{-1}(\bar{\partial}^2_Ju-A_J\partial_Ju).$$
  Hence, a $C^2$ function $u$ is $J$-plurisubharmonic if and only if the $(1,1)$ part of $dd^c_Ju$ is positive.
  Harvey and Lawson have proven that the notion of $J$-plurisubharmonic is equivalent to the $J$-plurisubharmonic in the standard sense
   (cf. Harvey-Lawson \cite[Theorem 6.2]{HL3}).
   Harvey and Lawson also introduce the notion of Hermitian plurisubharmonic on an almost Hermitian manifold $(M,g,J)$.
   Denote the Riemannian Hessian operator by $$({\rm Hess}~u)(X,Y):= XYu-(\nabla_XY)u$$ for $X,Y\in TM$,
   where $\nabla$ is the Levi-Civita connection.
   A function $u\in C^\infty(M)$ is then defined to be Hermitian plurisubharmonic if ${\rm Hess^C}u\geq 0$,
   where
   $$
   ({\rm Hess^C}u)(X,Y):=({\rm Hess}~u)(X,Y)+({\rm Hess}~u)(JX,JY).
   $$
   In general, Hermitian plurisubharmonic does not agree with the standard $J$-plurisubharmonic (cf. Harvey-Lawson \cite[Section 9]{HL3}).
   But we have the following proposition proved by Harvey and Lawson:
   \begin{prop}\label{lawson}
   (cf. {\rm Harvey-Lawson \cite[Theorem 9.1]{HL3}})
   Let $(M,g,J)$ be an almost Hermitian manifold.
   If the associated K\"{a}hler form $\omega(\cdot,\cdot)=g(J\cdot,\cdot)$ is closed, that is, $(M,g,J,\omega)$ is almost K\"{a}hler,
   then the notion of Hermitian plurisubharmonic coincides with the notion of $J$-plurisubharmonic.
   \end{prop}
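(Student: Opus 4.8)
The plan is to reduce both notions to explicit quadratic forms in a single real tangent vector $X$, and to show that under the almost Kähler hypothesis $d\omega=0$ these two quadratic forms coincide pointwise; the equivalence of the two positivity conditions $H_x(u)\geq 0$ and ${\rm Hess^C}u\geq 0$ is then immediate, since both are diagonal positivity statements for symmetric bilinear forms.

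First I would record the two Hessians on the diagonal. The text already supplies, for a $C^2$ function $u$,
$$H(u)(X,X)=\{XX+(JX)(JX)+J([X,JX])\}u,$$
while the complexified Riemannian Hessian, directly from $({\rm Hess}\,u)(X,Y)=XYu-(\nabla_XY)u$, reads
$$({\rm Hess^C}u)(X,X)=XXu+(JX)(JX)u-(\nabla_XX+\nabla_{JX}(JX))u.$$
Subtracting, the second-order terms cancel and only a first-order term survives, namely
$$H(u)(X,X)-({\rm Hess^C}u)(X,X)=du\big(\nabla_XX+\nabla_{JX}(JX)+J[X,JX]\big).$$
Using that $\nabla$ is torsion-free (so $[X,JX]=\nabla_X(JX)-\nabla_{JX}X$), the Leibniz rule $\nabla_X(JX)=(\nabla_XJ)X+J\nabla_XX$, the identity $J^2=-\mathrm{id}$, and the anticommutation $J(\nabla_XJ)=-(\nabla_XJ)J$, I would simplify the argument of $du$ to the purely algebraic vector field
$$V(X):=\nabla_XX+\nabla_{JX}(JX)+J[X,JX]=(\nabla_{JX}J)X-(\nabla_XJ)(JX).$$
Thus the two notions of plurisubharmonicity agree as soon as $V(X)\equiv 0$.

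The heart of the argument, and the only place the almost Kähler hypothesis is used, is showing $V\equiv 0$. Here I would invoke the standard identity expressing $\nabla J$ through $d\omega$ and the Nijenhuis tensor $N_J$ (cf. \cite{KN}); when $d\omega=0$ it collapses to $g((\nabla_AJ)B,C)=c\,g(N_J(B,C),JA)$ for a fixed constant $c$. Substituting this into both terms of $V(X)$ and pairing against an arbitrary $W$, I would exploit the pointwise symmetries of the Nijenhuis tensor, namely $N_J(JX,W)=-JN_J(X,W)$ together with the $J$-invariance $g(JA,JB)=g(A,B)$ of the metric, to find that the two contributions are equal and cancel. This yields $g(V(X),W)=0$ for every $W$, hence $V(X)=0$.

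I expect this Nijenhuis cancellation to be the main obstacle, and it is exactly where the hypothesis bites: for a general almost Hermitian metric $V$ does not vanish and the two notions genuinely differ, as recalled just before the statement from \cite[Section 9]{HL3}. The almost Kähler condition $d\omega=0$ is precisely what forces $\nabla J$ to be governed solely by $N_J$ and thereby makes the offending first-order term disappear. Once $V\equiv 0$ is established the quadratic forms coincide, so ${\rm Hess^C}u\geq 0$ if and only if $H_x(u)\geq 0$ at every point, which is the asserted equivalence of Hermitian plurisubharmonicity and $J$-plurisubharmonicity. I note finally that the precise value of the normalization constant $c$ is irrelevant: the same $c$ multiplies both terms, so the cancellation is robust to the convention chosen for $N_J$.
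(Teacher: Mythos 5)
Your proof is correct, but there is nothing in the paper to compare it against: Proposition \ref{lawson} is stated purely as a quotation of Theorem 9.1 in Harvey--Lawson \cite{HL3}, with no argument given, so your computation supplies a self-contained derivation exactly where the paper defers to the literature (and it follows what is essentially the standard route behind that theorem). Both pivotal steps check out. First, the reduction of $H(u)(X,X)-({\rm Hess^C}u)(X,X)$ to $du(V(X))$ with $V(X)=(\nabla_{JX}J)X-(\nabla_XJ)(JX)$ is a correct consequence of torsion-freeness of the Levi-Civita connection, the Leibniz rule $\nabla_X(JX)=(\nabla_XJ)X+J\nabla_XX$, and the anticommutation $J(\nabla_XJ)=-(\nabla_XJ)J$. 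Second, the cancellation works exactly as you predict: when $d\omega=0$ the standard almost Hermitian identity (cf. \cite{KN}) collapses to $g((\nabla_AJ)B,C)=c\,g(\mathcal{N}_J(B,C),JA)$ for a universal constant $c$, so $g((\nabla_{JX}J)X,W)=c\,g(\mathcal{N}_J(X,W),J^2X)=-c\,g(\mathcal{N}_J(X,W),X)$, while $g((\nabla_XJ)(JX),W)=c\,g(\mathcal{N}_J(JX,W),JX)=-c\,g(J\mathcal{N}_J(X,W),JX)=-c\,g(\mathcal{N}_J(X,W),X)$, using $\mathcal{N}_J(JX,\cdot)=-J\mathcal{N}_J(X,\cdot)$ and the $J$-invariance of $g$; these two equal quantities enter $V$ with opposite signs, hence $g(V(X),W)=0$ for every $W$ and $V\equiv 0$. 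Note that your argument in fact shows the two Hessian quadratic forms are pointwise equal in the almost K\"ahler case, which is slightly stronger than the stated equivalence of the positivity conditions, and since the paper defines Hermitian plurisubharmonicity only for smooth functions, the $C^2$ computation is all that the statement requires.
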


    Let $(M,g,J,\omega)$ be an almost K\"{a}hler manifold of (complex) dimension $n$.
   For any $p\in M$, assume $T_pM\cong\mathbb{C}^n$.
   Let $$
  B_1(p,\varepsilon_1):=\{\xi\in T_pM \mid |\xi|\leq \varepsilon_1\}
  $$
  and
   $$
  S_1(p,\varepsilon_1):=\{\xi\in T_pM \mid |\xi|=\varepsilon_1\}.
  $$
   Suppose that $\rho_g(p,q)$ is the geodesic distance of points $p$, $q$ with respect to $g$ (for details, see Chavel \cite{Cha}).
   Denote by
   $$
  B(p,\varepsilon_1):=\{q\in M \mid \rho_g(p,q)\leq \varepsilon_1\}
  $$
  and
  $$
  S(p,\varepsilon_1):=\{q\in M \mid  \rho_g(p,q)= \varepsilon_1\}.
  $$
   It is well known that for each $p\in M$, there exists $\varepsilon_2>0$ and a neighborhood $U$ of $p$ in $M$ such that
   for each $q\in U$, $\exp_q$ maps $B_1(p,\varepsilon_2)$ diffeomorphically onto an open set in $M$.
  Hence, for $\varepsilon_1<\varepsilon_2$, we have
  $$
  B(p,\varepsilon_1)=\exp B_1(p,\varepsilon_1)
  $$
  and
  $$
  S(p,\varepsilon_1)=\exp S_1(p,\varepsilon_1).
  $$
  Let $inj M$ be the injectivity radius of $M$ (for the detailed definition, we refer to Chavel \cite[Chapter III]{Cha}).
  \begin{prop}
  (cf. {\rm Chavel \cite[Theorem IX.6.1]{Cha}})
  Let $(M,g,J,\omega)$ be an almost K\"{a}hler manifold.
  Assume that the sectional curvature $K\leq \delta$ on $M$. Set $r=\min\{\frac{inj M}{2},\frac{\pi}{2\sqrt{\delta}}\}$,
  then $B(p,r)$ is strictly convex.
  \end{prop}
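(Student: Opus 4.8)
The plan is to recognize that this is a purely Riemannian statement: the almost complex structure $J$ and the form $\omega$ are irrelevant, since the geodesic balls $B(p,\varepsilon_1)$, the distance $\rho_g$, and the notion of (strict) convexity all depend only on the metric $g$. Accordingly I would deduce it from the classical convexity-radius estimate of comparison geometry, working throughout with the distance function $\rho(x):=\rho_g(p,x)$ and showing that along any minimizing geodesic the composite $\rho\circ\gamma$ is strictly convex.

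Next, working on the punctured ball $B(p,r)\setminus\{p\}$: since $r\le\tfrac12\,\mathrm{inj}\,M<\mathrm{inj}\,M$, this set contains no cut point of $p$, so $\rho$ is smooth there with $|\nabla\rho|\equiv1$ and $\mathrm{Hess}\,\rho(\nabla\rho,\cdot)=0$; and since $r<\tfrac{\pi}{\sqrt{\delta}}$, the hypothesis $K\le\delta$ keeps every point of $B(p,r)$ strictly inside the conjugate locus of $p$ by the Rauch comparison theorem, which is exactly what licenses the Hessian comparison theorem. The latter gives, on $B(p,r)\setminus\{p\}$,
\[
\mathrm{Hess}\,\rho\ \ge\ \sqrt{\delta}\,\cot\!\bigl(\sqrt{\delta}\,\rho\bigr)\,\bigl(g-d\rho\otimes d\rho\bigr).
\]
The coefficient $\sqrt{\delta}\cot(\sqrt{\delta}\,\rho)$ is strictly positive precisely when $\sqrt{\delta}\,\rho<\tfrac{\pi}{2}$, i.e.\ $\rho<\tfrac{\pi}{2\sqrt{\delta}}$, which is guaranteed by $r\le\tfrac{\pi}{2\sqrt{\delta}}$. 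Hence $\mathrm{Hess}\,\rho$ is positive definite on the hyperplane field $(\nabla\rho)^{\perp}$ throughout $B(p,r)\setminus\{p\}$, vanishing only in the radial direction $\nabla\rho$.

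Finally I would run the standard convexity argument. Given $q_1,q_2\in\overline{B(p,r)}$, let $\gamma:[0,1]\to M$ be a minimizing geodesic joining them (its image lies within the injectivity radius since $\rho_g(q_1,q_2)\le 2r\le\mathrm{inj}\,M$), and set $f(t):=\rho(\gamma(t))$. As $\nabla_{\dot\gamma}\dot\gamma=0$, at every $t$ with $\gamma(t)\ne p$ we have
\[
f''(t)=\mathrm{Hess}\,\rho(\dot\gamma,\dot\gamma)\ \ge\ \sqrt{\delta}\,\cot\!\bigl(\sqrt{\delta}\,f(t)\bigr)\,\bigl|\dot\gamma^{\perp}\bigr|^{2}\ \ge\ 0,
\]
and at an isolated crossing of $p$ the Lipschitz function $f$ has a convex corner; thus $f$ is convex on $[0,1]$. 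Suppose $f(t_0)\ge r$ for some $t_0\in(0,1)$; convexity forces $f(t_0)\le\max\{f(0),f(1)\}\le r$, hence $f(0)=f(1)=f(t_0)=r$ and $f\equiv r$ on $[0,1]$. But then $\gamma$ runs inside the sphere $S(p,r)$, so $\dot\gamma=\dot\gamma^{\perp}\ne0$ and the displayed inequality gives $f''>0$, contradicting $f\equiv r$. Therefore $\rho(\gamma(t))<r$ for all $t\in(0,1)$, which is precisely the strict convexity of $B(p,r)$.

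The step I expect to be most delicate is keeping the comparison valid all the way up to the stated radius: one must verify via Rauch that $K\le\delta$ prevents conjugate points before distance $\pi/\sqrt{\delta}$, so that $\rho$ remains smooth and the Hessian estimate holds on the whole of $B(p,r)\setminus\{p\}$, and one must treat with care the degenerate cases of $\gamma$ (passing through the center $p$, or running tangentially along a level sphere) where the positivity of $\mathrm{Hess}\,\rho$ could a priori fail to yield strictness.
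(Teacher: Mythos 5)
The paper itself offers no argument for this proposition: it is stated with a bare citation to Chavel's Theorem IX.6.1 (Whitehead's theorem on the convexity radius), so your attempt must be measured against the classical proof. Measured that way, it has a genuine gap at the step ``thus $f$ is convex on $[0,1]$''. The Hessian comparison you invoke, $\mathrm{Hess}\,\rho\geq\sqrt{\delta}\,\cot(\sqrt{\delta}\rho)\,(g-d\rho\otimes d\rho)$, yields $f''(t)\geq 0$ only where the coefficient $\cot\bigl(\sqrt{\delta}\,f(t)\bigr)$ is nonnegative, i.e.\ where $f(t)\leq\pi/(2\sqrt{\delta})$. But the only a priori bound you have on a minimizing geodesic $\gamma$ joining $q_1,q_2\in B(p,r)$ is the triangle-inequality estimate $f(t)\leq r+\tfrac12 d(q_1,q_2)<2r\leq\pi/\sqrt{\delta}$. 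On the annulus $\pi/(2\sqrt{\delta})<\rho<\pi/\sqrt{\delta}$ the comparison inequality is still valid but its right-hand side is \emph{negative} (already on the round sphere of curvature $\delta$, geodesic spheres of such radii are concave when viewed from the center), so $f''\geq 0$ is unjustified exactly where it is needed. Assuming $f\leq\pi/(2\sqrt{\delta})$ along the whole geodesic is essentially assuming the conclusion, so the convexity claim is circular, and your final max-principle step collapses whenever the putative interior maximum of $f$ exceeds $\pi/(2\sqrt{\delta})$.

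The missing idea is the continuity, or ``first contact'', argument that the classical proof uses precisely to avoid this: fix $q_1$, move the second endpoint from $p$ out to $q_2$ along the radial geodesic $c(u)$, and consider the family of minimizing geodesics $\gamma_u$ from $q_1$ to $c(u)$ (well defined and continuous in $u$, since all lengths are $<2r\leq \mathrm{inj}\,M$). The function $h(u)=\max_t \rho(\gamma_u(t))$ is continuous with $h(0)<r$; if some $\gamma_u$ left $B(p,r)$, there would be a first $u_0$ with $h(u_0)=r$, and the maximum of $\rho\circ\gamma_{u_0}$ would be attained at an interior point lying at distance \emph{exactly} $r\leq\pi/(2\sqrt{\delta})$ from $p$. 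There $f'(t_0)=0$, so $\dot\gamma\perp\nabla\rho$, and your Hessian estimate does give $f''(t_0)>0$, contradicting maximality; hence no $\gamma_u$ ever reaches $S(p,r)$. In this scheme the comparison is only ever applied at level $r$, never on the bad annulus. Two smaller points: when $r=\pi/(2\sqrt{\delta})$ one has $\cot(\sqrt{\delta}r)=0$ and even this strictness degenerates, so one should first prove the statement for every radius $r'<r$ and then exhaust; on the other hand, your reduction to a purely Riemannian statement, the use of $r\leq\mathrm{inj}\,M/2$ to keep $\rho$ smooth, and the treatment of a crossing through $p$ as a convex corner are all fine.
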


  Therefore, on an almost K\"{a}hler manifold with bounded geometry (cf. \cite{Cha}), a small geodesic ball is strictly convex.
  It is well known that one of the fundamental results of classical complex analysis establishes the equivalence between the holomorphic
  disc convexity of a domain in an affine complex space, the Levi convexity of its boundary
  and existence of a strictly plurisubharmonic exhaustion function.
  On the other hand, in the works of K. Diederich-A. Sukhov, Y. Eliashberg-M. Gromov, F.R. Harvey-H.B. Lawson, Jr \cite{DS,EG,HL4,HL5}
  and other authors, the convexity properties of strictly $J$-pseudoconvex domains in almost complex manifolds are substantially used give rise to many
  interesting results.
  Concerning symplectic structure, K. Diederich and A. Sukhov \cite[Theorem 5.4]{DS} obtained a characterization of
   $J$-pseudoconvex domain in almost complex manifolds similar to the classical results of complex analysis.
  Hence fix a point $p$, $\rho_g(p,q)$ is a strictly subharmonic function on $\{q \mid  \rho_g(p,q)<r\}$.

  \begin{claim}\label{claim 1}
   Let $(M,g,J,\omega)$ be an almost K\"{a}hler manifold of (complex) dimension $n$.
   For any $p\in M$,
 $\log \rho_g(p,q)$ is $J$-plurisubharmonic if $\rho_g(p,q)$ is small enough.
   \end{claim}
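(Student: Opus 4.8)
The plan is to verify the pointwise criterion \eqref{defi psh}, namely that $dd^c_J(\log\rho_g)(X,JX)\ge 0$ for every real tangent field $X$ on a small punctured ball about $p$; since $(M,g,J,\omega)$ is almost K\"ahler, Proposition \ref{lawson} lets me compute equivalently with the $J$-symmetrized Riemannian Hessian, which is convenient near $p$ where $\rho_g^2$ is smooth. Writing $\log\rho_g=\tfrac12\log u$ with $u=\rho_g^2$ and using the composition identity
\[
dd^c_J(\log u)(X,JX)=\frac{dd^c_J u(X,JX)}{u}-\frac{(du(X))^2+(du(JX))^2}{u^2},
\]
I reduce the claim to understanding $dd^c_J(\rho_g^2)$ and $d\rho_g^2$ near $p$.

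Next I would pass to geodesic normal coordinates centered at $p$, choosing a $g$-orthonormal, $J$-adapted frame so as to identify $T_pM\cong\mathbb{C}^n$. By the Gauss lemma one has $u=\rho_g^2=|z|^2\bigl(1+O(|z|^2)\bigr)$, $du=2\rho_g\,d\rho_g$, and the Hessian of $u$ equals $2g$ at $p$ with a curvature correction of size $O(\rho_g^2)$. Substituting and setting $a=d\rho_g(X)$, $b=d\rho_g(JX)$, so that $a^2+b^2=|\Pi X|^2$ is the squared length of the orthogonal projection of $X$ onto the complex radial plane $\Pi=\mathbb{R}\,\partial_\rho\oplus\mathbb{R}\,J\partial_\rho$, one is led to a formula of the shape
\[
dd^c_J(\log\rho_g)(X,JX)=\frac{C}{\rho_g^{2}}\bigl(|X|^2-a^2-b^2\bigr)+O(1)\,|X|^2 ,\qquad C>0 .
\]
The singular leading term is $\ge 0$ by Cauchy--Schwarz and reproduces the flat model fact that $\log|z|$ is plurisubharmonic on $\mathbb{C}^n$; moreover it is \emph{strictly} positive, of order $\rho_g^{-2}$, as soon as $X$ stays a fixed angle away from $\Pi$. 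Hence for such directions the bounded remainder is dominated once $\rho_g$ is small, and nonnegativity is immediate.

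The hard part is exactly the complex radial plane $\Pi$: there $|X|^2-a^2-b^2$ vanishes, the dominant $\rho_g^{-2}$ term disappears, and the sign is decided entirely by the bounded remainder, which is the second-order Taylor correction of the metric, i.e.\ a curvature term. Keeping this borderline eigenvalue nonnegative under the curved perturbation is precisely where the smallness of $\rho_g$ must be exploited, and it is the step I expect to be the genuine obstacle, since in the flat model this is the zero eigenvalue of $\log|z|$. My intended remedy is to carry the expansion of $dd^c_J(\rho_g^2)$ one order further on the narrow cone around $\Pi$ and to compare against the strict $J$-subharmonicity of $\rho_g$ itself --- equivalently the strict convexity of small geodesic balls recorded in the preceding proposition --- in order to absorb the remainder. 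I should flag that this control requires $\rho_g$ small and, in the radial direction, an effective grip on the holomorphic sectional curvature; without it one obtains only that $\log\rho_g$ is \emph{quasi-$J$-plurisubharmonic}, that is, $J$-plurisubharmonic after adding a smooth strictly $J$-plurisubharmonic function, which is the form in which the log singularity is actually used in the Lelong-number arguments of Appendix \ref{Lelong}.
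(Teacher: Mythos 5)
Your setup and leading-order analysis are sound, and in normal coordinates they reproduce exactly what the paper does in geodesic spherical coordinates: away from the complex radial plane $\Pi=\mathbb{R}\,\nabla\rho_g\oplus\mathbb{R}\,J\nabla\rho_g$ the complex Hessian of $\log\rho_g$ is bounded below by a positive multiple of $\rho_g^{-2}$, while on $\Pi$ the singular term cancels identically. The gap you flag is genuine: on $\Pi$ the sign is decided by the $O(1)$ curvature remainder, and neither a higher-order expansion nor comparison with the strict $J$-subharmonicity of $\rho_g$ can restore nonnegativity of $\log\rho_g$ itself, because on $\Pi$ all favorable terms are bounded while the curvature term has a definite sign governed by the holomorphic sectional curvature in the radial direction.

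In fact no argument can close this gap, because Claim \ref{claim 1} is false as stated. Take $(M,g,J,\omega)$ to be $\mathbb{C}P^2$ with the Fubini--Study metric (or the round $S^2=\mathbb{C}P^1$ if $n=1$) and let $\Sigma$ be a projective line through $p$: it is a totally geodesic $J$-holomorphic curve, so $\rho_g(p,\cdot)|_\Sigma$ is the intrinsic distance of the round metric $g_\Sigma=dr^2+K^{-1}\sin^2(\sqrt{K}\,r)\,d\theta^2$, and for $X$ tangent to $\Sigma$ one has $dd^c_J\log\rho_g(X,JX)=\Delta_\Sigma(\log r)\,|X|^2$ with $\Delta_\Sigma\log r\rightarrow -K/3<0$ as $r\rightarrow 0$; hence $\log\rho_g$ fails to be $J$-plurisubharmonic on every punctured neighborhood of $p$. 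The paper's own proof overlooks precisely the point you isolated: in the displayed computation of $\Delta_g|_{l_n}\log\rho_g$ it uses the flat-model operator $\frac{\partial^2}{\partial s^2}+\frac{1}{t^2}\sum_i\frac{\partial^2}{\partial\theta_i^2}$ in geodesic spherical coordinates, discarding the corrections to the metric coefficients (of relative size $O(t^2)$, hence $O(1)$ after two derivatives), which are exactly what decides the sign on the radial plane $l_n$ where the flat-model answer is $0$. Your closing remark is therefore the correct resolution: in general one only obtains that $\log\rho_g$ is quasi-$J$-plurisubharmonic, equivalently that Chirka's corrected function $\log\rho_g+A\rho_g$ is $J$-plurisubharmonic near $p$ (the correction contributes a term of order $A/\rho_g$ on $\Pi$, which dominates the curvature term for $\rho_g$ small), and this weaker statement, having the same logarithmic singularity, is all that the Lelong-number and decomposition arguments of Appendices \ref{Lelong} and \ref{Siu} actually need, e.g.\ in Lemma \ref{lemma} where the claim is invoked. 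So your proposal is incomplete, but the incompleteness reflects a genuine defect in the paper: Claim \ref{claim 1} as stated is not provable, and the proof the paper gives for it is not valid.
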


  We will prove the above claim later. Note that when we identify $\mathbb{R}^{2n}$ with $\mathbb{C}^n$.
  Chirka (unpublished) observed that if the almost complex structure $J$ defined
  in a neighborhood of $0$ coincides with the standard complex structure at $0$,
  then for $A>0$ large enough the function $z\rightarrow \log|z|+A|z|$ is $J$-plurisubharmonic near $0$, with
  $z=(z_1,\cdot\cdot\cdot,z_n)$ and $|z|=(|z_1|^2+\cdot\cdot\cdot+|z_n|^2)^{\frac{1}{2}}$.
  One should of course not expect the function $\log|z|$ to be $J$-plurisubharmonic,
  since it is not strictly plurisubharmonic for the standard complex structure, and
 hence even a small change of complex structure will not preserve plurisubharmonicity.
 The term $A|z|$ is a needed correction term.
 The computation is made in detail in Ivashkovich-Rosay \cite[Lemma 1.4]{IR}.
 Note that $J$-holomorphic curves are $-\infty$ sets of $J$-plurisubharmonic functions, with a singularity of $\log\log$ type (cf. Rosay \cite{Rosay}),
 but it is shown that in general they are not $-\infty$ set of $J$-plurisubharmonic functions with logarithmic singularity (cf. Rosay \cite{Rosay2}).

   \vskip 6pt

    Suppose that $(M,g,J)$  is an almost Hermitian $2n$-manifold.
   Let $\nabla^1$ be the second canonical connection satisfying $\nabla^1g=0$ and $\nabla^1J=0$ \cite{G2}.
   There exists a unique second canonical connection on almost Hermitian manifold $(M,g,J)$ whose torsion has everywhere vanishing $(1,1)$ part (cf.\cite{G2,TWY}).
   This connection was first introduced by Ehresmann and Libermann (cf. \cite{EL}).
   It is also sometimes referred to as the Chern connection,
   since when $J$ is integrable it coincides with the connection defined in \cite{Che}.
   Choose a local unitary frame $\{e_1,\cdot\cdot\cdot,e_n\}$ for $TM^{1,0}$ with respect to
   the Hermitian inner product $h=g-\sqrt{-1}\omega$, where $\omega(\cdot,\cdot)=g(J\cdot,\cdot)$,
   and let $\{\theta^1,\cdot\cdot\cdot,\theta^n\}$ be a dual coframe.
   The metric $h$ can be written as
  $$
  h=\theta^i\otimes\overline{\theta^i}+\overline{\theta^i}\otimes\theta^i.
  $$
    Let $\Theta$ be the torsion of the canonical almost Hermitian connection $\nabla^1$.
     Define functions $N^i_{\bar{j}\bar{k}}$ and $T^i_{jk}$ (cf. \cite{TWY}) by
  $$(\Theta^i)^{(0,2)}=N^i_{\bar{j}\bar{k}}\overline{\theta^j}\wedge \overline{\theta^k},$$
  $$(\Theta^i)^{(2,0)}=T^i_{jk}\theta^j\wedge \theta^k$$
  with $N^i_{\bar{j}\bar{k}}=-N^i_{\bar{k}\bar{j}}$ and  $T^i_{jk}=-T^i_{kj}$.

  It is not hard to obtain the following lemma:
  \begin{lem}
  (cf. {\rm \cite{G1,TWY,WZ2}})
   The $(0,2)$ part of the torsion is independent of the choice of metric.
   \end{lem}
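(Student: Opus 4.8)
The plan is to prove the lemma by identifying the $(0,2)$ part of the torsion with the Nijenhuis tensor $\mathcal{N}_J$, which by its very definition in (\ref{2eq5}) involves only $J$ and the Lie bracket, and no metric. First I would use the defining property $\nabla^1 J=0$ of the second canonical connection. This property holds for every $J$-compatible metric $g$ used to construct $\nabla^1$, and it forces $\nabla^1$ to preserve the type decomposition $TM\otimes_{\mathbb{R}}\mathbb{C}=TM^{1,0}\oplus TM^{0,1}$; in particular $\nabla^1_X Y\in\Gamma(TM^{1,0})$ whenever $X,Y\in\Gamma(TM^{1,0})$.

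Next I would compute the $TM^{0,1}$-component of the torsion on $(1,0)$-vectors. For $X,Y\in\Gamma(TM^{1,0})$ the torsion is $\Theta(X,Y)=\nabla^1_X Y-\nabla^1_Y X-[X,Y]$, and by the previous step both $\nabla^1_X Y$ and $\nabla^1_Y X$ lie in $TM^{1,0}$, hence contribute nothing to the projection onto $TM^{0,1}$. Therefore
$$(\Theta(X,Y))^{(0,1)}=-[X,Y]^{(0,1)}.$$
Substituting $JX=\sqrt{-1}X$ and $JY=\sqrt{-1}Y$ into (\ref{2eq5}) and collecting terms yields $4\mathcal{N}_J(X,Y)=-2\left([X,Y]+\sqrt{-1}J[X,Y]\right)$, and since $Z+\sqrt{-1}JZ=2Z^{(0,1)}$ for any complex vector field $Z$, this simplifies to $\mathcal{N}_J(X,Y)=-[X,Y]^{(0,1)}$. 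Combining the two displays gives $(\Theta(X,Y))^{(0,1)}=\mathcal{N}_J(X,Y)$, which in the unitary coframe is precisely the assertion that the coefficients $N^i_{\bar{j}\bar{k}}$ appearing in $(\Theta^i)^{(0,2)}=N^i_{\bar{j}\bar{k}}\,\overline{\theta^j}\wedge\overline{\theta^k}$ encode the Nijenhuis tensor.

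Finally I would conclude: the expression $-[X,Y]^{(0,1)}=\mathcal{N}_J(X,Y)$ depends only on the Lie bracket and on the splitting into $(1,0)$ and $(0,1)$ parts, both of which are determined by $J$ alone, with no reference to $g$. Consequently the $(0,2)$ part of the torsion of $\nabla^1$ coincides with $\mathcal{N}_J$ for every $J$-compatible metric, proving the lemma. I do not expect a genuine obstacle here, since the argument is essentially formal once $\nabla^1 J=0$ is invoked; the only points requiring care are checking that $\nabla^1$ indeed preserves the type decomposition and tracking the numerical constant in the identification with $\mathcal{N}_J$. I would also note that the \emph{components} $N^i_{\bar{j}\bar{k}}$ depend on the chosen unitary coframe (hence on $g$), so the correct invariant statement is that the $(0,2)$ part $\Theta^{(0,2)}$, viewed as a $TM^{0,1}$-valued $(2,0)$-form, is metric-independent.
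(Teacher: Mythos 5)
Your proof is correct and is essentially the argument behind the lemma as it appears in the sources the paper cites: the paper itself gives no proof, deferring to \cite{G1,TWY,WZ2}, where the statement is likewise established by identifying the $(0,2)$-part of the torsion of any connection with $\nabla^1 J=0$ with the Nijenhuis tensor $\mathcal{N}_J$, which involves only $J$ and Lie brackets. The only cosmetic point is a conjugation: what you compute, namely the $T^{0,1}$-component of $\Theta(X,Y)$ for $(1,0)$-fields $X,Y$, is the complex conjugate of the paper's object $(\Theta^i)^{(0,2)}\otimes e_i$ (a $T^{1,0}$-valued $(0,2)$-form), and since the torsion of the real connection $\nabla^1$ satisfies $\overline{\Theta(X,Y)}=\Theta(\bar{X},\bar{Y})$, metric-independence of one is equivalent to that of the other, so your conclusion stands.
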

   Consider the real $(1,1)$ form $\omega(\cdot,\cdot)=g(J\cdot,\cdot)$,
   $$\omega=\sqrt{-1}\sum^n_{i=1}\theta^i\wedge\overline{\theta^i}.$$
   We say that $(M,J,g,\omega)$ is almost K\"{a}hler if $d\omega=0$, and it is quasi K\"{a}hler
  if $(d\omega)^{(1,2)}=0$.
  An almost K\"{a}hler or quasi K\"{a}hler manifold with $J$ integrable is a K\"{a}hler manifold.
  \begin{lem}
  (cf. {\rm \cite{G1,TWY}})
  An almost Hermitian manifold $(M,g,J,\omega)$ is almost K\"{a}hler if and only if
  $$T^i_{kj}=0$$
  and
  $$N_{\bar{i}\bar{j}\bar{k}}+N_{\bar{j}\bar{k}\bar{i}}+N_{\bar{k}\bar{i}\bar{j}}=0,$$
  where $N_{\bar{i}\bar{j}\bar{k}}=N^i_{\bar{j}\bar{k}}$.
  $(M,g,J,\omega)$ is quasi K\"{a}hler if and only if
  $$T^i_{kj}=0.$$
  \end{lem}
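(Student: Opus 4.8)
The plan is to compute $d\omega$ directly from the first structure equation of the second canonical connection $\nabla^1$ and then to separate the result into its bidegree components, since almost K\"ahlerness means $d\omega=0$ while quasi K\"ahlerness means only $(d\omega)^{(1,2)}=0$. Writing $\omega^i_j$ for the connection $1$-forms of $\nabla^1$ in the unitary coframe $\{\theta^1,\dots,\theta^n\}$, the structure equation reads $d\theta^i=-\omega^i_j\wedge\theta^j+\Theta^i$, where, by the defining property of $\nabla^1$, the torsion $\Theta^i$ has vanishing $(1,1)$-part, so $\Theta^i=T^i_{jk}\theta^j\wedge\theta^k+N^i_{\bar j\bar k}\overline{\theta^j}\wedge\overline{\theta^k}$. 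Because $\nabla^1$ is unitary, its connection matrix is skew-Hermitian, i.e. $\omega^i_j+\overline{\omega^j_i}=0$.

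First I would differentiate $\omega=\sqrt{-1}\sum_i\theta^i\wedge\overline{\theta^i}$ to get
$$d\omega=\sqrt{-1}\sum_i\left(d\theta^i\wedge\overline{\theta^i}-\theta^i\wedge d\overline{\theta^i}\right).$$
Substituting the structure equation and its conjugate, the two groups of terms containing the connection forms both reduce, after relabelling indices and applying $\overline{\omega^j_i}=-\omega^i_j$, to $\mp\sum_{i,j}\omega^i_j\wedge\theta^j\wedge\overline{\theta^i}$, hence cancel identically. This cancellation is the key simplification and leaves
$$d\omega=\sqrt{-1}\sum_i\left(\Theta^i\wedge\overline{\theta^i}-\theta^i\wedge\overline{\Theta^i}\right),$$
so $d\omega$ is governed purely by the torsion.

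Next I would insert the explicit torsion and sort by bidegree. The $(2,1)$-component is $\sqrt{-1}\,T^i_{jk}\,\theta^j\wedge\theta^k\wedge\overline{\theta^i}$ and the $(0,3)$-component is $\sqrt{-1}\,N^i_{\bar j\bar k}\,\overline{\theta^j}\wedge\overline{\theta^k}\wedge\overline{\theta^i}$, while the $(1,2)$- and $(3,0)$-components are their complex conjugates because $\omega$ is real, so that $(d\omega)^{(1,2)}=\overline{(d\omega)^{(2,1)}}$. Since the monomials $\theta^j\wedge\theta^k\wedge\overline{\theta^i}$ are linearly independent and $T^i_{jk}$ is already antisymmetric in $j,k$, the $(2,1)$-part vanishes precisely when all $T^i_{jk}=0$; equivalently $T^i_{kj}=0$, giving the quasi K\"ahler characterization. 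For the $(0,3)$-part I would reorder $\overline{\theta^j}\wedge\overline{\theta^k}\wedge\overline{\theta^i}=\overline{\theta^i}\wedge\overline{\theta^j}\wedge\overline{\theta^k}$ (an even permutation) and observe that only the totally antisymmetric part of $N_{\bar i\bar j\bar k}=N^i_{\bar j\bar k}$ survives the wedge; carrying out that antisymmetrization while using $N_{\bar i\bar j\bar k}=-N_{\bar i\bar k\bar j}$ collapses the alternating sum to twice the cyclic sum $N_{\bar i\bar j\bar k}+N_{\bar j\bar k\bar i}+N_{\bar k\bar i\bar j}$, so $(d\omega)^{(0,3)}=0$ exactly when this cyclic expression vanishes.

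Combining the pieces yields both claims: $d\omega=0$ forces the vanishing of all four bidegree parts, i.e. $T^i_{kj}=0$ together with the cyclic identity on $N$, which is the almost K\"ahler characterization, whereas imposing only $(d\omega)^{(1,2)}=0$ isolates the single condition $T^i_{kj}=0$. The step I expect to be the main obstacle is the bookkeeping in the $(0,3)$-component: one must carefully exploit the antisymmetry of $N$ in its last two barred indices to verify that the six-term alternating sum reduces to exactly twice the three-term cyclic sum, and to confirm that nothing is lost in passing between the index-raised $N^i_{\bar j\bar k}$ and the lowered $N_{\bar i\bar j\bar k}$.
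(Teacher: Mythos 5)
Your proof is correct. Note that the paper itself gives no proof of this lemma --- it is stated with the citation ``cf.\ \cite{G1,TWY}'' --- so there is no internal argument to compare against; your computation is precisely the standard structure-equation argument used in those references. The two key steps both check out: since $\nabla^1 h=0$ the connection matrix is skew-Hermitian in a unitary coframe, so the connection terms in $d\omega=\sqrt{-1}\sum_i\bigl(d\theta^i\wedge\overline{\theta^i}-\theta^i\wedge d\overline{\theta^i}\bigr)$ cancel identically, leaving only the torsion; and in the $(0,3)$-component the six-term alternating sum of $N_{\bar i\bar j\bar k}$ does reduce, via antisymmetry in the last two barred indices, to twice the cyclic sum $N_{\bar i\bar j\bar k}+N_{\bar j\bar k\bar i}+N_{\bar k\bar i\bar j}$ (and this cyclic sum vanishes automatically when two indices coincide, so the condition for distinct triples coming from the wedge products is equivalent to the condition stated for all triples).
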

  Notice that if $(M,g,J,\omega)$ is almost K\"{a}hler, then $(M,g,J,\omega)$ is quasi K\"{a}hler.

  Let $f$ be a smooth function on $M$.
  We define the canonical Laplacian $\Delta^1$ of $f$ by
  $$
  \Delta^1f=\sum_i(\nabla^1\nabla^1f(e_i,\bar{e}_i)+\nabla^1\nabla^1f(\bar{e}_i,e_i)).
  $$
  This expression is independent of the choice of unitary frame.
  By Lemma 2.5 in \cite{TWY},
  $$
  \Delta^1f=\sqrt{-1}\sum_i(dd_J^cf)^{(1,1)}(e_i,\bar{e}_i).
  $$
  \begin{lem}\label{Yau equivalence}
  (cf. {\rm \cite{G1,TWY}})
  If the metric $g$ is quasi-K\"{a}hler then the canonical Laplacian $\Delta^1$ is equal to the
  usual Laplacian, $\Delta_g$, of the Levi-Civita connection of $g$.
  \end{lem}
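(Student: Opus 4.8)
The plan is to compare the two second-order operators directly through their underlying connections. Both $\Delta^1 f$ and $\Delta_g f$ arise as a $g$-trace of a Hessian: $\Delta^1 f$ is the trace over a unitary frame of the Chern Hessian $\nabla^1\nabla^1 f$, while $\Delta_g f = d^*df = -\mathrm{tr}_g(\nabla\nabla f)$ is built from the Levi-Civita Hessian. Since the second-order (symbol) part of either Hessian only sees the frame and the function — not the connection coefficients — the two operators share the same principal part once the sign conventions are matched so that the leading term of $\Delta^1$ and of $\Delta_g = dd^*+d^*d$ coincide. Hence the difference $\Delta^1-\Delta_g$ is a first-order operator, so there is a globally defined real $1$-form $\tau$ with
\[
\Delta^1 f - \Delta_g f = \langle \tau, df\rangle \qquad \text{for all } f \in C^\infty(M).
\]
The whole statement thus reduces to showing that $\tau$ vanishes when $g$ is quasi-K\"{a}hler.

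Next I would identify $\tau$ with the trace of the torsion of $\nabla^1$. Writing $A := \nabla^1 - \nabla$ for the contorsion, $\tau$ is the metric dual of $\sum_a A_{e_a} e_a$ summed over a $g$-orthonormal frame. Using that $\nabla$ is torsion-free one gets $T^{\nabla^1}(X,Y) = A_X Y - A_Y X$, and using that both $\nabla$ and $\nabla^1$ are $g$-compatible — so each $A_X$ is skew-adjoint, whence $\mathrm{tr}\,A_W = 0$ — one obtains, after a short rearrangement in a local unitary frame $\{e_i,\overline{e}_i\}$,
\[
\langle \tau, W\rangle = -\sum_i \Big(\langle T^{\nabla^1}(e_i,W),\overline{e}_i\rangle + \langle T^{\nabla^1}(\overline{e}_i,W),e_i\rangle\Big).
\]
Thus $\tau$ is exactly (the metric dual of) the torsion $1$-form of the second canonical connection.

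Now I would evaluate this trace in the unitary coframe $\{\theta^i,\overline{\theta}^i\}$, where $\Theta^i = T^i_{jk}\,\theta^j\wedge\theta^k + N^i_{\overline{j}\overline{k}}\,\overline{\theta}^j\wedge\overline{\theta}^k$, the $(1,1)$ part of the torsion being absent by the defining property of $\nabla^1$. The key observation is that the Nijenhuis part drops out of the trace: in the term $\sum_i \Theta^i(e_i,W)$ the factor $(\overline{\theta}^j\wedge\overline{\theta}^k)(e_i,\cdot)$ vanishes since $\overline{\theta}^j(e_i)=0$, and symmetrically $\overline{N}$ vanishes in $\sum_i \overline{\Theta}^i(\overline{e}_i,W)$ since $\theta^j(\overline{e}_i)=0$. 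Hence $\tau$ is built solely from the $(2,0)$-torsion coefficients $T^i_{jk}$ and their conjugates. Since $g$ is quasi-K\"{a}hler, the preceding lemma gives $T^i_{jk}=0$ (equivalently $(d\omega)^{(1,2)}=0$), so $\tau\equiv 0$ and therefore $\Delta^1 f = \Delta_g f$ for every $f$.

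The main obstacle is the bookkeeping in the first two steps: carefully pinning down the constants and sign conventions so that $\Delta^1$ (a trace of the Chern Hessian over the complex frame) and $\Delta_g = dd^*+d^*d$ are normalized consistently, and then verifying cleanly — via the index antisymmetries above — that the Nijenhuis contribution genuinely cancels rather than merely being reorganized. An essentially equivalent route avoids connections altogether: combine the identity $\Delta^1 f = \sqrt{-1}\sum_i (dd^c_J f)^{(1,1)}(e_i,\overline{e}_i)$ already recorded above (together with $(dd^c_J f)^{(1,1)} = 2\sqrt{-1}\partial_J\bar{\partial}_J f$) with a direct computation of $d^* df$ in terms of $d^c_J$, where the discrepancy again collects into a contraction of $d\omega$ against $df$ that vanishes once $(d\omega)^{(1,2)}=0$. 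I expect the connection-theoretic argument to be the more transparent of the two.
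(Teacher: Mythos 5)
Your proof is correct, but there is nothing in the paper to compare it against: the lemma is stated with ``(cf. \cite{G1,TWY})'' and no argument is given, so your proposal supplies a proof rather than reproduces one. What you give is essentially the standard torsion-trace computation underlying the cited references, and it is sound. Writing $A=\nabla^1-\nabla$, one has $\nabla^1\nabla^1 f(X,Y)-\nabla\nabla f(X,Y)=-(A_XY)f$, so the difference of the two trace Laplacians is $-\sum_a(A_{e_a}e_a)f$, manifestly first order; metric compatibility of both connections gives $\mathrm{tr}\,A_W=0$ and $A_{e_a}W=A_We_a+T^{\nabla^1}(e_a,W)$, whence
$$\sum_a\langle A_{e_a}e_a,W\rangle=-\sum_a\langle T^{\nabla^1}(e_a,W),e_a\rangle,$$
which is your torsion-trace identity; and your type argument --- $(\bar{\theta}^j\wedge\bar{\theta}^k)(e_i,\cdot)=0$ because $\bar{\theta}^j(e_i)=0$, and conjugately for the $\bar{N}$ terms --- is exactly why the Nijenhuis $(0,2)$ part of the torsion can never survive this trace, leaving only the coefficients $T^i_{jk}$, which vanish under the quasi-K\"ahler hypothesis by the paper's preceding lemma. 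Two bookkeeping points, both of which you flag as such: first, the paper's earlier convention $\Delta_g=dd^*+d^*d$ is the positive Hodge Laplacian, which is the \emph{negative} of the trace Laplacian $\mathrm{tr}_g\nabla\nabla f$ that your argument compares with $\Delta^1$, so the lemma must indeed be read with the trace convention (as the phrase ``Laplacian of the Levi-Civita connection'' indicates); second, your displayed formula for $\langle\tau,W\rangle$ carries the sign belonging to the metric dual of $\sum_a A_{e_a}e_a$, whereas with your definition $\Delta^1f-\Delta_gf=\langle\tau,df\rangle$ one gets the opposite sign --- harmless, since the conclusion is $\tau\equiv 0$. Your alternative route via the recorded identity $\Delta^1f=\sqrt{-1}\sum_i(dd^c_Jf)^{(1,1)}(e_i,\bar{e}_i)$ and a direct computation of $d^*df$ would work equally well and is closer in spirit to the computations of \cite{TWY}.
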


   \vskip 6pt

  Let us return to the proof of the above claim.

  \vskip 6pt

  {\bf Proof of Claim \ref{claim 1}}
  To verity that $\log \rho_g(p,q)$ is $J$-plurisubharmonic on almost K\"ahler manifold $(M,g,J,\omega)$, we introduce geodesic spherical coordinates about $p$ by defining
  $$
  V:[0,\varepsilon)\times T_pM\longrightarrow M
  $$
  by $V(s,X)=\exp sX$.
  For any $\xi\in S_p=S_1(p,1)$, denote by
  $$\xi^{\bot}:=\{\eta\in T_pM \mid  \langle\eta,\xi\rangle=0\}.$$
   Then the map $\eta\mapsto sF\eta$ is an isomorphism of $\xi^{\bot}$ onto $S_1(p,s)_{s\xi}$,
   where $F:T_pM\rightarrow (T_pM)_{s\xi}$ is the canonical isomorphism.
  Hence for any point $q'$ which lies in a small neighborhood of $p$,
  $q'$ could be written as
  $$q'=\exp s(\xi+\sum^{2n-1}_{i=1}\theta_ie_i),$$
  where $e_1,\cdot\cdot\cdot,e_{2n-1},\xi=e_{2n}\in T_pM$ is a local unitary orthogonal frame,
  and $Je_{2i-1}=e_{2i}$, $1\leq i\leq n$.
  Therefore, $$\rho_g(p,q')=\sqrt{s^2(1+\sum^{2n-1}_{i=1}\theta^2_i)}.$$
  Hence, when $s=t$, $\theta_i=0$, $i=1,2,\cdot\cdot\cdot,2n-1$
  \begin{eqnarray}\label{positive}
    \Delta_g\log \rho_g(p,q')|_q &=&\frac{1}{2}(\frac{\partial^2}{\partial s^2}+\frac{1}{t^2}\sum^{2n-1}_{i=1}\frac{\partial^2}{\partial\theta^2_i})\log s^2(1+\sum^{2n-1}_{i=1}\theta^2_i)|_{s=t,\theta_i=0} \nonumber\\
    &=& -\frac{1}{t^2}+\sum^{2n-1}_{i=1}\frac{1}{t^2}=\frac{2n-2}{t^2} .
  \end{eqnarray}
  Since we mainly consider it on almost K\"{a}hler $2n$-manifold, especially, on almost K\"{a}hler $4$-manifold,
  $\Delta_g\log \rho_g(p,q)\geq 0$.
  By Lemma \ref{Yau equivalence}, notice that an almost K\"{a}hler manifold is a quasi K\"{a}hler manifold, we have
  $$\Delta^1\log \rho_g(p,q)=\Delta_g\log \rho_g(p,q)\geq 0.$$
  Define $l_j$ to be the $J$-holomorphic curves spanned by $\{e_{2j-1},Je_{2j-1}\}$, $1\leq j\leq n$.
  Then, we have
  \begin{eqnarray}
    \Delta_g|_{l_n}\log \rho_g(p,q')|_q &=&\frac{-1}{s^2(1+\sum^{2n-1}_{i=1}\theta^2_i)}|_{s=t,\theta_i=0}\nonumber\\
      &&+
   [\frac{1}{s^2(1+\sum^{2n-1}_{i=1}\theta^2_i)}+\frac{-2\theta^2_{2n-1}}{s^2(1+\sum^{2n-1}_{i=1}\theta^2_i)^2}]|_{s=t,\theta_i=0} \nonumber\\
    &=& -\frac{1}{t^2}+\frac{1}{t^2}=0
  \end{eqnarray}
  and
   \begin{eqnarray}\label{j curve}
    \Delta_g|_{l_j}\log \rho_g(p,q')|_q &=&
     \frac{(1+\sum^{2n-1}_{i=1}\theta^2_i)-2\theta^2_{2j-1}}{(1+\sum^{2n-1}_{i=1}\theta^2_i)^2}|_{s=t,\theta_i=0}\nonumber\\
      &&+
   \frac{(1+\sum^{2n-1}_{i=1}\theta^2_i)-2\theta^2_{2j}}{(1+\sum^{2n-1}_{i=1}\theta^2_i)^2}|_{s=t,\theta_i=0} \nonumber\\
    &=& \frac{1}{t^2}+\frac{1}{t^2}=\frac{2}{t^2},
  \end{eqnarray}
  where $1\leq j\leq n-1$.
  Hence, for any $J$-holomorphic curve $l=\sum^n_{j=1}a_jl_j$ spanned by $\{X,JX\}$,
  \begin{eqnarray*}
   dd^c_J\log \rho_g(p,q')(X,JX)|_q&=&\sum^n_{j=1} a^2_j\Delta_g|_{l_j}\log \rho_g(p,q')|_q \\
   &=&\frac{2}{t^2}\sum^{n-1}_{j=1} a^2_j\geq 0,
   \end{eqnarray*}
   which means that $\log \rho_g(p,q)$ is $J$-plurisubharmonic if $\rho_g(p,q)<\varepsilon$.
  This completes the proof of the claim.  \qed

    \vskip 6pt

   In the remainder of this subsection, we will discuss the basic properties of $J$-plurisubharmonic functions on almost K\"{a}hler manifolds.
   In fact, a number of the results established in complex analysis via plurisubharmonic functions have
  been extended to almost complex manifolds (cf. \cite{HL4,HL5,HL3,HLP,Sukhov}).
    Let $(M,J)$ be an almost complex manifold and ${\rm PSH}(M,J)$ the set of $J$-plurisubharmonic functions on $(M,J)$.
  We have the following facts (cf. \cite{HL4,HL5,HL3,HLP,Sukhov}):
  \begin{prop}\label{plurisubharmonic prop}

    ~

    1) Suppose $(M,J)$ is an almost complex manifold which is $J$-pseudoconvex, and let $u\in{\rm PSH}(M,J)$ be a $J$-plurisubharmonic function.
            Then there exists a decreasing sequence $\{u_j\}\subset C^\infty(M)$ of smooth strictly $J$-plurisubharmonic functions such that
               $u_j(x)\downarrow u(x)$ at each $x\in M$.

   2) (Maximum property) If $u,v\in{\rm PSH}(M,J)$, then $w=max\{u,v\}\in{\rm PSH}(M,J)$.

   3) (Coherence property) If $u\in{\rm PSH}(M,J)$ is twice differentiable at $x\in M$, then $Hess_xu$ is positive.

   4) Let $u_1$ and $u_2$ be smooth strictly $J$-plurisubharmonic functions on $(M,J)$.
      Then for every $\varepsilon>0$ and every relatively compact domain $\Omega\subset M$ there exists a smooth and strictly $J$-plurisubharmonic
   function $u$ in $\Omega$ such that
   $\max\{u_1,u_2\}\leq u\leq\max\{u_1,u_2\}+\varepsilon$ on $\Omega$.

   5) If $\psi$ is convex non-decreasing function, then $\psi\circ u\in{\rm PSH}(M,J)$ for each $u\in{\rm PSH}(M,J)$.

   6) (Decreasing sequence property) If $\{u_j\}$ is a decreasing ($u_j\geq u_{j+1}$) sequence of functions with all $u_j\in{\rm PSH}(M,J)$,
      then the limit $u=\lim_{j\rightarrow\infty}u_j\in{\rm PSH}(M,J)$.

   7) (Uniform limit property) Suppose $\{u_j\}\subset{\rm PSH}(M,J)$ is a sequence which converges to $u$ uniformly on compact subsets on $M$,
    then $u\in{\rm PSH}(M,J)$.

    8) (Families locally bounded above) Suppose $\mathcal{F}\subset{\rm PSH}(M,J)$ is a family of functions which are locally uniformly bounded above.
    Then the upper envelope $v=\sup_{f\in\mathcal{F}}f$ has upper semi-continuous regularization $v^*\in{\rm PSH}(M,J)$ and $v^*=v$ a.e..
    Moreover, there exists a sequence $\{u_j\}\subset\mathcal{F}$ with $v^j=max\{u_1,\cdot\cdot\cdot,u_j\}$ converging to $v^*$ in $L^1_{loc}(M)$.
  \end{prop}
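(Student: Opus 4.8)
The plan is to funnel all eight assertions through the single characterization recalled above: by Harvey--Lawson \cite[Theorem 6.2]{HL3}, a function $u\in{\rm USC}(M)$ lies in ${\rm PSH}(M,J)$ if and only if its restriction to each local $J$-holomorphic curve is subharmonic, and for $u\in C^2$ this is equivalent to $dd^c_Ju(X,JX)\geq 0$ for every $X\in TM$ (cf. (\ref{defi psh})). With this dictionary in hand, the statements (2), (3), (5), (6), (7) become purely local, one-complex-variable assertions about subharmonic functions and are inherited verbatim from classical potential theory.

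Concretely, I would argue as follows. For (2), the maximum of two functions that are subharmonic on a $J$-holomorphic curve is again subharmonic there, so $\max\{u,v\}$ restricts to a subharmonic function on every such curve. For (5), a convex non-decreasing function of a subharmonic function is subharmonic, again curve by curve. For (6) and (7), the classical facts that a decreasing limit, respectively a locally uniform limit, of subharmonic functions is subharmonic transfer directly, using upper semicontinuity and the sub-mean-value inequality along the $J$-holomorphic discs through each point, whose local existence is standard. For (3) I would use the viscosity formulation of ${\rm PSH}(M,J)$: if $u$ is twice differentiable at $x$, its second-order Taylor polynomial is an admissible test function at $x$, and the defining inequality $H_x(\varphi)\geq 0$ then forces ${\rm Hess}^{\mathbb{C}}_x u\geq 0$.

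For (8) I would invoke the Brelot--Cartan theorem: since $\mathcal{F}$ is locally uniformly bounded above, the upper envelope $v$ has an upper semicontinuous regularization $v^*$ that is $J$-plurisubharmonic and coincides with $v$ off a pluripolar, hence Lebesgue-null, set, while the extraction of a sequence $\{u_j\}\subset\mathcal{F}$ whose successive maxima $v^j=\max\{u_1,\dots,u_j\}$ converge to $v^*$ in $L^1_{\rm loc}(M)$ is Choquet's lemma; all of this is local and passes through the $J$-holomorphic-curve characterization. For (4) I would use Demailly's regularized maximum, obtained by replacing $\max$ with a smoothed maximum function: the result dominates $\max\{u_1,u_2\}$, stays within $\varepsilon$ of it on the relatively compact $\Omega$, and is strictly $J$-plurisubharmonic wherever $u_1,u_2$ are, since the construction preserves the positivity of $dd^c_J$.

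The genuinely hard statement is (1). This is exactly Theorem $4.1$ of Harvey--Lawson--Pli\'{s} \cite{HLP}, already quoted in Section \ref{last section}, and I would cite it directly rather than reprove it. Its proof rests on the full Dirichlet-duality and potential-theoretic machinery of Harvey and Lawson: one uses the strictly $J$-plurisubharmonic exhaustion provided by $J$-pseudoconvexity to exhaust $M$ by sublevel sets, performs a Richberg-type local smoothing of $u$, and glues the pieces by the regularized maximum of (4) into a monotone decreasing smooth family. The main obstacle, and the reason \cite{HLP} is indispensable, is that in the non-integrable case there is no global $\partial_J\bar{\partial}_J$-Poincar\'{e} lemma, so the smoothing cannot be reduced to a potential-theoretic problem for a fixed $(1,1)$-form and must instead be carried out intrinsically within the viscosity theory; this is where the crux of the whole proposition lies.
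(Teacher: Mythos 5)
Your proposal is correct and takes essentially the same route as the paper: the paper offers no proof of this proposition, presenting it as a compilation of known facts with exactly the references you lean on --- part (1) is Theorem 4.1 of \cite{HLP}, part (4) is Sukhov's regularized maximum \cite{Sukhov}, and parts (2), (3), (5)--(8) are classical potential theory transported to the almost complex setting via Harvey--Lawson \cite{HL3,HL4,HL5}. Two small repairs to your sketch: in (3) the second-order Taylor polynomial $P$ only satisfies $u-P\leq o(|y-x|^2)$, so it is not itself a test function and you must test with $P+\varepsilon|y-x|^2$ and let $\varepsilon\to 0$; and in (8) the upper semicontinuous regularization $v^*$ need not restrict on a $J$-holomorphic curve $\Sigma$ to the regularization of $v|_\Sigma$, so the Brelot--Cartan and Choquet statements should be quoted from the Harvey--Lawson viscosity framework rather than argued curve-by-curve.
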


  For an almost K\"{a}hler $4$-manifold, we use Theorem \ref{app 1} for $\mathcal{\widetilde{W}}, d^-_J$-problem
   in Appendix \ref{Hormander} to establish the following result:
  \begin{lem}\label{current app}
 Let $(M,g,J,\omega)$ be an almost K\"{a}hler $4$-manifold, and let $T$ be a strictly positive closed $(1,1)$-current on $M$
  with $L^q$ coefficients for some fixed $q\in(1,2)$.
  Then, $T$ can be written as $T=d\mathcal{\widetilde{W}}(f_T)$ locally, where $f_T$ is in $L^q_2(M)$ and strictly $J$-plurisubharmonic.
 \end{lem}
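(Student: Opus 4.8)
The plan is to localize to a small strictly $J$-pseudoconvex chart, to solve a $\widetilde{\mathcal{W}}, d^-_J$-problem there by the H\"ormander-type $L^2$ estimates of Theorem \ref{app 1}, and then to read off strict $J$-plurisubharmonicity of the resulting potential from the strict positivity of $T$. First I would fix a point and a small Darboux neighbourhood $U$ with coordinates $(z_1,z_2)$ in which $\varphi = |z|^2 = |z_1|^2 + |z_2|^2$ is a smooth strictly $J$-plurisubharmonic exhaustion, as in the discussion following Proposition \ref{plurisubharmonic prop} and Proposition 6.4 of \cite{HL3}; shrinking $U$ we may take it strictly $J$-pseudoconvex. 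Since $U$ is contractible and $T$ is closed with $L^q$ coefficients, the Poincar\'e lemma yields a primitive $T = d\sigma$ on $U$; as $T$ is of pure type $(1,1)$ the primitive may be chosen of the form $\sigma = u + \bar u$ with $u \in \Lambda^{0,1}_J \otimes L^q_1(U)$, and vanishing of the $(2,0)+(0,2)$-component of $T$ forces $d^-_J(u+\bar u) = 0$. This is exactly the situation entering the Claim inside the proof of Lemma \ref{3l2}.

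Next I would solve $\widetilde{\mathcal{W}}(f_T) = u + \bar u$ (after normalizing the primitive by $d^*(u+\bar u) = 0$, as in that Claim), so that $T = d\widetilde{\mathcal{W}}(f_T) = \widetilde{\mathcal{D}}^+_J(f_T)$. On the strictly $J$-pseudoconvex $U$ this is precisely the $\widetilde{\mathcal{W}}, d^-_J$-problem of Theorem \ref{app 1}: the weight $\varphi$ supplies the positivity $\sum_{i,j}(\partial_{J_i}\bar{\partial}_{J_j}\varphi)\xi^i\bar{\xi}^j \geq c|\xi|^2$ needed to close the a priori inequality, just as (\ref{non equ1}) closes the global argument. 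Because the second-order operator $f \mapsto \widetilde{\mathcal{D}}^+_J(f)$ is elliptic with the same principal part as the complex Hessian, $L^q$ Calder\'on--Zygmund estimates upgrade the solution to $f_T \in L^q_2(U)$; for the borderline range $q \in (1,2)$ I would supplement this with the duality device already used in Claim \ref{weak strong}, testing $\widetilde{\mathcal{D}}^+_J(f_T)$ against forms $\widetilde{\mathcal{W}}(h)$ with $h$ in a high $L^{p'}$ space and invoking the closed range of $\widetilde{\mathcal{W}}$.

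Finally I would verify strict $J$-plurisubharmonicity. Since $U$ is almost K\"ahler, Remark \ref{2r6} gives $\widetilde{\mathcal{D}}^+_J = \mathcal{D}^+_J$ on $U$, and (recalling $Jdf = d^c_J f$) the $(1,1)$-form $\mathcal{D}^+_J(f_T)$ encodes the complex Hessian of $f_T$; in the integrable case it is literally $2\sqrt{-1}\partial_J\bar{\partial}_J f_T = (dd^c_J f_T)^{(1,1)}$. Thus $T = \mathcal{D}^+_J(f_T)$ being a strictly positive $(1,1)$-current forces the complex Hessian of $f_T$ to be strictly positive, which is the defining condition (\ref{defi psh}) for $f_T$ to be strictly $J$-plurisubharmonic, interpreted distributionally and then promoted to its upper-semicontinuous representative via Proposition \ref{plurisubharmonic prop}.

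I expect the hardest part to be solving the $\widetilde{\mathcal{W}}, d^-_J$-problem in the genuinely non-integrable setting: the correction terms $\eta^1_f, \eta^2_f$ and the constraint $d^-_J\widetilde{\mathcal{W}}(f) = 0$ obstruct a verbatim transcription of H\"ormander's $\bar\partial$-argument, so the Nijenhuis contributions entering through the $A_J, \bar A_J$ terms must be controlled and absorbed into the weighted estimate of Theorem \ref{app 1}; the same corrections mean $\mathcal{D}^+_J(f_T)$ is not literally $(dd^c_J f_T)^{(1,1)}$, so one must check they do not spoil the positivity transfer of paragraph three. A secondary difficulty is the sub-$L^2$ regularity $f_T \in L^q_2$ with $q < 2$: a mere $L^1$ bound on $T$ would not produce an $L^2$ potential --- exactly the gap flagged in Claim \ref{weak strong} --- so the duality argument there must be run in the local, weighted form rather than via the plain $L^2$ theory.
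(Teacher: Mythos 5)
Your construction coincides with the paper's own proof in its main steps: localize to a star-shaped strictly $J$-pseudoconvex chart, use the Poincar\'e lemma to write $T=dA$ with $d^-_J A=0$ (automatic because $T$ has pure type $(1,1)$), solve the $\mathcal{\widetilde{W}},d^-_J$-problem by Theorem \ref{app 1} to obtain $T=d\mathcal{\widetilde{W}}(f_T)$, and invoke the almost K\"ahler hypothesis to identify $\mathcal{\widetilde{W}}(f_T)=\mathcal{W}(f_T)$, hence $T=\mathcal{D}^+_J(f_T)$. Up to that point the two arguments agree.

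The genuine gap is the step you yourself flag in your closing paragraph but never close: the deduction of strict $J$-plurisubharmonicity. By Definition \ref{2d5}, $\mathcal{D}^+_J(f_T)=d\bigl(Jdf_T+d^*(\eta^1_{f_T}+\overline{\eta}^1_{f_T})\bigr)$, and since this form is purely of type $(1,1)$ one has
$$
\mathcal{D}^+_J(f_T)=2\sqrt{-1}\,\partial_J\bar{\partial}_J f_T+\bigl(dd^*(\eta^1_{f_T}+\overline{\eta}^1_{f_T})\bigr)^{(1,1)},
$$
so strict positivity of the left-hand side says nothing, by itself, about the sign of $\sqrt{-1}\partial_J\bar{\partial}_J f_T$, which is exactly what (\ref{defi psh}) requires; your ``positivity transfer'' in the third paragraph is an assertion, not an argument, and your fourth paragraph concedes as much without supplying the missing idea. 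The paper closes this step by a different device: since $\mathcal{D}^+_J(f_T)$ is a closed, strictly positive $(1,1)$-form, it is a local symplectic form compatible with $J$; on a sufficiently small chart one can therefore work in Darboux complex coordinates $(z_1,z_2)$ with $J_0=J(p)$ in which $\mathcal{D}^+_J(f_T)=2\sqrt{-1}\partial_{J_0}\bar{\partial}_{J_0}f_T$ with the model potential $f_T=|z_1|^2+|z_2|^2$, and strict $J$-plurisubharmonicity of this potential then follows because $\|J-J_0\|$ is small on the shrunken chart (stability of strict plurisubharmonicity under small perturbation of the structure, as in Chirka's observation recalled in Appendix \ref{psh}). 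Note also that the paper handles the regularity $f_T\in L^q_2$, $q\in(1,2)$, not by Calder\'on--Zygmund estimates but by running the argument for smooth data and passing to the limit through a sequence of smooth $J$-plurisubharmonic functions converging in $L^q_2$ (Proposition \ref{plurisubharmonic prop}). Without an argument of one of these kinds, your proposal does not establish the conclusion that $f_T$ is strictly $J$-plurisubharmonic.
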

 \begin{proof}
 It is often convenient to work with smooth forms and then prove statements about
  currents by using an approximation of a given current by smooth forms (cf. \cite{GH,S1}).
  For any point $p\in M$, we choose a neighborhood $U_p$ of $p$.
 We may assume without loss of generality that $U_p$ is a star shaped strictly $J$-pseudoconvex open set, by Poincar\'{e} Lemma,
  $T=dA$ on $U_p$ since $T|_{U_p}$ is a strictly positive closed $(1,1)$-current.
  Note that $T$ is $(1,1)$ type, so $d^-_J(A)=0$.
  Then applying Theorem \ref{app 1} in Appendix \ref{Hormander} ($\mathcal{\widetilde{W}}, d^-_J$-problem),
    there exists a smooth function $f_T$ such that $T=d\mathcal{\widetilde{W}}(f_T)$ on $U_p$.
    Since $(M,g,J,\omega)$ is an almost K\"{a}hler $4$-manifold, $\mathcal{\widetilde{W}}(f_T)=\mathcal{W}(f_T)$ (see Section \ref{2}),
    hence $T=d\mathcal{W}(f_T)$ locally.
    When $U_p$ is very small, on $U_p$ there exists Darboux coordinate chart $(z_1,z_2)$ (cf. \cite{A2,MS})
    with standard complex structure $J_0=J(p)$.
    Since $d\mathcal{W}(f_T)=\mathcal{D}^+_J(f_T)$
    is smooth and strictly  positive $(1,1)$-form,
    $\mathcal{D}^+_J(f_T)$ can be regarded as a local symplectic form on $U_p$.
    Hence, the complex coordinate $(z_1,z_2)$ is also Darboux coordinate on $U_p$ for $\mathcal{D}^+_J(f_T)$,
    that is,  $\mathcal{D}^+_J(f_T)$ are $J$ and $J_0(=J(p))$ compatible.
    Hence $\mathcal{D}^+_J(f_T)=2\sqrt{-1}\partial_{J(p)}\bar{\partial}_{J(p)}f_T$, i.e., $f_T=|z_1|^2+|z_2|^2$.
    It is easy to see that $\sqrt{-1}\partial_J\bar{\partial}_Jf_T>0$ on $U_p$.
    Therefore $f_T$ is also strictly $J$-plurisubharmonic.
    By Proposition \ref{plurisubharmonic prop}, when $f_T\in L^q_2(U_p)$ for some fixed $q\in(1,2)$, the above conclusion also holds
    since there exists a sequence $\{f_{T,k}\}$ of smooth $J$-plurisubharmonic functions on $U_p$ such that $f_{T,k}$ converges to $f_T$
    in norm $ L^q_2$.
    This completes the proof of Lemma \ref{current app}.
  \end{proof}

   In classical complex analysis case, we have Poincar\'{e}-Lelong equation ({\rm\cite{GH}}).
   If the holomorphic function $f$ has divisor the analytic hypersurface $Z$, then the equation of currents
    $$
   \frac{\sqrt{-1}}{2\pi}\partial\bar{\partial}\log|f|^2=T_Z
    $$
   is valid.
  In \cite{Elk2}, Elkhadhra extended Poincar\'{e}-Lelong equation to the almost complex category.
   Let $\Omega$ be an open set of $\mathbb{R}^{2n}$ equipped with an almost complex structure $J$.
    Given a submanifold $Z$ of $\Omega$ of codimension $2p$ if $J(TZ)=TZ$, that is,
    $TZ$ is $J$-invariant, then $J$ is also an almost complex structure on $TZ$,
    it means that $Z$ is an almost complex submanifold of dimension $2n-2p$.
  Let $U$ be an open subset of  $\Omega$ such that $Z$ is defined on $U$ by $f_i=0$, $1\leq i\leq p$,
  where the $f_i$ are of smooth functions on $U$, $\bar{\partial}_Jf_i=0$ on $Z\cap U$
 and $\partial_Jf_1\wedge\cdot\cdot\cdot\wedge\partial_Jf_p\neq 0$ on $U$.
  With these notations, Elkhadhra obtained a generalized Poincar\'{e}-Lelong formula:
  $$
  (\frac{\sqrt{-1}}{2\pi}\partial_J\bar{\partial}_J\log|f|^2)^p=T_Z+R_J(f),
  $$
  where $f=(f_i)_{1\leq i\leq p}$, $|f|^2=\sum^p_{i=1}|f_i|^2$ and
 $R_J(f)$ is a $(p,p)$-current which has $L^{\alpha}_{loc}$ integrable as coefficients, $\alpha<1+\frac{1}{2p-1}$.
  Moreover, $R_J(f)=0$ when the structure $J$ is integrable.
    Our Lemma {\rm\ref{current app}} can be viewed as a generalized Poincar\'{e}-Lelong equation of closed positive
   $(1,1)$-currents on almost K\"{a}hler 4-manifold.

   \subsection{Kiselman's minimal principle for $J$-plurisubharmonic functions}\label{Kiselman}

   This subsection is devoted to studying Kiselman's minimal principle for $J$-plurisubharmonic functions.
   A linear image of a convex set is convex, but in spite of far reaching analogy between convexity and pseudoconvexity the corresponding result is
   not true in the complex domain, the projection in $\mathbb{C}^2$ of a pseudoconvex set in $\mathbb{C}^3$ may fail to be pseudoconvex.
   C. O. Kiselman \cite{Kise} exhibited, in classical complex analysis, a class of pseudoconvex sets which admit pseudoconvex projections and
   studied an associated functional transformation, the partial Legendre transformation.
   This transformation can be used to study the local behavior of plurisubharmonic functions in classical complex analysis.
   In this subsection, we use this method to study the local behavior of $J$-plurisubharmonic functions.

  Let $(\mathbb{R}^{2n},\omega_0)$ be the standard symplectic vector space, where
   $\omega_0=\sum^n_{i=1}dx_i\wedge dy_i$.
   Here $(x_1,y_1,\cdot\cdot\cdot,x_n,y_n)$ is the global coordinate of $\mathbb{R}^{2n}$.
  As in classical complex analysis \cite{JP}, we have the following definition.
  \begin{defi}
   (cf. {\rm Jarnicki-Pflug \cite[Definition 1.1.1]{JP}})
    A pair $(X,\pi)$ is called a symplectic Riemann region over the symplectic vector space $(\mathbb{R}^{2n},\omega_0)$ if:

   (1) $X$ is a topological Hausdorff space;

   (2) $\pi:X\longrightarrow (\mathbb{R}^{2n},\omega_0)$ is a local homeomorphism.

  Moreover, if $X$ is connected, then we say that $(X,\pi)$ is a symplectic Riemann domain over $(\mathbb{R}^{2n},\omega_0)$.
  The mapping $\pi$ is called the projection. $\forall z\in \pi(X)$, $\pi^{-1}(z)$ is called the stalk over $z$.
    A subset $A\subset X$ is said to be univalent if $\pi|_A:A\rightarrow \pi(A)$ is homeomorphic.
  \end{defi}

  \begin{rem}
   (cf. {\rm Jarnicki-Pflug \cite{JP}})
   (1) If we replace $(\mathbb{R}^{2n},\omega_0)$ in the above definition by a (connected) $2n$-dim symplectic manifold $(M,\omega)$,
    then we get the notion of a Riemann region (domain) over $(M,\omega)$.

   (2) $\omega_0$ can be pulled back to $X$ so that $(X,\omega=\pi^*\omega_0)$ is a symplectic manifold.
    It is well known that there exists an $\omega$-compatible almost complex structure $J$ on $X$, that is, $\omega(J\cdot,J\cdot)=\omega(\cdot,\cdot)$.
      Let $g(\cdot,\cdot):=\omega(\cdot,J\cdot)$ be an almost K\"{a}hler metric on $X$.
  Then $(X,g,J,\omega)$ is an almost K\"{a}hler manifold (cf. {\rm \cite{MS}}).
  Let $J_0:= J_{st}$ be the standard complex structure on $\mathbb{R}^{2n}$, $g_0(\cdot,\cdot):=\omega_0(\cdot,J_0\cdot)$,
   then $(\mathbb{R}^{2n},g_0,J_0,\omega_0)=\mathbb{C}^n$.

   (3) If $\Omega\subset(\mathbb{R}^{2n},g_0,J_0,\omega_0)$ is a domain,
  then $(\Omega,\omega_0)$ is a (symplectic) Riemann domain over $\mathbb{C}^n$.

   (4) If $(X,\pi,\omega)$ is a symplectic Riemann domain over $(\mathbb{R}^{2n},\omega_0)$,
  then $\pi$ is an open mapping.
   Hence, $\pi(X)$ is a domain over $(\mathbb{R}^{2n},\omega_0)$ and the stalk $\pi^{-1}(p)$ is discrete for all $p\in\pi(X)$.

   (5) Let $(X,\pi,\omega)$ be a symplectic Riemann domain over $(\mathbb{R}^{2n},\omega_0)$, and let $Y$ be a univalent subset such that
    $\pi(Y)=\pi(X)$, then $Y=X$.

   (6) Evidently, not all connected symplectic $2n$-dimensional manifolds are symplectic Riemann domains,
  e.g., a compact symplectic manifold cannot be a symplectic Riemann domain.
  In the category of non-compact connected symplectc manifolds the situation is as follows:
  If $n=1$, then any complex (symplectic $2$-dimensional) manifold is a symplectic Riemann domain over $\mathbb{C}$ $((\mathbb{R}^2,\omega_0))$
   with suitable projection $\pi$;
   If $n\geq2$, then there exist very regular non-compact connected symplectic manifolds
   which are not symplectic Riemann domains over $(\mathbb{R}^{2n},\omega_0)$.

  (7) If $(X,\pi,\omega)$ is a symplectic Riemann domain over $(\mathbb{R}^2,\omega_0)$, then
      $(Y,\pi|_Y,\omega|_Y)$ is a symplectic Riemann domain over $(\mathbb{R}^2,\omega_0)$ for any domain $Y\subset X$.

    (8) If $(X,\pi^1,\omega^1)$ and $(Y,\pi^2,\omega^2)$ are symplectic Riemann domains
       over $(\mathbb{R}^{2n},\omega^1_0)$ and $(\mathbb{R}^{2m},\omega^2_0)$, respectively,
  then $(X\times Y,\pi^1\times\pi^2,\omega^1\oplus\omega^2)$ is a symplectic Riemann domain over
     $(\mathbb{R}^{2n}\times\mathbb{R}^{2m},\omega^1_0\oplus\omega^2_0)$.
  \end{rem}

   \begin{exa}
   (1) Let $(\mathbb{R}^{2n},\pi=id_{\mathbb{R}^{2n}},\omega^1_0)$ be a symplectic vector space,
   where $$\mathbb{R}^{2n}:=\{(x_1,\cdot\cdot\cdot,x_{2n})\,|\, x_i\in\mathbb{R},1\leq i\leq 2n\},$$
     $\omega^1_0=dx_1\wedge dx_2+\cdot\cdot\cdot+dx_{2n-1}\wedge dx_{2n}$.
   Suppose $J$ is an $\omega^1_0$-compatible almost complex structure on $\mathbb{R}^{2n}$.
    Let $g_J(\cdot,\cdot)=\omega^1_0(\cdot,J\cdot)$, then $E:=(\mathbb{R}^{2n},g_J,J,\omega^1_0)$
  is an almost K\"{a}hler manifold and also a topological vector space.

  (2) Let $(\mathbb{R}^{2m},\pi=id_{\mathbb{R}^{2m}},\omega^2_0)$ be a symplectic vector space,
   where $$\mathbb{R}^{2m}=\{(y_1,\cdot\cdot\cdot,y_{2m})\,|\,y_i\in\mathbb{R},1\leq i\leq 2m\},$$
     $\omega^2_0=dy_1\wedge dy_2+\cdot\cdot\cdot+dy_{2m-1}\wedge dy_{2m}$.
  Let $J_0$ be the standard complex structure on $\mathbb{R}^{2m}$.
  It is easy to see that $J_0$ is $\omega^2_0$-compatible. Then
   $(\mathbb{R}^{2m},J_0,\omega^2_0)=\mathbb{C}^{m}=\mathbb{R}^{m}+\sqrt{-1}\mathbb{R}^{m}$.
   \end{exa}

    \begin{defi}
    A domain $\Omega\subset\mathbb{C}^n$ is called a tube domain if $\Omega=\Omega+\sqrt{-1}\mathbb{R}^n$.
     \end{defi}

  In classical complex analysis, one has the following theorem (cf. \cite{D3,Hormander,Kise2}):
   \begin{theo}\label{cla com the}
  (1) Let $\Omega\subset\mathbb{C}^n$ be a domain, $u$ a $(J_0)$-plurisubharmonic function which is locally indenpendent of
  the imaginary part of $z$, i.e., for any $z\in\Omega$, $u(z')=u(z)$
  if $z'$ is sufficientlly close to $z$ and ${\rm Re}z'={\rm Re}z$.
  Then $u$ is locally convex in $\Omega$ (thus convex if $\Omega$ is convex).

  (2) Any $(J_0)$-pseudoconvex tube domain $\Omega\subset\mathbb{C}^n$ is of the form $\Omega=\Omega_1+\sqrt{-1}\mathbb{R}^n$,
   where $\Omega_1$ is a convex subdomain of $\mathbb{R}^n$.
   \end{theo}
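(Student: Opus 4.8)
The plan is to reduce both parts to the one--variable fact that a subharmonic function on a planar domain which is independent of the imaginary coordinate is convex in the real coordinate, and then to feed this into the standard distance--function description of pseudoconvexity.

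For (1), I would restrict $u$ to complex lines issuing in real directions. Fix $z_0\in\Omega$ and a real vector $\xi\in\mathbb{R}^n$, and consider the slice $\zeta\mapsto u(z_0+\zeta\xi)$ for $\zeta=s+\sqrt{-1}t$ in a small disc. Since $u$ is $J_0$-plurisubharmonic, this slice is subharmonic in $\zeta$; since $u$ is locally independent of ${\rm Im}\,z$ and $\xi$ is real, the slice depends only on $s={\rm Re}\,\zeta$. A subharmonic function of $\zeta$ that is independent of $t$ has vanishing $\partial_t^2$, so its distributional Laplacian reduces to $\partial_s^2\ge 0$; hence $s\mapsto u(z_0+s\xi)$ is convex. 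As this holds for every real direction $\xi$ and every base point, $u$ is convex along every real segment, and therefore locally convex (globally convex when $\Omega$ is convex). This argument needs no smoothness; in the $C^2$ case it is transparent because, $u$ being independent of the imaginary part, one has $\partial^2 u/\partial z_j\partial\bar z_k=\tfrac14\,\partial^2 u/\partial x_j\partial x_k$, so the (Hermitian) Levi form coincides up to the factor $\tfrac14$ with the real Hessian of $u$ viewed as a function of ${\rm Re}\,z$, and positivity of one is positivity of the other.

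For (2), I would use the standard equivalence that a domain $\Omega\subset\mathbb{C}^n$ is $J_0$-pseudoconvex if and only if $-\log d_\Omega$ is plurisubharmonic, where $d_\Omega(z)$ is the Euclidean distance from $z$ to $\partial\Omega$. For a tube $\Omega=\Omega_1+\sqrt{-1}\mathbb{R}^n$ the imaginary translations are isometries preserving $\Omega$, so $d_\Omega(z)=\delta({\rm Re}\,z)$ with $\delta(x)$ the distance from $x$ to $\partial\Omega_1$ in $\mathbb{R}^n$; in particular $-\log d_\Omega$ is plurisubharmonic and locally independent of ${\rm Im}\,z$. Applying part (1), $-\log\delta$ is convex on $\Omega_1$. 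It then remains only to upgrade this to convexity of the base $\Omega_1$ itself, which yields the asserted form $\Omega=\Omega_1+\sqrt{-1}\mathbb{R}^n$ with $\Omega_1$ convex.

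The main obstacle is precisely this last upgrade: convexity of $-\log\delta$ along segments lying inside $\Omega_1$ does not, by itself, forbid a segment joining two points of $\Omega_1$ from leaving $\Omega_1$, since a convex one--variable function may blow up as one approaches the boundary. The clean way around it is the classical fact (see Hörmander's convexity theory, cf. \cite{Hormander,Kise2,D3}) that an open connected set in $\mathbb{R}^n$ is convex exactly when $-\log$ of its boundary--distance is convex; equivalently one argues by the continuity principle (Kontinuit\"atssatz) for the pseudoconvex domain $\Omega$, constructing a family of analytic discs in the complex line through two points $a,b\in\Omega_1$ whose boundaries remain in a fixed compact subset of $\Omega$ and which sweep out the real segment $[a,b]$, forcing $[a,b]\subset\Omega_1$. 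Either route is standard once part (1) is in hand, so I would cite it and concentrate the exposition on the plurisubharmonic--to--convex reduction above.
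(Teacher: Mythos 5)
The paper never proves this statement: Theorem \ref{cla com the} is quoted as a classical fact of several complex variables, with citations to \cite{D3,Hormander,Kise2}, and is then used as a black box in the proof of the minimal principle (Theorem \ref{sym the}). Your argument is essentially the standard proof contained in exactly those references, and it is correct: for (1), slicing along real directions $\xi\in\mathbb{R}^n$ reduces matters to the one-variable fact that a $t$-independent subharmonic function has nonnegative distributional $\partial_s^2$, which is made rigorous by mollifying with a radial kernel (this preserves both plurisubharmonicity and the local independence of ${\rm Re}\,z$ versus ${\rm Im}\,z$) and passing to the decreasing limit; for (2), the identity $d_\Omega(z)=\delta({\rm Re}\,z)$ together with the classical equivalence of pseudoconvexity with plurisubharmonicity of $-\log d_\Omega$ reduces everything to (1) plus the connectedness upgrade you flagged. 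That last step, which you cite rather than prove, is indeed standard and can be closed in three lines rather than by appeal to the Kontinuit\"atssatz: the set $T$ of pairs $(a,b)\in\Omega_1\times\Omega_1$ with $[a,b]\subset\Omega_1$ is nonempty and open, and it is also closed in $\Omega_1\times\Omega_1$, because convexity of $-\log\delta$ along segments contained in $\Omega_1$ gives $\delta(ta+(1-t)b)\geq\min\{\delta(a),\delta(b)\}$ for $(a,b)\in T$, an inequality that survives passage to limits since $\delta(x)={\rm dist}(x,\mathbb{R}^n\setminus\Omega_1)$ is continuous on all of $\mathbb{R}^n$; connectedness of $\Omega_1\times\Omega_1$ then forces $T=\Omega_1\times\Omega_1$, i.e.\ $\Omega_1$ is convex.
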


    The main goal of this subsection is to prove a minimum principle for $J$-plurisubharmonic function as in classical complex analysis
    (cf. Kiselman \cite{Kise}).
     \begin{theo}\label{sym the}
     (minimal principle for $J$-plurisubharmonic functions)

   Let $E=(\mathbb{R}^{2n-2k},g_J,J,\omega^1_0)$ be an almost K\"{a}hler manifold
     which is also a topological vector space with the induced topology from the metric $g_J$.
    Let $J_1:= J\oplus J_0$ be an $\omega^1_0\oplus\omega^2_0$-compatible almost complex structure on
    $(\mathbb{R}^{2n-2k},\omega^1_0)\times(\mathbb{R}^{2k},\omega^2_0)$.
    Suppose that $\Omega$ is a $J_1$-pseudoconvex subdomain of $E\times\mathbb{C}^k$ such that for each $x\in E$, the fiber
   $$
    \Omega_x:=\{z\in\mathbb{C}^k \mid  (x,z)\in\Omega\}
   $$
   is a non-empty connected tube domain.
   Let $u$ be a $J_1$-plurisubharmonic function on $\Omega$.
   Then the function
    $$
    f:\pi(\Omega)\rightarrow [-\infty,+\infty),\,\,\,\pi: E\times\mathbb{C}^k\rightarrow E
    $$
     \begin{equation}
   f(x):=\inf\{u(x,z) \mid  z\in\Omega_x\},\,\,\, x\in\pi(\Omega)
     \end{equation}
  is $J$-plurisubharmonic.
      \end{theo}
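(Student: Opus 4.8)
The plan is to verify the two defining features of a $J$-plurisubharmonic function on the base $E$: upper semicontinuity of $f$, and the sub-mean-value inequality of $f$ along every $J$-holomorphic curve in $\pi(\Omega)$. By the Harvey--Lawson characterization recalled just before Proposition \ref{plurisubharmonic prop}, these together are equivalent to $J$-plurisubharmonicity. Upper semicontinuity is immediate: if $f(x_0)<c$ I pick $z_0\in\Omega_{x_0}$ with $u(x_0,z_0)<c$; since $\Omega$ is open and $u$ is upper semicontinuous, $(x,z_0)\in\Omega$ and $u(x,z_0)<c$ for all $x$ near $x_0$, whence $f(x)\le u(x,z_0)<c$. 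The structural observation driving everything is that $J_1=J\oplus J_0$ \emph{decouples} holomorphic discs: a map $\zeta\mapsto(\phi(\zeta),\psi(\zeta))$ from the unit disc $\Delta$ into $E\times\mathbb{C}^k$ is $J_1$-holomorphic if and only if $\phi$ is $J$-holomorphic into $E$ and $\psi$ is a genuinely holomorphic map into $\mathbb{C}^k$, because both the differential and $J_1$ split along the product. Hence, given a $J$-holomorphic disc $\phi:\overline\Delta\to\pi(\Omega)$, any holomorphic $\psi:\Delta\to\mathbb{C}^k$ with $(\phi(\zeta),\psi(\zeta))\in\Omega$ produces a $J_1$-holomorphic disc along which the restriction of the $J_1$-plurisubharmonic function $u$ is subharmonic.

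First I would fix such a $\phi$ and an $\varepsilon>0$ and build a competitor lift. Using the tube structure of the fibers together with the independence of $u$ on ${\rm Im}\,z$ (the invariance under imaginary translations $z\mapsto z+\sqrt{-1}\,t$ inherent to the Kiselman setting), $u(x,z)$ depends only on $x$ and $s={\rm Re}\,z$, and by Theorem \ref{cla com the} each real fiber $A_x={\rm Re}\,\Omega_x$ is convex. I would select a boundary datum $\theta\mapsto s(\theta)\in A_{\phi(e^{i\theta})}$ with $u(\phi(e^{i\theta}),s(\theta))\le f(\phi(e^{i\theta}))+\varepsilon$, take its harmonic extension $H:\overline\Delta\to\mathbb{R}^k$ together with a harmonic conjugate $\widetilde H$, and set $\psi=H+\sqrt{-1}\,\widetilde H$, a holomorphic map with ${\rm Re}\,\psi|_{\partial\Delta}=s$. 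The subharmonicity of $\zeta\mapsto u(\phi(\zeta),\psi(\zeta))$ then yields
\[
f(\phi(0))\le u(\phi(0),\psi(0))\le\frac{1}{2\pi}\int_0^{2\pi}u\big(\phi(e^{i\theta}),s(\theta)\big)\,d\theta\le\frac{1}{2\pi}\int_0^{2\pi}\big(f(\phi(e^{i\theta}))+\varepsilon\big)\,d\theta,
\]
and letting $\varepsilon\to0$ gives the desired sub-mean-value inequality.

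The hard part will be ensuring that the lifted disc stays inside $\Omega$, i.e.\ that $H(\zeta)\in A_{\phi(\zeta)}$ for all interior $\zeta$, since the convex fibers $A_{\phi(\zeta)}$ vary with $\zeta$ and a harmonic extension need not respect a moving family of convex constraints. This is precisely the technical core of Kiselman's argument. I would control it with the $J_1$-pseudoconvexity of $\Omega$: a $J_1$-plurisubharmonic exhaustion $\rho$ of $\Omega$ restricts to a subharmonic function on the lifted disc, so the maximum principle confines the disc provided the boundary values $s(\theta)$ are chosen with a uniform interior margin in the fibers. To legitimize this selection I would first regularize $f$ from above on $\partial\Delta$ and, where smoothness is needed, replace $u$ by a decreasing sequence of smooth strictly $J_1$-plurisubharmonic functions (Proposition \ref{plurisubharmonic prop}(1)) taken invariant under imaginary translations in $z$; the corresponding infima $f_j$ decrease to $f$, and Proposition \ref{plurisubharmonic prop}(6) then upgrades the $J$-plurisubharmonicity of each $f_j$ to that of $f$.

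A secondary subtlety to address is the case where the fiberwise infimum is not attained or equals $-\infty$. I would handle this by exhausting each tube $\Omega_x$ by relatively compact convex sub-tubes, applying the above argument to the resulting infima, and passing to the decreasing limit once more via Proposition \ref{plurisubharmonic prop}(6); the convexity of the real fibers supplied by Theorem \ref{cla com the} guarantees that this exhaustion can be carried out compatibly as $x$ moves along $\phi$. Once the lifted disc is kept inside $\Omega$ and the boundary values are controlled, the decoupling of $J_1$ and the subharmonicity of $u$ along $J_1$-holomorphic discs do all the remaining work, so the genuinely almost-complex input reduces to this single product-splitting observation, while the rest is the classical Kiselman mechanism transplanted verbatim.
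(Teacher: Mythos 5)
Your opening moves are sound and in fact coincide with the paper's: upper semicontinuity is handled correctly, and the decoupling observation (a $J_1$-holomorphic disc in $E\times\mathbb{C}^k$ is precisely a pair consisting of a $J$-holomorphic disc in $E$ and a genuinely holomorphic disc in $\mathbb{C}^k$, since $J_1=J\oplus J_0$ is a product structure) is exactly why one may test $f$ along $J$-holomorphic curves. But the step you yourself call ``the technical core'' is a genuine gap, and the mechanism you propose for it is circular. To apply the maximum principle to a $J_1$-plurisubharmonic exhaustion $\rho$ of $\Omega$ along the lifted disc $\zeta\mapsto(\phi(\zeta),\psi(\zeta))$, you must already know that this disc lies in $\Omega$; otherwise $\rho\circ(\phi,\psi)$ is not even defined, let alone subharmonic. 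So the conclusion you want (confinement) is the hypothesis you need to invoke the tool. The standard repair would be a Kontinuit\"atssatz-type continuity method: deform from a disc known to lie in $\Omega$ and use the exhaustion to show the set of good deformation parameters is closed. You construct no such family, and the natural starting point (a constant real section $s^*$ lying in $A_{\phi(\zeta)}$ for \emph{all} $\zeta$) need not exist, since the moving convex fibers can have empty intersection; a continuously varying boundary selection $s_t(\theta)\in A_{\phi(te^{i\theta})}$ is itself a nontrivial problem. Worse, the statement you ultimately need --- that pseudoconvexity plus tube fibers forces harmonic extensions of boundary sections to respect the moving convex fibers --- is essentially a reformulation of the minimum principle itself (a superharmonicity property of the fibers), so the disc method does not reduce the theorem to anything easier. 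There are also unaddressed secondary points: picking $s(\theta)$ with $u(\phi(e^{i\theta}),s(\theta))\le f(\phi(e^{i\theta}))+\varepsilon$ requires a measurable selection (the infimum need not be attained and $f$ is only upper semicontinuous), $s$ may be unbounded when near-optimal points escape to infinity in the tubes, and Proposition \ref{plurisubharmonic prop}(1) does not produce smooth approximants invariant under imaginary translations, so your regularization step is not justified as stated.

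The paper's proof sidesteps all of this with one structural observation you did not exploit: since $J$-plurisubharmonicity of $f$ only has to be verified along $J$-holomorphic curves, one may restrict the entire fibration to such a curve $\Sigma\subset\pi(\Omega)$, and an almost complex structure on a $2$-dimensional base is automatically integrable. Hence $\Omega\cap(\Sigma\times\mathbb{C}^k)$ is an honest complex (indeed K\"ahler) Riemann domain, and the \emph{classical} Kiselman minimum principle applies verbatim; that classical statement is then proved not by discs but by the Legendre-transform/Hessian computation with Hermitian quasi-inverses (Lemmas \ref{matrix dec1}--\ref{matrix dec3} and Corollary \ref{usefull col}, following Jarnicki--Pflug), where the tube hypothesis and invariance under imaginary translations enter through Theorem \ref{cla com the} to kill the negative term in $g_{z\bar{z}}$. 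If you want to keep your framework, the honest route is to use your decoupling reduction and then quote (or reprove) the classical theorem over $\Sigma\times\mathbb{C}^k$, rather than attempt a from-scratch disc proof whose confinement lemma is the theorem in disguise.
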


    \begin{rem}
   (1) $\pi(\Omega)\subset E$ is $J$-pseudoconvex (cf. {\rm Kiselman \cite{Kise}}).

   (2) If the fibres are tubular but not necessarily connected (they must consist of convex components),
   then the function $f$ is not defined on $E$ but on a symplectic Riemann domain over $(\mathbb{R}^{2n-2k},\omega^1_0)$.
    For more details see {\rm\cite[Proposition 2.1]{Kise}}.
   \end{rem}

   The similar proof as in classical complex analysis we will present here is taken from Kiselman \cite{Kise2} and Jarnicki-Pflug \cite{JP}.
    We need the following technical lemmas:

  \begin{lem}\label{matrix dec1}
  Let $L$ be a positive semidefinite Hermitian $(n\times n)$-matrix.
  Then there exists a Hermitian $(n\times n)$-matrix $M$ with $LML=L$.
  \end{lem}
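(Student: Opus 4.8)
The plan is to reduce the statement to the spectral theorem and then to construct $M$ explicitly as a Moore--Penrose type pseudoinverse of $L$. First I would use the fact that a positive semidefinite Hermitian matrix is unitarily diagonalizable with nonnegative real eigenvalues: there exist a unitary matrix $U$ and a real diagonal matrix $D=\mathrm{diag}(\lambda_1,\dots,\lambda_n)$ with each $\lambda_i\ge 0$ such that $L=UDU^{*}$.

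Next I would define the candidate for $M$ by inverting $L$ on its range and setting it to zero on its kernel. Concretely, let $D^{+}=\mathrm{diag}(d_1,\dots,d_n)$, where $d_i=\lambda_i^{-1}$ whenever $\lambda_i>0$ and $d_i=0$ whenever $\lambda_i=0$, and put $M=UD^{+}U^{*}$. Since $D^{+}$ is a real diagonal matrix and $U$ is unitary, $M$ is automatically Hermitian, so the required structural property of $M$ comes for free from this construction.

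It then remains to verify the identity $LML=L$. Substituting the factorizations gives $LML=U(DD^{+}D)U^{*}$, so the claim reduces to the scalar identity $DD^{+}D=D$, i.e. $\lambda_i d_i\lambda_i=\lambda_i$ for each $i$. This holds trivially: if $\lambda_i>0$ then $\lambda_i\lambda_i^{-1}\lambda_i=\lambda_i$, and if $\lambda_i=0$ then both sides vanish.

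There is essentially no serious obstacle here; the only point requiring a little care is the treatment of the zero eigenvalues, which is precisely what the pseudoinverse construction handles. I note that existence of such an $M$ does not actually require positivity---any diagonalizable matrix would admit such an $M$---but positive semidefiniteness furnishes the real, nonnegative spectrum that makes both the diagonalization and the Hermitian property of $M$ immediate, so this is the natural hypothesis to record.
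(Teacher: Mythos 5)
Your proof is correct and follows essentially the same route as the paper: unitary diagonalization of $L$, inversion of the nonzero eigenvalues and zero on the kernel (the Moore--Penrose-type pseudoinverse), and the scalar verification $\lambda_i d_i \lambda_i = \lambda_i$. The paper merely orders the eigenvalues so the positive ones come first; otherwise the argument is identical.
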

  \begin{proof}
   There exists $P\in U(n)$ such that
  $$PL\bar{P}^T=\left(\begin{array}{cccccc}
                                          \lambda_1 &  &  &  &  &  \\
                                           & \cdot\cdot\cdot &  &  &  &  \\
                                           &  &  \lambda_m &  & &  \\
                                           &  &  & 0 &  &  \\
                                           &  &  &  & \cdot\cdot\cdot &  \\
                                           &  &  &  &  & 0 \\
                                        \end{array}
                                      \right) =: \Lambda, \,\,\,m\leq n,$$
   since $L$ is a positive semidefinite Hermitian $(n\times n)$-matrix.
  Let $$\Lambda^-:=\left(\begin{array}{cccccc}
                                          \frac{1}{\lambda_1} &  &  &  &  &  \\
                                           & \cdot\cdot\cdot &  &  &  &  \\
                                           &  &  \frac{1}{\lambda_m }&  & &  \\
                                           &  &  & 0 &  &  \\
                                           &  &  &  & \cdot\cdot\cdot &  \\
                                           &  &  &  &  & 0 \\
                                        \end{array}
                                      \right),$$
   and take $M=\bar{P}^T\Lambda^-P$, then $LML=(\bar{P}^T\Lambda P)(\bar{P}^T\Lambda^-P)(\bar{P}^T\Lambda P)=L$.
    \end{proof}

    Such matrix $M$ is called a Hermitian quasi-inverse of $L$.

  \begin{lem}\label{matrix dec2}
  Let $F:\mathbb{C}^n\rightarrow\mathbb{R}$, $$F(z):=\sum^n_{i,j=1}L_{ij}z_i\bar{z_j}+2Re(\sum^n_{j=1}b_jz_j)$$ be bounded from below,
  where $L=(L_{ij})_{n\times n}$ is a positive semidefinite Hermitian matrix and $b=(b_1,\cdot\cdot\cdot,b_n)\in\mathbb{C}^n$.
   If $M$ is a Hermitian quasi-inverse of $L$, then $LMb^T=b^T$ and $$F(z)\geq -\bar{b}Mb^T=F(-(\bar{M}\bar{b}^T)^T),\, z\in\mathbb{C}^n.$$
 \end{lem}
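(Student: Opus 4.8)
The plan is to minimize $F$ directly by completing the square about the candidate point $z_0:=-(\bar{M}\bar{b}^T)^T$ singled out in the statement, after first verifying that $z_0$ is genuinely a critical point. The latter is precisely the content of the asserted identity $LMb^T=b^T$, so I would prove that identity first and then read off both the inequality and the minimum value from the squared form. Throughout I would keep $z$ a column vector and write $Q(z):=z^TL\bar z=\sum_{i,j}L_{ij}z_i\bar z_j$, which is real-valued because $L=\bar L^T$, and I would use the structural data from Lemma \ref{matrix dec1}, namely $L=\bar{P}^T\Lambda P$, $M=\bar{P}^T\Lambda^- P$ with $P$ unitary, so that $M$ is Hermitian and both $LML=L$ and (reading off the diagonal form in the same way) $MLM=M$ hold.

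First I would establish $LMb^T=b^T$, i.e. that $b^T$ lies in the range of $L$; this is the one place where the hypothesis that $F$ is bounded from below is actually used. If $\zeta\in\mathbb{C}^n$ satisfies $L\bar\zeta=0$, then $Q(\tau\zeta)=0$ for every $\tau\in\mathbb{C}$, so $F(\tau\zeta)=2\,\mathrm{Re}(\tau\, b\zeta)$; were $b\zeta\neq 0$, a suitable choice of the phase and modulus of $\tau$ would force $F\to-\infty$, contradicting boundedness. Hence $b\zeta=0$ whenever $L\bar\zeta=0$, which, setting $\eta=\bar\zeta$, says $\langle b^T,\eta\rangle=0$ for all $\eta\in\ker L$. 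Since $L$ is Hermitian we have $\ker L=(\mathrm{range}\,L)^\perp$, so $b^T\in\mathrm{range}\,L$. Finally $LM=\bar{P}^T(\Lambda\Lambda^-)P$ is the orthogonal projection onto $\mathrm{range}\,L$, whence $LMb^T=b^T$.

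Next I would complete the square. Writing $z=z_0+w$ and expanding, the two mixed quadratic terms are complex conjugates, so they combine into a single real part and one obtains $F(z)=F(z_0)+2\,\mathrm{Re}\big(((L\bar z_0)^T+b)w\big)+w^TL\bar w$. With $z_0=-\bar{M}\bar{b}^T$ one has $\bar z_0=-Mb^T$ and therefore $L\bar z_0=-LMb^T=-b^T$ by the previous step, so $(L\bar z_0)^T+b=0$ and the linear cross-term vanishes identically in $w$. Since $L$ is positive semidefinite, $w^TL\bar w\geq 0$, giving $F(z)\geq F(z_0)$ for all $z$. To evaluate $F(z_0)$ I would substitute $z_0$ and use the Hermitian symmetry of $M$ together with $MLM=M$: the quadratic term becomes $z_0^TL\bar z_0=\bar b\,MLM\,b^T=\bar bMb^T$, while the linear term is $2\,\mathrm{Re}(bz_0)=-2\bar bMb^T$ (the scalar $\bar bMb^T$ being real), so that $F(z_0)=\bar bMb^T-2\bar bMb^T=-\bar bMb^T$, as claimed.

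The main obstacle — indeed the only step that is not pure bookkeeping — is the extraction of $b^T\in\mathrm{range}\,L$ from the mere boundedness of $F$ below, and the recognition that this membership is exactly what annihilates the residual linear term after completing the square. Everything else reduces to careful tracking of conjugates and transposes, where the sole point requiring care is using the Hermitian symmetry of $L$ and $M$ so that the cross-terms pair correctly into real parts and so that $LML=L$, $MLM=M$ may be invoked.
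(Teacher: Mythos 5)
Your proof is correct. Note, however, that the paper itself does not prove this lemma at all: its ``proof'' consists of the single line referring the reader to Lemma 2.3.6 of Jarnicki--Pflug \cite{JP}. So your self-contained argument — extracting $b^T\in\mathrm{range}\,L$ from boundedness below via the kernel/phase argument, then completing the square about $z_0=-(\bar M\bar b^T)^T$ — fills in exactly what the paper outsources, and it is essentially the standard argument one finds in the cited reference. Two small remarks. First, your justification of $LMb^T=b^T$ leans on the structural form $M=\bar P^T\Lambda^- P$ from Lemma \ref{matrix dec1} (so that $LM$ is the orthogonal projection onto $\mathrm{range}\,L$); if one reads ``Hermitian quasi-inverse'' as meaning \emph{any} Hermitian $M$ with $LML=L$, that projection claim can fail, but the conclusion survives by the one-line substitute: writing $b^T=Lc$ gives $LMb^T=LMLc=Lc=b^T$. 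Second, by the same token you do not need $MLM=M$ to evaluate the quadratic term at $z_0$: once $LMb^T=b^T$ is known, $\bar b\,MLM\,b^T=\bar b M(LMb^T)=\bar bMb^T$ directly, which also shows the minimum value $\bar bMb^T=\bar c^TLc$ is independent of which quasi-inverse is chosen. With these trivial adjustments your argument proves the lemma in the generality stated.
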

  \begin{proof}
  For a detailed proof of this lemma, we refer to \cite[Lemma 2.3.6]{JP}.
  \end{proof}

      By using Lemma \ref{matrix dec1}, \ref{matrix dec2}, we can prove the following lemma.

   \begin{lem}\label{matrix dec3}
   Let $\Omega$ be a domain in $\mathbb{C}_z\times\mathbb{C}^n_w$ and let $u\in PSH(\Omega)\cap C^2(\Omega)$.
  Moreover, let $M(z,w)$ denote a quasi-inverse of $$L(z,w)=(\frac{\partial^2u}{\partial w_i\partial \bar{w}_j}(z,w))_{1\leq i,j\leq n},\,(z,w)\in \Omega.$$
    Then $u_{z\bar{z}}\geq \bar{b}Mb^T$ on $\Omega$,
     where $b=(b_1,\cdot\cdot\cdot,b_n)=(\frac{\partial^2u}{\partial\bar{z}\partial w_1}
          ,\cdot\cdot\cdot,\frac{\partial^2u}{\partial\bar{z}\partial w_n}):\Omega\rightarrow \mathbb{C}^n$.
  \end{lem}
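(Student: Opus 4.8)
The plan is to deduce the inequality directly from the plurisubharmonicity of $u$ together with the purely algebraic fact recorded in Lemma \ref{matrix dec2}. First I would note that, since $u\in PSH(\Omega)\cap C^2(\Omega)$, at every point $(z,w)\in\Omega$ the full complex Hessian
$$H=\left(\frac{\partial^2 u}{\partial\zeta_\alpha\partial\bar\zeta_\beta}\right)_{0\leq\alpha,\beta\leq n},\qquad \zeta=(z,w_1,\dots,w_n),$$
is a positive semidefinite Hermitian $(n+1)\times(n+1)$ matrix. Its lower-right $n\times n$ block is exactly $L$, and by Hermitian symmetry the entries coupling $z$ to $w$ are $H_{i0}=u_{w_i\bar z}=b_i$ and $H_{0i}=\bar b_i$ (mixed partials commuting gives $b_i=\partial^2u/\partial\bar z\,\partial w_i=u_{w_i\bar z}$).

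The key step is to test positivity of $H$ against the special family of vectors $\xi=(1,t_1,\dots,t_n)$ with $\mathbf t=(t_1,\dots,t_n)\in\mathbb{C}^n$ arbitrary. Expanding $\sum_{\alpha,\beta}H_{\alpha\beta}\xi_\alpha\bar\xi_\beta\geq 0$ and using $\sum_i b_i t_i+\sum_i\bar b_i\bar t_i=2\,{\rm Re}\left(\sum_i b_i t_i\right)$ yields
$$u_{z\bar z}+\sum_{i,j=1}^n L_{ij}t_i\bar t_j+2\,{\rm Re}\left(\sum_{i=1}^n b_i t_i\right)\geq 0$$
for every $\mathbf t\in\mathbb{C}^n$. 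Setting $F(\mathbf t):=\sum_{i,j}L_{ij}t_i\bar t_j+2\,{\rm Re}\left(\sum_i b_i t_i\right)$, this says precisely that $F$ is bounded below (by $-u_{z\bar z}$); moreover $L$ is positive semidefinite, being a principal submatrix of the positive semidefinite $H$.

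At this point I would invoke Lemma \ref{matrix dec2} with this $F$ and this $L$: it gives $F(\mathbf t)\geq -\bar b M b^T$ for all $\mathbf t$, with equality attained at $\mathbf t=-(\bar M\bar b^T)^T$, so $\inf_{\mathbf t}F(\mathbf t)=-\bar b M b^T$. Taking the infimum over $\mathbf t$ in the displayed inequality then gives $u_{z\bar z}-\bar b M b^T=u_{z\bar z}+\inf_{\mathbf t}F(\mathbf t)\geq 0$, i.e. $u_{z\bar z}\geq\bar b M b^T$ at the given point; since the point was arbitrary, the inequality holds on all of $\Omega$. I do not anticipate a genuine obstacle here: the entire content is the observation that restricting the Hessian form to directions with fixed first coordinate $1$ converts plurisubharmonicity into exactly the bounded-below quadratic treated in Lemma \ref{matrix dec2}. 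The only care required is the bookkeeping of Hermitian conjugates, to confirm that the coefficient vector appearing in $F$ is the prescribed $b$ and that $L$ inherits positive semidefiniteness from $H$.
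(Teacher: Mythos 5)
Your proof is correct, and it is exactly the intended argument: the paper itself gives no proof of Lemma \ref{matrix dec3}, deferring to \cite[Lemma 2.3.7]{JP}, and the chain of Lemmas \ref{matrix dec1}--\ref{matrix dec2} is set up precisely so that one tests the positive semidefinite full Hessian of $u$ in the variables $(z,w)$ against vectors $(1,t_1,\dots,t_n)$ and then minimizes the resulting bounded-below quadratic $F(\mathbf{t})=\sum_{i,j}L_{ij}t_i\bar t_j+2\,\mathrm{Re}\bigl(\sum_i b_it_i\bigr)$ via Lemma \ref{matrix dec2}, obtaining $u_{z\bar z}+\inf_{\mathbf{t}}F(\mathbf{t})=u_{z\bar z}-\bar bMb^T\geq 0$. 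Your bookkeeping of the conjugates ($H_{i0}=u_{w_i\bar z}=b_i$, $H_{0i}=\bar b_i$) and the observation that $L$ inherits positive semidefiniteness as a principal submatrix are both right, so your write-up supplies the detail the paper leaves to the citation.
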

  \begin{proof}
  For a detailed proof of this lemma, we refer to \cite[Lemma 2.3.7]{JP}.
  \end{proof}

  Let $U\subset \mathbb{C}$ be an open set, and let $y:U\rightarrow\mathbb{C}^n$ be a $C^1$-function such that
   \begin{equation}
 (z,y(z))\in \Omega,\,\,\, \frac{\partial u}{\partial w_j}(z,y(z))=0,\,\,\, 1\leq j\leq n,\,\,\, z\in U,
   \end{equation}
   where $u$ and $\Omega$ are the same as in the above lemma.
   Define $g:U\rightarrow \mathbb{R}$, $g(z):= u(z,y(z))$.
   Differentiation of $g$ with respect to $z$ and $\bar{z}$ leads to
   \begin{equation}
   g_{z\bar{z}}(z)=u_{z\bar{z}}(z,y(z))+\sum^n_{j=1}u_{zw_j}(z,y(z))y_{j\bar{z}}(z)+\sum^n_{j=1}u_{z\bar{w}_j}(z,y(z))\bar{y}_{j\bar{z}}(z).
   \end{equation}
  Since $u_{w_k}(z,y(z))=0$, $k=1,\cdot\cdot\cdot,n$, we differentiate the equations with respect to $z$ and $\bar{z}$,
  then
    $$
   0=a_k(z,y(z))+\sum^n_{j=1}H_{kj}(z)\alpha_j(z)+\sum^n_{j=1}L_{kj}(z)\bar{\beta}_j(z),\,\,\,1\leq k\leq n,
    $$
   and
    $$
   0=b_k(z,y(z))+\sum^n_{j=1}H_{kj}(z)\beta_j(z)+\sum^n_{j=1}L_{kj}(z)\bar{\alpha}_j(z),\,\,\,1\leq k\leq n,
    $$
  where $$\alpha=(y_{1z},\cdot\cdot\cdot,y_{nz}),\,\,\, \beta=(y_{1\bar{z}},\cdot\cdot\cdot,y_{n\bar{z}}),$$
 $$a=(a_1,\cdot\cdot\cdot,a_n)=(u_{zw_1},\cdot\cdot\cdot,u_{zw_n}),\,\,\,
   b=(b_1,\cdot\cdot\cdot,b_n)=(u_{\bar{z}\bar{w}_1},\cdot\cdot\cdot,u_{\bar{z}\bar{w}_n}),$$
  \begin{equation}
  H(z)=(H_{kj}(z))=(\frac{\partial^2u}{\partial w_k\partial w_j}(z,y(z)),\,\,\,
    L(z)=(L_{kj}(z))=(\frac{\partial^2u}{\partial w_k\partial \bar{w}_j}(z,y(z)), \,\,\,z\in U.
  \end{equation}
  Summarizing, the following identities hold for $z\in U$:
        $$
       a(z,y(z))=-\alpha(z)H(z)-\bar{\beta}(z)L^T(z),
        $$
         \begin{equation}
  b(z,y(z))=-\beta(z)H(z)-\bar{\alpha}(z)L^T(z).
         \end{equation}
    \begin{prop}
   Let $M$ be a matrix-valued function on $U$ such that for all $z\in U$ the matrix $M(z)$ is a Hermitian quasi-inverse of $L(z)$.
    Then
    $$
     g_{z\bar{z}}(z)\geq(\beta(HM^T\bar{H}-L)\bar{\beta}^T)(z), \,\,\, z\in U.
     $$
  In particular, $g$ is subharmonic on $U$, if the right-hand side of this inequality is never negative on $U$.
  \end{prop}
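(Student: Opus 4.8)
The plan is to reduce the whole statement to the pointwise positivity of the complex Hessian of $u$ in the variables $(z,w)$, and then to use the two differentiated critical identities to convert the displayed expression for $g_{z\bar z}$ into something that the scalar bound of Lemma \ref{matrix dec3} controls exactly. First I would record the first-order behaviour: since $u_{w_j}(z,y(z))=0$ and $u$ is real one also has $u_{\bar w_j}(z,y(z))=\overline{u_{w_j}(z,y(z))}=0$, so the two $w$-terms in the chain rule drop and $g_z=u_z(z,y(z))$. Differentiating once more in $\bar z$ produces the displayed formula $g_{z\bar z}=u_{z\bar z}+a\beta^{T}+c\bar\alpha^{T}$, with $a_j=u_{zw_j}$, $c_j=u_{z\bar w_j}$, $\alpha=(y_{jz})$, $\beta=(y_{j\bar z})$.

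Next I would substitute the identity $a=-\alpha H-\bar\beta L^{T}$ together with the identity $c=-\alpha L-\bar\beta\bar H$ obtained by differentiating $u_{\bar w_k}(z,y(z))=0$ in $z$. Using that $H$ is symmetric, $L$ Hermitian, and the elementary facts $\bar\beta L^{T}\beta^{T}=\beta L\bar\beta^{T}$ and $\bar\beta\bar H\bar\alpha^{T}=\overline{\alpha H\beta^{T}}$, the two mixed terms collapse to the clean exact identity
\[
g_{z\bar z}=u_{z\bar z}-\alpha L\bar\alpha^{T}-\beta L\bar\beta^{T}-2\,\mathrm{Re}\,(\alpha H\beta^{T}).
\]
Then I would invoke plurisubharmonicity. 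The full Hessian $\left(\begin{smallmatrix} u_{z\bar z} & c\\ \bar c^{T} & L\end{smallmatrix}\right)\ge 0$ yields two facts: the scalar estimate $u_{z\bar z}\ge cM\bar c^{T}$ of Lemma \ref{matrix dec3}, with $M$ the Hermitian quasi-inverse of Lemmas \ref{matrix dec1}--\ref{matrix dec2}, and the compatibility condition that $\bar c^{T}$ lie in the range of $L$, equivalently $cML=c$. Substituting $c=-(\bar\beta\bar H+\alpha L)$ and repeatedly using $LML=L$, I would expand $cM\bar c^{T}=\beta H\bar M\bar H\bar\beta^{T}+\alpha L\bar\alpha^{T}+2\,\mathrm{Re}\,(\alpha LMH\beta^{T})$; feeding this into the estimate and cancelling the $\alpha L\bar\alpha^{T}$ contribution leaves $g_{z\bar z}\ge \beta H\bar M\bar H\bar\beta^{T}-\beta L\bar\beta^{T}+2\,\mathrm{Re}\,(\alpha(LM-I)H\beta^{T})$, which is the claimed $\beta(HM^{T}\bar H-L)\bar\beta^{T}$ up to the last cross term (recall $\bar M=M^{T}$).

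The hard part will be showing that this leftover cross term $2\,\mathrm{Re}\,(\alpha(LM-I)H\beta^{T})$ vanishes, and this is exactly where positivity of $u$ must be used a second time. From $c=-(\bar\beta\bar H+\alpha L)$ I read $\bar\beta\bar H=-(c+\alpha L)$; since $c$ lies in the row space of $L$ (this is precisely the range/compatibility condition $cML=c$ above) and $\alpha L$ does too, so does $\bar\beta\bar H$, whence $\bar\beta\bar H\,ML=\bar\beta\bar H$. Taking conjugate transposes and using that $H$, $L$, $M$ are (conjugate-)symmetric gives $LMH\beta^{T}=H\beta^{T}$, so $(LM-I)H\beta^{T}=0$ and the cross term is identically zero; this upgrades the bookkeeping to an equality and delivers $g_{z\bar z}\ge\beta(HM^{T}\bar H-L)\bar\beta^{T}$. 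The remaining assertion is immediate: if $\beta(HM^{T}\bar H-L)\bar\beta^{T}\ge 0$ throughout $U$ then $g_{z\bar z}\ge 0$, hence $\Delta g=4g_{z\bar z}\ge 0$ and $g$ is subharmonic. I would also note that only $u\in C^{2}\cap\mathrm{PSH}$ and $y\in C^{1}$ are required, so every derivative taken and the quasi-inverse construction are legitimate.
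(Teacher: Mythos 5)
Your proposal is correct and takes essentially the same route as the paper: both proofs combine the chain-rule expression for $g_{z\bar z}$, the differentiated critical-point identities, the scalar estimate $u_{z\bar z}\ge \bar b M b^T$ of Lemma \ref{matrix dec3} together with the relations $LML=L$ and $LMb^T=b^T$, and the observation that the leftover cross term is annihilated by this range condition. Your reorganization (isolating the exact identity for $g_{z\bar z}$ before estimating, and working with $c_j=u_{z\bar w_j}=\bar b_j$) is only a cosmetic, if somewhat cleaner, repackaging of the paper's computation.
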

  \begin{proof}
   Lemma \ref{matrix dec3} shows $\forall z\in U$, $u_{z\bar{z}}(z,y(z))\geq (\bar{b}Mb^{T})(z,y(z))$ and using $LMb^T=b^T$,
   \begin{eqnarray*}
   g_{z\bar{z}}(z)&=&u_{z\bar{z}}(z,y(z))+a(z,y(z)) \beta(z)+\overline{b(z,y(z))\alpha(z)}\\
   &\geq& \bar{b}Mb^T+a\beta^T+\bar{b}\bar{\alpha}^T\\
   &=& \bar{\beta}\bar{H}MH\beta^T+\bar{\beta}\bar{H}ML\bar{\alpha}^T+\alpha H^T\beta^T+\alpha H^T\beta^T   \\
    &&+\alpha L\bar{\alpha}^T-\alpha H\beta^T-\bar{\beta}L^T\beta^T-\bar{\beta}\bar{H}\bar{\alpha}^T-\alpha\bar{L}^T\bar{\alpha}^T\\
   &=&\bar{\beta}(\bar{H}MH-L^T)\beta^{T}+\bar{\beta}\bar{H}(ML-I_n)\bar{\alpha}^T\\
   &=&\beta(HM^T\bar{H}-L)\bar{\beta}^T+(-\bar{b}-\alpha\bar{L}^T)(ML-I_n)\bar{\alpha}^T\\
   &=&\beta(HM^T\bar{H}-L)\bar{\beta}^T.
   \end{eqnarray*}
  \end{proof}

  \begin{col}\label{usefull col}
  Under the assumptions of the above proposition,
  moreover, assume that the following properties are fulfilled: if $z\in U$ and $t\in \mathbb{R}^n$, then $(z,w+\sqrt{-1}t)\in U$ and $u(z,w)=u(z,w+\sqrt{-1}t)$.
  Then $g:U\rightarrow \mathbb{R}$ is subharmonic on $U$.
  \end{col}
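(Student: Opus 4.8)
The plan is to substitute the extra invariance hypothesis directly into the inequality established in the preceding Proposition,
$$g_{z\bar{z}}(z)\geq\bigl(\beta(HM^T\bar{H}-L)\bar{\beta}^T\bigr)(z),\qquad z\in U,$$
and to show that under the assumption the bracketed matrix vanishes identically, so that $g_{z\bar z}\geq 0$ throughout $U$ and hence $g$ is subharmonic. The added hypothesis says that $u$ is invariant under purely imaginary translations of the fibre variables $w$ (and $\Omega$ is correspondingly tubular in those directions), so all the work is to translate this invariance into an algebraic identity for the Hessian matrices $H$ and $L$.

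First I would unwind the invariance at the level of derivatives. Writing $w_j=s_j+\sqrt{-1}\,t_j$, the condition $u(z,w)=u(z,w+\sqrt{-1}t)$ for all $t\in\mathbb{R}^n$ means $u$ is locally independent of $t=(t_1,\dots,t_n)$, i.e. $\partial u/\partial t_j\equiv 0$. Using $\partial/\partial w_j=\tfrac12(\partial/\partial s_j-\sqrt{-1}\,\partial/\partial t_j)$ and $\partial/\partial\bar{w}_j=\tfrac12(\partial/\partial s_j+\sqrt{-1}\,\partial/\partial t_j)$, one gets $u_{w_k}=u_{\bar{w}_k}=\tfrac12 u_{s_k}$, and differentiating once more (the mixed $s,t$ derivatives dropping out) yields
$$H_{kj}=\frac{\partial^2 u}{\partial w_k\partial w_j}=\frac14\frac{\partial^2 u}{\partial s_k\partial s_j}=\frac{\partial^2 u}{\partial w_k\partial\bar{w}_j}=L_{kj}.$$
Thus $H=L$, this common matrix being $\tfrac14$ times the real Hessian of $u$ in the variables $s$; in particular it is real and symmetric, and it is positive semidefinite because $u\in PSH(\Omega)$, so a Hermitian quasi-inverse $M$ exists by Lemma \ref{matrix dec1}.

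The key step is then the algebraic simplification of $HM^T\bar{H}-L$. Since $H=L$ is real we have $\bar{H}=H=L$; since $M$ is a Hermitian quasi-inverse we have $M^T=\bar{M}$ together with $LML=L$. Taking complex conjugates of the defining relation and using $\bar{L}=L$ gives $L\bar{M}L=\overline{LML}=\bar{L}=L$, whence
$$HM^T\bar{H}-L=L\,\bar{M}\,L-L=L-L=0.$$
Consequently the right-hand side of the Proposition's estimate equals $\beta\cdot 0\cdot\bar{\beta}^T=0\geq 0$ at every point of $U$, so the Proposition forces $g_{z\bar z}\geq 0$ and $g$ is subharmonic on $U$, as claimed.

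I expect the only delicate point to be keeping the conjugation and transposition conventions straight so that the quasi-inverse identity $LML=L$ can legitimately be invoked after conjugation; once $H=L$ is shown to be real symmetric, the vanishing of $HM^T\bar{H}-L$ is a one-line consequence, and everything else is a direct substitution into the already-proved inequality.
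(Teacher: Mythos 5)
Your proof is correct and is exactly the intended argument: the paper states this corollary without proof as an immediate consequence of the preceding Proposition, and the standard justification (as in Jarnicki--Pflug) is precisely your computation that the invariance under imaginary translations forces $H=L$ to be real symmetric, whence $HM^T\bar{H}-L=L\bar{M}L-L=0$ by conjugating the quasi-inverse identity $LML=L$, making the Proposition's lower bound identically zero.
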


   By the above lemmas, proposition and corollary, we return to prove Theorem \ref{sym the}.

   {\it Proof} of Theorem \ref{sym the}: Suppose that
    $$(\mathbb{R}^{2n-2k},g_J,J,\omega^1_0)\times(\mathbb{R}^{2k},g_{J_0},J_0,\omega^2_0)=(\mathbb{R}^{2n-2k},g_J,J,\omega^1_0)\times\mathbb{C}^k$$
   is an almost K\"{a}hler manifold, where $J$ is an $\omega^1_0$-compatible almost complex structure on $\mathbb{R}^{2n-2k}$,
    $g_J(\cdot,\cdot):= \omega^1_0(\cdot,J\cdot)$, $J_0=J_{st}$ is the standard complex structure on $(\mathbb{R}^{2k},\omega^2_0)\cong\mathbb{C}^k$,
       $g_{J_0}(\cdot,\cdot):= \omega^2_0(\cdot,J_0\cdot)$.
    Let $$\Omega\subset(\mathbb{R}^{2n-2k},g_J,J,\omega^1_0)\times\mathbb{C}^k$$ be a $J_1$-pseudoconvex domain, where $J_1:=  J\oplus J_0$
    is an $\omega^1_0\oplus\omega^2_0$-compatible almost complex structure on $\mathbb{R}^{2n-2k}\times\mathbb{R}^{2k}$.
    Suppose that $u(x,w)$ is a $J_1$-plurisubharmonic function on $\Omega$,
         where $$(x,w)\in\Omega\subset(\mathbb{R}^{2n-2k},g_J,J,\omega^1_0)\times\mathbb{C}^k.$$
   Let $$\pi:(\mathbb{R}^{2n-2k},g_J,J,\omega^1_0)\times\mathbb{C}^k\rightarrow(\mathbb{R}^{2n-2k},g_J,J,\omega^1_0),
       \pi(x,w)=x\in(\mathbb{R}^{2n-2k},g_J,J,\omega^1_0).$$
    Define a function on $\pi(\Omega)$ as follows:
        Let $$\Omega_x:= \{w\in\mathbb{C}^k\mid  (x,w)\in \Omega\},\,\,
    g(x):= inf\{u(x,w)\mid  w\in\Omega_x\},\,\, x\in\pi(\Omega).$$

   To complete the proof of Theorem \ref{sym the}, we must prove that $g:\pi(\Omega)\rightarrow [-\infty,+\infty)$ is a
   $J$-plurisubharmonic function on $\pi(\Omega)$.
        It is well know that a $J$-plurisubharmonic function is $J$-plurisubharmonic in the standard sense (cf. \cite{HL3}),
   that is, its restriction to each $J$-holomorphic curve $\Sigma$ in $(\pi(\Omega),J)$ is subharmonic.
   Hence, without loss of generality, we may assume $k=n-1$, that is , $\Omega\subset(\mathbb{R}^2,g_J,J,\omega^1_0)\times\mathbb{C}^{n-1}$.
   Note that $(\mathbb{R}^2,g_J,J,\omega^1_0)$ is a Riemann surface (cf. \cite{GH}) since $J$ on $\mathbb{R}^2$ is integrable.
      Hence $\Omega\subset(\mathbb{R}^2,g_J,J,\omega^1_0)\times\mathbb{C}^{n-1}$ is a K\"{a}hler manifold
     which is also a Riemann domain over $\mathbb{C}^n$ in classical complex analysis.
   By using Theorem \ref{cla com the} and Corollary \ref{usefull col},
         similar to the proof of Theorem $2.3.2$ in \cite{JP},
    we can prove that $g(x):\pi(\Omega)\rightarrow [-\infty,+\infty)$ is a subharmonic function on $\pi(\Omega)$.
    For details, we refer to \cite[proof of Theorem 2.3.2]{JP}.
   This completes the proof of Theorem \ref{sym the}. $\Box$

\subsection{H\"{o}rmander's $L^2$ estimates on tamed almost complex $4$-manifolds}\label{Hormander}

  In this subsection, we devote to considering $\mathcal{\widetilde{W}}$, $d^-_J$-problem (as $\bar{\partial}$-problem in classical complex analysis, cf. H\"{o}rmander \cite{Hormander0,Hormander}).
   In Stein manifold, the $L^2$-method for the $\bar{\partial}$ operator has many applications, for example,
      using $L^2$-method we can prove the theorem of Siu \cite{S2} on the Lelong numbers of plurisubharmonic functions (cf. \cite{D3}).
  In this subsection, we extend H\"{o}rmander's $L^2$ estimates \cite{Hormander0,Hormander} to tamed almost complex $4$-manifold.

   Suppose that $J$ is an almost complex structure on $\mathbb{R}^4$ which is tamed by a symplectic $2$-form $\omega_1=F+d^-_J(v+\bar{v})$,
    where $F$ is a fundamental form on $\mathbb{R}^4$ and
  $v\in\Lambda^{0,1}_J\otimes L^2_1(\mathbb{R}^4)$.
   Let $g_J(\cdot,\cdot)=F(\cdot,J\cdot)$ be an almost Hermitian metric and $d\mu_{g_J}$ the volume form.
   Let $(\Omega,J)$ be a bounded open set in $(\mathbb{R}^4,J)$, $A=u+\bar{u}\in\Lambda^1_\mathbb{R}\otimes L^2_1(\Omega)$
   and satisfy $d^-_J(A)=0$, where $u\in\Lambda^{0,1}_J\otimes L^2_1(\Omega)$.
   Let $L^2_2(\Omega)_0$ be the completion of the space of smooth functions with compact support in $\Omega$ under the $L^2_2$ norm.
    Since $d^-_Jd^*:\Omega^-_J(\Omega)\rightarrow\Omega^-_J(\Omega)$ is a strongly elliptic linear operator (see Section \ref{2} or \cite{L3}),
    where $d^*=-*_{g_J}d*_{g_J}$,
    we define a linear operator $\mathcal{\widetilde{W}}$ as in Section $2$,
  $\mathcal{\widetilde{W}}: L^2_2(\Omega)_0\longrightarrow\Lambda^1_\mathbb{R}\otimes L^2_1(\Omega)$,
  where $L^2_2(\Omega)_0$ is the completion of the space of smooth functions with compact support in $\Omega$ under the $L^2_2$ norm,
  $$\mathcal{\widetilde{W}}(f)=Jdf+d^*(\eta^1_f+\overline{\eta}^1_f)-*_{g_J}(df\wedge d^-_J(v+\bar{v}))+d^*(\eta^2_f+\overline{\eta}^2_f),\,\,\,
   \eta^1_f,\eta^2_f\in\Lambda^{0,2}_J\otimes L^2_2(\Omega),$$
  satisfying
  $$d^*\mathcal{\widetilde{W}}(f)=0,$$
   $$d^-_JJdf+d^-_Jd^*(\eta^1_f+\overline{\eta}^1_f)=0,$$
  and
 $$-d^-_J*_{g_J}(df\wedge d^-_J(v+\bar{v}))+d^-_Jd^*(\eta^2_f+\overline{\eta}^2_f)=0,$$
  where
 $$ \eta^1_f|_{\partial\Omega}=0, \,\,\,\eta^2_f|_{\partial\Omega}=0.$$
 Notice that $C^\infty_0(\Omega)$ (which is the space of smooth functions with compact support in $\Omega$) is dense in $L^2_2(\Omega)_0$.
   The question with our relationship is whether $\mathcal{\widetilde{W}}(f)=A$ has a solution.
   Note that $d^-_J\circ\mathcal{\widetilde{W}}=0$.
   If we use the theory of Hilbert space, considing
  \begin{equation}\label{}
     L^2_2(\Omega)_0\stackrel{\mathcal{\widetilde{W}}}{\longrightarrow}\Lambda^1_\mathbb{R}\otimes L^2_1(\Omega)\stackrel{d^-_J}{\longrightarrow}
   \Lambda^-_J\otimes L^2(\Omega),
  \end{equation}
   then the above problem is equivalent to: Whether the kernel of $d^-_J$ is equal to the image of $\mathcal{\widetilde{W}}$.
  As the $\bar{\partial}$-problem in classical complex analysis,
  we call this problem the $\mathcal{\widetilde{W}}$, $d^-_J$-problem.

  \vskip 6pt

  Our approach is along the lines used by L. H\"{o}rmander to present the method of $L^2$ estimates for the $\bar{\partial}$-problem
    in \cite{Hormander0}.
  We summarize the above discussion in terms of the model of Hilbert spaces below:
   $$
  H_1\stackrel{T}{\longrightarrow}H_2\stackrel{S}{\longrightarrow}
  H_3,
   $$
   where $H_1,H_2,H_3$ are all Hilbert spaces, and $T, S$ are linear, closed and densely defined operators.
  Assume $ST=0$, the problem is whether, $\forall g\in\ker S$, a solution to
  $$
   Tf=g
  $$
  exists.
  First, note a simple fact that $Tf=g$ is equivalent to
   \begin{equation}\label{equivalent formular}
  (Tf,h)_{H_2}=(g,h)_{H_2},\,\,\,\forall h\in {\rm some \,\, dense\,\, subset}
   \end{equation}
  because $(Tf-g,h)_{H_2}=0$, $\forall h\in$ some dense subset $\Longleftrightarrow$ $(Tf-g,H_2)_{H_2}=0$ $\Longleftrightarrow$ $Tf=g$.

    \vskip 12pt

  Let $T^*$ be an adjoint operator of $T$ in the sense of distributions.
  By the theory of functional analysis, $T^*$ is a closed operator, and $(T^*)^*=T$ if and only if $T$ is closed.

  From (\ref{equivalent formular}), $(Tf,h)_{H_2}=(g,h)_{H_2}$, $\forall h\in$ some dense subset.
  If this dense subset is contained in $D_{T^*}$, then, noticing $(Tf,h)_{H_2}=(f,T^*h)_{H_1}$,
 \begin{eqnarray}\label{equivalent formular2}
   Tf=g &\Longleftrightarrow& (Tf,h)_{H_2}=(g,h)_{H_2} \nonumber \\
     &\Longleftrightarrow& (f,T^*h)_{H_1}=(g,h)_{H_2}, \,\,\,\forall h\in {\rm some \,\, dense\,\, subset\,\,in\,\,D_{T^*}}.
  \end{eqnarray}
  Let $T^*h\longrightarrow(g,h)_{H_2}$ be a linear functional defined on a subset of $H_1$ (that is, \{$T^*g$ $|$ $g\in$ some dense subset in $D_{T^*}$\}).
  If we can extend the above functional to a bounded linear functional on the entire $H_1$,
  then an application of Riesz Representation theorem to (\ref{equivalent formular2}) will thus show that
    the problem $Tf=g$ is solved.
   Recall that the Riesz Representation theorem states that if $\lambda: H\rightarrow \mathbb{C}$ is a bounded linear functional on a Hilbert space $H$,
   then there exists $g\in H$ such that $\lambda(x)=(x,g)_H$ $\forall x\in H$.
    Hence the main step is whether we can extend $T^*h\longrightarrow(g,h)_{H_2}$ to a bounded linear functional on the entire $H_1$
   (for details, see \cite{Hormander0,Hormander}).

As in classical complex analysis, we have the following lemmas:

     \begin{lem}\label{Siu lemma 1}
      (cf. {\rm\cite[Theorem 1.1.1]{Hormander0}})
     If there exists a constant $c_g$ depending only on $g$ such that
    \begin{equation}\label{Siu equ}
    |(g,h)_{H_2}|\leq c_g\|T^*h\|_{H_1},
   \end{equation}
    then $T^*h\longrightarrow(g,h)_{H_2}$ can be extended to a bounded linear functional on $H_1$.
      \end{lem}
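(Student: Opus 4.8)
The plan is to regard the assignment $T^{*}h\mapsto (g,h)_{H_2}$ as a linear functional $\lambda_0$ defined only on the image $R:=\{T^{*}h\mid h\in D_{T^{*}}\}\subset H_1$, and then to produce its extension to all of $H_1$ by an application of the Hahn--Banach theorem. First I would define $\lambda_0:R\to\mathbb{C}$ by $\lambda_0(T^{*}h)=(g,h)_{H_2}$. The crucial point, and the one the hypothesis is designed to handle, is that this rule is genuinely single valued: since $T^{*}$ need not be injective, two different $h_1,h_2\in D_{T^{*}}$ may satisfy $T^{*}h_1=T^{*}h_2$, and I must check that $(g,h_1)_{H_2}=(g,h_2)_{H_2}$. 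This is immediate from (\ref{Siu equ}) applied to $h_1-h_2$, because $T^{*}(h_1-h_2)=0$ forces $|(g,h_1-h_2)_{H_2}|\le c_g\|T^{*}(h_1-h_2)\|_{H_1}=0$. Hence $\lambda_0$ is a well-defined linear functional on the subspace $R$.

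Next I would record that $\lambda_0$ is bounded on $R$: for every $h\in D_{T^{*}}$ one has $|\lambda_0(T^{*}h)|=|(g,h)_{H_2}|\le c_g\|T^{*}h\|_{H_1}$, so $\lambda_0$ has operator norm at most $c_g$ as a functional on $R$. Being a bounded linear functional on a linear subspace of the Hilbert space $H_1$, it extends to a bounded linear functional $\lambda$ on all of $H_1$ with $\|\lambda\|\le c_g$ by the Hahn--Banach theorem; equivalently, one may extend $\lambda_0$ by continuity to the closure $\overline{R}$ and then set it equal to $0$ on the orthogonal complement $\overline{R}^{\perp}$, which yields the same conclusion without invoking Hahn--Banach in full generality. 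In either case the resulting $\lambda$ agrees with $h\mapsto(g,h)_{H_2}$ on the range of $T^{*}$, which is precisely the asserted extension.

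Finally I would note where the real content sits. The extension step is entirely soft functional analysis; the only substantive ingredient is the a priori estimate (\ref{Siu equ}), which simultaneously guarantees well-definedness (via the computation on $\ker T^{*}$ above) and boundedness with the explicit constant $c_g$. Thus the genuine difficulty is not the proof of this lemma but the verification of the hypothesis (\ref{Siu equ}) in the concrete $\widetilde{\mathcal{W}},d^-_J$ setting, which is established separately through the elliptic estimates for $d^-_Jd^{*}$ and $\widetilde{\mathcal{W}}$. Once (\ref{Siu equ}) is in hand, the present lemma is a one-line consequence, and the subsequent application of the Riesz representation theorem to $\lambda$ then produces the desired solution $f\in H_1$ with $(f,T^{*}h)_{H_1}=(g,h)_{H_2}$.
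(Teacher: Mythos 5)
Your proof is correct and is essentially the argument the paper has in mind (it cites H\"ormander's Theorem 1.1.1 rather than proving the lemma): well-definedness of the functional on the range of $T^{*}$ via the estimate applied to $h_1-h_2$, boundedness with norm at most $c_g$, and extension by continuity to the closure $\overline{\{T^{*}h \mid h\in D_{T^{*}}\}}$ together with the zero extension on its orthogonal complement. Your observation that one should use this specific extension (zero on $\overline{R}^{\perp}$) rather than an arbitrary Hahn--Banach extension is exactly what the paper relies on later, when it asserts that the extended functional vanishes on the orthogonal complement of $\overline{\{T^{*}h \mid h\in D_{T^{*}}\}}$ in order to conclude $f\in(\ker T)^{\perp}$.
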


      In the above discussion, we used only the front half of
       $$
  H_1\stackrel{T}{\longrightarrow}H_2\stackrel{S}{\longrightarrow}
  H_3.
   $$
   However, since we only need to solve the equation $Tf=g$ or $(T^*h,f)=(h,g)$ for $g\in\ker S$,
   it is unnecessary to prove (\ref{Siu equ}) for $g\in H_2$, rather we just need to prove (\ref{Siu equ})
   for $g\in\ker S$. In this case, we hope that $h$ in (\ref{Siu equ}) belongs to some dense subset in $D_{T^*}$.

 The method of proving $$|(g,h)_{H_2}|\leq c_g\|T^*h\|_{H_1}$$ is through proving a more general inequality:
   $$
      \|h\|^2_{H_2}\leq c(\|T^*h\|^2_{H_1}+\|Sh\|^2_{H_3}),\,\,\, h\in D_{T^*}\cap D_S.
   $$
 First we note, in our problem, $D_{T^*}$ and $D_S$ contain $C^\infty(\Omega)_0$
 which is the space of smooth functions on $\Omega$ with compact support,
  hence $D_{T^*}\cap D_S$ is dense on both $D_{T^*}$ and $H_2$.
  Notice that $T,S$ are linear, closed densely defined operators, and $ST=0$.
  Now we need
  \begin{lem}\label{Siu lemma 2}
  (cf. {\rm\cite[Theorem 1.1.2]{Hormander0}})
  If \begin{equation}\label{inequvi3}
      \|h\|^2_{H_2}\leq c(\|T^*h\|^2_{H_1}+\|Sh\|^2_{H_3})\,\,\, h\in D_{T^*}\cap D_S,
   \end{equation}
 then
  \begin{equation}\label{inequvi4}
    |(g,h)_{H_2}|\leq c^{\frac{1}{2}}\|g\|_{H_2}\|T^*h\|_{H_1}\,\,\,\forall g\in\ker S,\,\,\, h\in D_{T^*}\cap D_S.
  \end{equation}
   \end{lem}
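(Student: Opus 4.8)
### Proof proposal for Lemma 2.29 (Hörmander-type estimate)

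\textbf{Setup and the role of the hypothesis.} The statement to be proved is Lemma \ref{Siu lemma 2}: assuming the a priori inequality $\|h\|^2_{H_2}\leq c(\|T^*h\|^2_{H_1}+\|Sh\|^2_{H_3})$ for all $h\in D_{T^*}\cap D_S$, we must deduce the functional bound $|(g,h)_{H_2}|\leq c^{1/2}\|g\|_{H_2}\|T^*h\|_{H_1}$ for every $g\in\ker S$ and every $h\in D_{T^*}\cap D_S$. This is the standard functional-analytic passage in Hörmander's $L^2$-method (cf. \cite{Hormander0}), now transported to the $\mathcal{\widetilde{W}},d^-_J$-setting where $T=\mathcal{\widetilde{W}}$, $S=d^-_J$, and $ST=0$. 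The proof is purely abstract Hilbert-space manipulation and does not use any special feature of the almost complex geometry beyond the relations already encoded in the three-term sequence; those geometric facts are exactly what will eventually be needed to verify the hypothesis \eqref{inequvi3}, which is a separate (and harder) matter.

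\textbf{Main decomposition argument.} The plan is to fix $g\in\ker S$ and, for an arbitrary $h\in D_{T^*}\cap D_S$, split $h$ orthogonally relative to $\ker S$. Write $h=h_1+h_2$ with $h_1\in\ker S$ and $h_2\in(\ker S)^\perp$, where both summands still lie in $D_{T^*}\cap D_S$ (using that $\ker S$ is closed, so the orthogonal projection preserves the relevant domains after the usual regularization). Since $g\in\ker S$ and $h_2\perp\ker S$, we get immediately $(g,h_2)_{H_2}=0$, hence
$$
(g,h)_{H_2}=(g,h_1)_{H_2}.
$$
The point of isolating $h_1$ is that $Sh_1=0$, so applying the a priori inequality \eqref{inequvi3} to $h_1$ collapses its right-hand side to the single term $\|T^*h_1\|^2_{H_1}$:
$$
\|h_1\|^2_{H_2}\leq c\,\|T^*h_1\|^2_{H_1}.
$$
Next I would bound $(g,h_1)_{H_2}$ by Cauchy--Schwarz and insert this estimate:
$$
|(g,h)_{H_2}|=|(g,h_1)_{H_2}|\leq \|g\|_{H_2}\|h_1\|_{H_2}\leq c^{1/2}\|g\|_{H_2}\|T^*h_1\|_{H_1}.
$$

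\textbf{Reducing $T^*h_1$ to $T^*h$ and conclusion.} The remaining step is to replace $\|T^*h_1\|_{H_1}$ by $\|T^*h\|_{H_1}$. Because $ST=0$, the image of $T$ lies in $\ker S$, and consequently $T^*$ annihilates $(\ker S)^\perp$: for any $w\in(\ker S)^\perp\cap D_{T^*}$ and any $f\in D_T$ one has $(T^*w,f)_{H_1}=(w,Tf)_{H_2}=0$ since $Tf\in\ker S$, so $T^*w=0$. Applying this to $w=h_2$ gives $T^*h_2=0$, whence $T^*h=T^*h_1$ and therefore $\|T^*h_1\|_{H_1}=\|T^*h\|_{H_1}$. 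Substituting this identity into the inequality above yields exactly
$$
|(g,h)_{H_2}|\leq c^{1/2}\|g\|_{H_2}\|T^*h\|_{H_1},
$$
which is \eqref{inequvi4}, completing the proof. I expect the only genuinely delicate point to be the domain bookkeeping in the orthogonal splitting---verifying that $h_1,h_2$ remain in $D_{T^*}\cap D_S$ and that $T^*$ really kills $(\ker S)^\perp$ at the level of the closed densely defined operators involved; this is where the closedness of $S$ and the relation $ST=0$ must be used carefully, rather than in any formal algebraic way.
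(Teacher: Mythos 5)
Your proof is correct and is exactly the standard Hörmander argument that the paper invokes by citation (Theorem 1.1.2 of \cite{Hormander0}) without reproducing it: split $h$ orthogonally against the closed subspace $\ker S$, note $T^*$ annihilates $(\ker S)^\perp$ because $ST=0$ forces $\operatorname{range}(T)\subseteq\ker S$, and apply the a priori estimate to the $\ker S$ component. The only cosmetic point is that your parenthetical appeal to ``the usual regularization'' for domain preservation is unnecessary, since the facts $h_1\in\ker S\subset D_S$ and $h_2\in D_{T^*}$ with $T^*h_2=0$ (which you establish at the end) already give $h_1,h_2\in D_{T^*}\cap D_S$ with no further work.
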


  Applying Lemma \ref{Siu lemma 2}, we have that if $$\|h\|^2_{H_2}\leq c(\|T^*h\|^2_{H_1}+\|Sh\|^2_{H_3})$$ for all $h\in D_{T^*}\cap D_S$,
 then $$|(g,h)_{H_2}|\leq c^{\frac{1}{2}}\|g\|_{H_2}\|T^*h\|_{H_1}\,\,\,\forall g\in\ker S,\,\, h\in D_{T^*}\cap D_S.$$
  Hence, by Lemma \ref{Siu lemma 1}, $T^*h\longrightarrow(g,h)_{H_2}$ can be extended to a bounded linear functional on $H_1$,
 whose bound is $c^{\frac{1}{2}}\|g\|_{H_2}$.
  By Riesz Representation theorem, there exists $f\in H_1$ such that $$(T^*h,f)_{H_1}=(h,g)_{H_2}, \,\,\forall h\in D_{T^*}\cap D_S.$$
  Since $D_{T^*}\cap D_S$ is dense in $H_2$, we have $$(h,Tf)_{H_2}=(h,g)_{H_2}, \,\,\forall h\in H_2.$$
  By (\ref{equivalent formular2}), the equation $Tf=g$ has a solution.
   In addition, from the Riesz Representation theorem, we have
   $$ \|f\|_{H_1}\leq c^{\frac{1}{2}}\|g\|_{H_2},\,\,\,f\in(\ker T)^{\bot}.$$
   In fact, $$\|f\|_{H_1}\leq c^{\frac{1}{2}}\|g\|_{H_2}$$ is the direct consequence of  Riesz Representation theorem.
   To show $f\in(\ker T)^{\bot}$, note that, according to the way that $T^*h\rightarrow(h,g)_{H_2}$
   is extended to a bounded linear functional on the entire $H_1$,
   this functional vanishes on the orthogonal complement of $\overline{\{T^*h \,|\, h\in D_{T^*}\}}$,
   thus $f\in \overline{\{T^*h \,|\, h\in D_{T^*}\}}$.
   If $f\in \lim_{k\rightarrow\infty}T^*h_k$, then for every $X\in\ker T$, we have
   $$
   (X,f)_{H_1}=\lim_{k\rightarrow\infty}(X,T^*h_k)_{H_1}=\lim_{k\rightarrow\infty}(TX,h_k)_{H_2}=0,
   $$
   hence, $f\in(\ker T)^{\bot}$.

   In general, the solution of $Tf=g$ is not unique, since $f_1\in\ker T$, then
   \begin{eqnarray*}
     (T^*h,f+f_1)_{H_1} &=& (T^*h,f)_{H_1}+ (T^*h,f_1)_{H_1}\\
      &=& (T^*h,f)_{H_1}+ (Th,Tf_1)_{H_2}\\
      &=&  (T^*h,f)_{H_1},
   \end{eqnarray*}
   and $f,f+f_1$ are both the solutions of $Tf=g$.
   However, $f\in(\ker T)^{\bot}$ is the condition to assure that the above solution to $Tf=g$ is unique.

   From the above discussion, we have
  \begin{lem}\label{Siu lemma 3}
  (cf. {\rm\cite[Theorem 1.1.4]{Hormander0}})
  If $$\|h\|^2_{H_2}\leq c(\|T^*h\|^2_{H_1}+\|Sh\|^2_{H_3}),$$ then $Tf=g$ has a solution to $g\in \ker S$.
  This solution $f$ satisfies the estimate
  \begin{equation}\label{inequvi5}
    \|f\|_{H_1}\leq c^{\frac{1}{2}}\|g\|_{H_2},\,\,\,f\in(\ker T)^{\bot}.
  \end{equation}
   \end{lem}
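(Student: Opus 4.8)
Lemma \ref{Siu lemma 3} is the final step in the abstract Hilbert-space functional-analytic machinery set up in this subsection. Its proof is precisely the synthesis of the three preceding lemmas, so my plan is to assemble them in the correct logical order rather than to prove anything from scratch.

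The plan is to argue as follows. Suppose the fundamental a priori estimate
$$\|h\|^2_{H_2}\leq c(\|T^*h\|^2_{H_1}+\|Sh\|^2_{H_3})$$
holds for all $h\in D_{T^*}\cap D_S$. First I would invoke Lemma \ref{Siu lemma 2}: this estimate immediately yields, for every $g\in\ker S$ and every $h\in D_{T^*}\cap D_S$, the one-sided bound
$$|(g,h)_{H_2}|\leq c^{\frac{1}{2}}\|g\|_{H_2}\|T^*h\|_{H_1}.$$
Next, reading this inequality as \eqref{Siu equ} with $c_g=c^{\frac{1}{2}}\|g\|_{H_2}$, I would apply Lemma \ref{Siu lemma 1} to conclude that the linear functional $T^*h\longmapsto (g,h)_{H_2}$, which is a priori defined only on the image $\{T^*h \mid h\in D_{T^*}\cap D_S\}\subset H_1$, extends to a bounded linear functional on all of $H_1$ of norm at most $c^{\frac{1}{2}}\|g\|_{H_2}$.

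The heart of the argument is then the Riesz Representation theorem, exactly as rehearsed in the text immediately preceding the lemma. Applying it to this extended bounded functional produces an element $f\in H_1$ with
$$(T^*h,f)_{H_1}=(h,g)_{H_2}\qquad\text{for all }h\in D_{T^*}\cap D_S,$$
together with the norm bound $\|f\|_{H_1}\leq c^{\frac{1}{2}}\|g\|_{H_2}$. Using that $D_{T^*}\cap D_S$ is dense in $H_2$ and the adjoint relation $(T^*h,f)_{H_1}=(h,Tf)_{H_2}$, this reads $(h,Tf)_{H_2}=(h,g)_{H_2}$ for a dense family of $h$, whence $Tf=g$ by the equivalence \eqref{equivalent formular2}. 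Finally, to obtain \eqref{inequvi5} with the uniqueness-selecting condition $f\in(\ker T)^{\bot}$, I would recall that the Riesz representative lives in the closure $\overline{\{T^*h \mid h\in D_{T^*}\}}$; for any $X\in\ker T$ one has $(X,T^*h)_{H_1}=(TX,h)_{H_2}=0$, and passing to limits gives $(X,f)_{H_1}=0$, i.e.\ $f\perp\ker T$.

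Since every non-trivial analytic input is quarantined inside Lemmas \ref{Siu lemma 1} and \ref{Siu lemma 2}, the deduction itself is purely formal. The genuine obstacle is not in this lemma at all but in verifying its hypothesis in the geometric setting: namely, establishing the a priori estimate \eqref{inequvi3} for the concrete operators $T=\mathcal{\widetilde{W}}$, $S=d^-_J$ on the tamed almost complex $4$-manifold, where the taming correction term $-*_{g_J}(df\wedge d^-_J(v+\bar v))$ and the auxiliary $(0,2)$-form unknowns $\eta^1_f,\eta^2_f$ obstruct a direct transcription of the classical H\"ormander identity. That estimate, rather than the abstract extension argument above, is where the almost complex (non-integrable) structure must be controlled.
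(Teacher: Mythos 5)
Your proposal is correct and follows essentially the same route as the paper: combine Lemma \ref{Siu lemma 2} with Lemma \ref{Siu lemma 1} to extend the functional $T^*h\mapsto(g,h)_{H_2}$ to all of $H_1$ with bound $c^{\frac{1}{2}}\|g\|_{H_2}$, apply the Riesz Representation theorem together with the density of $D_{T^*}\cap D_S$ and the equivalence \eqref{equivalent formular2} to solve $Tf=g$, and obtain $f\in(\ker T)^{\bot}$ by noting that the representative lies in $\overline{\{T^*h \mid h\in D_{T^*}\}}$ and passing to limits in $(X,T^*h_k)_{H_1}=(TX,h_k)_{H_2}=0$. Your closing observation that the genuine analytic content lies in verifying the a priori estimate for $T=\mathcal{\widetilde{W}}$, $S=d^-_J$ rather than in this formal deduction also matches the paper's structure.
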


    We now return to the  $\mathcal{\widetilde{W}}$, $d^-_J$-problem discussed above.
   If $\varphi$ is a continuous function in $\Omega$, we denote by $L^2(\Omega,\varphi)$ the space of
   functions in $\Omega$ which are square integrable with respect to the measure $e^{-\varphi}d\mu_{g_J}$.
   This is a subspace of the space $L^2(\Omega,loc)$ of functions in $\Omega$ which are locally square integrable with respect to the
   Lebesgue measure, and it is clear that every function in $L^2(\Omega,loc)$
   belongs to $L^2(\Omega,\varphi)$ for some $\varphi$.
   By $\Lambda^k\otimes L^2(\Omega,\varphi)$ we denote the space of $k$-forms with coefficients in $L^2(\Omega,\varphi)$.
   We set
   $$
   \|f\|^2=\int_\Omega|f|^2e^{-\varphi}d\mu_{g_J}.
   $$
   It is clear that $L^2(\Omega,\varphi)$ is a Hilbert space with this norm.

   In our application of the above lemmas, the spaces $H_1$, $H_2$ and $H_3$ will be
   $ L^2_2(\Omega,\varphi)_0$, $\Lambda^1_\mathbb{R}\otimes L^2_1(\Omega,\varphi)$ and $\Lambda^-_J\otimes L^2(\Omega,\varphi)$, respectively,
   $T$ the operator between these space defined as explained above by the $\mathcal{\widetilde{W}}$ operator, and let $G$ be the set of all
   $A\in\Lambda^1_\mathbb{R}\otimes L^2_1(\Omega,\varphi)$ with $d^-_J(A)=0$.
   Let $S$ be the operator from $\Lambda^1_\mathbb{R}\otimes L^2_1(\Omega,\varphi)$
   to $\Lambda^-_J\otimes L^2(\Omega,\varphi)$ defined by $d^-_J$.
   Then $G$ is the null space of $S$, and to prove (\ref{inequvi3}) it will be sufficient to show that
   \begin{equation}\label{inequvi2}
      \|A\|^2_{H_2}\leq C^2(\|T^*A\|^2_{H_1}+\|SA\|^2_{H_3}),\,\,\, A\in D_{T^*}\cap D_S.
   \end{equation}
  To prove this basic inequality, we require the following set steps:

    \vskip 6pt

    {\it Step 1.} The formally adjoint operator, $\mathcal{\widetilde{W}}^{\ast}$, of $T=\mathcal{\widetilde{W}}$ (for $\bar{\partial}$-operator
    cf. L. H\"{o}rmander \cite{Hormander0}).

    First, we calculate it in the non weighted space.
    For all $f\in C^\infty(\bar{\Omega})\subset D_\mathcal{\widetilde{W}}$ (where $ C^\infty(\bar{\Omega})$ is the set of infinitely
    differentiable functions on some neighborhood of $\bar{\Omega}$), we have
    $$
    (\mathcal{\widetilde{W}}(f),A)=(f, \mathcal{\widetilde{W}}^*A).
    $$
  If $supp f\subset\Omega$, $A=u+\bar{u}\in\Omega^1_\mathbb{R}(\bar{\Omega})$ (where $\Omega^1_\mathbb{R}(\bar{\Omega})$ is the set of infinitely differentiable real $1$-forms on some neighborhood of $\bar{\Omega}$),
  $u\in\Omega^{0,1}_J(\bar{\Omega})$  (where $\Omega^{0,1}_J(\bar{\Omega})$ is the set of infinitely differentiable real $(0,1)$-forms with respect
  to the almost complex structure $J$ on some neighborhood of $\bar{\Omega}$) and $d^-_J(A)=0$,
  the above equality becomes
  \begin{eqnarray*}
    (\mathcal{\widetilde{W}}(f),A) &=& -\int_\Omega A\wedge d[f\omega_1+(\eta^1_f+\eta^2_f+\overline{\eta}^1_f+\overline{\eta}^2_f)]\\
     &=& -\int_\Omega d(A)\wedge[f\omega_1+(\eta^1_f+\eta^2_f+\overline{\eta}^1_f+\overline{\eta}^2_f)]  \\
     &=& -\int_\Omega d^+_J(A)\wedge[f\omega_1+(\eta^1_f+\eta^2_f+\overline{\eta}^1_f+\overline{\eta}^2_f)] \\
     &~& -\int_\Omega d^-_J(A)\wedge[f\omega_1+(\eta^1_f+\eta^2_f+\overline{\eta}^1_f+\overline{\eta}^2_f)] \\
     &=& -\int_\Omega d^+_J(A)\wedge fF\\
     &=& (f, \mathcal{\widetilde{W}}^*A)
  \end{eqnarray*}
  is valid to all $f\in C^\infty(\bar{\Omega})_0$.
  Thus, the formally adjoint operator of $\mathcal{\widetilde{W}}$ is
    $$\mathcal{\widetilde{W}}^*A=\frac{-2F\wedge d^+_J(A)}{F^2}.$$
  Then we define $\mathcal{\widetilde{W}}^*$ in weighted space by
  \begin{equation}\label{representation with weighted}
  \mathcal{\widetilde{W}}^*A=\frac{-2F\wedge d^+_J(e^{-\varphi}A)}{F^2}\cdot e^{\varphi}.
  \end{equation}

  {\it Step 2.} Computing $\|\mathcal{\widetilde{W}}^*A\|^2_{H_1}+\|d^-_JA\|^2_{H_3}$,
  as $A\in D_{\mathcal{\widetilde{W}}^*}\cap D_{d^-_J}\cap\Omega^1_\mathbb{R}(\bar{\Omega})$ (for $\bar{\partial}$-operator
    cf. L. H\"{o}rmander \cite{Hormander0}).

  Using the second canonical connection $\nabla^1$ with respect to metric $g_J$ (cf. Appendix \ref{psh} or \cite{G2}),
   for $p\in\Omega$, choose a local moving unitary frame $\{e^1,e^2\}$ for $T^{1,0}(\Omega)$
   and local complex coordinate $\{z^1,z^2\}$ in a neighborhood of $p$ satisfying $e^i(p)=\frac{\partial}{\partial z^i}|_p$
   with
  respect to the Hermitian inner product $h=g_J-\sqrt{-1}F$ (cf. \cite{Cha}).
 Denote $\{\theta_1,\theta_2\}$ by the dual frame of $\{e^1,e^2\}$.
  Hence $$h=g_J-\sqrt{-1}F=\theta_1\otimes\bar{\theta}_1+\theta_2\otimes\bar{\theta}_2$$ and
  $$F=\theta_1\wedge\bar{\theta}_1+\theta_2\wedge\bar{\theta}_2.$$
   By a direct calculation,
  \begin{eqnarray}\label{formular cup F}
   d^+_J(e^{-\varphi}A)\wedge F&=& [\partial_J(e^{-\varphi}u)+\bar{\partial}_J(e^{-\varphi}\bar{u})]\wedge F \nonumber \\
   &=&-e^{-\varphi}(\frac{\partial\varphi}{\partial z^1}\theta_1+\frac{\partial\varphi}{\partial z^2}\theta_2)\wedge(u^1\bar{\theta}_1+u^2\bar{\theta}_2)\wedge F+e^{-\varphi}\partial_J(u^1\bar{\theta}_1+u^2\bar{\theta}_2)\wedge F  \nonumber \\
   &~&+\bar{\partial}_J(e^{-\varphi}\bar{u})\wedge F    \nonumber \\
   &=&-e^{-\varphi}(\frac{\partial\varphi}{\partial z^1}u^1\theta_1\wedge\bar{\theta}_1
   +\frac{\partial\varphi}{\partial z^2}u^2\theta_2\wedge\bar{\theta}_2)\wedge F  \nonumber \\
    &~&+e^{-\varphi}(\frac{\partial u^1}{\partial z^1}\theta_1\wedge\bar{\theta}_1+\frac{\partial u^2}{\partial z^2}\theta_2\wedge\bar{\theta}_2)\wedge F
     +\bar{\partial}_J(e^{-\varphi}\bar{u})\wedge F     \nonumber \\
     &=&-\frac{1}{2}e^{-\varphi}(\frac{\partial\varphi}{\partial z^1}u^1+\frac{\partial\varphi}{\partial z^2}u^2)F^2
     +\frac{1}{2}e^{-\varphi}(\frac{\partial u^1}{\partial z^1}+\frac{\partial u^2}{\partial z^2})F^2 +\bar{\partial}_J(e^{-\varphi}\bar{u})\wedge F, \nonumber \\
     &~& \end{eqnarray}
     where $u=u^1\bar{\theta}_1+u^2\bar{\theta}_2$, $A=u+\bar{u}$.
         Thus, by (\ref{representation with weighted}) and (\ref{formular cup F}),
         \begin{equation}\label{formal adjoint}
           \mathcal{\widetilde{W}}^*A=\frac{\partial\varphi}{\partial z^1}u^1+\frac{\partial\varphi}{\partial z^2}u^2
         -\frac{\partial u^1}{\partial z^1}-\frac{\partial u^2}{\partial z^2}
         +\frac{\partial\varphi}{\partial \bar{z}^1}\bar{u}^1+\frac{\partial\varphi}{\partial \bar{z}^2}\bar{u}^2
         -\frac{\partial \bar{u}^1}{\partial \bar{z}^1}-\frac{\partial \bar{u}^2}{\partial \bar{z}^2}.
         \end{equation}

      Now computing
      $$\|\mathcal{\widetilde{W}}^*A\|^2_{H_1}=\int_\Omega|\sum_i\delta_iu^i|^2e^{-\varphi}
      =\sum_{i,j}\int_\Omega(\delta_iu^i)\overline{(\delta_ju^j)}e^{-\varphi},$$
      where $\delta_iu^i=\frac{\partial u^i}{\partial z^i}-\frac{\partial\varphi}{\partial z^i}u^i$.
      \begin{eqnarray}\label{d J minuous}
         d^-_J(A) &=& d^-_J(u+\bar{u})  \nonumber \\
         &=&\bar{\partial}_Ju+\bar{A}_Ju+\partial_J\bar{u}+A_J\bar{u } \nonumber\\
         &=& (\frac{\partial \bar{u}^2}{\partial z^1}-\frac{\partial \bar{u}^1}{\partial z^2})\theta_1\wedge\theta_2
         +(\frac{\partial u^2}{\partial \bar{z}^1}-\frac{\partial u^1}{\partial \bar{z}^2})\bar{\theta}_1\wedge\bar{\theta}_2 \nonumber\\
         &~&+(A_{J 2}\bar{u}^2-A_{J 1}\bar{u}^1)\theta_1\wedge\theta_2+(\bar{A}_{J 2}u^2-\bar{A}_{J 1}u^1)\bar{\theta}_1\wedge\bar{\theta}_2,
      \end{eqnarray}
  where $A_{J i}$ are the coefficients of $A_J$ which is the linear operator defined in Section $2$.
   So
      \begin{eqnarray*}
        \|d^-_JA\|^2_{H_3} &=&\int_\Omega\sum_{i<j}(|\frac{\partial u^j}{\partial \bar{z}^i}-\frac{\partial u^i}{\partial \bar{z}^j}|^2
        +|A_{J j}\bar{u}^j-A_{J i}\bar{u}^i|^2)e^{-\varphi}  \\
         &=& \sum_{i,j}\int_\Omega(|\frac{\partial u^j}{\partial \bar{z}^i}|^2-\frac{\partial u^j}{\partial \bar{z}^i}\frac{\partial \bar{u}^i}{\partial z^j})e^{-\varphi}
        +\int_\Omega\sum_{i<j}|A_{J j}\bar{u}^j-A_{J i}\bar{u}^i|^2e^{-\varphi}.
      \end{eqnarray*}
    Hence,
      \begin{eqnarray}
          \|\mathcal{\widetilde{W}}^*A\|^2_{H_1}+\|d^-_JA\|^2_{H_3}&=&\sum_{i,j}\int_\Omega|\frac{\partial u^j}{\partial \bar{z}^i}|^2e^{-\varphi}
          +\int_\Omega\sum_{i<j}|A_{J j}\bar{u}^j-A_{J i}\bar{u}^i|^2e^{-\varphi} \nonumber\\
          &~&+\sum_{i,j}\int_\Omega((\delta_iu^i)\overline{(\delta_ju^j)}
          -\frac{\partial u^j}{\partial \bar{z}^i}\frac{\partial \bar{u}^i}{\partial z^j})e^{-\varphi}.
      \end{eqnarray}

      Before continuing discussing, we need a formula which is basically the divergence theorem.
  \begin{prop}\label{divergence theorem}
 (for $\bar{\partial}$ operator, see{\rm\cite[ChapterII]{Hormander0}}{\rm\cite[ChapterIV]{Hormander}})
   If the boundary $\partial\Omega=\{r=0\}$ of a bounded domain $\Omega=\{r<0\}\subset(\mathbb{R}^4,J)$ is differentiable, $|dr|=1$ on $\partial\Omega$
   with respect to the metric $g_J$,
   and $L=\sum_ia_i\frac{\partial}{\partial x^i}$ is a differentiable operator of $1$-order with constant coefficients,
   then $$\int_\Omega Lf=\int_{\partial\Omega}(Lr)f. $$
   \end{prop}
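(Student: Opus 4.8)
The plan is to recognize Proposition \ref{divergence theorem} as a direct consequence of the divergence theorem (Gauss--Green formula), applied one coordinate at a time. Since the coefficients $a_i$ of $L=\sum_i a_i\frac{\partial}{\partial x^i}$ are constant, it suffices to establish the single-variable identity $\int_\Omega \frac{\partial f}{\partial x^i}=\int_{\partial\Omega}\frac{\partial r}{\partial x^i}\,f$ for each $i$; multiplying by $a_i$ and summing then yields the claim, because $L$ commutes with a finite sum and $Lr=\sum_i a_i\frac{\partial r}{\partial x^i}$. Thus the whole statement reduces to the classical boundary-integration formula, with $r$ playing the role of the defining function that pins down the outward normal.

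First I would rewrite the integrand $Lf$ as a divergence. Because the $a_i$ are constants, the constant vector field $a=\sum_i a_i\frac{\partial}{\partial x^i}$ satisfies $\mathrm{div}(fa)=\sum_i\frac{\partial(a_i f)}{\partial x^i}=\sum_i a_i\frac{\partial f}{\partial x^i}=Lf$. Applying the divergence theorem on the bounded domain $\Omega$ with $C^1$ boundary $\partial\Omega=\{r=0\}$ gives $\int_\Omega Lf=\int_\Omega \mathrm{div}(fa)=\int_{\partial\Omega} f\,\langle a,\nu\rangle$, where $\nu$ denotes the outward unit normal to $\partial\Omega$.

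Next I would identify $\nu$ through the defining function $r$. Since $\Omega=\{r<0\}$ and $\partial\Omega=\{r=0\}$, the function $r$ increases in the outward direction, so the outward conormal is proportional to $dr$; normalizing and invoking the hypothesis $|dr|=1$ on $\partial\Omega$ shows that the outward unit normal has components $\nu_i=\frac{\partial r}{\partial x^i}$. Substituting, $\langle a,\nu\rangle=\sum_i a_i\frac{\partial r}{\partial x^i}=Lr$, and therefore $\int_\Omega Lf=\int_{\partial\Omega}(Lr)f$, as asserted. Equivalently, one may phrase the argument invariantly: set $\alpha=f\,\iota_a\,d\mu_{g_J}$, the contraction of the volume form by $a$, so that $d\alpha=(Lf)\,d\mu_{g_J}$ and Stokes' theorem $\int_\Omega d\alpha=\int_{\partial\Omega}\alpha$ reproduces the boundary term $(Lr)f$ after restriction to $\partial\Omega$.

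The one genuine point requiring care---and the step I expect to be the main obstacle---is the consistency of the measures and of the normal with respect to the metric $g_J$: the volume integral, the induced boundary measure, the gradient used to form $\nu$, and the normalization $|dr|=1$ must all be taken with respect to the same metric for the identification $\nu_i=\frac{\partial r}{\partial x^i}$ to be correct and for Gauss--Green to hold in the stated coordinate form. Because $L$ is written with the coordinate partials $\frac{\partial}{\partial x^i}$ rather than with the $g_J$-divergence (which would carry additional connection terms, equivalently a nonzero $\mathrm{div}_{g_J}a$), one must verify that in the local adapted frame used in Step $2$---the unitary coframe $\{\theta_1,\theta_2\}$ for $h=g_J-\sqrt{-1}F$ with $e^i(p)=\frac{\partial}{\partial z^i}|_p$---these conventions align at the relevant point, so that the Euclidean Gauss--Green formula applies verbatim and the boundary term is exactly $\int_{\partial\Omega}(Lr)f$.
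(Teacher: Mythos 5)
Your proof is correct, and it is essentially the same argument the paper relies on: the paper gives no proof of this proposition at all, simply citing H\"ormander, and the standard Gauss--Green argument behind that citation is exactly what you wrote (write $Lf=\mathrm{div}(fa)$ using constancy of the $a_i$, apply the divergence theorem, and identify the unit outward normal with $dr$ via $\Omega=\{r<0\}$ and $|dr|=1$). Your closing caveat is also well taken: the statement as phrased mixes a $g_J$-normalization of $dr$ with coordinate partials $\partial/\partial x^i$ and an unspecified pair of measures, so the identity holds verbatim only when these conventions are aligned (e.g.\ in the Euclidean/adapted coordinates in which the paper actually applies it in Step 2), which is precisely the point you flag rather than gloss over.
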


   By the above proposition, we can get
   \begin{eqnarray*}
      \sum\int_\Omega f\overline{\frac{\partial(u^ie^{-\varphi})}{\partial z^i}}&=&-\sum\int_\Omega\frac{\partial f}{\partial\bar{z}^i}\bar{u}^ie^{-\varphi}
            +\sum\int_\Omega\frac{\partial(f\bar{u}^ie^{-\varphi})}{\partial\bar{z}^i} \\
      &=&-\sum\int_\Omega\frac{\partial f}{\partial\bar{z}^i}\bar{u}^ie^{-\varphi}
            +\sum\int_{\partial\Omega}\frac{\partial r}{\partial\bar{z}^i}(f\bar{u}^ie^{-\varphi}).
   \end{eqnarray*}
   We can reduce the deduced formula above to
   \begin{equation}\label{dual}
     (f,\delta_ig)=-(\bar{\partial}_if,g)+((\bar{\partial}_ir)f,g)_{\partial\Omega},
   \end{equation}
   where $f,g\in C^\infty(\bar{\Omega})$, and $(\cdot,\cdot)_{\partial\Omega}$ indicates the integral on $\partial\Omega$
   relative to the weight factor $e^{-\varphi}$.
   By (\ref{dual}),
   $$
   \int_\Omega(\delta_iu^i)\overline{(\delta_ju^j)}e^{-\varphi}=-(\bar{\partial}_j\delta_iu^i,u^j)+((\bar{\partial}_jr)\delta_iu^i,u^j)_{\partial\Omega},
   $$
   $$
   \int_\Omega(\bar{\partial}_iu^j)\overline{(\bar{\partial}_ju^i)}e^{-\varphi}=-(u^j,\delta_i\bar{\partial}_ju^i)+((\bar{\partial}_ir)u^j,\bar{\partial}_ju^i)_{\partial\Omega}.
   $$
   Then,
    \begin{eqnarray}
          \|\mathcal{\widetilde{W}}^*A\|^2_{H_1}+\|d^-_JA\|^2_{H_3}&=&\sum_{i,j}\int_\Omega|\frac{\partial u^j}{\partial \bar{z}^i}|^2e^{-\varphi}
          +\int_\Omega\sum_{i<j}|A_{J j}\bar{u}^j-A_{J i}\bar{u}^i|^2e^{-\varphi} \nonumber\\
          &~&+\sum_{i,j}((\delta_i\bar{\partial}_j-\bar{\partial}_j\delta_i)u^i,u^j)+\sum_{i,j}\int_{\partial\Omega}(\bar{\partial}_ir)(\delta_iu^j)\bar{u}^ie^{-\varphi} \nonumber\\
           &~&- \sum_{i,j}\int_{\partial\Omega}(\partial_ir)\bar{u}^j(\overline{\partial_j\bar{u}^i})e^{-\varphi} \nonumber\\
           &=&\sum_{i,j}\int_\Omega|\frac{\partial u^j}{\partial \bar{z}^i}|^2e^{-\varphi}
          +\int_\Omega\sum_{i<j}|A_{J j}\bar{u}^j-A_{J i}\bar{u}^i|^2e^{-\varphi} \nonumber\\
          &~&+\sum_{i,j}\int_\Omega(\bar{\partial}_j\partial_i\varphi)u^i\bar{u}^je^{-\varphi}
          +\sum_j\int_{\partial\Omega}(\delta_iu^j)\sum_i(\bar{\partial}_ir)\bar{u}^ie^{-\varphi}     \nonumber\\
           &~&- \sum_{i,j}\int_{\partial\Omega}(\partial_ir)\bar{u}^j(\overline{\partial_j\bar{u}^i})e^{-\varphi}.
      \end{eqnarray}
      If we add conditions
    \begin{equation}\label{boundary conditions}
   \sum_i(\partial_ir)u^i|_{\partial\Omega}=0
    \end{equation}
    to $A=u+\bar{u}$,
     then
     \begin{eqnarray*}
       \|\mathcal{\widetilde{W}}^*A\|^2_{H_1}+\|d^-_JA\|^2_{H_3} &=& \sum_{i,j}\int_\Omega|\frac{\partial u^j}{\partial \bar{z}^i}|^2e^{-\varphi}
          +\int_\Omega\sum_{i<j}|A_{J j}\bar{u}^j-A_{J i}\bar{u}^i|^2e^{-\varphi} \\
        &~&+\sum_{i,j}\int_\Omega(\bar{\partial}_j\partial_i\varphi)u^i\bar{u}^je^{-\varphi}
           - \sum_{i,j}\int_{\partial\Omega}(\partial_ir)\bar{u}^j(\overline{\partial_j\bar{u}^i})e^{-\varphi}.
     \end{eqnarray*}

     {\it Step 3.} The domination of the boundary term--Morrey's trick (cf. Morrey \cite{Mo} or H\"{o}rmander \cite[Chapter II]{Hormander0}).

     The method is: Let $A\in D_{\mathcal{\widetilde{W}}^*}\cap\Omega^1_\mathbb{R}(\bar{\Omega})$, $r=0$ define the boundary of $\Omega$,
     and the defining function $r$ be differentiable.
          Thus $\sum_i(\partial_ir)u^i$ are local functions, differentiable at every point.
       By (\ref{boundary conditions}), these functions vanish at $r=0$, i.e. on $\partial\Omega$.
     By Taylor expansion, it can be written as
      $$
        \sum_i(\partial_ir)u^i=\lambda r,
      $$
    where $\lambda$ is some differentiable function.
   Taking $\bar{\partial}_j$ to both sides to yield
  $$
  \sum_i(\bar{\partial}_j\partial_ir)u^i+\sum_i(\partial_ir)(\bar{\partial}_ju^i)=(\bar{\partial}_j\lambda)r+\lambda\bar{\partial}_jr.
  $$
  Multiplying $\bar{u}^j$ and summing up for $j$,
  $$
    \sum_{i,j}(\bar{\partial}_j\partial_ir)u^i\bar{u}^j+\sum_{i,j}(\partial_ir)(\bar{\partial}_ju^i)\bar{u}^j
   =\sum_jr(\bar{\partial}_j\lambda)\bar{u}^j+\sum_j\lambda(\bar{\partial}_jr)\bar{u}^j.
  $$
  Integrating on $\partial\Omega$, noting $r=0$ on $\partial\Omega$, $\sum_i(\partial_ir)u^i|_{\partial\Omega}=0$, to get
  $$
  -\sum_{i,j}\int_{\partial\Omega}(\partial_ir)(\bar{\partial}_ju^i)\bar{u}^je^{-\varphi}
   =\sum_{i,j}\int_{\partial\Omega}(\bar{\partial}_j\partial_ir)u^i\bar{u}^je^{-\varphi}.
  $$
  Then we get
  \begin{eqnarray}\label{normal}
        \|\mathcal{\widetilde{W}}^*A\|^2_{H_1}+\|d^-_JA\|^2_{H_3} &=& \sum_{i,j}\int_\Omega|\frac{\partial u^j}{\partial \bar{z}^i}|^2e^{-\varphi}
          +\int_\Omega\sum_{i<j}|A_{J j}\bar{u}^j-A_{J i}\bar{u}^i|^2e^{-\varphi} \nonumber\\
           &&+\sum_{i,j}\int_\Omega(\bar{\partial}_j\partial_i\varphi)u^i\bar{u}^je^{-\varphi} \nonumber\\
           && +\sum_{i,j}\int_{\partial\Omega}(\bar{\partial}_j\partial_ir)u^i\bar{u}^je^{-\varphi}.
     \end{eqnarray}
  Note that we have not made any special restrictions to the choice of $\varphi$ so far.
   Now we assume

  (1) $\Omega$ is a compact $J$-pseudoconvex domain, i.e.
   $$
    \sum_{i,j}(\bar{\partial}_j\partial_ir)\xi^i\bar{\xi}^j\geq 0,\,\,\,\forall \sum_i(\partial_ir)\xi^i=0;
   $$

  (2) $\varphi$ satisfies that complex Hessian is strictly positive-definite
         (i.e. $\varphi$ is a strictly $J$-plurisubharmonic function (cf. Harvey-Lawson \cite{HL3} or Appendix \ref{psh})),
   that is,
   there exists $c>0$ such that
   $$
  \sum_{i,j}(\partial_i\bar{\partial}_j\varphi)\xi^i\bar{\xi}^j\geq c\sum_i|\xi^i|^2.
   $$
  Under the two assumptions above, we have proved the following theorem:
  \begin{prop}\label{equi estimate}
   (for $\bar{\partial}$-problem see {\rm\cite{Hormander0,Hormander}})
   Let $\Omega$ be a compact $J$-pseudoconvex domain. Given a real valued function $\varphi\in C^\infty(\bar{\Omega})$
  satisfying $\sum_{i,j}(\partial_i\bar{\partial}_j\varphi)\xi^i\bar{\xi}^j\geq c\sum_i|\xi^i|^2$, $c>0$,
  then for $A\in D_{\mathcal{\widetilde{W}}^*}\cap D_{d^-_J}\cap\Omega^1_\mathbb{R}(\bar{\Omega})$, we have
  $$
    c\|A\|^2_{H_2}\leq\|\mathcal{\widetilde{W}}^*A\|^2_{H_1}+\|d^-_JA\|^2_{H_3}.
  $$
 \end{prop}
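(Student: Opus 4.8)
The plan is to obtain the estimate by reading off the integral identity already assembled in Steps~1--3, namely equation (\ref{normal}), and then disposing of each of its four terms using the two standing hypotheses. Because the proposition only requires $A=u+\bar{u}\in D_{\mathcal{\widetilde{W}}^*}\cap D_{d^-_J}\cap\Omega^1_\mathbb{R}(\bar{\Omega})$, the form $A$ is smooth up to $\partial\Omega$ and identity (\ref{normal}) applies verbatim; no regularization is needed here, the density arguments being deferred to the solvability step through Lemmas~\ref{Siu lemma 1}--\ref{Siu lemma 3}. Thus the whole proof reduces to a sign analysis of the right-hand side of (\ref{normal}).

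First I would note that the first two integrals on the right of (\ref{normal}), namely $\sum_{i,j}\int_\Omega|\partial u^j/\partial\bar{z}^i|^2e^{-\varphi}$ and $\int_\Omega\sum_{i<j}|A_{Jj}\bar{u}^j-A_{Ji}\bar{u}^i|^2e^{-\varphi}$, are manifestly nonnegative and may be discarded. The membership $A\in D_{\mathcal{\widetilde{W}}^*}$ is exactly what enforces the boundary constraint (\ref{boundary conditions}), $\sum_i(\partial_ir)u^i|_{\partial\Omega}=0$; in other words the coefficient vector $(u^1,u^2)$ is complex-tangential along $\partial\Omega$. This is precisely the hypothesis under which the $J$-pseudoconvexity assumption~(1) is applicable, so that the Levi-form boundary integral $\sum_{i,j}\int_{\partial\Omega}(\bar{\partial}_j\partial_ir)u^i\bar{u}^je^{-\varphi}$ is nonnegative as well. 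Hence all three of these terms only help.

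It then remains to bound the interior Hessian term from below. Applying the strict plurisubharmonicity assumption~(2) pointwise with $\xi^i=u^i$ gives $\sum_{i,j}(\partial_i\bar{\partial}_j\varphi)u^i\bar{u}^j\geq c\sum_i|u^i|^2$, and integrating against $e^{-\varphi}d\mu_{g_J}$ turns this into $\sum_{i,j}\int_\Omega(\bar{\partial}_j\partial_i\varphi)u^i\bar{u}^je^{-\varphi}\geq c\,\|A\|^2_{H_2}$, once the normalization of $|A|^2$ in terms of $\sum_i|u^i|^2$ coming from $A=u+\bar{u}$ and the chosen unitary coframe is accounted for. Combining this lower bound with the nonnegativity of the three discarded terms gives $c\|A\|^2_{H_2}\leq\|\mathcal{\widetilde{W}}^*A\|^2_{H_1}+\|d^-_JA\|^2_{H_3}$, which is the claim.

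The genuinely hard part is not this closing sign-chase but the identity (\ref{normal}) feeding it, above all Step~3's Morrey trick, which converts the a priori badly-signed boundary term $-\sum_{i,j}\int_{\partial\Omega}(\partial_ir)\bar{u}^j\overline{(\partial_j\bar{u}^i)}e^{-\varphi}$ into the Levi-form term $+\sum_{i,j}\int_{\partial\Omega}(\bar{\partial}_j\partial_ir)u^i\bar{u}^je^{-\varphi}$ by differentiating $\sum_i(\partial_ir)u^i=\lambda r$ in $\bar{\partial}_j$ and restricting to $\partial\Omega$, together with the commutator computation $[\delta_i,\bar{\partial}_j]=\bar{\partial}_j\partial_i\varphi$ of Step~2 that manufactures the Hessian term. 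Within the proof of the proposition itself, the point to keep honest is the equivalence between $A\in D_{\mathcal{\widetilde{W}}^*}$ and the boundary condition (\ref{boundary conditions}), since this is what licenses evaluating the Levi form on $(u^i)$. A reassuring structural feature is that the non-integrability of $J$, carried by the operator $A_J$, enters $\|d^-_JA\|^2_{H_3}$ (see (\ref{d J minuous})) only through the nonnegative square $\int_\Omega\sum_{i<j}|A_{Jj}\bar{u}^j-A_{Ji}\bar{u}^i|^2e^{-\varphi}$, so it cannot degrade the estimate; this is the reason the classical H\"{o}rmander argument survives intact in the almost complex setting.
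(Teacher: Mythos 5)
Your proposal is correct and follows essentially the same route as the paper: the paper's proof of this proposition is precisely the identity (\ref{normal}) assembled in Steps 1--3, followed by the same sign analysis---discarding the two nonnegative interior squares, using the boundary condition (\ref{boundary conditions}) (equivalent to $A\in D_{\mathcal{\widetilde{W}}^*}$ for smooth $A$) to apply $J$-pseudoconvexity to the Levi-form boundary term, and bounding the Hessian term below by $c\|A\|^2_{H_2}$ via strict $J$-plurisubharmonicity. You also correctly identify the two points the paper leans on, namely the Morrey trick and the equivalence between adjoint-domain membership and the tangency condition, so nothing is missing.
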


  Recall that in the previous discussion, if for all $A\in D_{\mathcal{\widetilde{W}}^*}\cap D_{d^-_J}$,
 we have
 $$c\|A\|^2_{H_2}\leq\|\mathcal{\widetilde{W}}^*A\|^2_{H_1}+\|d^-_JA\|^2_{H_3},$$ then the
  $\mathcal{\widetilde{W}}, d^-_J$-problem of a $J$-pseudoconvex domain has a solution (which is similar to the $\bar{\partial}$-problem in \cite{Hormander0,Hormander}).
  However, Proposition \ref{equi estimate} implies that $$c\|A\|^2_{H_2}\leq\|\mathcal{\widetilde{W}}^*A\|^2_{H_1}+\|d^-_JA\|^2_{H_3}$$
 holds for all infinitely differentiable functions in $D_{\mathcal{\widetilde{W}}^*}\cap D_{d^-_J}$.
  To prove that this estimate holds for all $A$ in $D_{\mathcal{\widetilde{W}}^*}\cap D_{d^-_J}$,
   it suffices to show that, $\forall A\in D_{\mathcal{\widetilde{W}}^*}\cap D_{d^-_J}$ there exists a sequence $A_\nu\in D_{\mathcal{\widetilde{W}}^*}\cap D_{d^-_J}\cap\Omega^1_\mathbb{R}(\bar{\Omega})$
  such that
  $$
 A_\nu\rightarrow A,\,\,\, \mathcal{\widetilde{W}}^*A_\nu\rightarrow \mathcal{\widetilde{W}}^*A,\,\,\, d^-_JA_\nu\rightarrow d^-_JA.
  $$
  Note that it is important to prove that this convergence holds at the same time.
  It is easy to prove that the first and the third hold.
   The question becomes to show that the second holds at the same time.
  The method presented below is called the regularization method of K. Friedrichs,
   first due to K. Friedrichs \cite{Fr} in 1944, and later further developed by L. H\"{o}rmander \cite{Hormander0} in 1965.

   Let  a domain $\Omega\subset\mathbb{R}^n$, $L$ be a linear differential operator
  $$
   L:C^\infty(\bar{\Omega})\longrightarrow C^\infty(\bar{\Omega}).
  $$
  We want to extend $L$ to $L_1$,
  $$
   L_1:L^2(\Omega)\longrightarrow L^2(\Omega).
  $$
   There are two ways to do the extension (cf. L. H\"{o}rmander \cite{Hormander0,Hormander}):

     1. The strict extension. $L_1$ is the closed extension of $L$, that is, $L_1=\bar{L}$.
    The definition is : $L_1f=g$ is equivalent to that there exists $f_\nu\in C^\infty(\bar{\Omega})$ such that
  $f_\nu\rightarrow f$, $Lf_\nu\rightarrow g$ (the convergence in the sense of $L^2$).

      2. The weak extension. The extension is in the sense of distributions, i.e. as $f,g\in L^2$.
   The definition of $Lf=g$ is:
   $$
    (g,\varphi)=(f,L^*\varphi)
   $$
    to every $\varphi\in C^\infty(\Omega)_0$.

   \begin{theo}\label{Friedrichs}
   ({\bf Friedrichs} )
  If L is a differential operator of first-order,
    the weak extension is equivalent to the strict extension (that is, the weak extension implies the strict extension).
   \end{theo}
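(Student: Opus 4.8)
The plan is to establish the two inclusions between the weak and the strict extensions separately, with the containment of the strict in the weak being immediate and the reverse containment carrying all the content. For the easy direction, suppose $\bar{L}f=g$ in the strict sense, so there are $f_\nu\in C^\infty(\bar{\Omega})$ with $f_\nu\to f$ and $Lf_\nu\to g$ in $L^2$. Then for every test function $\varphi\in C^\infty(\Omega)_0$ the classical identity $(Lf_\nu,\varphi)=(f_\nu,L^*\varphi)$ holds, and letting $\nu\to\infty$ gives $(g,\varphi)=(f,L^*\varphi)$, i.e.\ $Lf=g$ in the sense of distributions. Hence the strict extension is always contained in the weak extension, and the real assertion is that a \emph{first-order} operator admits no genuine distributional solutions beyond the limits of classical ones.

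For the reverse direction I would use the Friedrichs regularization. Fix $f,g\in L^2(\Omega)$ with $Lf=g$ weakly, write $L=\sum_j a_j\partial_j+b$ with smooth coefficients, and let $J_\varepsilon$ denote convolution with a standard mollifier $\rho_\varepsilon$. Working locally (after a partition of unity subordinate to coordinate charts and a small inward translation so that $J_\varepsilon f$ lands in $C^\infty(\bar{\Omega})$), set $f_\varepsilon=J_\varepsilon f$; then $f_\varepsilon\to f$ in $L^2$ by the approximate-identity property. The point is to show $Lf_\varepsilon\to g$, which I would do by estimating the commutator
\[
[L,J_\varepsilon]f=Lf_\varepsilon-J_\varepsilon(Lf)=Lf_\varepsilon-J_\varepsilon g,
\]
since $J_\varepsilon g\to g$ already. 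The zeroth-order contribution $b\,J_\varepsilon f-J_\varepsilon(bf)$ tends to $0$ because $b$ is continuous and $J_\varepsilon$ is an approximate identity, so everything reduces to the first-order commutators $[a_j\partial_j,J_\varepsilon]f$.

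The heart of the argument, and the step I expect to be the main obstacle, is the Friedrichs commutator lemma: $[a_j\partial_j,J_\varepsilon]f\to0$ in $L^2_{\mathrm{loc}}$ for $f\in L^2_{\mathrm{loc}}$ and $a_j$ Lipschitz. I would prove it by moving the derivative off $f$ via integration by parts, obtaining the kernel representation
\[
[a_j\partial_j,J_\varepsilon]f(x)=\int(\partial_j\rho_\varepsilon)(x-y)\,[a_j(x)-a_j(y)]\,f(y)\,dy+\int\rho_\varepsilon(x-y)\,(\partial_ja_j)(y)\,f(y)\,dy,
\]
which never requires $\partial_j f$ to lie in $L^2$, only $f$ itself; this is precisely why the identity fails for operators of higher order and holds here. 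Because $a_j(x)-a_j(y)=O(|x-y|)$ on the support of $\rho_\varepsilon(x-\cdot)$, the first kernel is uniformly bounded in $L^1$ in each variable, so a Schur-type estimate gives a uniform $L^2$ bound. The delicate part is the limit: a first-order Taylor expansion $a_j(x)-a_j(y)\approx\nabla a_j(x)\cdot(x-y)$ together with the moment identity $\int(\partial_j\rho)(w)\,w_k\,dw=-\delta_{jk}$ shows the first integral converges to $-(\partial_j a_j)\,f$, which exactly cancels the second integral, whose limit is $+(\partial_j a_j)\,f$. This cancellation is the crux. With it established, summing over $j$ gives $[L,J_\varepsilon]f\to0$ in $L^2_{\mathrm{loc}}$, hence $Lf_\varepsilon\to g$ locally; reassembling the partition of unity then yields $\bar{L}f=g$, identifying the two extensions. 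The remaining points are purely technical: the errors from the inward translation and from differentiating the cutoff functions near $\partial\Omega$ are lower order and vanish in the limit.
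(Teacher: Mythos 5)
The paper gives no proof of this theorem at all: it is quoted as a classical result of Friedrichs \cite{Fr}, in the form developed by H\"{o}rmander \cite{Hormander0}, and your mollification--commutator argument is precisely the proof in those references, so you have supplied the argument the paper merely cites. Your central computation is correct: the kernel representation of $[a_j\partial_j,J_\varepsilon]$, the uniform Schur bound coming from $|a_j(x)-a_j(y)|\le C|x-y|$ on the $\varepsilon$-support of $\partial_j\rho_\varepsilon$, and the cancellation of the limit $-(\partial_ja_j)f$ of the first integral against the limit $+(\partial_ja_j)f$ of the second via the moment identity $\int(\partial_j\rho)(w)\,w_k\,dw=-\delta_{jk}$ are exactly the Friedrichs lemma. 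The one step you should not dismiss as ``purely technical'' is the boundary: to get approximants in $C^\infty(\bar{\Omega})$ one convolves with a mollifier shifted inward by $\delta$ subject to $\varepsilon<\delta\le C\varepsilon$, and this coupling matters, because the commutator kernel's Schur bound becomes $O\bigl((\delta+\varepsilon)/\varepsilon\bigr)$; with $\delta$ proportional to $\varepsilon$ the shifted kernel has the same support size, $L^1$ norm and moments as the centered one, so your estimates carry over verbatim, but with $\delta$ and $\varepsilon$ decoupled (e.g.\ $\varepsilon=o(\delta)$ is fine, $\delta=o(\varepsilon)$ fails to keep the support inside $\Omega$, and $\varepsilon \ll \delta$ with the error treated as $(L-L^{(\delta)})J_\varepsilon$ costs a factor $\delta/\varepsilon$) the argument breaks.
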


  \begin{rem}
  It is enough to require that $\varphi$ is a strictly $J$-plurisubharmonic function.
    If $J$ is integrable, then $\mathcal{\widetilde{W}}, d^-_J$-problem becomes $\bar{\partial}$-problem, hence
    Proposition \ref{equi estimate} is a generalization of Theorem 4.2.2 in {\rm\cite{Hormander}}.
  \end{rem}

  Now we return to prove the iequality
  $$
 c\|A\|^2_{H_2}\leq\|\mathcal{\widetilde{W}}^*A\|^2_{H_1}+\|d^-_JA\|^2_{H_3},\,\,A\in D_{\widetilde{W}^*}\cap D_{d^-_J}.
  $$
  We have proved the case for $A\in\Omega^1_{\mathbb{R}}(\bar{\Omega})$.
  For $A\in D_{\widetilde{W}^*}\cap D_{d^-_J}$, we need to find $A_\nu\in\Omega^1_{\mathbb{R}}(\bar{\Omega})$ so that
  $$
  A_\nu\rightarrow A,\,\,\mathcal{\widetilde{W}}^*A_\nu\rightarrow\mathcal{\widetilde{W}}^*A,\,\,d^-_JA_\nu\rightarrow d^-_JA.
  $$
  We can do that by using the smoothing method of K. Friedrichs.
  Since $A\in D_{\widetilde{W}^*}\cap D_{d^-_J}$,
  $\mathcal{\widetilde{W}}^*A$ and $d^-_JA$ exists.
  Note by the definition of  $\mathcal{\widetilde{W}}^*$,
  $\mathcal{\widetilde{W}}^*A=f$ is in the sense of weak extension, and $d^-_J$ is a closed operator,
   $d^-_JA$ is in the sense of strict extension.
   Obviously, sttict extension implies weak one, so, in the sense of distributions ( recall (\ref{formal adjoint})-(\ref{d J minuous})),
   we have
   \begin{equation}\label{long eq1}
     \mathcal{\widetilde{W}}^*A=\frac{\partial\varphi}{\partial z^1}u^1+\frac{\partial\varphi}{\partial z^2}u^2+\frac{\partial\varphi}{\partial \bar{z}^1}\bar{u}^1+\frac{\partial\varphi}{\partial \bar{z}^2}\bar{u}^2-\frac{\partial u^1}{\partial z^1}-\frac{\partial u^2}{\partial z^2}
     -\frac{\partial \bar{u}^1}{\partial \bar{z}^1}-\frac{\partial \bar{u}^2}{\partial \bar{z}^2},
   \end{equation}
   where $A=u+\bar{u}$, $u=u^1\bar{\theta}_1+u^2\bar{\theta}_2\in\Omega^{0,1}_J(\bar{\Omega})$,
   $\{\theta_1,\theta_2\}$ is the dual frame of the local moving unitary frame $\{e^1,e^2\}$ for $T^{1,0}(\bar{\Omega})$;
   \begin{eqnarray}\label{long eq2}
      d^-_JA&=& (\frac{\partial \bar{u}^2}{\partial z^1}-\frac{\partial \bar{u}^1}{\partial z^2})\theta_1\wedge\theta_2
     +(\frac{\partial u^2}{\partial \bar{z}^1}-\frac{\partial u^1}{\partial \bar{z}^2})\bar{\theta}_1\wedge\bar{\theta}_2 \nonumber \\
      & +&(A_{J_2}\bar{u}^2-A_{J_1}\bar{u}^1)\theta_1\wedge\theta_2 +(\bar{A}_{J_2}u^2-\bar{A}_{J_1}u^1)\bar{\theta}_1\wedge\bar{\theta}_2,
   \end{eqnarray}
   where $$A_J:\Omega^{1,0}_J(\bar{\Omega})-\Omega^{0,2}_J(\bar{\Omega}), \,\,\bar{A}_J:\Omega^{0,1}_J(\bar{\Omega})-\Omega^{2,0}_J(\bar{\Omega}),$$
   are linear operators depending on $J$ (if $J$ is integrable, $A_J=0=\bar{A}_J$),
   $A_{J_i}$, $i=1,2$, are the coefficients of $A_J$ (more details, see Section \ref{2}).
  There are linear differential equations of first order.
  By the smoothing method of Friedrichs (Friedrichs theorem holds for first-order differential operator),
  setting $A_\varepsilon=A\ast\chi_\varepsilon$ (where $A\ast\chi_\varepsilon$ is the convolution of
   $A$ with respect to mean value function $\chi_\varepsilon$, cf. \cite{Hormander0,Hormander}),
   then $$\mathcal{\widetilde{W}}^*A_\varepsilon\rightarrow\mathcal{\widetilde{W}}^*A,
   d^-_JA_\varepsilon\rightarrow d^-_JA, A_\varepsilon\rightarrow A.$$
   Note that $A_\varepsilon$ which is obtained by quoting Friedrichs regularization method directly,
   is contained in $\Omega^1_{\mathbb{R}}(\bar{\Omega})$.
   However, it is not clear whether it is in $D_{\mathcal{\widetilde{W}}^*}$,
   since that $A_\varepsilon\in D_{\mathcal{\widetilde{W}}^*}\cap \Omega^1_{\mathbb{R}}(\bar{\Omega})$
   has to satisfy the boundary condition (cf. (\ref{boundary conditions})):
   \begin{equation}\label{boundary con}
    \sum^2_{i=1}(\partial_ir)u^i_\varepsilon|_{\partial\Omega}=0,\,\,\,A_\varepsilon=u_\varepsilon+\bar{u}_\varepsilon.
   \end{equation}

   How do all $A_\varepsilon$ satisfy (\ref{boundary con}) at the same time?
   In 1965, L. H\"{o}rmander \cite{Hormander} further extended Friedrichs regularization method to satisfy the given boundary conditions.

   Assume $\Omega=\{r<0\}\subset\mathbb{R}^N$, we consider differential equations system (in the sense of distribution) on $\Omega$:
   \begin{equation}\label{sumsum}
    \sum^N_{i=1}\sum^I_{j=1}b^k_{ij}D_iu_j+\sum^I_{j=1}c^k_ju_j=f_k, \,\,\,1\leq k\leq I,
   \end{equation}
   where $D_i=\frac{\partial}{\partial x_i}$, $i=1,\cdot\cdot\cdot,N$, $b^k_{ij},c^k_j\in C^\infty(\bar{\Omega})$.
   We write them in a matrix form:
   \begin{equation}\label{}
    Bu+Cu=f
   \end{equation}
   where $u=(u_1,\cdot\cdot\cdot,u_I)^T$, $f=(f_1,\cdot\cdot\cdot,f_I)^T$.
   The actual situation over here is
   \begin{eqnarray*}
     f=\left(
     \begin{array}{c}
       T^*u   \\
       S^*u  \\
     \end{array}
   \right).
   \end{eqnarray*}
 We set the former $K^0$ equations of (\ref{sumsum}) by
 \begin{equation}\label{}
     B^0u+C^0u=f^0.
 \end{equation}

 Next we see how to describe the boundary conditions.
 For $u\in L^2(\Omega)$, we denote its null extension by $\tilde{u}$
 \begin{eqnarray*}
  u\rightarrow\tilde{u}\in L^2(\mathbb{R}^N),
 \end{eqnarray*}
    \begin{equation}
  \tilde{u}(x)=\left\{
    \begin{array}{ll}
    u(x), & ~x\in\Omega,\\
      &   \\
    0, & ~ x\in\mathbb{R}^N\setminus\Omega.
   \end{array}
  \right.
    \end{equation}
  We know that $u\in D_{T^*}\Leftrightarrow (T\varphi,u)=(\varphi,T^*u)$,
  $\forall \varphi\in D_T$.
  That is $$
  \int_\Omega(T\varphi)u=\int_\Omega\varphi(T^*u).
  $$
  In particular, it is true for a $C^\infty$ function $\varphi$ with a compact support in $\mathbb{R}^N$, but
  $$
  \int_\Omega(T\varphi)u=\int_\Omega\varphi(T^*u)=\int_{\mathbb{R}^N}\varphi\widetilde{(T^*u)},
  $$
  while
  $$
  \int_\Omega(T\varphi)u=\int_{\mathbb{R}^N}(T\varphi)\tilde{u},
  $$
  so
  $$
  \int_{\mathbb{R}^N}(T\varphi)\tilde{u}=\int_{\mathbb{R}^N}\varphi\widetilde{(T^*u)}.
  $$
  It is true for each $C^\infty$ function $\varphi$ with its support in $\mathbb{R}^N$,
  thus
  \begin{equation}\label{}
    T^\ast\tilde{u}=\widetilde{(T^*u)}.
  \end{equation}
  So we consider that the equations and their boundary conditions are
   \begin{equation}
  \left\{
    \begin{array}{ll}
   (B+C)u=f, \\
      &   \\
    (B^0+C^0)\tilde{u}=\tilde{f^0}.
   \end{array}
  \right.
    \end{equation}
  We have the following Friedrichs-H\"{o}rmander Theorem (cf. L. H\"{o}rmander \cite[Proposition 1.2.4]{Hormander}):
   Let $u,f\in L^2(\Omega)$ satisfy (in the sense of distributions) equations
    \begin{equation}
  \tilde{u}(x)=\left\{
    \begin{array}{ll}
   (B+C)u=f, & ~B=\left(
                    \begin{array}{c}
                      B^0 \\
                     \ast \\
                    \end{array}
                  \right)_{K\times I}
   ,~C=\left(
                    \begin{array}{c}
                      C^0 \\
                     \ast \\
                    \end{array}
                  \right)_{K\times I},\\
      &   \\
    (B^0+C^0)\tilde{u}=\tilde{f^0}, & ~ f=\left(
                    \begin{array}{c}
                      f^0 \\
                     \ast \\
                    \end{array}
                  \right)_{I\times 1},
   \end{array}
  \right.
    \end{equation}
  where $\Omega=\{r<0\}\subset\subset \mathbb{R}^N$.
  If the ranks of $B^0(r)$ at each point in $\partial\Omega$ are constants,
  there is a sequence of $u_\nu\in C^\infty(\Omega)$ such that
  \begin{eqnarray*}
   \left\{
    \begin{array}{ll}
    u_\nu\rightarrow u;\\
      &   \\
    Bu_\nu+Cu_\nu\rightarrow f;\\
     &   \\
      B^0\tilde{u}_\nu+C^0\tilde{u}_\nu\rightarrow  \widetilde{B^0u_\nu+C^0u_\nu}.\\
   \end{array}
  \right.
  \end{eqnarray*}

  Now we return to $\mathcal{\widetilde{W}},d^-_J$-problem.
  In our discussed situations,
  $\Omega=\{r<1\}\subset\subset\mathbb{R}^4$,
  $T^*=\mathcal{\widetilde{W}}^*$, $S=d_J^-$.
  For $A\in D_{\mathcal{\widetilde{W}}^*}\cap D_{d^-_J}$,
  $$ f=\left(
    \begin{array}{c}
      \mathcal{\widetilde{W}}^*A \\
      d_J^-A \\
    \end{array}
  \right).
  $$
  In terms of local moving unitary dual frame $\{\theta_1,\theta_2\}$,
  $$ A=u+\bar{u}=u^1\bar{\theta}_1+u^2\bar{\theta}_2+\bar{u}^1\theta_1+\bar{u}^2\theta_2. $$
  By (\ref{long eq1}) and (\ref{long eq2})
  $$
   \mathcal{\widetilde{W}}^*A=\frac{\partial\varphi}{\partial z^1}u^1+\frac{\partial\varphi}{\partial z^2}u^2+\frac{\partial\varphi}{\partial \bar{z}^1}\bar{u}^1+\frac{\partial\varphi}{\partial \bar{z}^2}\bar{u}^2-\frac{\partial u^1}{\partial z^1}-\frac{\partial u^2}{\partial z^2}
     -\frac{\partial \bar{u}^1}{\partial \bar{z}^1}-\frac{\partial \bar{u}^2}{\partial \bar{z}^2},
  $$
  \begin{eqnarray*}
      d^-_JA&=& (\frac{\partial \bar{u}^2}{\partial z^1}-\frac{\partial \bar{u}^1}{\partial z^2})\theta_1\wedge\theta_2
     +(\frac{\partial u^2}{\partial \bar{z}^1}-\frac{\partial u^1}{\partial \bar{z}^2})\bar{\theta}_1\wedge\bar{\theta}_2  \\
      & +&(A_{J_2}\bar{u}^2-A_{J_1}\bar{u}^1)\theta_1\wedge\theta_2 +(\bar{A}_{J_2}u^2-\bar{A}_{J_1}u^1)\bar{\theta}_1\wedge\bar{\theta}_2.
   \end{eqnarray*}
 The $1$-form $A$ can be written as a vector: $A_1=(u^1,u^2,\bar{u}^1,\bar{u}^2)^T$.
 Hence we have a matrix equation
 $$f_1=\left(
     \begin{array}{c}
       B^0A_1+C^0A_1 \\
       DA_1+EA_1 \\
     \end{array}
   \right),
   $$
   which is equivalent to $$ f=\left(
    \begin{array}{c}
      \mathcal{\widetilde{W}}^*A \\
      d_J^-A \\
    \end{array}
  \right).
  $$
  It is easy to see that
  $$
  B^0=(-\frac{\partial}{\partial z^1}\,\,-\frac{\partial}{\partial z^2}\,\,-\frac{\partial}{\partial \bar{z}^1}\,\,-\frac{\partial}{\partial \bar{z}^2}),
  $$
   $$
  C^0=(\frac{\partial\varphi}{\partial z^1}\,\,\,\frac{\partial\varphi}{\partial z^2}\,\,\,\frac{\partial\varphi}{\partial \bar{z}^1}\,\,\,\frac{\partial\varphi}{\partial \bar{z}^2}),\,\,\,K^0=1,
  $$
  $$
  D=\left(
     \begin{array}{cccc}
       0 & 0 & -\frac{\partial}{\partial z_2} & \frac{\partial}{\partial z_1}  \\
      -\frac{\partial}{\partial \bar{z}_2}  & \frac{\partial}{\partial \bar{z}_1} & 0 & 0 \\
     \end{array}
   \right),\,\,
   E=\left(
     \begin{array}{cccc}
       0 & 0 & -A_{J_1} & A_{J_2} \\
      -\bar{A}_{J_1}   & \bar{A}_{J_2} & 0 & 0 \\
     \end{array}
   \right).
  $$

  By Friedrichs-H\"{o}rmander Theorem, having proved that for a $J$-pseudoconvex domain $\Omega$ in a tamed almost complex $4$-manifold $(M,J)$,
 if $\varphi\in C^\infty(\bar{\Omega})$ satisfies $$\sum_{i,j}(\partial_i\bar{\partial}_j\varphi)\xi^i\bar{\xi}^j\geq c\sum_i|\xi^i|^2,\,\, c>0,$$
  then for $A\in D_{\mathcal{\widetilde{W}}^*}\cap D_{d^-_J}$, we have
  $$
    c\|A\|^2_{H_2}\leq\|\mathcal{\widetilde{W}}^*A\|^2_{H_1}+\|d^-_JA\|^2_{H_3}.
  $$
   Combining the former part of this subsection, we solved the $\mathcal{\widetilde{W}}$,
   $d^-_J$-problem (as the $\bar{\partial}$-problem in classical complex analysis) of
  $J$-pseudoconvex domain in the sense of distribution (for $\bar{\partial}$-problem see \cite{Hormander0,Hormander}).
   \begin{theo}\label{app 1}
  Let $\Omega$ be a compact $J$-pseudoconvex domain in a tamed almost complex $4$-manifold.
   Given a real valued function $\varphi\in C^\infty(\bar{\Omega})$
  satisfies $$\sum_{i,j}(\partial_i\bar{\partial}_j\varphi)\xi^i\bar{\xi}^j\geq c\sum_i|\xi^i|^2,\,\,c>0,$$
  then for all $A\in\Lambda^1_\mathbb{R}\otimes L^2_1(\Omega,\varphi)$
   and satisfy $d^-_J(A)=0$, then there exists $f\in L^2_2(\Omega,\varphi)_0$ such that
    $$
    \mathcal{\widetilde{W}}(f)=A,\,\,\, \|f\|_{H_1}\leq\frac{1}{\sqrt{c}}\|A\|_{H_2}.
    $$
  \end{theo}

    \begin{rem}
    1. As in classical complex analysis, there is the regularity properties of the solution, i.e.,
    when $A$ has enough differentiability, the solution $f$ to $\mathcal{\widetilde{W}}(f)=A$ must have appropriate
    differentiability (for $\bar{\partial}$-problem, see J. J. Kohn \cite{Kohn1,Kohn2}).
    A stronger result is:
     For a strictly pseudoconvex domain $\Omega$, $\mathcal{\widetilde{W}}(f)=A$.
     If $A\in \Omega^1_{\mathbb{R}}(\bar{\Omega})$, then $f\in C^\infty(\bar{\Omega})$.

  2. It is well known that $\bar{\partial}$-problem in classical complex analysis is for any dimension.
     It is natural to ask that could we consider $\mathcal{\widetilde{W}}$, $d^-_J$-problem for higher dimensional almost K\"{a}hler manifolds.
    \end{rem}

     \subsection{The singularities of $J$-plurisubharmonic functions on tamed almost complex $4$-manifolds}\label{singularity}

   The goal of this subsection is to study singularities of $J$-plurisubharmonic functions on tamed almost complex $4$-manifolds
   as in classical complex analysis.
      F. Elkhadhra had the following result (cf. \cite[Proposition 1]{Elk}):

    Let $\Omega$ be an open set of $\mathbb{R}^{2n}$ equipped with an almost complex structure $J$ of class $C^1$.
    Let $N$ be a $C^2$ submanifold of codimension $2k$ such that $J(TN)=TN$.
  Then for every $x_0\in N$ there exists an open neighborhood $U$ of $x_0$ and functions $f_1,\cdot\cdot\cdot,f_k$ of class $C^2$
  on $U$ such that
 $$
  N\cap U=\{x\in U\mid  f_1(x)=\cdot\cdot\cdot=f_k(x)=0,\,\,\,\bar{\partial}_Jf_j=0
$$
$$
   \,\,\, {\rm on}\,\,\, N\cap U,\,\,\,{\rm and}\,\,\,\partial_Jf_1\wedge\cdot\cdot\cdot\wedge\partial_Jf_k\neq 0 \,\,\, {\rm on}\,\,\, U\}.
$$
  Moreover there exists a $J$-plurisubharmonic function $u$ on $U$ of class $C^2$ on $U\backslash N$ such that $ N\cap U=\{u=-\infty\}$.

   In fact, if $(M,J)$ is an almost complex manifold, and $f$ a $J$-holomorphic function at some point $p\in M$.
   Then, for all vector fields $X,Y$, $df(\mathcal{N}_J(X,Y))=0$ at $p$, where $\mathcal{N}_J$ is the Nijenhuis tensor (cf. Lemma 3.2 in Wang-Zhu \cite{WZ2}).
   Note that if there exist $n$ $J$-holomorphic functions on a real $2n$-dimensional almost Hermitian manifold $(M,g,J)$ which are independent at some point $p\in M$,
     then the Nijenhuis tensor $\mathcal{N}_J$  identically vanishes at $p$.
   This means that an integrable complex structure is one with many holomorphic functions.
  It is a hard theorem (Newlander-Nirenberg integrability theorem for almost complex structures) that the converse is also true.
    In general, an almost complex manifold has no holomorphic functions at all.
 On the other hand, it has a lot of $J$-holomorphic curves (i.e., maps $u:\mathbb{C}\rightarrow (M,g,J)$ such that $df\circ i=J\circ df$)
   (cf. M. Gromov \cite{G3}).

  As done in Theorems 4.4.2-4.4.5 of L. H\"{o}rmander {\rm\cite{Hormander}}, we study a $J$-plurisubharmonic function $\varphi$ which is not identically $-\infty$
  on a connected $J$-pseudoconvex open set $\Omega$, then $e^{-\varphi}$ is locally integrable in a dense open subset of $\Omega$.
  Therefore we have the following theorem:
   \begin{theo}\label{HBS2}
   Suppose that $(M,J)$ is an almost complex $4$-manifold which is tamed by symplectic form $\omega_1=F+d^-_J(v+\bar{v})$,
   where $F$ is the fundamental $2$-form on $M$.
   $g_J(\cdot,\cdot):= F(\cdot,J\cdot)$ is an almost Hermitian metric on $M$.
    Let $\varphi$ be a strictly $J$-plurisubharmonic function on a $J$-pseudoconvex open set $\Omega\subset M$.
   If $p\in\Omega$, there exists a neighborhood of $p$ such that the set of points of which $e^{-\varphi}$ is not integrable in this neighborhood
  is a $J$-analytic subset of $\Omega$ of dimension (complex) $\leq 1$.
    \end{theo}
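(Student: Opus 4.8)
The plan is to carry the Hörmander–Bombieri–Skoda argument over to the almost complex category, using the weighted solvability of the $\widetilde{\mathcal{W}}, d^-_J$-problem (Theorem \ref{app 1}) in place of Hörmander's weighted $\bar{\partial}$-estimate, and the $J$-holomorphic-curve/Lelong-number description of $J$-analytic sets (Appendices \ref{Lelong}, \ref{Siu}) in place of the theory of holomorphic functions and coherent multiplier ideal sheaves. First I would localize: fixing $p\in\Omega$ and shrinking to a relatively compact, strictly $J$-pseudoconvex neighborhood $U\ni p$ (possible since $\Omega$ is $J$-pseudoconvex and, by the local symplectic property, $U$ carries Darboux coordinates with $J(p)=J_{st}$), on which Theorem \ref{app 1} applies with the weight $\varphi$. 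Because $\varphi$ is strictly $J$-plurisubharmonic we may arrange the uniform Hessian bound $\sum_{i,j}(\partial_{J_i}\bar{\partial}_{J_j}\varphi)\xi^i\bar{\xi}^j\geq c|\xi|^2$ with $c>0$ on $U$, so the estimate $\|f\|_{L^2(U,\varphi)}\leq c^{-1/2}\|\widetilde{\mathcal{W}}(f)\|_{L^2(U,\varphi)}$ is at my disposal.

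Next I would attach to $\varphi$ the closed positive $(1,1)$-current $T_\varphi:=\mathcal{D}^+_J(\varphi)$, the $(1,1)$-part of $dd^c_J\varphi$, which is positive precisely because $\varphi$ is $J$-plurisubharmonic and which in the Darboux chart equals $2\sqrt{-1}\,\partial_{J(p)}\bar{\partial}_{J(p)}\varphi$ up to the Nijenhuis correction terms. Applying the Siu-type decomposition of Appendix \ref{Siu} (Theorem \ref{Theorem A} and Remark \ref{7.6}) to $T_\varphi$, the upperlevel sets $E_c(T_\varphi)=\{x:\nu_1(x,T_\varphi)\geq c\}$ of its Lelong numbers are $J$-analytic subsets of complex dimension $\leq 1$; the comparison Theorem \ref{comparison theo} then lets me pass freely between the Lelong number defined by $g$ and the one defined by the local symplectic form $\omega_p$.

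The analytic core is the two-sided comparison between local integrability of $e^{-\varphi}$ and the Lelong numbers of $T_\varphi$. For the sufficiency (Skoda) direction I would run the $L^2$-construction: at a point $x$ where $\nu_1(x,T_\varphi)$ lies below the critical value, I solve $\widetilde{\mathcal{W}}(f)=A$ by Theorem \ref{app 1} for suitable $d^-_J$-closed data and use the weighted bound to produce a bounded-below multiplier forcing $e^{-\varphi}\in L^1$ near $x$; the critical value is computed in complex dimension $2$ on the model singularity $\log\rho_g(p,\cdot)$, which is $J$-plurisubharmonic by Claim \ref{claim 1} and for which $\int \rho_g^{-\lambda}$ converges exactly when $\lambda<4$. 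For the necessity direction, at a point of sufficiently large Lelong number the singularity of $\varphi$ dominates a logarithmic pole carried by a $J$-holomorphic curve through $x$, so $e^{-\varphi}$ fails to be integrable there. Together these squeeze the non-integrability locus $A=\{x: e^{-\varphi}\ \text{not}\ L^1\ \text{near}\ x\}$ between two $J$-analytic upperlevel sets and, via the structure theorem, identify it with one.

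Finally, since $\varphi$ is strictly $J$-plurisubharmonic it is finite almost everywhere, so $e^{-\varphi}$ is locally integrable on a dense open subset of $U$; hence $A$ is a proper closed subset and, being $J$-analytic by the previous step, has complex dimension $\leq 1$. The hard part will be the third step: over $\mathbb{C}^n$ the corresponding assertion rests on coherence of the multiplier ideal sheaf and on genuine holomorphic functions, both unavailable here, so the entire weight of the argument falls on the weighted estimate of Theorem \ref{app 1} and on the $J$-holomorphic-curve geometry of $E_c(T_\varphi)$. The delicate point is to make the Lelong-number comparison sharp enough that $A$ is exactly $J$-analytic rather than merely contained in a $J$-analytic set, and it is here that the dimension-$2$ hypothesis and the explicit $\log\rho_g$ model are indispensable.
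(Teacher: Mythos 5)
Your proposal has a genuine gap --- in fact two. First, it is circular within the logical architecture of the paper: you invoke the Siu-type structure theory of Appendix \ref{Siu} (Theorem \ref{Theorem A}, which rests on Theorem \ref{analytic theo}) to know that the upperlevel sets $E_c(T_\varphi)$ are $J$-analytic of dimension $\leq 1$, but the proof of Theorem \ref{analytic theo} is itself an application of Theorem \ref{HBS2}: there the sets $Z_{a,b}$ of non-integrability points of $e^{-U_a/b}$ are declared $J$-analytic precisely by citing Theorem \ref{HBS2}, and $E_c$ is then written as an intersection of such $Z_{a,b}$. So the analyticity of Lelong upperlevel sets cannot be an input to the proof of this theorem; in this paper it is one of its consequences.

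Second, and independently of how the analyticity of $E_c$ is obtained, the ``squeeze'' in your third step does not close. The two implications one can prove --- Lelong number below a Skoda-type threshold forces local integrability, Lelong number at least the complex dimension forces non-integrability --- pin the non-integrability locus $A$ only between two \emph{different} upperlevel sets, $E_{c_2}(T_\varphi)\subset A\subset E_{c_1}(T_\varphi)$ with $c_1<c_2$; in the intermediate range the Lelong number simply does not determine integrability. Already in $\mathbb{C}^2$ the plurisubharmonic functions $3\log|z_1|$ and $\frac{3}{2}\log(|z_1|^2+|z_2|^2)$ have the same Lelong number $3$ at the origin, yet $e^{-\varphi}$ is non-integrable near $0$ for the first and integrable for the second. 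This is exactly the gap which, over $\mathbb{C}^n$, is bridged by genuine holomorphic functions (H\"{o}rmander--Bombieri) or by coherence of the multiplier ideal, and your proposed substitute --- solving the $\widetilde{\mathcal{W}},d^-_J$-problem via Theorem \ref{app 1} --- produces real-valued functions, not objects whose zero sets cut out $A$. The paper escapes this trap by a different mechanism: in a Darboux chart $\|J-J_{st}\|$ is small, and because $\varphi$ is \emph{strictly} $J$-plurisubharmonic it remains plurisubharmonic for the integrable structure $J_{st}$ (after adding the correction $\log(1+|z|^2)^2$); the classical theorems of H\"{o}rmander (Propositions \ref{Hor443} and \ref{Hor444}) then supply honest $J_{st}$-holomorphic functions $f$ with weighted $L^2$ bounds, the non-integrability locus is identified as the intersection of their zero sets, and finally the Nijenhuis--Woolf theorem together with the Diederich--Sukhov disc correspondence replaces each $J_{st}$-holomorphic curve by a $J$-holomorphic curve through the same point with the same tangent, which is what makes $A$ a $J$-analytic subset of complex dimension $\leq 1$. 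If you want to salvage your outline, the step that must be replaced is the squeeze: you need a device that manufactures, at every integrability point, a holomorphic-type object equal to $1$ there and vanishing on all of $A$; in this paper that device is the local reduction to the integrable structure, not the weighted $\widetilde{\mathcal{W}},d^-_J$-estimate.
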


   \begin{rem}
   According to Gromov's fundamental theory of $J$-holomorphic curves {\rm \cite{G3}},
   almost complex submanifolds of complex dimension one always exist locally in a given almost complex manifold (there are no local obstructions).
 These curves can be realized globally as images of Riemann surfaces under $J$-holomorphic maps.
   In higher dimension, even through the existence of almost submanifolds can be obstructed.
   Donaldson {\rm \cite{D4}} has shown that every compact symplectic manifold admits symplectic submanifolds which is done by approximating a compatible
   almost complex structure.
   It is natural to ask the following question:
 Could one generalize Theorem \ref{HBS2} to higher dimensional symplectic manifolds for closed positive $(1,1)$-currents or $(n-1,n-1)$-currents ($n>2$).
   \end{rem}

   {\it Proof} of Theorem \ref{HBS2}:
   Since any almost complex $4$-manifold has the local symplectic property (cf. \cite{L2}), there exists an open set $U_p\subset\Omega$ and a symplectic form $\omega_p$ on $U_p$
   such that $F|_p=\omega_p|_p$.
   Hence we choose a Darboux coordinate chart $$\{(z_1,z_2)\mid  z_1(p)=z_2(p)=0\}$$ for the symplectic form $\omega_p$.
   Without loss of generality, we may assume that $U_p$ is the Darboux coordinate chart (see \cite{A2}).
   Let  $$g'_J(\cdot,\cdot):=\omega_p(\cdot,J\cdot), \,\,g_0(\cdot,\cdot):= \omega_p(\cdot,J_{st}\cdot),$$
  then $g'_J(p)=g_0(p)=g_J(p)$.
  Since $$dd^c_{J_{st}}(|z_1|^2+|z_2|^2)=2\sqrt{-1}(dz_1\wedge d\bar{z}_1+dz_2\wedge d\bar{z}_2),$$
   $|z_1|^2+|z_2|^2$ is a strictly plurisubharmonic function in classical sense on the Darboux coordinate chart.
   Let $$B_r(p):=\{|z_1|^2+|z_2|^2<r\}\subset U_p$$ and $B_r(p)$ is a strictly pseudoconvex domain.
  $\|J-J_{st}\|$ is small on $B_r(p)$ when $r$ is small enough (cf. \cite{DS,D4,HL3,TWZ}).
  Indeed, we can get
   \begin{equation}
  g'_J|_{B_r(p)}=g_0|_{B_r(p)}\cdot e^h,
   \end{equation}
   where $h$ is a symmetric $J$-anti-invariant $(2,0)$ tensor (cf. Kim \cite{Kim}, also see Tan-Wang-Zhou \cite{TWZ}) and
    $g_0e^h$ is defined by $g_0e^h(X,Y)=g_0(X,e^{g'^{-1}_Jh}Y)$.
   Here $g'^{-1}_Jh$ is the lifted $(1,1)$ tensor of $h$ with respect to $g'_J$
 and $e^{g'^{-1}_Jh}$ is identity at point $p$.
   Hence, when $r$ is small enough, $\varphi+\log(1+|z|^2)^2$ is a strictly plurisubharmonic function in classical sense on $B_r(p)$.
   Without loss of generality, we may assume that $r=1$.

    To complete the proof of Theorem \ref{HBS2}, we need the following propositions:
    \begin{prop}\label{Hor443}
    (cf. {\rm H\"{o}rmander \cite[Theorem 4.4.3]{Hormander}})
    Let $\psi$ be a plurisubharmonic function in classical sense on $B_1(p)$ such that
    $$
   |\psi(z)-\psi(z')|<c,\,\,\, z,z'\in B_1(p)
    $$
  for some constant $c$.
  Let $V$ be a complex linear subspace of $\mathbb{C}^2$ of codimension $k$, $k=0,1,2$.
  For every holomorphic function $g$ on $V\cap B_1(p)$ such that
    $$
   \int_{V\cap B_1(p)}|g|^2e^{-\psi}d\lambda <\infty,
   $$
  where $d\lambda$ denotes the volume form of $V$,
     there exists a holomorphic function $f$ on $B_1(p)$ such that $f|_{V\cap B_1(p)}=g$ and
     \begin{equation}
   \int_{B_1(p)}|f|^2e^{-\psi}(1+|z|^2)^{-3k}d\mu_{g'_J}\leq 9^k\pi^ke^{kc}\int_{V\cap B_1(p)}|g|^2e^{-\psi}d\lambda.
     \end{equation}
   Note that $d\mu_{g'_J}=d\mu_{g_0}=\omega_p^2/2$ is the volume form on $B_1(p)$ since $J$ and $J_{st}$ are $\omega_p$-compatible;
   and on $B_1(p)$, for any $q\in B_1(p)$, $F(q)=L_p(q)\omega_p(q)$, where $L_p(q)$ is a positive function on $B_1(p)$, $L_p(p)=1$.
    \end{prop}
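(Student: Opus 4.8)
The plan is to observe that, after the reductions carried out just above, the statement is a purely classical one to which H\"ormander's extension theorem applies directly. Indeed, in the Darboux coordinate chart $(z_1,z_2)$ the almost complex structure has been replaced by the standard $J_{st}$, the weight $\psi$ is plurisubharmonic in the classical sense, and $d\mu_{g'_J}=\omega_p^2/2$ is Euclidean Lebesgue measure; moreover $B_1(p)=\{|z_1|^2+|z_2|^2<1\}$ is a Euclidean ball, hence convex and in particular pseudoconvex for $J_{st}$. Since $V$ is a \emph{linear} subspace it passes through the centre $p$ (where $z=0$), so by the unitary invariance of $B_1(p)$ I would first rotate coordinates and assume $V=\{z_1=\cdots=z_k=0\}$ is a coordinate subspace, with $k\in\{0,1,2\}$.

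For $k=0$ there is nothing to prove ($f=g$). In general I would extend $g$ trivially: set $G(z):=g(z_{k+1},\dots,z_{2})$ (a constant when $k=2$), a function on $B_1(p)$ independent of $z_1,\dots,z_k$. Then $G$ is holomorphic on $B_1(p)$ and $G|_{V\cap B_1(p)}=g$, so it remains only to verify the weighted bound with $f=G$. This is where the special geometry helps: because $B_1(p)$ is a ball \emph{centred on} $V$, the trivial extension stays holomorphic throughout the domain, and the $\bar\partial$-correction that carries the curvature in the general theorem is not needed here.

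The estimate itself is a Fubini computation. Writing $z=(\zeta,w)$ with $\zeta=(z_1,\dots,z_k)$ the normal variables and $w$ the coordinates on $V$, I would integrate first over the normal fibre. The oscillation hypothesis $|\psi(z)-\psi(z')|<c$ gives $e^{-\psi(\zeta,w)}\le e^{c}\,e^{-\psi(0,w)}$, which transfers the weight onto $V$, while the fibre domain $\{|\zeta|^2<1-|w|^2\}$ is contained in all of $\mathbb{C}^k$, so
\[
\int_{|\zeta|^2<1-|w|^2}(1+|\zeta|^2+|w|^2)^{-3k}\,d\zeta\ \le\ \int_{\mathbb{C}^k}(1+|\zeta|^2)^{-3k}\,d\zeta\ =:\ c_k<\infty,
\]
the last integral converging because $6k>2k$. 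Collecting these yields
\[
\int_{B_1(p)}|G|^2e^{-\psi}(1+|z|^2)^{-3k}\,d\mu_{g'_J}\ \le\ c_k\,e^{c}\int_{V\cap B_1(p)}|g|^2e^{-\psi}\,d\lambda .
\]
A direct evaluation gives $c_1=\tfrac{\pi}{2}$ and $c_2=\tfrac{\pi^2}{20}$, whence $c_k e^{c}\le 9^k\pi^k e^{kc}$ for $k\in\{0,1,2\}$, which is exactly the asserted constant (with room to spare).

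The only points that require care are the unitary reduction of $V$ to a coordinate subspace and the uniform control of the normal-fibre integral; the genuinely hard ingredient of the general extension theorem, namely solving $\bar\partial$ against a weight whose complex Hessian dominates the Fubini--Study form (H\"ormander \cite[Theorems 4.4.2 and 4.4.3]{Hormander}), is circumvented by the ball geometry. Should one prefer, the proposition may instead be quoted verbatim as H\"ormander's Theorem 4.4.3 applied to the pseudoconvex domain $B_1(p)$, the present computation being the specialization of that theorem to a ball centred on $V$.
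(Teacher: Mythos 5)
Your proof is correct, but it takes a genuinely different route from the paper: the paper gives no argument for this proposition at all, presenting it purely as a citation of H\"ormander's Theorem 4.4.3, whose proof in general requires a cut-off followed by a $\bar\partial$-correction solved with weighted $L^2$-estimates on a pseudoconvex domain. You observe that the two special features of the statement make that machinery unnecessary: the domain is a Euclidean ball whose centre lies on the linear subspace $V$, so the orthogonal projection of $B_1(p)$ onto $V$ lands inside $V\cap B_1(p)$ and the trivial extension $f(\zeta,w)=g(w)$ is globally defined and holomorphic; and the weight has oscillation bounded by $c$, so $e^{-\psi(\zeta,w)}\le e^{c}e^{-\psi(0,w)}$. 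Fubini plus the convergent normal-fibre integrals $\int_{\mathbb{C}}(1+|\zeta|^2)^{-3}d\lambda=\pi/2$ and $\int_{\mathbb{C}^2}(1+|\zeta|^2)^{-6}d\lambda=\pi^2/20$ then give the stated inequality with the sharper constants $\frac{\pi}{2}e^{c}$ and $\frac{\pi^2}{20}e^{c}$; these are indeed $\le 9^k\pi^k e^{kc}$, but note that your comparison at $k=2$ uses $e^{2c}\ge e^{c}$, i.e.\ $c>0$, which you should justify (it is automatic: taking $z=z'$ in the strict oscillation hypothesis forces $c>0$). Two cosmetic points: the blanket claim that $c_ke^{c}\le 9^k\pi^ke^{kc}$ for $k\in\{0,1,2\}$ fails as literally written at $k=0$, where $e^{c}\not\le 1$, but you had already dispatched that case separately with $f=g$ and constant $1$; and your argument never uses plurisubharmonicity of $\psi$, only its bounded oscillation, which is worth saying explicitly since it shows the hypothesis can be weakened here. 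As for what each approach buys: yours is self-contained, elementary, and gives better constants in this specific geometry; the paper's citation buys generality, since H\"ormander's theorem holds on arbitrary pseudoconvex domains with no oscillation hypothesis, a setting in which the trivial extension is either undefined or uncontrollable and the $\bar\partial$-machinery is genuinely needed.
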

   By Proposition \ref{Hor443}, we have the following proposition:
   \begin{prop}\label{Hor444}
    (cf. {\rm H\"{o}rmander \cite[Theorem 4.4.4]{Hormander}})
    Let $\psi$ be a plurisubharmonic function in classical sense on $B_1(p)$.
    If $z^0\in B_1(p)$ and $e^{-\psi}$ is integrable in a neighborhood of $z^0$ one can find a holomorphic function $f$ in $B_1(p)$
    such that $f(z^0)=1$ and
    $$
    \int_{B_1(p)}|f(z)|^2e^{-\psi}(1+|z|^2)^{-6}d\mu_{g'_J}<\infty.
    $$
    \end{prop}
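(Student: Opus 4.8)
The plan is to prove Proposition \ref{Hor444} by reducing it to an application of Proposition \ref{Hor443} with the subspace $V$ taken to be a single point, namely $z^0$. The underlying idea is classical: integrability of $e^{-\psi}$ near $z^0$ means the value at $z^0$ of a holomorphic germ can be controlled in a weighted $L^2$ sense, and Proposition \ref{Hor443} with $k=2$ (codimension $2$ subspace, i.e. a point) lets us extend that datum to a genuine holomorphic function on the whole ball with the required weighted $L^2$ estimate. The exponent $(1+|z|^2)^{-6}$ that appears in the statement is precisely $(1+|z|^2)^{-3k}$ with $k=2$, which is the first confirmation that this is the correct specialization.

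First I would set $V=\{z^0\}$, a complex-linear (affine) subspace of codimension $k=2$ intersected with $B_1(p)$, and take the holomorphic ``function'' $g$ on $V\cap B_1(p)=\{z^0\}$ to be the constant $g(z^0)=1$. Since $V$ is zero-dimensional, the integral $\int_{V\cap B_1(p)}|g|^2e^{-\psi}\,d\lambda$ reduces to the single value $|g(z^0)|^2e^{-\psi(z^0)}=e^{-\psi(z^0)}$, which is finite. To meet the hypothesis of Proposition \ref{Hor443} I must arrange that $\psi$ have bounded oscillation on $B_1(p)$; here I would shrink the ball if necessary, or replace $\psi$ by $\psi$ restricted to a slightly smaller concentric ball on which $|\psi(z)-\psi(z')|<c$ holds, using that $\psi$, being plurisubharmonic and finite near $z^0$ (this is where the integrability of $e^{-\psi}$ in a neighborhood of $z^0$ enters), is locally bounded above and not identically $-\infty$. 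Proposition \ref{Hor443} then produces a holomorphic $f$ on $B_1(p)$ with $f|_{\{z^0\}}=g$, that is $f(z^0)=1$, and satisfying
$$
\int_{B_1(p)}|f(z)|^2e^{-\psi}(1+|z|^2)^{-6}\,d\mu_{g'_J}\leq 9^2\pi^2 e^{2c}\,e^{-\psi(z^0)}<\infty,
$$
which is exactly the desired conclusion.

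The one point requiring genuine care, and the step I expect to be the main obstacle, is verifying that the local integrability of $e^{-\psi}$ near $z^0$ legitimately supplies a finite right-hand side and that the oscillation bound can be secured uniformly on the relevant ball. The integrability hypothesis guarantees $\psi(z^0)>-\infty$ after adjusting on a null set, so $e^{-\psi(z^0)}$ is a finite constant; but one must check that the constant $c$ controlling the oscillation of $\psi$ can be taken finite on a ball still containing $z^0$, which follows from the upper semicontinuity and local boundedness above of plurisubharmonic functions together with the fact that $\psi\not\equiv-\infty$. A technical subtlety is that the weight and volume form $d\mu_{g'_J}$ in Proposition \ref{Hor443} are those of the almost K\"{a}hler metric $g'_J$ rather than the Euclidean ones; since $d\mu_{g'_J}=d\mu_{g_0}=\omega_p^2/2$ on $B_1(p)$ by the Darboux identification, this causes no difficulty and the estimate transfers verbatim. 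Once these points are in place the proof is essentially immediate from the extension theorem.
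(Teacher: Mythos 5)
Your reduction to Proposition \ref{Hor443} cannot work as proposed, because the hypothesis of Proposition \ref{Hor444} supplies neither of the two facts your argument needs: finiteness of $\psi(z^0)$ (so that the right-hand side $e^{-\psi(z^0)}$ makes sense) and a bound $|\psi(z)-\psi(z')|<c$ on some ball around $z^0$ (the hypothesis of Proposition \ref{Hor443}). Local integrability of $e^{-\psi}$ implies neither. Concretely, take $\psi(z)=\log|z_1|$ on $B_1(p)\subset\mathbb{C}^2$ and $z^0=0$: then $e^{-\psi}=|z_1|^{-1}$ is integrable near $0$ (in the $z_1$-plane, $\int_{|z_1|<1}|z_1|^{-1}\,d\lambda(z_1)=2\pi$), yet $\psi(0)=-\infty$ and the oscillation of $\psi$ is infinite on every ball centered at $0$, however small. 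Your proposed fixes fail: a plurisubharmonic function cannot be ``adjusted on a null set'' (its value at each point is the limit of its averages over small balls, so the $-\infty$ at $z^0$ is intrinsic), and plurisubharmonic functions are locally bounded \emph{above} but in general not below, even when $e^{-\psi}$ is locally integrable. Worse, in the only case your argument does cover --- $\psi$ of bounded oscillation on $B_1(p)$ --- the proposition is trivial, since then $e^{-\psi}$ is bounded and $f\equiv 1$ already satisfies the conclusion. The entire content of Proposition \ref{Hor444}, and the reason it is needed for Theorem \ref{HBS2}, lies in weights with logarithmic poles, which is exactly the case your reduction excludes. (There is also the smaller point that $\{z^0\}$ is an affine, not linear, subspace unless $z^0=0$, but that could be repaired by translation.)

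Although the paper's phrasing (``By Proposition \ref{Hor443}, we have the following proposition'') invites the deduction you attempted, H\"ormander's Theorem 4.4.4 is in fact proved from the $L^2$ existence theorem for $\bar{\partial}$ with a \emph{singular} weight (Theorem 4.4.2 in \cite{Hormander}), not from the extension theorem, and that is the argument to give here. Assume $z^0=0$; choose $\chi\in C^\infty_0(\omega)$ with $\chi\equiv 1$ near $0$, where $\omega$ is a neighborhood of $0$ on which $e^{-\psi}$ is integrable, and set $\tilde{\psi}(z)=\psi(z)+4\log|z|$, which is plurisubharmonic (here $2n=4$ since we are in $\mathbb{C}^2$). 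Since $\bar{\partial}\chi$ vanishes near $0$ and $e^{-\psi}$ is integrable on its support, $\int|\bar{\partial}\chi|^2e^{-\tilde{\psi}}\,d\lambda<\infty$, so there exists $u$ with $\bar{\partial}u=\bar{\partial}\chi$ and $\int|u|^2e^{-\tilde{\psi}}(1+|z|^2)^{-2}\,d\lambda<\infty$. Near $0$ the function $u$ is holomorphic (there $\bar{\partial}u=\bar{\partial}\chi=0$) and $e^{-\psi}$ is bounded below (as $\psi$ is bounded above), while $|z|^{-4}$ is not locally integrable in $\mathbb{C}^2$; hence the finiteness of the integral forces $u(0)=0$. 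Then $f=\chi-u$ is holomorphic on $B_1(p)$, $f(z^0)=1$, and since $(1+|z|^2)^{-6}\leq|z|^{-4}(1+|z|^2)^{-2}$ one obtains $\int_{B_1(p)}|f|^2e^{-\psi}(1+|z|^2)^{-6}\,d\mu_{g'_J}<\infty$ (the identification $d\mu_{g'_J}=d\mu_{g_0}$ on the Darboux chart, which you noted, is the only point where the almost complex setting enters). The extension theorem \ref{Hor443} plays no role in this proof.
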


     Let $(\Sigma,j_\Sigma)$ be a compact Riemann surface.
   A smooth map $u:(\Sigma,j_\Sigma)\rightarrow (M,J)$ is called a $J$-holomorphic curve if
       the differential $du$ is a complex linear map with respect to $j_\Sigma$ and $J$:
   \begin{equation}
   J\circ du=du\circ j_\Sigma.
    \end{equation}
   Hence $$\bar{\partial}_Ju(X)=\frac{1}{2}[du(X)+J(u)du(j_\Sigma X)]=0$$ if $u$ is a $J$-holomorphic curve.
   Recall that the energy of a smooth map $u:\Sigma\longrightarrow (B_1(p),g'_J,J)$
     is defined as the $L^2$-norm of the $1$-form $du\in\Omega^1(\Sigma, u^*TM)$:
        $$
      E_J(u):=\frac{1}{2}\int_\Sigma |du|_J^2dvol_\Sigma.
        $$
   Here the norm of the (real) linear map $$L:= du(z):T_z\Sigma\rightarrow T_{u(z)}B_1(p)$$ is defined by
   \begin{equation}
   |L|_J:=\xi|^{-1}\sqrt{|L(\xi)|^2_J+|L(j_\Sigma\xi)|^2_J}
   \end{equation}
   for  $0\neq\xi\in T_z\Sigma$, where $|L(\xi)|^2_J=g'_J(\xi,\xi)$.
     By Lemma 2.2.1 in McDuff-Salamon \cite{MS},
   \begin{equation}
  E_J(u)=\int_\Sigma|\bar{\partial}_Ju|_J^2dvol_\Sigma+\int_\Sigma u^*\omega_p.
   \end{equation}
   Hence a $J$-holomorphic curve $u:\Sigma\longrightarrow (B_1(p),g'_J,J)$ is a minimal surface with respect to the metric $g'_J$.
  Note that a smooth map $u:\Sigma\longrightarrow (M,g,J)$ (an almost Hermitian manifold)
    is a $J$-holomorphic curve if and only if it is conformal with respect to $g$,
   i.e. its differential preserves angles or, equivalently, it preserves inner products up to a common positive factor.
   In our case, $g_J$ and $g'_J$ are in the same conformal class since $F|_{B_1(p)}$ and $\omega_p$ are in the same conformal class
   since for any $q\in B_1(p)$, $F(q)=L_p(q)\omega_p(q)$, where $L_p(q)$ is a positive function on $B_1(p)$, $L_p(p)=1$.
  Therefore, a $J$-holomorphic curve $u:\Sigma\longrightarrow (B_1(p),g'_J,J)$ is also a minimal surface with respect to the almost Hermitian metric $g_J$.

  \vskip 6pt

    We now return to the proof of Theorem \ref{HBS2}.
   The set of non integrability points of $e^{-\varphi}$ is the intersection of
  all hypersurfaces $f^{-1}(0)$ defined by holomorphic functions such that
    \begin{equation}
  \int_{B_1(p)}|f|^2(1+|z|^2)^{-6}e^{-\varphi}d\mu_{g'_J}<\infty.
    \end{equation}
  Indeed $f$ must vanish at any non integrability point, and on the other hand Proposition \ref{Hor444} shows that one can choose $f(z^0)=1$
   at any integrability point $z^0$.
   Suppose that $z^0\in f^{-1}(0)$, where $f$ is a holomorphic function on $B_1(p)$.
   Then there exists a holomorphic curve $u_f:\Sigma\longrightarrow (B_1(p),g_0,J_{st})$ passing through point $z^0$.
  Nijenhuis and Woolf (cf. \cite[Theorem III]{NW}) proved the following result:
  Let $J$ be an almost-complex structure on a manifold $X$ of real dimension $2n$,
  of class $C^{k,\lambda}$ ($k\geq 0$ is integer, $0<\lambda<1$).
  Then for every point $x$ of $X$ and every complex tangent vector $v$, there is a $J$-holomorphic curve of class $C^{1,\lambda}$
  passing through $x$ with tangent vector $v$ at $x$.
  Every such curve is actually of class $C^{k+1,\lambda}$.

  Hence, there exists a $J$-holomorphic curve
   $u'_f:\Sigma'\rightarrow B_1(p)$ passing through $z^0\in B_1(p)$
    which is contact $u_f:\Sigma\rightarrow B_1(p)$ at $z^0$, that is, $T_{z^0}u'_f(\Sigma')=T_{z^0}u_f(\Sigma)$.
    In fact, one can obtain a bijective corresponding between small enough $J$-holomorphic discs and usual holomorphic discs
    (see Diederich-Sukhov \cite[p.334]{DS} for details).

  Therefore, the set of non integrability points of $e^{-\varphi}$  is the intersection of all $J$-holomorphic curves
           $u'_f:\Sigma'\rightarrow (B_1(p),J)$ which are minimal surfaces with respect to the almost Hermitian metric $g_J$.
   Thus, the set of points in the neighborhood of which $e^{-\varphi}$  is not integrable is a $J$-analytic
   subset of $\Omega$ of dimension (complex)$\leq 1$.
   This completes the proof of Theorem \ref{HBS2}. ~~~~~~~~~~~~~~~~$\Box$

  \section{Siu's decomposition theorem on tamed almost complex $4$-manifolds}\label{Siu decomposition}
\renewcommand{\theequation}{B.\arabic{equation}}
\setcounter{equation}{0}
  As done in classical complex analysis, we define Lelong number for closed,
  positive almost complex $(1,1)$-currents (almost K\"{a}hler currents).
  We will discuss basic properties of almost K\"{a}hler currents and prove Siu's decomposition theorem on tamed almost complex $4$-manifolds.
  Our argument follows J.-P. Deamilly \cite{D3}.
 \medskip

   \subsection{Lelong numbers of closed positive $(1,1)$-currents on tamed almost complex $4$-manifolds}\label{Lelong}

   In this subsection, we will study closed, positive almost complex $(1,1)$-currents on almost complex $4$-manifolds.
   Note that any almost complex $4$-manifold $(M,J)$ has the local symplectic property \cite{L2}, that is, $\forall p\in M$,
  there are a neighborhood $U_p$ of $p$ and a closed $J$-compatible $2$-form
  $\omega_p$ on $U_p$ such that $d\omega_p=0$ and $\omega_p\wedge\omega_p>0$ on $U_p$.
  We may assume without loss of generality that $U_p$ is a star shaped strictly $J$-pseudoconvex open set,
   by Poincar\'{e} Lemma, there is a vector field $\xi_p$ on $U_p$
  such that $i_{\xi_p}\omega_p=\alpha_p$ and $\omega_p=d\alpha_p$.
  The fundamental theorem of Darboux \cite{A2,EG} shows that there are a neighborhood $U'_p\subset\subset U_p$ of $p$ and diffeomorphism $\Phi_p$
  from $U'_p$ onto $\Phi_p(U'_p)\subset\mathbb{C}^2\cong \mathbb{R}^4$ such that $\omega_p|_{U'_p}=\Phi_p^*\omega_0$, where $\Phi_p(p)=0\in\mathbb{C}^2$.
  Since the concepts we are going to study mostly concern the behaviour of currents or $J$-plurisubharmonic functions in a neighbordhood of a point
   on an almost complex $4$-manifold $(M,J)$,
   {\it we may assume that $(M,g_J,J,\omega)$ is an almost K\"{a}hler $4$-manifold, where $g_J(\cdot,\cdot)=\omega(\cdot,J\cdot)$}.
   Moreover, without loss of generality, we may assume that $M$ is an open subset of $\mathbb{C}^2$.
    Then the $J$-plurisubharmonic, standard plurisubharmonic and Hermitian plurisubharmonic on $M$ are equivalent.
   Let $\phi: M\rightarrow [-\infty,\infty)$ be a continuous $J$-plurisubharmonic function
   (our continuity assumption means that $e^{\phi}$ is continuous).
   We say that a $J$-plurisubharmonic function $\phi$ is semi-exhaustive if there exists a real number $c$ such that $B_{c,\phi}\subset\subset M$,
   where $$B_{c,\phi}:=\{x\in M\,|\, \phi(x)<c\}.$$
   Similarly, $\phi$ is said to be semi-exhaustive on a closed subset $A\subset M$ if there exists $c$ such that $A\cap B_{c,\phi}\subset\subset M$.
   We are interested especially in the set of poles $\{\phi=-\infty\}$.
   Let $T$ be a closed positive current of bidimension $(1,1)$ on $M$.
   Assume that $\phi$  is semi-exhaustive on $SuppT$ and that $B_{c,\phi}\cap SuppT\subset\subset M$.
   \begin{defi}\label{Leong number defi}
    (cf. {\rm Demailly \cite[Definition (5.4) in Chapter 3]{D3}})
   Let $(M,g_J,J,\omega)$ be an almost K\"ahler 4-manifold. If $\phi$ is semi-exhaustive on $SuppT$ and $B_{c,\phi}\cap SuppT\subset\subset M$, we set
   for $r\in(-\infty,c)$
   $$
   \nu(\phi,r,T)=\int_{B_{r,\phi}}T\wedge(dd^c_J\phi)
   $$
   and
   $$
   \nu(\phi,T)=\lim_{r\rightarrow-\infty} \nu(\phi,r,T).
   $$
   The number $\nu(\phi,T)$ will be called the generalized Lelong number of $T$ with respect to the weight $\phi$.
   \end{defi}

   As in cassical complex analysis (cf. \cite{D3,GH}),
   the above limit exists because $\nu(\phi,r,T)$ is a monotone increasing function of $r$.

  \begin{prop}\label{formula}
  (cf. {\rm Demailly \cite[Formula (5.5) in Chapter 3]{D3}})
  For any convex increasing function $\chi:\mathbb{R}\rightarrow\mathbb{R}$ we have
  $$
  \int_{B_{r,\phi}}T\wedge(dd^c_J\chi\circ\phi)=\chi'(r-0)\nu(\phi,r,T)
  $$
  where $\chi'(r-0)$ denotes the left derivative of $\chi$ at $r$.
  \end{prop}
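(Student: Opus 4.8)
The plan is to produce a $d^c_J$-primitive of the closed current under the integral sign and then integrate by parts over the sublevel domain $B_{r,\phi}$. Throughout I use the standing reduction of this subsection: by the local symplectic property I may assume $(M,g_J,J,\omega)$ is almost K\"ahler and, after a Darboux chart, that $M$ is an open subset of $\mathbb{C}^2$, so that $J$-plurisubharmonicity agrees with the standard notion. The continuity hypothesis on $\phi$ (that $e^{\phi}$ is continuous) guarantees, via the Bedford--Taylor product of a continuous $J$-plurisubharmonic function with the closed positive current $T$, that every wedge product written below is a well-defined positive measure on $\mathrm{Supp}\,T$.

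First I would record the two identities that drive the computation. Since $d^c_Ju(X)=-du(JX)$ acts as a first-order derivation on functions, for any $C^2$ convex increasing $\chi$ one has $d^c_J(\chi\circ\phi)=(\chi'\circ\phi)\,d^c_J\phi$, whence
$$dd^c_J(\chi\circ\phi)=d\big((\chi'\circ\phi)\,d^c_J\phi\big).$$
Because $T$ is $d$-closed and of even degree, this upgrades to an identity of currents
$$T\wedge dd^c_J(\chi\circ\phi)=d\big(T\wedge(\chi'\circ\phi)\,d^c_J\phi\big),$$
which is exactly the local form of the integration-by-parts Proposition \ref{integration by part}; I would invoke that proposition to legitimize the manipulation at the level of currents.

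Assuming first that $\chi$ is smooth, I would apply Stokes' theorem on the relatively compact open set $B_{r,\phi}=\{\phi<r\}$, relative compactness being the semi-exhaustivity hypothesis. The boundary contribution collapses onto the level set $\{\phi=r\}$, where $\chi'\circ\phi=\chi'(r)$ is constant and factors out:
$$\int_{B_{r,\phi}}T\wedge dd^c_J(\chi\circ\phi)=\chi'(r)\int_{\{\phi=r\}}T\wedge d^c_J\phi.$$
Specializing the same identity to $\chi=\mathrm{id}$ identifies the remaining boundary integral with $\nu(\phi,r,T)=\int_{B_{r,\phi}}T\wedge dd^c_J\phi$ of Definition \ref{Leong number defi}, giving the claimed equality but with $\chi'(r)$ in place of $\chi'(r-0)$. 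To pass to a general convex increasing $\chi$, I would approximate it by smooth convex increasing functions obtained by a one-sided mollification (averaging $\chi$ over a left half-interval), so that the regularized derivatives at $r$ converge precisely to the left derivative $\chi'(r-0)$; this left value is the correct one exactly because $B_{r,\phi}$ is the \emph{open} sublevel set $\{\phi<r\}$. Taking the limit, the right-hand side tends to $\chi'(r-0)\,\nu(\phi,r,T)$ and the left-hand side tends to the desired integral by weak convergence of the positive measures.

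The main obstacle will be making the Stokes step fully rigorous when $\phi$ is only continuous $J$-plurisubharmonic rather than smooth, since then $\{\phi=r\}$ is merely the topological boundary of $B_{r,\phi}$ and the symbol $\int_{\{\phi=r\}}T\wedge d^c_J\phi$ must be read through slicing of the current $T\wedge d^c_J\phi$. I would dispose of this by regularizing $\phi$ to a decreasing sequence of smooth $J$-plurisubharmonic functions (the Demailly-type regularization developed in Appendix \ref{Demailly}), establishing the formula at each level, and then controlling the limit using the monotonicity of $r\mapsto\nu(\phi,r,T)$ together with the continuity of $e^{\phi}$. The delicate point is the interchange of the smoothing limit with the boundary evaluation, and it is here that the positivity of $T$ and the Bedford--Taylor continuity of the products are indispensable.
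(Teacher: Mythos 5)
Your formal skeleton (the chain rule $d^c_J(\chi\circ\phi)=(\chi'\circ\phi)\,d^c_J\phi$, closedness of $T$, reduction to the local almost K\"ahler model) is consistent with the paper's framework, but your proof has a genuine gap at exactly the point where the proposition has content. Note first that the paper itself offers no argument: its ``proof'' is the citation of Demailly \cite[Ch.~III, Formula (5.5)]{D3}, whose mechanism is quite different from yours. Demailly never writes a boundary integral: he compares $\int_{B_{r,\phi}}T\wedge dd^c u$ for plurisubharmonic weights $u$ that agree near the boundary region of the sublevel set, using that this integral is unchanged when $u$ is replaced by $v$ with $u-v$ compactly supported in $B_{r,\phi}$ (integration by parts with a cutoff against the closed current $T$, so no boundary terms ever appear), applied to $v=\chi_\varepsilon\circ\phi$ where $\chi_\varepsilon$ equals $\chi$ on $[r-\varepsilon,+\infty)$ and is affine of slope $\chi'(r-\varepsilon-0)$ below; the left derivative enters through these slopes and a monotone limit $\varepsilon\to 0$, never through level sets or mollification of $\chi$.

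Your two-step scheme---(i) Stokes with a boundary term on $\{\phi=r\}$ for smooth data, then (ii) left-mollification of $\chi$ and a weak limit---breaks at both steps. For (i): $T$ is a current and $\phi$ only continuous, so $T\wedge d^c_J\phi$ does not restrict to the level set $\{\phi=r\}$; even for smooth $\phi$ the value $r$ may be critical, so the boundary-term Stokes formula is available only for a.e.\ $r$, whereas the proposition is asserted for every $r$; and your proposed fix, regularizing $\phi$, changes the domain $B_{r,\phi}$, the integrand, and $\nu(\phi,r,T)$ simultaneously, so the required interchange of limits is precisely the content to be proved, not a routine step. For (ii), the decisive gap: weak convergence of the positive measures $T\wedge dd^c_J(\chi_\delta\circ\phi)$ to $T\wedge dd^c_J(\chi\circ\phi)$ yields, on the \emph{open} set $B_{r,\phi}$, only the one-sided Portmanteau inequality
$$
\chi'(r-0)\,\nu(\phi,r,T)\;=\;\lim_{\delta\to 0}\chi_\delta'(r)\,\nu(\phi,r,T)\;\geq\;\int_{B_{r,\phi}}T\wedge dd^c_J(\chi\circ\phi),
$$
and the reverse inequality does not follow, because the limit measure can charge $\{\phi=r\}$ and approximating mass can leak across $\partial B_{r,\phi}$. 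This is not a hypothetical worry: it occurs exactly when $\chi$ has a kink at $r$, the only case in which ``left derivative'' carries information. For instance, with $T$ the current of integration over a $J$-holomorphic curve through $p$, $\phi=\log\rho_g(p,\cdot)$ and $\chi(t)=\max(t,r)$, the measure $T\wedge dd^c_J(\chi\circ\phi)$ is carried entirely by the level set $\{\phi=r\}$, so the open-set integral is $0=\chi'(r-0)\,\nu(\phi,r,T)$ while the closed-set integral is the full kink mass; any argument that cannot distinguish $B_{r,\phi}$ from its closure---and weak convergence alone cannot---is unable to establish the asserted equality. As written, your proposal proves at best one inequality; closing it requires the compactly-supported-modification device above, which then renders both the mollification and the boundary integrals unnecessary.
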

  \begin{proof}
  For a detailed proof of the above Proposition, we refer to Formula $(5.5)$ in Chapter $3$ of \cite{D3}.
  \end{proof}

   We get in particular $$\int_{B_{r,\phi}}T\wedge(dd^c_Je^{2\phi})=2e^{2r}\nu(\phi,r,T),$$ whence the formula
  \begin{equation}\label{formula 2}
    \nu(\phi,r,T)=e^{-2r}\int_{B_{r,\phi}}T\wedge(\frac{1}{2}dd^c_Je^{2\phi}).
  \end{equation}
  Suppose $p\in SuppT$, then we define the Lelong number of $T$ with respect to the weight function $\varphi=\log \rho_g(p,q)$,
  $$\nu(\varphi,r,T)=\int_{B_{r,\varphi}}T\wedge(dd^c_J\varphi)$$
  and
  $$\nu(p,T)=\lim_{r\rightarrow -\infty}\nu(\varphi,r,T).$$
  The number $\nu(p,T)$ will be called the {\it Lelong number of $T$ at point $p$}.
  Then Formula (\ref{formula 2}) gives
  \begin{eqnarray*}
    \nu(\varphi,\log r,T) &=&  r^{-2}\int_{\rho_g(p,q)<r}T\wedge\frac{1}{2}dd^c_J\rho^2_g(p,q)\\
     &=&  r^{-2}\int_{\rho_g(p,q)<r}T\wedge\sqrt{-1}\partial_J\bar{\partial}_J\rho^2_g(p,q).
  \end{eqnarray*}
  The positive measure $\sigma_{T}=T\wedge\sqrt{-1}\partial_J\bar{\partial}_J\rho^2_g(p,q)$ is called the {\it trace measure} of $T$ (cf. Demailly \cite{D3}).
  We get
  \begin{equation}\label{formula 3}
   \nu(\varphi,\log r,T)=\frac{\sigma_{T}(B(p,r))}{r^2}
  \end{equation}
  and $\nu(p,T)$ is the limit of this ratio as $r\rightarrow 0$.
  The ratio $\frac{\sigma_{T}(B(p,r))}{r^2}$ is an increasing function of $r$.
  If $T$ is smooth at $p$, then $\sigma_{T}(B(p,r))$ is bounded near the point $p$ and $\sigma_{T}(B(p,r))=O(r^4)$.
  Hence, $$\nu(p,T)=\lim_{r\rightarrow 0}\frac{\sigma_{T}(B(p,r))}{r^2}=\lim_{r\rightarrow 0}O(r^2)=0.$$
 It is similar to the case of $J$ being integrable (cf. {\rm\cite{D3,GH,K,S2}})
  that $\nu(p,T)\geq 0$ and is identically equal to zero in case $T$ is a smooth current.
  Also, as in classical complex analysis (cf. \cite{D3,GH}), we have the following proposition
  \begin{prop}
  According to the above definition, we have
  \begin{equation}\label{Lelong number equ}
    \nu(p,T)=\lim_{r\rightarrow 0}\frac{2}{r^2}\int_{\rho_g(p,q)<r}T\wedge\omega.
  \end{equation}
  \end{prop}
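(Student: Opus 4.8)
The plan is to deduce the identity directly from the monotone trace-measure formula (\ref{formula 3}) already in hand, by comparing the weight form $\sqrt{-1}\partial_J\bar{\partial}_J\rho^2_g(p,q)$ with $\omega$ near $p$. Writing $\sigma_T=T\wedge\sqrt{-1}\partial_J\bar{\partial}_J\rho^2_g(p,q)$ for the trace measure and recalling $\nu(p,T)=\lim_{r\to 0^+}\nu(\varphi,\log r,T)=\lim_{r\to 0^+} r^{-2}\sigma_T(B(p,r))$, it suffices to prove that the positive measures $\sigma_T$ and $2\,T\wedge\omega$ carry the same $r^{-2}$-normalized mass at $p$, that is
\[
\lim_{r\to 0^+}\frac{1}{r^2}\int_{B(p,r)}T\wedge\big(\sqrt{-1}\partial_J\bar{\partial}_J\rho^2_g(p,q)-2\omega\big)=0 .
\]

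First I would establish the pointwise identity $\big(\sqrt{-1}\partial_J\bar{\partial}_J\rho^2_g(p,q)\big)\big|_{q=p}=2\,\omega|_p$. Since we are on the almost K\"ahler manifold $(M,g_J,J,\omega)$ and $\rho_g(p,\cdot)$ is smooth inside the injectivity radius, I introduce geodesic normal coordinates at $p$, in which $\rho^2_g(p,\cdot)$ is exactly the Euclidean square $\sum x_i^2$ while $g_J(p)$, $J(p)=J_0$, $\omega(p)=\omega_0$ are standard. Using the Harvey--Lawson expression $H(u)(X,X)=\{XX+(JX)(JX)+J([X,JX])\}u$ recalled in Appendix \ref{psh}, together with $dd^c_J u(X,JX)=H(u)(X,X)$ and $(dd^c_J u)^{(1,1)}=2\sqrt{-1}\partial_J\bar{\partial}_J u$ (the formula for $dd^c_J$ in Appendix \ref{psh}), a direct evaluation at $p$ gives $dd^c_J\rho^2_g(X,JX)|_p=4|X|^2=4\,\omega(X,JX)|_p$; here the torsion term $J([X,JX])\rho^2_g$ drops out because $\rho^2_g$ is critical at $p$, so the non-integrability of $J$ contributes nothing to leading order. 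Dividing by two yields the claimed identity.

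Consequently $\Theta:=\sqrt{-1}\partial_J\bar{\partial}_J\rho^2_g(p,q)-2\omega$ is a smooth real $(1,1)$-form (it is real by (\ref{2eq9}) and of pure type $(1,1)$ since $\partial_J\bar{\partial}_J$ maps functions into $\Omega^{1,1}_\mathbb{R}$) vanishing at $p$, so by smoothness its $\omega$-norm obeys $\|\Theta\|_\omega(q)\le C\rho_g(p,q)$ on a small ball $B(p,r_0)$. Because $T\ge 0$ is a positive $(1,1)$-current, its pairing with $\Theta$ is dominated by the positive trace; setting $\mu_T:=T\wedge\omega$ one gets
\[
\Big|\int_{B(p,r)}T\wedge\Theta\Big|\le C\int_{B(p,r)}\rho_g(p,q)\,d\mu_T\le Cr\,\mu_T(B(p,r)) .
\]
Next I invoke monotonicity: by (\ref{formula 3}) the ratio $r^{-2}\sigma_T(B(p,r))$ is increasing in $r$, hence bounded for $r\le r_0$, giving $\sigma_T(B(p,r))\le C_0 r^2$; since on $B(p,r_0)$ the forms $\sqrt{-1}\partial_J\bar{\partial}_J\rho^2_g$ and $\omega$ are uniformly comparable (both strictly positive, with leading ratio $2$), the same bound $\mu_T(B(p,r))\le C_1 r^2$ follows. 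Combining, $r^{-2}\big|\int_{B(p,r)}T\wedge\Theta\big|\le C_2 r\to 0$, whence
\[
\nu(p,T)=\lim_{r\to 0^+}\frac{1}{r^2}\int_{B(p,r)}T\wedge\sqrt{-1}\partial_J\bar{\partial}_J\rho^2_g(p,q)=\lim_{r\to 0^+}\frac{2}{r^2}\int_{B(p,r)}T\wedge\omega .
\]

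The main obstacle is the pointwise comparison step: one must check carefully that the reduction to the almost K\"ahler model via the local symplectic property is legitimate and that the Nijenhuis/torsion contribution to the Hessian of $\rho^2_g$ genuinely vanishes at $p$, so that the leading coefficient is exactly $2$. Once this is secured, the remaining measure-theoretic estimates are routine consequences of the monotonicity formula (\ref{formula 3}) and the positivity of $T$.
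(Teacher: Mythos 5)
Your proof is correct, and it is worth recording how it relates to the paper's own argument. The measure-theoretic backbone is identical in both: the two weight forms agree at $p$, $T$ is positive, and the monotonicity (or mere finiteness of the limit) of $r^{-2}\sigma_T(B(p,r))$ supplies the mass bound $\int_{B(p,r)}T\wedge\omega=O(r^2)$ that turns a pointwise $O(\rho_g)$ discrepancy between the weights into an $O(r)$ error after $r^{-2}$ normalization. Where you genuinely differ is in how the agreement of the weights is established. The paper makes a two-step comparison through the flat model: it takes Darboux coordinates so that $2\omega=\sqrt{-1}\partial_{J_{st}}\bar{\partial}_{J_{st}}|z|^2$, invokes the Diederich--Sukhov lemma to arrange $\|z_*(J)-J_{st}\|_{C^1}$ small and estimate $(dd^c_J-dd^c)|z|^2=d(J_{st}-J)d|z|^2=O(|z|)$, and then separately uses the Schoen--Yau expansion of $g_J$ in normal coordinates to compare $\rho^2_{g_J}$ with $|z|^2$; combining the two comparisons gives the statement. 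You collapse this into a single direct comparison: since $d\rho^2_g$ vanishes at $p$, the torsion/Nijenhuis contribution to $dd^c_J\rho^2_g$ dies there, so $\Theta=\sqrt{-1}\partial_J\bar{\partial}_J\rho^2_g-2\omega$ is a smooth real $(1,1)$-form vanishing at $p$, and the domination $|T\wedge\Theta|\le C\rho_g\,T\wedge\omega$ valid for positive currents finishes the argument. Your route is more elementary, needing neither the Darboux chart nor the Diederich--Sukhov normalization; what the paper's detour through $\sqrt{-1}\partial_{J_{st}}\bar{\partial}_{J_{st}}|z|^2$ buys is that it exhibits the Lelong number as the one computed in a local symplectic (Darboux) model with the frozen complex structure $J(p)$, which is exactly the form in which the number is reused later (e.g. in the comparison between $\nu_1$ and $\nu_2$ and in the appeal to Elkhadhra's coordinate-independence). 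Both proofs, yours and the paper's, rely at the same point on the a priori boundedness of $r^{-2}\int_{B(p,r)}T\wedge(\text{positive form})$, so neither is circular.
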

   \begin{proof}
  We have the result of K. Diederich and A. Sukhov (cf. Lemma $2.1$ in \cite{DS}):
  Let $(M,J)$ be an almost complex manifold.
  Then for every point $p\in M$, every $\alpha\geq 0$ and $\lambda_0>0$ there exists a neighborhood $U$ of $p$ and
  a coordinate diffeomorphism $z:U\rightarrow \mathbb{B}$ such that $z(p)=0$,
  $dz(p)\circ J(p)\circ dz^{-1}(0)=J_{st}$ and the direct image $z_*(J)=dz\circ J\circ dz^{-1}$
  satisfies $\parallel z_*(J)-J_{st}\parallel_{C^{\alpha}(\bar{\mathbb{B}})}\leq \lambda_0$.

  Now, let $(M,g_J,J,\omega)$ be an almost K\"ahler 4-manifold.
  For any $p\in M$, there exists a Darboux coordinate $\{z_1,z_2\}$ on a small neighborhood $U_p$ of $p$
  such that
  $$
  \omega=\frac{\sqrt{-1}}{2}(dz_1\wedge d\bar{z}_1+dz_2\wedge d\bar{z}_2)=\frac{\sqrt{-1}}{2}\partial_{J_{st}}\bar{\partial}_{J_{st}}|z|^2
  =\frac{\sqrt{-1}}{2}\partial_{J_{st}}\bar{\partial}_{J_{st}}(z_1\bar{z}_1+z_2\bar{z}_2).
  $$
  Choose $\alpha=1$, $\lambda_0=1$.
  When $r$ is small, for $$\forall z\in B(0,r):=\{z\in U_p\,|\, \rho_{g_J}(0,z)<r\},$$
  we have
  $\parallel z_*(J)-J_{st}\parallel_{C^1}\leq 1$ and
  \begin{eqnarray*}
    (dd^c_J-dd^c)|z|^2 &=& d(J_{st}-J)d|z|^2 \\
     &=& d(J_{st}-J)(z_1\cdot d\bar{z}_1+dz_1\cdot \bar{z}_1+z_2\cdot d\bar{z}_2+dz_2\cdot \bar{z}_2).
  \end{eqnarray*}
  Hence $$|(dd^c_J-dd^c)|z|^2|\leq c|z|,$$ where $c$ is a positive constant.
 Then
 \begin{eqnarray*}
        \frac{1}{r^2}\int_{\rho_{g_J}(0,z)<r}T\wedge \sqrt{-1}\partial_{J_{st}}\bar{\partial}_{J_{st}}|z|^2&=& \frac{1}{r^2}\int_{\rho_{g_J}(0,z)<r}T\wedge \sqrt{-1}\partial_J\bar{\partial}_J|z|^2 \\
         && +O(r)\cdot\frac{1}{r^2}\int_{\rho_{g_J}(0,z)<r}T\wedge \sqrt{-1}\partial_J\bar{\partial}_J|z|^2.
      \end{eqnarray*}
      Therefore
  \begin{equation}\label{estimating equation1}
      \lim_{r\rightarrow 0}\frac{1}{r^2}\int_{\rho_{g_J}(0,z)<r}T\wedge \sqrt{-1}\partial_{J_{st}}\bar{\partial}_{J_{st}}|z|^2= \lim_{r\rightarrow 0}\frac{1}{r^2}\int_{\rho_{g_J}(0,z)<r}T\wedge \sqrt{-1}\partial_J\bar{\partial}_J|z|^2.
  \end{equation}

  On the other hand, let $(x^1,\cdot\cdot\cdot,x^4)$ be the normal coordinates of $g_J$ in a neighborhood $U$ of the point $p$.
 Then $g_{J,kl}$ have the following Taylor expansion (cf. Schone-Yau \cite{SY}):
   $$
   g_{J,kl}(x)=\delta_{kl}+\frac{1}{3}R_{kijl}x^ix^j+\frac{1}{6}R_{kijl,s}x^ix^jx^s+O(r^4),
   $$
   where all the curvatures and their covariant derivatives are evaluated at $p$.
   If $q\in U$,
   $$\rho_{g_J}(p,q)=\int^1_0|\gamma'(t)|_{g_J(\gamma(t))}dt,$$
    where $\gamma$ is the geodesic connecting points $p$ and $q$.
    Hence,
    \begin{eqnarray*}
      \rho_{g_J}(p,q) &=&\int^1_0 \sqrt{g_J(\gamma(t))(\gamma'(t),\gamma'(t))} dt  \\
       &=&\int^1_0 \sqrt{g_{J,kl}(tx)x^kx^l} dt  \\
       &=& \int^1_0 \sqrt{[\delta_{kl}+\frac{1}{3}R_{kijl}tx^itx^j+O(r^3)]x^kx^l} dt  \\
       &=& \int^1_0 \sqrt{|x|^2+\frac{t^2}{3}R_{kijl}x^ix^jx^kx^l+O(r^5)} dt \\
       &=&  \int^1_0|x|\sqrt{1+\frac{\frac{t^2}{3}R_{kijl}x^ix^jx^kx^l+O(r^5)}{|x|^2}} dt \\
       &=& \int^1_0[|x|+\frac{t^2R_{kijl}x^ix^jx^kx^l}{6|x|}+O(r^4)]dt\\
       &=& |x|+\frac{R_{kijl}x^ix^jx^kx^l}{18|x|}+O(r^4).
    \end{eqnarray*}
   Therefore,
    $$\rho^2_{g_J}(p,q)=|x|^2+\frac{1}{9}R_{kijl}x^ix^jx^kx^l+O(r^5),$$
    and
    $$
    \rho^2_{g_J}(p,q)-|x|^2=\frac{1}{9}R_{kijl}x^ix^jx^kx^l+O(r^5)=O(r^4).
    $$
    In fact, $\rho^2_{g_J}(p,q)$ is strictly $J$-plurisubharmonic near $p$ (cf. Ivashkovich-Rosay \cite[Lemma 1.3]{IR}).
   Then we can get
   \begin{eqnarray*}
     \frac{1}{r^2}\int_{\rho_{g_J}(0,z)<r}T\wedge\sqrt{-1}\partial_J\bar{\partial}_J\rho^2_{g_J}(p,q) &=&
       \frac{1}{r^2}\int_{\rho_{g_J}(0,z)<r}T\wedge \sqrt{-1}\partial_J\bar{\partial}_J|z|^2\\
         && +O(r^2)\cdot\frac{1}{r^2}\int_{\rho_{g_J}(0,z)<r}T\wedge  \sqrt{-1}\partial_J\bar{\partial}_J|z|^2,
      \end{eqnarray*}
      and
      \begin{equation}\label{estimating equation2}
      \lim_{r\rightarrow 0}\frac{1}{r^2}\int_{\rho_{g_J}(0,z)<r}T\wedge\sqrt{-1}\partial_J\bar{\partial}_J\rho^2_{g_J}(p,q)= \lim_{r\rightarrow 0}\frac{1}{r^2}\int_{\rho_{g_J}(0,z)<r}T\wedge \sqrt{-1}\partial_J\bar{\partial}_J|z|^2.
  \end{equation}
      At last, by (\ref{estimating equation1}) and (\ref{estimating equation2}),
      $$
       \lim_{r\rightarrow 0}\frac{1}{r^2}\int_{\rho_{g_J}(0,z)<r}T\wedge\sqrt{-1}\partial_J\bar{\partial}_J\rho^2_{g_J}(p,q)
       =  \lim_{r\rightarrow 0}\frac{1}{r^2}\int_{\rho_{g_J}(0,z)<r}T\wedge \sqrt{-1}\partial_{J_{st}}\bar{\partial}_{J_{st}}|z|^2.
      $$
      This completes the proof of the proposition.
    \end{proof}

   All these results are particularly interesting when $T_\Sigma$ is the current of integration over a $J$-holomorphic curve.
   Then $\sigma_{T}(B(p,r))$ is the Euclidean area of $\Sigma\cap B(p,r)$, while $\pi r^2$ is the area of a disc of radius $r$.
   Then it is immediate to check that
  $$
 \nu(p,T_\Sigma)=\left\{
  \begin{array}{ll}
   0 & \textrm{if}~ p\notin \Sigma,\\
   1 & \textrm{if}~ p \in \Sigma.
  \end{array}
 \right.
 $$

  \vskip 6pt

  In \cite{Elk2}, Elkhadhra has studied the Lelong number of a positive current
   $T$ of bidimension $(p,p)$ defined on an almost complex manifold.
   In particular, he has proven that the Lelong numbers of a positive current are independent on the coordinate systems
   (cf. Elkhadhra \cite[Theorem 3]{Elk2}). Thus, we have the following proposition:
 \begin{prop}\label{5A6}
  (cf. {\rm \cite{D3,Elk2,S2}})
 The Lelong number, $\nu(\phi,T)$, is independent of the choice of local coordinates.
 \end{prop}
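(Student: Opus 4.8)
The plan is to deduce the coordinate independence of $\nu(\phi,T)$ from a comparison principle for generalized Lelong numbers attached to equivalent weights, following Demailly's treatment in \cite{D3} and its almost-complex adaptation by Elkhadhra \cite{Elk2}. The starting point is the trace-measure description already obtained: by (\ref{formula 3}) we have $\nu(p,T)=\lim_{r\to 0}r^{-2}\sigma_T(B(p,r))$ with $\sigma_T=T\wedge\sqrt{-1}\partial_J\bar{\partial}_J\rho_g^2(p,\cdot)$, and by (\ref{estimating equation1})--(\ref{estimating equation2}) this limit is unchanged if $\rho_g^2$ is replaced by the Euclidean weight $|z|^2$ in the Darboux chart and $J$ by the standard structure $J_{st}=J(p)$. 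Thus the whole statement reduces to the invariance of this limit under a change of the weight $\phi$ and of the local chart $z$.

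First I would establish the comparison theorem in the present setting: if $\phi$ and $\psi$ are two continuous $J$-plurisubharmonic weights that are semi-exhaustive on $\mathrm{Supp}\,T$ and satisfy $\ell:=\limsup_{x\to p}\psi(x)/\phi(x)<\infty$, then $\nu(\psi,T)\le \ell\,\nu(\phi,T)$, the exponent being $1$ because $T$ has bidimension $(1,1)$. The argument proceeds as in the integrable case: using the integration-by-parts formula of Proposition \ref{integration by part} together with the monotonicity of $r\mapsto\nu(\phi,r,T)$ (Definition \ref{Leong number defi}) and Proposition \ref{formula}, one compares $\int_{B_{r,\phi}}T\wedge dd^c_J\phi$ with $\int_{B_{r',\psi}}T\wedge dd^c_J\psi$ for suitably matched sublevel radii and lets $r\to-\infty$. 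Applying this inequality in both directions to a pair of \emph{equivalent} weights, namely those with $\phi/\psi\to 1$ at $p$, yields $\nu(\phi,T)=\nu(\psi,T)$.

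Second, I would verify that any two admissible choices of local data produce equivalent weights. If $g$ and $g'$ are two almost Hermitian metrics, then in a fixed chart both geodesic distances are asymptotic to the Euclidean norm: the normal-coordinate expansion carried out in the proof of the preceding proposition gives $\rho_g^2(p,q)-|x|^2=O(|x|^4)$, and the same computation applies to $g'$. Consequently $\log\rho_{g'}(p,\cdot)/\log\rho_g(p,\cdot)\to 1$ as $q\to p$, so the weights are equivalent and the comparison theorem forces $\nu_g(p,T)=\nu_{g'}(p,T)$. Likewise, passing from one Darboux chart $z$ to another $w$ changes the weight only by $\log|w|=\log|z|+O(1)$ near $p$, since the transition map is a diffeomorphism fixing $p$ with invertible, $J(p)$-linear differential, hence $|w|\asymp|z|$; this is again an equivalence of weights, so invariance under the choice of chart follows.

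The main obstacle I anticipate is the comparison theorem itself in the almost-complex category: unlike the integrable case, $dd^c_J\phi$ carries the extra torsion contributions recorded after Proposition \ref{integration by part}, so I must check that these terms spoil neither the monotonicity of $\nu(\phi,r,T)$ nor the Stokes and integration-by-parts manipulations. Here the reduction to an almost K\"ahler chart is essential, since it is precisely on such charts that the $d^c_J$-computations are controlled (cf. Claim \ref{claim 1} and Lemma \ref{Yau equivalence}). A secondary technical point is to make the asymptotic equivalence of weights uniform enough to pass to the limit in the comparison inequality; this is where the quantitative expansions (\ref{estimating equation1})--(\ref{estimating equation2}) of the preceding proposition, rather than merely qualitative asymptotics, will be used.
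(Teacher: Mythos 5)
The paper does not actually prove this proposition: it is quoted directly from Elkhadhra \cite[Theorem 3]{Elk2}, with no argument supplied beyond the citation. So your proposal supplies work the paper delegates to a reference, and your route --- Demailly's comparison theorem for generalized Lelong numbers applied to asymptotically equivalent weights --- is in substance the route taken in the cited sources \cite{D3,Elk2}. Most of your outline is sound: the exponent in the comparison inequality is indeed $1$ for bidimension-$(1,1)$ currents; the normal-coordinate expansion $\rho_g^2(p,q)-|x|^2=O(|x|^4)$ from the proof of the preceding proposition does give $\log\rho_{g'}/\log\rho_g\to 1$; and your main anticipated obstacle is in fact harmless, for a reason worth recording: on a $4$-manifold the $(2,0)+(0,2)$ components of $dd^c_J u$ wedge to zero against any current of bidegree $(1,1)$ for pure degree reasons, so $T\wedge dd^c_J u=T\wedge(dd^c_J u)^{(1,1)}\geq 0$ for $J$-plurisubharmonic $u$, $dd^c_J u$ is exact, and the Stokes/monotonicity manipulations of Definition \ref{Leong number defi} and Proposition \ref{formula} go through as in the integrable case.

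There is, however, one step that fails as written. In your last paragraph you feed the weights $\log|z|$ and $\log|w|$ of two Darboux charts into the comparison theorem, but that theorem requires both weights to be $J$-plurisubharmonic, and $\log|z|$ is in general \emph{not} $J$-plurisubharmonic when $J$ is non-integrable. This is precisely Chirka's observation recalled in Appendix A.1 of the paper, right after Claim \ref{claim 1}: $\log|z|$ is not \emph{strictly} plurisubharmonic for $J_{st}$, so an arbitrarily small perturbation of the structure destroys plurisubharmonicity, and one must instead use the corrected weight $\log|z|+A|z|$ with $A$ large. The repair is routine but must be made explicit: either apply the comparison theorem to the Chirka-corrected weights (the correction is $o(\log|z|)$ near $p$, so all ratio limits, and hence the conclusion of the comparison theorem, are unchanged), or run the comparison only between geodesic weights $\log\rho_g$ and $\log\rho_{g'}$, which are genuinely $J$-plurisubharmonic near $p$ by Claim \ref{claim 1}, and pass to coordinate expressions afterwards via the trace-measure estimates (\ref{estimating equation1})--(\ref{estimating equation2}), which compare $\sqrt{-1}\partial_J\bar{\partial}_J\rho_g^2$ with $\sqrt{-1}\partial_{J_{st}}\bar{\partial}_{J_{st}}|z|^2$ without ever requiring $\log|z|$ itself to be a legitimate weight. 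With that adjustment your argument becomes a correct, essentially self-contained proof of the proposition.
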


   \vskip 6pt

  We are going to introduce the notions of $J$-pluripolar subset and $J$-analytic subset in an almost complex $2n$-manifold $(X,J)$.
  Such subsets should be considered as almost complex analogues of ``classical" complex case.
  In general, $J$-pluripolar subsets are the sets of $-\infty$ poles of $J$-plurisubharmonic functions.
   \begin{defi}\label{pluripolar}
   (cf. {\rm\cite{D3,Elk}})
      A subset $A$ of an almost complex $2n$-manifold $(X,J)$ is said to be $J$-pluripolar if for every point $x\in X$
   there exist a connected neighborhood $U$ of $x$ and $u\in PSH(X,J)$, $u\not\equiv-\infty$,
   such that $A\cap U\subset\{y\in U\mid  u(y)=-\infty\}$.

   A subset $A\subset X$ is said to be complete $J$-pluripolar in $X$ if for every point $x\in X$
   there exist a neighborhood $U$ of $x$ and $u\in PSH(X,J)\cap L^1_{loc}(U)$ such that $A\cap U\subset\{y\in U\mid  u(y)=-\infty\}$.
   $A$ is said to be regular complete $J$-pluripolar if there exists a $J$-plurisubharmonic function $u$ on $X$,
    of class $C^2$ on $X\setminus u^{-1}(-\infty)$ such that $A=u^{-1}(-\infty)$.
   \end{defi}

  \begin{rem}
   In the case when the structure $J$ is integrable, El Mir {\rm \cite{ElMir}} proved that every complete ($J$-)pluripolar subset is regular.
   \end{rem}

    Let $(X,J)$ be an almost complex manifold, $A$ a closed subset of $X$ and $T$ a current of order zero on $X\setminus A$.
    One says that $T$ admits a trivial extension $\tilde{T}$ on $X$ if $T$ has a locally finite mass in the neighborhood of every point of $A$,
    in which case $\tilde{T}$ can be defined by putting $\tilde{T}=0$ on $A$;
   the existence of some extension $T'$ is in any case equivalent to the local finiteness of the mass of $T$ near $A$.
    In \cite{Elk}, F. Elkhadhra presented a generalization of El Mir's theorem \cite{ElMir}
      on the extension of positive currents across a complete $J$-pluripolar subset,
   in the almost complex setting.
    For a detailed description of the almost complex version of El Mir's theorem, we refer to Theorem $1$ in \cite{Elk}.
    Here, we mainly want to apply its corollary, hence, we have the following proposition:
   \begin{prop}\label{extention}
  (cf. {\rm  Elkhadhra \cite[Corollary 1]{Elk}})
  Let $T$ is a closed positive current of bidimension $(1,1)$.
   If $A\subset X$ is a closed regular complete $J$-pluripolar set and $id_A$ is its characteristic function,
  then $id_AT$ is a closed positive current.
   \end{prop}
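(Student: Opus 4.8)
The plan is to obtain the statement as a short consequence of the almost--complex Skoda--El Mir extension theorem (Theorem~1 of \cite{Elk}, described in the discussion preceding the proposition), with closedness as the only substantive point. First I would dispose of positivity and of the very meaning of $id_AT$: a closed positive current has order zero, so the coefficients of $T$ are measures of locally finite total variation, and hence the product $id_AT$ of $T$ with the bounded Borel function $id_A$ is a well-defined current of order zero with locally finite mass. Since $id_A\ge 0$, multiplication by it preserves positivity against strictly positive test forms, so $id_AT\ge 0$; equivalently $id_AT=T-id_{X\setminus A}T\le T$. It remains only to prove $d(id_AT)=0$.

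For closedness I would write $T=id_AT+id_{X\setminus A}T$. Because $dT=0$, it suffices to show that $R:=id_{X\setminus A}T$ is closed, and $R$ is exactly the trivial extension across $A$ of the current $T|_{X\setminus A}$, which is closed and positive on the open set $X\setminus A$ and has locally finite mass near $A$ (being the restriction of the globally defined $T$). This is the precise hypothesis of the $J$-version of El Mir's theorem, and the assumption that $A$ be \emph{regular} complete $J$-pluripolar is what makes it applicable: there is $u\in{\rm PSH}(X,J)$, of class $C^2$ on $X\setminus u^{-1}(-\infty)$, with $A=u^{-1}(-\infty)$. Invoking that theorem gives that $R$ is closed and positive, whence $id_AT=T-R$ is a difference of closed currents, hence closed, completing the proof.

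To make the deduction self-contained I would also indicate the mechanism behind the extension theorem, since that is where the real content lies. One normalises $u\le 0$ locally, sets $w=e^{u}$ (a continuous $J$-plurisubharmonic function with $0\le w\le 1$ and $\{w=0\}=A$), and uses cut-offs $\theta_\varepsilon=\chi(w/\varepsilon)$ for a smooth increasing $\chi$ vanishing near $0$ and equal to $1$ on $[1,\infty)$, so that $\theta_\varepsilon\nearrow id_{X\setminus A}$ and $\theta_\varepsilon T\to R$. Since $dT=0$ one has $d(\theta_\varepsilon T)=d\theta_\varepsilon\wedge T$, and the task is to show this carries no mass in the limit, by Cauchy--Schwarz for the positive current $T$ controlled by a Dirichlet-type energy $\int_{\{w\le\varepsilon\}}\sqrt{-1}\,\partial_J w\wedge\bar{\partial}_J w\wedge T$. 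A simplification special to the almost complex case is that, $T$ having bidimension $(1,1)$, it pairs nontrivially only with $(1,1)$-forms; thus in the expansion $dd^c_Ju=2\sqrt{-1}\,\partial_J\bar{\partial}_Ju+\sqrt{-1}(\bar{A_J}\bar{\partial}_Ju-\partial^2_Ju)+\sqrt{-1}(\bar{\partial}^2_Ju-A_J\partial_Ju)$ the $(2,0)$ and $(0,2)$ Nijenhuis-type terms drop out against $T$, leaving only the genuinely positive contribution $2\sqrt{-1}\,\partial_J\bar{\partial}_Jw\wedge T\ge 0$; moreover $d^2=0$ holds unchanged, so the integration by parts against a compactly supported test function (throwing $dd^c_J$ onto the cut-off and using $dT=0$) is legitimate and bounds the energy by a constant times the local mass of $T$.

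The hard part will be the step I have just summarised qualitatively, namely verifying near $A$ that these energy measures do not concentrate on the pluripolar set as $\varepsilon\to 0$, so that the finite total energy forces $d\theta_\varepsilon\wedge T\to 0$; this non-concentration is exactly what the regularity of $u$ (its $C^2$-smoothness off $A$, together with $\partial_J w\to 0$ towards $A$) secures, and it is the technical core of Theorem~1 of \cite{Elk}. In the write-up I would therefore cite that theorem directly for $R$, recording the reduction $id_AT=T-R$ and the positivity $id_AT\le T$ as the only additional, elementary, observations needed.
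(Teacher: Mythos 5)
Your proof is correct and follows essentially the same route as the paper: the paper establishes this proposition purely by citing Elkhadhra's Corollary 1 of \cite{Elk}, which is itself deduced from the almost-complex El Mir/Skoda extension theorem (Theorem 1 of \cite{Elk}) exactly as you describe, via the decomposition $id_AT=T-id_{X\setminus A}T$ with $id_{X\setminus A}T$ the trivial extension of $T|_{X\setminus A}$, whose hypotheses are met because $A$ is closed regular complete $J$-pluripolar and $T$ has locally finite mass. Your additional observations (order-zero coefficients making $id_AT$ well defined, and positivity) are the elementary points implicit in that citation, so nothing is missing.
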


  It is well known that if $J$ is integrable,
  every ($J$-)analytic subset is a regular complete ($J$-)pluripolar set.
   But this is not yet established in the non-integrable case. As a generalization of classical complex analysis, we have the following definition:

  \begin{defi}\label{Elk defi}
  (cf. {\rm  Elkhadhra \cite{Elk2}})
  We say that $A$ is a $J$-analytic subset of an almost complex $2n$-manifold $(X,J)$ of dimension $p$
  if there exists a finite sequence of closed subsets
  $$
  \emptyset=A_{-1}\subset A_0\subset\cdot\cdot\cdot\subset A_p=A,
  $$
  where $A_j\setminus A_{j-1}$ is a smooth almost complex submanifold of $X\setminus A_{j-1}$,
  of complex dimension $j$ and has a locally finite $2j$-Hausdorff measure in the neighborhood of every point of $X$.
  We say that $A$ is of pure complex dimension $p$ if moreover we have $A_{j-1}\subset \overline{A_j\setminus A_{j-1}}$,
  for $j=0,1,2,\cdot\cdot\cdot, p$.
  If the $p$-dimensional strata $A_p\setminus A_{p-1}$ are connected we say that $A$ is irreducible.
  \end{defi}

   Notice that the definition for the almost complex setting does coincide with the usual analytic subsets in the integrable case.
   In order to justify the above definition let us recall that every closed $J$-holomorphic curve $A$ of $(X,J)$ is $J$-analytic.
   Indeed, we write $\emptyset=A_{-1}\subset A_0\subset A_1=A$,
   where $A_0$ is the singular part of $A$ which is discrete.
   More generally, every almost complex submanifold is a $J$-analytic subset. As in classical complex analysis, we have the following lemma:

   \begin{lem}\label{lemma2}
  (cf. {\rm  Demailly \cite[Lemma 8.15 in Chapter 3]{D3}})
  If $T$ is a closed positive current of bidimension $(1,1)$ on a almost K\"{a}hler $4$-manifold $(X,g_J,J,\omega)$
  and let $A$ be an irreducible $J$-analytic set,
  we set
  $$
  m_A:=\inf\{\nu(x,T)\mid x\in A\}.
  $$
  Then $\nu(x,T)=m_A$ for $x\in A\setminus \cup A_j$, where  $(A_j)$ is a countable family of proper $J$-analytic subsets of $A$.
  We say that $m_A$ is the generic Leong number of $T$ along $A$.
  \end{lem}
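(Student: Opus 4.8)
The statement to prove (Lemma~\ref{lemma2}) is the almost-complex analogue of Demailly's classical result asserting that the Lelong number of a closed positive $(1,1)$-current is generically constant along an irreducible $J$-analytic set $A$. The plan is to reduce the problem to the behaviour of the Lelong number as a function on $A$ and to exploit upper semi-continuity together with the stratified structure of $A$ from Definition~\ref{Elk defi}.

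First I would record the two structural facts that make the argument work. By Proposition~\ref{5A6} the Lelong number $\nu(x,T)$ is intrinsic (independent of coordinates), so it is a well-defined function $x\mapsto\nu(x,T)$ on $X$; and by the formula $\nu(x,T)=\lim_{r\to0}r^{-2}\sigma_T(B(x,r))$ together with the monotonicity of $r\mapsto r^{-2}\sigma_T(B(x,r))$ established just before the statement, this function is upper semi-continuous on $X$. The upper semi-continuity is the one analytic input I genuinely need: it guarantees that for every $c>0$ the super-level set $E_c=\{x\in A:\nu(x,T)\ge c\}$ is a closed subset of $A$. The decomposition theorem referred to in the main text (Theorem~\ref{Theorem A}, cited in Section~\ref{last section}) asserts moreover that each such $E_c$ is itself a $J$-analytic subset of dimension $\le1$; since $A$ is irreducible of dimension at most $1$, either $E_c=A$ or $E_c$ is a proper $J$-analytic subset of $A$ (a discrete, hence countable, set when $\dim A=1$, and a strictly smaller stratum in general).

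With these facts the proof is essentially formal. Set $m_A=\inf\{\nu(x,T):x\in A\}$ as in the statement. For each rational $c>m_A$ the set $E_c$ is a closed $J$-analytic subset of $A$, and by definition of the infimum $E_c\ne A$, so $E_c$ is a proper $J$-analytic subset. Let $(A_j)$ be the countable family obtained by listing the sets $E_c$ over all rationals $c>m_A$ together with the stratum $A_{p-1}$ of lower dimension coming from Definition~\ref{Elk defi}; this is a countable collection of proper $J$-analytic subsets of $A$. For any $x\in A\setminus\bigcup_j A_j$ one has $\nu(x,T)<c$ for every rational $c>m_A$, whence $\nu(x,T)\le m_A$; combined with $\nu(x,T)\ge m_A$ (the defining inequality of the infimum), this forces $\nu(x,T)=m_A$. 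Thus the Lelong number equals $m_A$ off the countable union $\bigcup_j A_j$ of proper $J$-analytic subsets, which is exactly the claim.

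The step I expect to be the main obstacle is not the formal manipulation above but the verification that each super-level set $E_c$ is genuinely $J$-analytic rather than merely closed, since in the non-integrable setting the classical Siu analyticity theorem is replaced by its almost-complex counterpart (Theorem~\ref{Theorem A} and Remark~\ref{7.6}) whose proof rests on the regularization machinery of Appendix~\ref{Demailly} and the $\widetilde{\mathcal{W}},d^-_J$-solvability of Appendix~\ref{Hormander}. Once that analyticity is granted, the irreducibility of $A$ is what upgrades ``$E_c$ is $J$-analytic'' to ``$E_c$ is a \emph{proper} $J$-analytic subset whenever $c>m_A$,'' because an irreducible $J$-analytic set of dimension $\le1$ cannot contain a closed $J$-analytic subset of the same dimension other than itself. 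I would therefore structure the writeup so that the analyticity of $E_c$ is invoked as a black box from Theorem~\ref{Theorem A}, and devote the visible argument to the upper semi-continuity of $\nu(\cdot,T)$ and the countable-exhaustion bookkeeping, which are the parts specific to Lemma~\ref{lemma2}.
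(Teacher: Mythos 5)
Your proposal is correct and follows essentially the same route as the paper: the paper's proof likewise takes the countable family $(A_j)$ to be the sets $A\cap E_c(T)$ over rational $c>m_A$, observes that each is a proper $J$-analytic subset of $A$ (properness from the definition of the infimum, analyticity invoked as a black box from the later analyticity theorem for upperlevel sets), and concludes $\nu(x,T)=m_A$ off their union. The only differences are cosmetic: you additionally spell out the upper semi-continuity of $\nu(\cdot,T)$ and cite Theorem~\ref{Theorem A} where the analyticity input is really Theorem~\ref{analytic theo}, but the argument is the same.
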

  \begin{proof}
   The upperlevel sets of the Lelong number is defined by
  $$
  E_c(T):=\{x\in X\mid  \nu(x,T)\geq c \}.
  $$
  By definition of $m_A$ and $E_c(T)$, we have $\nu(x,T)\geq m_A$ for every $x\in A$ and
  $$
  \nu(x,T)=m_A
  $$
  on $A\setminus \bigcup_{c\in\mathbb{Q}, c>m_A}A\cap E_c(T)$.
  However, for $c>m_A$, the intersection $A\cap E_c(T)$ is a proper $J$-analytic subset of $A$.
  \end{proof}

  According to Definition \ref{Elk defi}, this enables us to deduce without difficulty that
  every $J$-analytic subset $A$ is a locally regular complete $J$-pluripolar subset away from the singular part of $A$.
  Obviously, a natural question arises here: Is every $J$-analytic subset a (locally) regular complete $J$-pluripolar set?
   What would happen if closed positive currents are restricted to $J$-analytic subsets?
     Although this is a well-known result when $J$ is integrable.
  Our next result concerns the restriction of closed positive currents on $J$-analytic subsets.
   First, recall that in terms of currents, if $A$ is a $J$-analytic subset of complex dimension $p$
   then $T_A$ defines a closed positive $(p,p)$-current by integrating $(p,p)$ test forms on the components of $A$ of dimension $2p$.
   More precisely, assume that $$\emptyset=A_{-1}\subset A_0\subset\cdot\cdot\cdot\subset A_p=A$$ is a sequence as in Definition \ref{Elk defi}
  and let $Y=A_p\backslash A_{p-1}$.
  Since $Y$ is a smooth almost complex submanifold of $X\backslash A_{p-1}$,
   then the integration on $Y$ defines a positive closed current on $X\backslash A_{p-1}$.
  When $A$ is a $J$-analytic subset of complex dimension $p$, we obtain the following proposition.

   \begin{prop}\label{Elk lemma 1}
    (cf. {\rm  Elkhadhra \cite[Lemma 1]{Elk2}})
  Assume that $T$ is a positive closed current of bidimension $(p,p)$ on almost complex manifold $(X,J)$,
      and $A$ is a $J$-analytic subset of complex dimension $p$,
   then the cut-off $id_AT$ is also a positive and closed current supported by $A$.
  \end{prop}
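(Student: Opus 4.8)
\vspace{2pt}

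The plan is to reduce the statement to a local computation near each stratum of $A$ and then invoke the almost-complex analogue of El Mir's theorem (Proposition~\ref{extention}) to patch the local pieces into a globally closed positive current. First I would recall from Definition~\ref{Elk defi} that $A$ carries a stratification
$$
\emptyset=A_{-1}\subset A_0\subset\cdots\subset A_p=A,
$$
where each difference $Y_j:=A_j\setminus A_{j-1}$ is a smooth almost complex submanifold of $X\setminus A_{j-1}$ of complex dimension $j$ with locally finite $2j$-Hausdorff measure. Since positivity and $d$-closedness are local properties, it suffices to verify the claim in a neighborhood of an arbitrary point $x\in X$, treating separately the case $x\in Y_p=A_p\setminus A_{p-1}$ (the top stratum) and the case $x\in A_{p-1}$ (the lower-dimensional part).

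On the top stratum the argument is essentially the classical one. Near a point $x\in Y_p$ the submanifold $Y_p$ is a smooth almost complex submanifold of the prescribed dimension, so $\mathbbm{1}_A T = \mathbbm{1}_{Y_p} T$ there, and one first checks that $\mathbbm{1}_{Y_p} T$ has locally finite mass: because $T$ is positive and of bidimension $(p,p)$, its trace measure $\sigma_T = T\wedge \tfrac{1}{2}(dd^c_J\rho_g^2)^{\,p}$ controls the mass of $T$, and the finiteness of the $2p$-Hausdorff measure of $Y_p$ together with the comparison estimate $\rho^2_{g_J}(p,q)-|z|^2=O(r^4)$ established in Appendix~A guarantees that $\sigma_T(Y_p\cap B(x,r))<\infty$. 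Thus the trivial extension $\widetilde{T}=\mathbbm{1}_{Y_p}T$ exists as an order-zero current, is positive (being the restriction of a positive current to a Borel set), and is supported by $\overline{Y_p}\subseteq A$. To see that it is also $d$-closed near such $x$, I would use that $A_{p-1}$ is a $J$-analytic set of strictly smaller dimension and hence has vanishing $2p$-Hausdorff measure, so the support condition forces $d(\mathbbm{1}_{Y_p}T)$ to be a current supported on the negligible set $A_{p-1}$; the support theorem for normal currents then shows it vanishes.

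The hard part will be the behaviour across the lower strata $A_{p-1}$, where the naive support theorem does not immediately apply because $A_{p-1}$ need not be negligible for the extension problem. Here I would invoke the almost-complex El Mir extension theorem in the form quoted as Proposition~\ref{extention}: since each $J$-analytic subset is, away from its singular locus, a locally regular complete $J$-pluripolar set (a point already remarked in the paragraph following Lemma~\ref{lemma2}), the characteristic function $\mathbbm{1}_A$ multiplies the closed positive current $T$ to a current that extends trivially and closedly across $A_{p-1}$. Concretely, one applies the extension result inductively on the strata: having established that $\mathbbm{1}_{A_p\setminus A_{p-1}}T$ is closed and positive on $X\setminus A_{p-1}$ with locally finite mass near $A_{p-1}$, the $J$-pluripolarity of $A_{p-1}$ lets us conclude that its trivial extension to all of $X$ remains closed and positive. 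Iterating down through $A_{p-2},\dots,A_0$ completes the proof, and the resulting current is exactly $\mathbbm{1}_A T$, supported by $A$. The only delicate point in this induction is verifying the complete $J$-pluripolarity (rather than mere pluripolarity) of the strata, for which I would lean on the singularity analysis of $J$-plurisubharmonic functions in Theorem~\ref{HBS2} together with Elkhadhra's generalized Poincar\'e--Lelong formula, which produces the required $J$-plurisubharmonic defining functions with $-\infty$ poles exactly along the strata.
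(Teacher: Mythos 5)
The paper itself offers no proof of this proposition --- it is quoted from Elkhadhra \cite[Lemma 1]{Elk2} --- so your proposal must stand on its own, and it does not: the decisive gap is in your treatment of the top stratum. For $x\in Y_p$ you assert that ``the support condition forces $d(\mathbbm{1}_{Y_p}T)$ to be supported on the negligible set $A_{p-1}$'' and then invoke the support theorem. But the support condition only gives $\mathrm{supp}\, d(\mathbbm{1}_{Y_p}T)\subseteq\overline{Y_p}$, and $\overline{Y_p}$ has Hausdorff dimension $2p$, while $d(\mathbbm{1}_{Y_p}T)$ has dimension $2p-1$: a flat current of dimension $2p-1$ supported on a $2p$-dimensional set need not vanish (the boundary of integration over a disc inside $Y_p$ is such a current). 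Proving that the boundary of the cut-off does not charge the smooth stratum $Y_p$ itself is exactly the content of the Skoda--El Mir extension theorem; it is \emph{here}, across $Y_p$, that the almost-complex El Mir theorem must be applied, and this is legitimate because a smooth almost complex submanifold is locally regular complete $J$-pluripolar by Elkhadhra's result quoted in Appendix \ref{singularity} (also note that Proposition \ref{extention} as stated in the paper covers only bidimension $(1,1)$; for general $(p,p)$ one needs Elkhadhra's theorem in \cite{Elk} in its full generality). As written, your argument for closedness across $Y_p$ assumes precisely what has to be proved.

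Conversely, at the lower strata --- where you do invoke El Mir --- the hypothesis you need is unavailable and, fortunately, unnecessary. Complete $J$-pluripolarity of the singular strata $A_{p-1}$ (arbitrary $J$-analytic sets) is exactly the open question the paper records after Definition \ref{Elk defi}, and your proposed remedy does not close it: Theorem \ref{HBS2} runs in the opposite direction (it produces a $J$-analytic set from the non-integrability locus of $e^{-\varphi}$, not a $J$-plurisubharmonic function with poles along a prescribed $J$-analytic set), and Elkhadhra's Poincar\'e--Lelong formula presupposes defining functions $f_i$ with $\bar{\partial}_Jf_i=0$ on the zero set, whose existence along singular strata is not established. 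What actually handles the lower strata is a dimension count: $A_{p-1}$ has locally finite $\mathcal{H}^{2p-2}$-measure, hence $\mathcal{H}^{2p}(A_{p-1})=\mathcal{H}^{2p-1}(A_{p-1})=0$; by the Lelong monotonicity inequality a closed positive current of bidimension $(p,p)$ charges no $\mathcal{H}^{2p}$-null set, so $\mathbbm{1}_{A_{p-1}}T=0$ and $\mathbbm{1}_AT=\mathbbm{1}_{Y_p}T$; and once $\mathbbm{1}_{Y_p}T$ is closed on $X\setminus A_{p-1}$ (by El Mir across the smooth stratum), its differential is a flat current of dimension $2p-1$ supported in the $\mathcal{H}^{2p-1}$-null set $A_{p-1}$, hence zero by Federer's support theorem. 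In short, your two tools are each deployed at the wrong stratum: the El Mir theorem belongs at $Y_p$, the Hausdorff-measure argument at $A_{p-1}$, and with that exchange (or with an induction that applies El Mir only to the \emph{smooth} stratum of each $A_j$) your outline becomes a correct proof.
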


  Notice also that by the same idea of Proposition \ref{Elk lemma 1},
   we can easily see that the current of integration $T_A$ on a $J$-analytic subset is positive and closed.

   \begin{prop}\label{Elk theorem 2}
  (cf. {\rm  Elkhadhra \cite[Theorem 2]{Elk2}})
    Let $T$ be a closed positive current of bidimension $(p,p)$ on an almost K\"{a}hler manifold $(X,J)$.
    Let $A$ be a $J$-analytic subset of $(X,J)$ of dimension $p$. Then, we have
  $$
    id_AT=m_AT_A,
  $$
   in particular $T-m_AT_A$ is positive.
   \end{prop}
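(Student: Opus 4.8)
The plan is to follow the classical argument of Siu, adapting it to the almost complex setting via the preparatory results on Lelong numbers already established in this appendix. The statement to prove is that for a closed positive current $T$ of bidimension $(p,p)$ on an almost K\"ahler manifold $(X,J)$ and a $J$-analytic subset $A$ of dimension $p$, one has $id_AT=m_AT_A$, and consequently $T-m_AT_A\geq 0$. The key structural facts I would assemble are: Proposition \ref{Elk lemma 1} (the cut-off $id_AT$ is itself positive and closed, supported by $A$), Lemma \ref{lemma2} (the generic Lelong number $m_A$ is attained off a countable union of proper $J$-analytic subsets), Proposition \ref{extention} (trivial extension across complete $J$-pluripolar sets), and the fact that the current of integration $T_A$ on a $J$-analytic subset is closed and positive.

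First I would reduce to the smooth locus. Write $Y=A_p\setminus A_{p-1}$, the $2p$-dimensional smooth almost complex stratum, which is dense in $A$ by irreducibility of the top-dimensional part. Since $id_AT$ is a positive closed current supported by $A$ and $A_{p-1}$ has real Hausdorff dimension $<2p$, the support theorem for currents (a current of bidimension $(p,p)$ supported on a set of dimension $<2p$ vanishes) forces $id_{A_{p-1}}T=0$, so all the mass of $id_AT$ lives on $Y$. On the smooth manifold $Y$, the theory of closed positive currents supported by a submanifold says $id_AT$ equals a nonnegative density times the integration current $T_A$; the task is to identify that density as the constant $m_A$.

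The heart of the matter is showing this density is constant and equal to $m_A$. For this I would use the Lelong number comparison. By Lemma \ref{lemma2}, $\nu(x,T)=m_A$ for generic $x\in Y$, and the Lelong number of $id_AT$ at such a point coincides with the local density of $id_AT$ with respect to $T_A$; since $\nu(x,T_A)=1$ for $x\in A$ (established in this appendix), the generic density equals $m_A$. To promote this from generic points to the whole stratum, I would invoke that the density function of a closed positive current supported by a smooth submanifold is itself a closed positive current (equivalently a constant on each connected component), and $Y$ is connected by the irreducibility hypothesis. Thus $id_AT=m_AT_A$ on $X\setminus A_{p-1}$, and by the extension across the lower-dimensional, complete $J$-pluripolar set $A_{p-1}$ (Proposition \ref{extention}) the equality holds on all of $X$.

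The main obstacle I anticipate is precisely the constancy of the density along $Y$, which in the integrable case rests on the semicontinuity and analyticity properties of Lelong numbers via Siu's theorem; here I must instead lean on the almost complex Lelong-number machinery (Propositions \ref{5A6}, the coordinate-independence, and Lemma \ref{lemma2}) together with the fact that a $J$-holomorphic curve is a minimal surface, so that the trace-measure density is comparable to Euclidean area. The subtle point is that without integrability one does not automatically have a holomorphic density function; I would circumvent this by working on small Darboux charts where, by the estimates relating $\partial_J\bar\partial_J$ to $\partial_{J_{st}}\bar\partial_{J_{st}}$ used earlier in this appendix, the current $id_AT$ behaves to leading order like its standard-complex counterpart, transferring the classical constancy statement to the present setting. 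Once the density is pinned to $m_A$ generically and shown locally constant, positivity of $T-m_AT_A$ follows immediately since $T-id_AT\geq 0$ by Proposition \ref{Elk lemma 1}.
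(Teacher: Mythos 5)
The first thing to know: the paper contains no proof of this proposition. It is imported directly from Elkhadhra \cite[Theorem 2]{Elk2}, and the remark following it only explains that Elkhadhra's almost-complex result is being restated on an almost K\"ahler manifold, where the paper's Lelong numbers are defined. So your sketch must be judged on its own as a proof of Elkhadhra's theorem. Its skeleton --- cut off by $A$, apply a support/structure theorem on the smooth stratum $Y=A_p\setminus A_{p-1}$, identify the resulting density with $m_A$ via Lelong numbers, extend across $A_{p-1}$ --- is the standard Siu--Demailly scheme and is the right general route. But it has a genuine gap at the crux of the theorem.

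The gap is the identification of the density with $m_A$. Granting the structure theorem, you get $id_AT=\theta\,T_A$ on $Y$ with $\theta\geq 0$ locally constant. Since $id_AT\leq T$, at a generic point $x\in Y$ where $\nu(x,T)=m_A$ (Lemma~\ref{lemma2}) you obtain only the inequality $\theta=\nu(x,id_AT)\leq\nu(x,T)=m_A$. Your sentence ``the Lelong number of $id_AT$ at such a point coincides with the local density \dots\ the generic density equals $m_A$'' silently substitutes $\nu(x,T)$ for $\nu(x,id_AT)$; these differ by $\nu\bigl(x,\,id_{X\setminus A}T\bigr)$, and the whole difficulty of the theorem is to show that this diffuse contribution vanishes at suitable points of $A$ --- equivalently, that $\nu(\cdot,T)\geq c$ on $A$ forces $T\geq c\,T_A$. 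In the integrable case this is Demailly's comparison theorem for Lelong numbers (the heart of Siu's argument), and it needs either the Lelong--Jensen machinery or the analyticity of the upperlevel sets $E_c$ (here Theorem~\ref{analytic theo}); nothing in your sketch supplies it, so what you actually prove is $id_AT=\theta T_A$ with $0\leq\theta\leq m_A$, not the asserted equality. Two further problems: the Federer support theorem you invoke for $id_{A_{p-1}}T$ holds for flat (e.g.\ normal) currents, and a cut-off of a closed current need not be closed --- a Dirac mass times a positive $(p,p)$-vector is a positive order-zero current of bidimension $(p,p)$ supported at a point --- so you must first run the El Mir extension (Proposition~\ref{extention}) across $A_{p-1}$ to exhibit $id_{A_{p-1}}T$ as a difference of closed positive currents, whereas you invoke that proposition only at the very end; and that proposition requires $A_{p-1}$ to be a \emph{regular complete} $J$-pluripolar set, a property the paper explicitly flags as unresolved for singular $J$-analytic sets in the non-integrable case, so even that step is not free.
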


   \begin{rem}
   Elkhadhra proved the above proposition on the almost complex manifold in {\rm \cite{Elk2}}.
   Since our Lelong number is defined on the almost K\"{a}hler manifold in this paper,
   we describe Elkhadhra's result on the almost K\"{a}hler manifold.
   \end{rem}

   \vskip 6pt

  The purpose of the remainder of this subsection is to give two other definitions of Lelong number on tamed closed almost complex $4$-manifolds.
   Suppose that $(M,J)$ is an almost complex $4$-manifold tamed by a symplectic $2$-form $\omega_1=F+d^-_J(v+\bar{v})$,
    where $v\in \Omega^{0,1}_J$ and $F$ is a fundamental $2$-form.
    Let $g_J(\cdot,\cdot)=F(\cdot,J\cdot)$ be an almost Hermitian metric and $d\mu_{g_J}$ the volume form.
   Suppose that $\rho_{g_J}(p,q)$ is the geodesic distance of points $p$, $q$ with respect to $g_J$ (cf. Chavel \cite{Cha}).
 Denote by
   $$
  B(p,r):=\{q\in M\mid \rho_{g_J}(p,q)\leq r\}.
  $$

  \begin{defi}\label{Lelong 1}
 If $p\in SuppT$, $T$ is a closed positive $(1,1)$-current on a closed almost complex $4$-manifold tamed by a symplectic form $\omega_1=F+d_{J}^{-}(v+\bar{v}),~v\in\Omega_{J}^{0,l}$,
 we define the Lelong number as follows
   $$
   \nu_1(p,\omega_1,r,T)=\frac{2}{r^2}\int_{B(p,r)}T\wedge\omega_1
   $$
  and
   $$
   \nu_1(p,T)=\lim_{r\rightarrow 0}\nu_1(p,\omega_1,r,T).
   $$
  \end{defi}
  Notice that as in the almost K\"{a}hler case, $\nu_1(p,\omega_1,r,T)$ is an increasing function of $r$.
  On the other hand, any almost complex $4$-manifold $(M,J)$ has the local symplectic property \cite{L2}, that is, $\forall p\in M$,
  there is a neighborhood $U_p$ of $p$ and a $J$-compatible symplectic form
  $\omega_p$ on $U_p$ such that $\omega_p|_p=F|_p$ and $F=f_p\omega_p$, $f_p\in C^\infty(U_p)$.
  Fix a point $q\in U_p$.
  Moreover, we assume that $r$ is small enough such that $B(q,r)\subset U_p$.
 It is similar to Definition \ref{Leong number defi}, in particular (\ref{Lelong number equ}),
  on symplectic $4$-manifold $(U_p,\omega_p)$,
  we can define Lelong number as follows,
  \begin{defi}\label{Lelong 2}
 If $p\in SuppT$, $T$ is a closed positive $(1,1)$-current on a closed almost complex $4$-manifold,
 we define
   $$
   \nu_2(q,\omega_p,r,T)=\frac{2}{r^2}\int_{B(q,r)}T\wedge\omega_p,
   $$
  and
   $$
   \nu_2(q,p,T)=\lim_{r\rightarrow 0}\nu_2(q,\omega_p,r,T).
   $$
  \end{defi}

   Note that $$\nu_1(q,\omega_1,r,T)=\frac{2}{r^2}\int_{B(q,r)}T\wedge\omega_1=\frac{2}{ r^2}\int_{B(q,r)}T\wedge F=\frac{2}{ r^2}\int_{B(q,r)}f_pT\wedge\omega_p,$$
    we will get the following comparison theorem:

   \begin{theo}\label{comparison theo}
    Let $T$ be a closed positive $(1,1)$-current on a closed almost complex $4$-manifold tamed by symplectic form $\omega_1$.
    If $p\in SuppT$,
   then $\nu_1(q,T)=f_p(q)\nu_2(q,p,T)$ for any $q$ which is very close to $p$. Moreover, there exists a constant $c>1$ depending on $\omega_1$ such that $c^{-1}\nu_2(q,p,T)\leq\nu_1(q,T)\leq c\nu_2(q,p,T)$, $\forall q\in SuppT\cap U_p\subseteq M$.
   \end{theo}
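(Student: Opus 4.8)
The plan is to reduce the comparison between $\nu_1$ and $\nu_2$ to the continuity of the conformal factor $f_p$, exploiting the identity recorded just before the statement, namely
$$\nu_1(q,\omega_1,r,T)=\frac{2}{r^2}\int_{B(q,r)}T\wedge\omega_1=\frac{2}{r^2}\int_{B(q,r)}T\wedge F=\frac{2}{r^2}\int_{B(q,r)}f_p\,T\wedge\omega_p,$$
which holds because $T$ is of type $(1,1)$ while $d^-_J(v+\bar v)$ is of type $(2,0)+(0,2)$, so that $T\wedge d^-_J(v+\bar v)=0$ pointwise on a $4$-manifold and $F=f_p\omega_p$. First I would observe that, since $T$ is a closed positive $(1,1)$-current and $\omega_p$ is a $J$-compatible symplectic form, the wedge $T\wedge\omega_p$ defines a nonnegative Radon measure on $U_p$; consequently $r\mapsto\nu_2(q,\omega_p,r,T)$ is a nonnegative increasing function of $r$ (as already noted after Definition~\ref{Lelong 2}), so its limit $\nu_2(q,p,T)$ exists and is finite, and likewise $\nu_1(q,T)$ exists.

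Next I would split the integrand by inserting the constant value $f_p(q)$:
$$\nu_1(q,\omega_1,r,T)=f_p(q)\,\nu_2(q,\omega_p,r,T)+\frac{2}{r^2}\int_{B(q,r)}\bigl(f_p-f_p(q)\bigr)\,T\wedge\omega_p.$$
Using the positivity of the measure $T\wedge\omega_p$, the error term is controlled by
$$\left|\frac{2}{r^2}\int_{B(q,r)}\bigl(f_p-f_p(q)\bigr)\,T\wedge\omega_p\right|\le\Bigl(\sup_{B(q,r)}|f_p-f_p(q)|\Bigr)\,\nu_2(q,\omega_p,r,T).$$
Since $f_p\in C^\infty(U_p)$ is continuous, $\sup_{B(q,r)}|f_p-f_p(q)|\to0$ as $r\to0$, whereas $\nu_2(q,\omega_p,r,T)\to\nu_2(q,p,T)<\infty$; hence the error term tends to $0$. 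Letting $r\to0$ and invoking Definition~\ref{Lelong 1} then gives the first assertion $\nu_1(q,T)=f_p(q)\,\nu_2(q,p,T)$, valid for every $q\in\mathrm{Supp}\,T\cap U_p$ near $p$, so that $B(q,r)\subset U_p$ for all small $r$.

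For the two-sided estimate I would use that $f_p$ is a strictly positive continuous function on $U_p$ (positivity follows from $F=f_p\omega_p$ with $F,\omega_p$ both positive $(1,1)$-forms) satisfying $f_p(p)=1$. Assuming $U_p$ relatively compact (shrinking it if necessary), $f_p$ attains a positive minimum and a finite maximum on $\overline{U_p}$, so there is a constant $c>1$, depending on $\omega_1$ through $F$, $\omega_p$ and hence $f_p$, with $c^{-1}\le f_p\le c$ on $U_p$. Combining this with the identity $\nu_1(q,T)=f_p(q)\,\nu_2(q,p,T)$ immediately yields $c^{-1}\nu_2(q,p,T)\le\nu_1(q,T)\le c\,\nu_2(q,p,T)$ for all $q\in\mathrm{Supp}\,T\cap U_p$.

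The splitting and the final sandwich estimate are routine; the only point requiring care is the justification that $T\wedge\omega_p$ is a genuine positive measure with finite, monotone ball-mass, so that both the dominated-error estimate and the existence of $\nu_2(q,p,T)$ are legitimate. This is precisely the local Lelong-number machinery on almost K\"ahler $4$-manifolds established earlier in this subsection (Definitions~\ref{Lelong 1} and~\ref{Lelong 2} and the surrounding monotonicity remarks), and I expect that verification, rather than the elementary limit computation, to be the main — though mild — obstacle.
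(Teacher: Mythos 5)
Your proof is correct and takes essentially the same route as the paper's: both rest on the identity $\nu_1(q,\omega_1,r,T)=\frac{2}{r^2}\int_{B(q,r)}f_p\,T\wedge\omega_p$ together with continuity of $f_p$ at $q$, the paper squeezing between the minimum $m_r$ and maximum $M_r$ of $f_p$ on $\overline{B(q,r)}$ where you subtract $f_p(q)$ and bound the error term by $\sup_{B(q,r)}|f_p-f_p(q)|\cdot\nu_2(q,\omega_p,r,T)$ --- the same elementary argument in different clothing. The only other (cosmetic) difference is in the two-sided bound: you obtain $c^{-1}\le f_p\le c$ by shrinking $U_p$ to be relatively compact, while the paper appeals to a finite symplectic covering of the closed manifold $M$; both hinge on positivity and boundedness of the conformal factor $f_p$.
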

  \begin{proof}
    Since $f_p$ is smooth on $U_p$,
   $f_p$ can achieve the maximum and minimum values on $\overline{B(q,r)}$.
   Assume that $M_r$ and $m_r$ are the maximum and minimum values of $f_p$ on $\overline{B(q,r)}$, respectively.
    Thus,
  $$
          m_r\frac{2}{ r^2}\int_{B(q,r)}T\wedge\omega_p\leq\nu_1(q,\omega_1,r,T)=\frac{2}{ r^2}\int_{B(q,r)}f_pT\wedge\omega_p\leq M_r\frac{2}{ r^2}\int_{B(q,r)}T\wedge\omega_p.
  $$
 It is easy to see that $\lim_{r\rightarrow 0}M_r=\lim_{r\rightarrow 0}m_r=f_p(q)$.
 Taking the limit of both sides of the above inequality, for $q\in SuppT\cap U_p$, we can get
  $$
  f_p(q)\nu_2(q,p,T)\leq \nu_1(q,T)\leq f_p(q)\nu_2(q,p,T).
   $$
 Hence, we obtain $\nu_1(q,T)=f_p(q)\nu_2(q,p,T)$,
  in particular $\nu_1(p,T)=\nu_2(p,p,T)$,
 since $f_p(p)=1$. Note that $M$ is a closed almost complex 4-manifold which has local symplectic property, so we can find a finite open symplectic covering $\{ (U_{p_1},\omega_{p_1}),\cdots,(U_{p_k},\omega_{p_k}) \}$ of $M$.
   \end{proof}

 \begin{rem}
 (1) Let $T$ be a closed positive $(n-1,n-1)$-current on a closed almost complex $2n$-manifold tamed by a symplectic form $\omega$.
  If $ p\in Supp T$, we define
   $$
   \nu_1(p,\omega,r,T)=\frac{2}{r^2}\int_{B(p,r)}T\wedge\omega,
   $$
   and $\nu_1(p,T)=\lim_{r\rightarrow 0}\nu_1(p,\omega,r,T)$.

  (2) Let $T$ be a closed positive $(p,p)$-current on a closed almost K\"{a}hler $2n$-manifold
   $(M,g,J,\omega)$. If $q\in Supp T$, we define
    $$
   \nu(q,\omega,r,T)=\frac{2}{ r^{2n-2p}}\int_{B(q,r)}T\wedge\omega^{n-p}
   $$
  and
   $\displaystyle{
   \nu(q,T)=\lim_{r\rightarrow 0}\nu(q,\omega,r,T)}.
   $
  \end{rem}

   \subsection{Siu's decomposition formula of closed positive $(1,1)$-currents on tamed almost complex $4$-manifolds}\label{Siu}

   T. Rivi\`{e}re and G. Tian \cite{RT2} have obtained a very important result on the singular set
    of $(1,1)$ integral currents on almost complex manifolds with the local symplectic property. The regularity question for almost complex cycles is embedded into the problem of calibrated current and hence the theory of area-minimizing rectifiable 2-cycles. Their result appears to be a consequence of the ``Big Regularity Paper" of F. Almgren \cite{Alm} combined with the Ph.D thesis of his student S. Chang \cite{Ch}. This subsection is devoted to considering regularity of closed $(1,1)$-currents on tamed closed almost complex 4-manifolds.
   It is natural to generalize Siu's semicontinuity theorem \cite{S2} of closed positive $(1,1)$-currents on almost complex manifolds with local symplectic property.
  Note that any almost complex $4$-manifold $(M,J)$ has the local symplectic property \cite{L2} and
  the concepts we are gonging to study mostly concern the behaviour of currents or $J$-plurisubharmonic
     function in a neighbordhood of a point on an almost complex $4$-manifold $(M,J)$,
   we may assume that $(M,g,J,\omega)$ is an almost K\"{a}hler $4$-manifold throughout this section.
   Moreover, without loss of generality, we may assume that $M$ is an open subset of $\mathbb{C}^2$.
Suppose that $\nu_1(p,T)$ is the Lelong number defined on the closed almost Hermitian 4-manifold $(M,g_J,J,F)$ tamed by a symplectic form $\omega_1=F+d^{-}_J(v+\bar{v})$, where $v\in \Omega^{1,0}_J$. Since Lelong number is locally defined,
we first consider properties of Lelong number on an open almost K\"ahler 4-manifold.

  \begin{lem}\label{lemma}
  (cf. {\rm  Demailly \cite[The first and second steps of the proof of Theorem 8.4 in Chapter 3]{D3}})
  If $T$ is a closed positive current of bidimension $(1,1)$ on an open almost K\"{a}hler $4$-manifold $(M,g,J,\omega)$,
  the upperlevel sets
  $$
  E_c(T)=\{p\in M\mid  \nu(p,T)\geq c \}
  $$
   of the usual Lelong number are complete $J$-pluripolar subsets of $M$.
 \end{lem}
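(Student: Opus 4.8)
The plan is to localize the statement and to reproduce, in the $J$-setting, the first two steps of Demailly's proof of Siu's semicontinuity theorem, using the almost-complex pluripotential tools of Appendix A in place of their integrable counterparts. Since $J$-pluripolarity (Definition \ref{pluripolar}) and the Lelong number are local notions, I would fix $p_0\in M$ and work on a small star-shaped strictly $J$-pseudoconvex Darboux neighbourhood $U$ of $p_0$, identified with an open subset of $\mathbb{C}^2$ on which $J$ agrees with $J_{st}$ at $p_0$ and, after shrinking, is $C^0$-close to $J_{st}$. On such a $U$ I first produce a local $J$-plurisubharmonic potential $u$ for $T$: since $T$ is a closed positive $(1,1)$-current on the $J$-pseudoconvex $U$, the Poincar\'{e} lemma gives $T=dA$ with $d^-_J A=0$, and solving the $\widetilde{\mathcal{W}},d^-_J$-problem (Theorem \ref{app 1}, as in Lemma \ref{current app}, after a smoothing reduction if $T$ does not have $L^2$ coefficients) yields $u\in L^1_{loc}(U)$, $J$-plurisubharmonic and not identically $-\infty$, with $T=\widetilde{\mathcal{D}}^+_J(u)=\mathcal{D}^+_J(u)$ since $(U,g,J,\omega)$ is almost K\"{a}hler.

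Next I would identify the Lelong number of the current with that of its potential. Using Definition \ref{Leong number defi}, Proposition \ref{formula} and formulas (\ref{formula 2})--(\ref{formula 3}), together with the fact that $\log\rho_g(p,\cdot)$ is $J$-plurisubharmonic (Claim \ref{claim 1}) and the coordinate-independence of the Lelong number (Proposition \ref{5A6}), one shows $\nu(p,T)=\nu(u,p)$, the Lelong number of $u$ at $p$. Because $r\mapsto\sigma_T(B(p,r))/r^2$ is monotone increasing (formula (\ref{formula 3})), the function $p\mapsto\nu(p,T)$ is upper semicontinuous; hence $E_c(T)$ is closed. This is the analogue of the ``first step.''

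The ``second step'' is the heart of the argument: to exhibit on $U$ a single $J$-plurisubharmonic function $V_c\not\equiv-\infty$ with $E_c(T)\cap U\subset\{V_c=-\infty\}$. Following Kiselman, I would introduce an auxiliary variable $w$ in a one-dimensional tube and form the extremal function $\gamma(x,t)=\sup\{u(\zeta):\rho_g(\zeta,x)\le e^{t}\}$, the supremum of the potential over geodesic balls; it is convex increasing in $t=\mathrm{Re}\,w$, independent of $\mathrm{Im}\,w$, and its slope as $t\to-\infty$ recovers $\nu(u,x)$, so that $\gamma(x,t)-ct\to-\infty$ precisely when $\nu(u,x)>c$. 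Transplanting this to the product $\Omega\subset U\times\mathbb{C}$ equipped with $J_1=J\oplus J_0$ and tubular fibres, I would verify that $(x,w)\mapsto\gamma(x,\mathrm{Re}\,w)-c\,\mathrm{Re}\,w$ is $J_1$-plurisubharmonic, apply Kiselman's minimal principle (Theorem \ref{sym the}) to conclude that the fibre infimum
$$V_c(x)=\inf\{\,\gamma(x,\mathrm{Re}\,w)-c\,\mathrm{Re}\,w\ :\ w\in\Omega_x\,\}$$
is $J$-plurisubharmonic on $\pi(\Omega)$, and check that $V_c(x)=-\infty$ exactly on $\{\nu(u,x)>c\}$. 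Writing $E_c(T)=\bigcap_{c'<c}\{\nu>c'\}$ and summing the corresponding functions (via the decreasing-sequence and $L^1_{loc}$ properties of Proposition \ref{plurisubharmonic prop}) gives the required $J$-plurisubharmonic defining function; since $E_c(T)$ has locally finite trace mass and hence Lebesgue measure zero, $V_c\in L^1_{loc}$ and is not identically $-\infty$, so $E_c(T)$ is complete $J$-pluripolar by Definition \ref{pluripolar}.

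The main obstacle is the $J_1$-plurisubharmonicity of the extremal function $\gamma$. In the integrable case this rests on the holomorphicity of the affine family $(x,w)\mapsto x+e^{w}\zeta$, which fails for a non-integrable $J$. I expect to resolve this by replacing the affine translates with genuine $J$-holomorphic discs through $x$ of radius $e^{\mathrm{Re}\,w}$ (whose existence is guaranteed by the Nijenhuis-Woolf theorem already invoked in Theorem \ref{HBS2}), so that the restriction of the $J$-plurisubharmonic $u$ to each disc is subharmonic and the resulting supremum is $J_1$-plurisubharmonic by construction; the $C^0$-closeness of $J$ to $J_{st}$ in the Darboux chart and the comparison of Lelong numbers (Theorem \ref{comparison theo}, Proposition \ref{5A6}), possibly with a correction term as in Claim \ref{claim 1} and Theorem \ref{HBS2}, then guarantee that the threshold $\nu(u,x)>c$ is detected correctly and the minimal principle applies.
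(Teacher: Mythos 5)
There are two genuine gaps in your plan, and the second one is fatal to the route you chose. First, your opening move --- producing a local $J$-plurisubharmonic potential $u$ with $T=\mathcal{D}^+_J(u)$ --- is not supported by the tools you cite. Lemma \ref{current app} and Theorem \ref{app 1} are an $L^2$-theory: they require $T$ to have $L^q$ coefficients (and, in Lemma \ref{current app}, to be strictly positive), whereas the lemma you are proving is stated for an arbitrary closed positive $(1,1)$-current, whose coefficients are in general only measures (e.g.\ a current of integration over a $J$-holomorphic curve). The ``smoothing reduction'' you invoke is circular in this context: approximating a closed positive current by smooth closed positive forms is essentially the content of the regularization theorem of Appendix C, which in turn rests on the structure theory you are trying to establish. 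The paper's proof sidesteps potentials entirely: it associates to $T$ the function $V(y,z)=-\int^0_{\mathrm{Re}\,z}\nu(\varphi_y,t,T)\chi'(t)\,dt$ built from the weighted Lelong numbers with weight $\varphi_y=\log\rho_g(\cdot,y)$, rewrites it by Stokes and Fubini as $\int_M T\wedge\chi(\tilde\varphi)\,dd^c_{J,x}\tilde\varphi$, and shows by a direct computation of $dd^c_JV$ that $\rho(y)+V(y,z)$ is $J$-plurisubharmonic on $M\times H$ for a suitable smooth $\rho$; this works for any closed positive $T$.

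Second, and more seriously, the $J_1$-plurisubharmonicity of your extremal function $\gamma(x,t)=\sup\{u(\zeta):\rho_g(\zeta,x)\le e^{t}\}$ is exactly the point where the integrable argument breaks, and your proposed repair does not close it. In the integrable case one writes the sup over the ball as a sup of the functions $(x,w)\mapsto u(x+e^{w}\zeta)$, each of which is plurisubharmonic because the family of discs varies \emph{holomorphically} in $(x,w)$; a supremum of plurisubharmonic functions is then (after upper regularization) plurisubharmonic. The Nijenhuis--Woolf theorem gives you individual $J$-holomorphic discs through each point with smooth --- not $J_1$-holomorphic --- dependence on the center $x$ and the scale parameter $w$, and a supremum of functions that are merely subharmonic along each slice of a smoothly varying family has no reason to be $J_1$-plurisubharmonic; so Kiselman's minimal principle (Theorem \ref{sym the}) cannot be applied to $\gamma(x,\mathrm{Re}\,w)-c\,\mathrm{Re}\,w$. (You would additionally need to prove that the slope of $\gamma$ at $t=-\infty$ equals the Lelong number $\nu(x,T)$ as defined in the paper via trace measures, which is nowhere established.) The paper's construction avoids both problems: the function $\rho(y)+V(y,z)-a\,\mathrm{Re}\,z$ is shown to be $J_1$-plurisubharmonic by estimating $dd^c_JV$ directly from the integral formula, Kiselman's principle is applied to its partial Legendre transform $U_a(y)=\inf_{r<-1}\{\rho(y)+V(y,r)-ar\}$, and the two claims ($U_a$ bounded below near $y_0$ when $a>\nu(\varphi_{y_0},T)$, and $U_a(y_0)=-\infty$ when $a<\nu(\varphi_{y_0},T)$) together with the series $f=\sum_k 2^{-k}f_k$ of regularizations yield a single continuous $J$-plurisubharmonic $f$ with $E_c=f^{-1}(-\infty)$. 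If you want to salvage your approach, you would have to either construct a genuinely $J_1$-holomorphic family of discs (which is obstructed) or replace the sup-over-balls by an integral-type mean against $T$ --- at which point you have reproduced the paper's proof.
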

\begin{proof}
 Suppose $(M,g,J,\omega)$ is an open almost K\"ahler 4-manifold, where $M\subset\subset \mathbb{C}^2$.
 Let $\varphi(x,y)=\log \rho_g(x,y):M\times M\rightarrow[-\infty,+\infty)$ be a continuous $J$-plurisubharmonic function (see Claim \ref{claim 1}),
 where $\rho_g(x,y)$ is the geodesic distance of points $x$, $y$ with respect to $g$.
 Let $\chi\in C^\infty(\mathbb{R},\mathbb{R})$ be an increasing function such that $\chi(t)=t$ for $t\leq -1$
 and $\chi(t)=0$ for $t\geq0$.
 We consider the half-plane $H=\{z\in\mathbb{C}\mid  {\rm Re} \,z<-1\}$ and associate with $T$ the potential function $V$
 on $M\times H$ defined by
 $$
 V(y,z)=-\int^0_{{\rm Re}\,z}\nu(\varphi_y,t,T)\chi'(t)dt.
 $$
 For every $t>{\rm Re}\,z$, Stokes' formula gives
\begin{eqnarray*}
   \nu(\varphi_y,t,T)&=& \int_{\varphi(x,y)<t}T(x)\wedge dd^c_{J,x}\tilde{\varphi}(x,y,z)
\end{eqnarray*}
 with $$\tilde{\varphi}(x,y,z):= \max\{\varphi(x,y)\mid {\rm Re}\,z\}.$$
 By Fubini theorem, we obtain
 \begin{eqnarray*}
   V(y,z) &=& -\int_{x\in M,\varphi(x,y)<t,{\rm Re}\,z<t<0}T(x)\wedge(dd^c_{J,x}\tilde{\varphi}(x,y,z))\chi'(t)dt \\
    &=&  \int_{x\in M}T(x)\wedge\chi(\tilde{\varphi}(x,y,z))dd^c_{J,x}\tilde{\varphi}(x,y,z),
 \end{eqnarray*}
 where $dd^c_{J,x}\tilde{\varphi}(x,y,z)=dJ(x)d\tilde{\varphi}(x,y,z)$.
 For any smooth $(2,2)$-form $\alpha$ with compact support in $M\times H$, by Proposition \ref{integration by part},
 we get
 \begin{eqnarray*}
   <dd^c_JV,\alpha> &=&  <V,d^c_Jd\alpha> \\
   &=&  \int_{M\times M\times H}T(x)\wedge\chi(\tilde{\varphi}(x,y,z))dd^c_J\tilde{\varphi}(x,y,z)\wedge d^c_Jd\alpha(y,z)\\
    &=&  -\int_{M\times M\times H}T(x)\wedge\chi(\tilde{\varphi}(x,y,z))dd^c_J\tilde{\varphi}(x,y,z)\wedge dd^c_J\alpha(y,z)\\
    &=&  -\int_{M\times M\times H} dd^c_J[T(x)\wedge\chi(\tilde{\varphi}(x,y,z))\wedge dd^c_J\tilde{\varphi}(x,y,z)]\wedge\alpha(y,z)\\
     &=&  \int_{M\times M\times H} T(x)\wedge dd^c_J\chi(\tilde{\varphi}(x,y,z))\wedge dd^c_J\tilde{\varphi}(x,y,z)\wedge\alpha(y,z).
 \end{eqnarray*}
 Observe that the replacement of $dd^c_{J,x}$ by the total differentiation $dd^c_J$ does not modify the integrand,
 because the terms in $dx$, $d\bar{x}$ must have total bidegree.
 On $\{-1\leq\varphi(x,y)\leq 0\}$ we have $\tilde{\varphi}(x,y,z)=\varphi(x,y)$, whereas for $\varphi(x,y)<-1$ we get $\tilde{\varphi}<-1$
 and $\chi(\tilde{\varphi})=\tilde{\varphi}$.
 We see that $dd^c_JV(y,z)$ is the sum of $(1,1)$-form
 \begin{equation}\label{form 1}
   \int_{\{x\in M\,|\,-1\leq\varphi(x,y)\leq 0\}}T\wedge dd^c_J(\chi\circ\varphi)\wedge (dd^c_J\varphi),
 \end{equation}
 and
 \begin{equation}\label{form 2}
   \int_{\{x\in M\,|\,\varphi(x,y)<-1\}}T\wedge (dd^c_J\tilde{\varphi})^2.
 \end{equation}
 As $\varphi$ is smooth outside $\varphi^{-1}(-\infty)$, this form (\ref{form 1}) has locally bounded coefficients.
 Hence $dd^c_JV(y,z)\geq 0$ except perhaps for locally bounded  terms.
 In addition, $V$ is continuous on $M\times H$ because $T\wedge(dd^c_J\tilde{\varphi})^2$ is weakly continuous in the
 variables $(y,z)$ by Corollary 3.6 in \cite{D3}.
  Therefore, there exists a positive $J$-plurisubharmonic function $\rho\in C^{\infty}(M)$ such that
 $\rho(y)+V(y,z)$ is $J$-plurisubharmonic on $M\times H$.
 If we let ${\rm Re}z$ tend to $-\infty$, we see that the function
 $$
 U_0(y)=\rho(y)+V(y,-\infty)=\rho(y)-\int^0_{-\infty}\nu(\varphi_y,t,T)\chi'(t)dt
 $$
 is locally $J$-plurisubharmonic or identically $-\infty$ on $M$.
 Moreover, it is clear that $ U_0(y)=-\infty$ at every point $y$ such that $\nu(\varphi_y,T)>0$.
 If $M$ is connected and $U_0\not\equiv-\infty$, we already conclude that the density set $\cup_{c>0}E_c$ is pluripolar in $M$.

   \vskip 6pt

 Let $a\geq 0$ be arbitrary.
 The function
 $\rho(y)+V(y,z)-a{\rm Re}z$ is $J$-plurisubharmonic and independent of ${\rm Im}z$.
 By Kiselman's minimal principle \cite{Kise} which also holds on almost K\"{a}hler manifolds (see Theorem \ref{sym the} in Appendix \ref{Kiselman}),
 the partial Legendre transform $$U_a(y):=\inf_{r<-1}\{\rho(y)+V(y,r)-ar\}$$ is locally $J$-plurisubharmonic or $\equiv-\infty$ on $M$.
 Let $y_0\in M$ be a given point.
 We claim that:

  ${\bf(a)}$ If $a>\nu(\varphi_{y_0},T)$, then $U_a$ is bounded below on a neighborhood of $y_0$.

  ${\bf(b)}$ If $a<\nu(\varphi_{y_0},T)$, then $U_a(y_0)=-\infty$.

  By the definition of $V$ we have
  $$
  V(y,r)\leq-\nu(\varphi_y,r,T)\int^0_r\chi'(t)dt=r\nu(\varphi_y,r,T)\leq r\nu(\varphi_y,T).
  $$
 Then clearly $U_a(y_0)=-\infty$ if $a<\nu(\varphi_{y_0},T)$.
 On the other hand, if $a>\nu(\varphi_{y_0},T)$, there exists $t_0<0$ such that $\nu(\varphi_{y_0},t_0,T)<a$.
 Fix $r_0<t_0$.
 The semi-continuity property (Demailly \cite[Proposition 5.13]{D3}) shows that there exists a neighborhood $\varpi$ of $y_0$
 such that $\sup_{y\in\varpi}\nu(\varphi_y,r_0,T)<a$.
 For all $y\in\varpi$, we get
 $$
 V(y,r)\geq-C-a\int^0_r\chi'(t)dt=-C+a(r-r_0),
 $$
  and this implies $U_a(y)\geq-C-ar_0$.
  We complete the proof of the claim above.

   \vskip 6pt
   Now return to the proof of Lemma \ref{lemma}.
  Note that the family $\{U_a\}$ is increasing in $a$, that $U_a=-\infty$ on $E_c$ for all $a<c$
  and that $\sup_{a<c}U_a(y)>-\infty$ if $y\in M\setminus E_c$ (apply the above claim).
  For any integer $k\geq 1$, let $f_k\in C^\infty(M)$ be a $J$-plurisubharmonic regularization of $U_{c-\frac{1}{k}}$
  such that $f_k\geq U_{c-\frac{1}{k}}$ on $M$ and $f_k\leq-2^k$ on $E_c\cap M_k$ where
  $$M_k=\{y\in M\mid  d_{g_J}(y,\partial M)\geq\frac{1}{k}\}.$$
  Then the above claim shows that the family $(f_k)$ is uniformly bounded below on every compact subset of $M\setminus E_c$.
  We can also choose $(f_k)$ uniformly bounded above on every compact
 subset of $M$ because $U_{c-\frac{1}{k}}\leq U_c$.
 The function
 $$
 f=\sum^{+\infty}_{k=1}2^{-k}f_k
 $$
 is a continuous $J$-plurisubharmonic function $f:M\rightarrow [-\infty,+\infty)$ such that $$E_c=f^{-1}(-\infty).$$
 Hence $E_c$ is a complete $J$-pluripolar subset of $M$ and has zero Lebesgue measure.
 \end{proof}

   To prove the $J$-analyticity of $E_c$, we need the following estimation
 \begin{lem}\label{estimation lemma}
 (cf. {\rm Demailly \cite[The third step of the proof of Theorem 8.4 in Chapter 3]{D3}})
 Let $y_0\in M$ be a given point, $L$ a compact neighborhood of $y_0$, $K\subset M$ a compact subset and $r_0$ a real number$<-1$ such that
 $$
 \{(x,y)\in M\times L\mid  \varphi(x,y)\leq r_0\}\subset K\times L,
 $$
 where $$\varphi(x,y)=\log \rho_g(x,y):M\times M\rightarrow[-\infty,+\infty)$$ is a continuous $J$-plurisubharmonic function.
 Assume that $e^{\varphi(x,y)}$ is locally H\"{o}lder continuous in $y$ and that
 $$
 |e^{\varphi(x,y_1)}-e^{\varphi(x,y_2)}|\leq C\rho_g(y_1,y_2)^\gamma
 $$
 for all $(x,y_1,y_2)\in K\times L\times L$.
 Then for all $\varepsilon\in(0,1)$, there exists a real number $\eta(\varepsilon)>0$ such that all $y\in M$ with $\rho_g(y,y_0)<\eta(\varepsilon)$
 satisfy
 $$
 U_a(y)\leq\rho(y)+((1-\varepsilon)\nu(\varphi_{y_0},T)-a)(\gamma\log \rho_g(y,y_0)+\log\frac{2eC}{\varepsilon}).
 $$
 \end{lem}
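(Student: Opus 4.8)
The plan is to follow Demailly's argument for the third step of his Theorem 8.4, exploiting that $U_a(y)=\inf_{r<-1}\{\rho(y)+V(y,r)-ar\}$ is an infimum: to bound $U_a(y)$ from above it is enough to exhibit a single admissible $r$ for which $\rho(y)+V(y,r)-ar$ is at most the claimed quantity. Writing $\delta=\rho_g(y,y_0)$ and $\nu=\nu(\varphi_{y_0},T)$, I would take $r_1=\gamma\log\delta+\log\tfrac{2eC}{\varepsilon}$. Since $\log\delta\to-\infty$ as $y\to y_0$, there is $\eta(\varepsilon)>0$ such that $r_1<-1$ whenever $\delta<\eta(\varepsilon)$, so $r_1$ is admissible in the infimum. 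The whole estimate then reduces to a good lower bound for the generalized Lelong numbers $\nu(\varphi_y,t,T)$ of $T$ centred at $y$, valid for $t\ge r_1$.

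First I would compare balls using the H\"older hypothesis. For $x\in K$ one has $|\rho_g(x,y)-\rho_g(x,y_0)|=|e^{\varphi(x,y)}-e^{\varphi(x,y_0)}|\le C\delta^\gamma$, hence $B(y_0,s-C\delta^\gamma)\subseteq B(y,s)$ for every $s>C\delta^\gamma$. Because $T$ is positive and $\omega\ge 0$, the mass $\int_{B(\cdot,s)}T\wedge\omega$ is monotone in the domain, so the ball-average Lelong quantity $\frac{2}{s^2}\int_{B(\cdot,s)}T\wedge\omega$ of (\ref{Lelong number equ}) satisfies $\frac{2}{s^2}\int_{B(y,s)}T\wedge\omega\ge\frac{(s-C\delta^\gamma)^2}{s^2}\cdot\frac{2}{(s-C\delta^\gamma)^2}\int_{B(y_0,s-C\delta^\gamma)}T\wedge\omega$. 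Since this last average is increasing in the radius with limit $\nu(y_0,T)=\nu$, I obtain $\frac{2}{s^2}\int_{B(y,s)}T\wedge\omega\ge\nu\bigl(1-C\delta^\gamma/s\bigr)^2$. Choosing $s\ge\tfrac{2eC}{\varepsilon}\delta^\gamma$ forces $(1-C\delta^\gamma/s)^2\ge 1-\varepsilon$, and via Theorem \ref{comparison theo} together with the bounded-geometry expansion $\rho_g^2=|z|^2+O(r^4)$ used in the proof of (\ref{Lelong number equ}) this transfers to $\nu(\varphi_y,t,T)\ge(1-\varepsilon)\nu$ for all $t\ge r_1$, the extra factor $e$ in the threshold absorbing the discrepancy between the two normalizations for $\delta$ small.

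It then remains to insert this into $V$. Recalling $V(y,r)=-\int_r^0\nu(\varphi_y,t,T)\chi'(t)\,dt$, that $\chi'\equiv 1$ on $(-\infty,-1]$, and that $\chi(0)=0$, $\chi(r_1)=r_1$ since $r_1<-1$, the monotonicity of $t\mapsto\nu(\varphi_y,t,T)$ yields $\nu(\varphi_y,t,T)\ge(1-\varepsilon)\nu$ on all of $[r_1,0]$. Since $\chi'\ge 0$, this gives $V(y,r_1)\le-(1-\varepsilon)\nu\int_{r_1}^0\chi'(t)\,dt=(1-\varepsilon)\nu\,r_1$, valid without any sign hypothesis on $(1-\varepsilon)\nu-a$. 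Consequently $U_a(y)-\rho(y)\le V(y,r_1)-ar_1\le\bigl((1-\varepsilon)\nu-a\bigr)r_1$, which is exactly the asserted inequality once $r_1$ is written out.

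The hard part will be the second step: making precise and uniform (in $y$ near $y_0$) the passage between the two Lelong-type quantities, namely the ball-average $\frac{2}{s^2}\int_{B(y,s)}T\wedge\omega$ built from the fixed reference form $\omega$, for which domain monotonicity is transparent, and the generalized Lelong number $\nu(\varphi_y,t,T)$ with the $y$-dependent weight $\log\rho_g(\cdot,y)$ that actually enters $V$. Controlling this discrepancy requires the curvature expansion of $\rho_g^2$ and the comparison estimates of Theorem \ref{comparison theo}, and it is precisely where the explicit constant $\tfrac{2eC}{\varepsilon}$ (equivalently the summand $\log e=1$) is consumed; one must verify that the induced errors stay within the $(1-\varepsilon)$ slack for all $\delta<\eta(\varepsilon)$, shrinking $\eta(\varepsilon)$ if necessary.
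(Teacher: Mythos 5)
Your argument is correct and is essentially the proof the paper points to: the paper gives no proof beyond citing the third step of Demailly's Theorem 8.4, and you have faithfully reconstructed exactly that argument --- bounding the infimum $U_a(y)=\inf_{r<-1}\{\rho(y)+V(y,r)-ar\}$ by the single admissible value $r_1=\gamma\log\rho_g(y,y_0)+\log\tfrac{2eC}{\varepsilon}$, using the H\"older hypothesis to nest balls centred at $y$ and $y_0$, monotonicity of the Lelong ratios to get $\nu(\varphi_y,t,T)\geq(1-\varepsilon)\nu(\varphi_{y_0},T)$ for $t\geq r_1$, and then integrating against $\chi'$ in $V$. The one loose point --- that the $\omega$-average of (\ref{Lelong number equ}) is only comparable to the generalized Lelong number $\nu(\varphi_y,t,T)$ up to $1+O(e^t)$ factors rather than exactly monotone with limit $\nu$ --- you flag yourself, and it is genuinely absorbed by the slack between $(1-\varepsilon/e)$ and $(1-\varepsilon)$ built into your threshold, using the local comparison $\sqrt{-1}\partial_J\bar{\partial}_J\rho_g^2=2\omega+O(r)$ established in the paper's proof of (\ref{Lelong number equ}) (which, rather than Theorem \ref{comparison theo}, is the precise result needed here).
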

  \begin{proof}
 For a detailed proof of this lemma, we refer to Demailly \cite[The third step of the proof of Theorem 8.4 in Chapter 3]{D3}.
 \end{proof}

By Lemma \ref{estimation lemma}, \ref{lemma}, as in classical complex analysis, we have the following theorem:

  \begin{theo}\label{analytic theo}
  (cf. {\rm Demailly \cite[Theorem 8.4 and Corollary 8.5 in Chapter 3]{D3}})
  If $T$ is a closed positive current of bidimension $(1,1)$ on an almost K\"{a}hler $4$-manifold $(M,g,J,\omega)$,
  the upperlevel sets
  $$
  E_c(T)=\{p\in M\mid  \nu(p,T)\geq c \}
  $$
   of the usual Lelong number are $J$-analytic subsets of dimension$\leq1$.
 \end{theo}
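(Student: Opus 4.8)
The plan is to show that the upperlevel set $E_c(T)$ is a $J$-analytic subset of dimension $\leq 1$ by reducing the problem to the local singularity result already established, namely Theorem \ref{HBS2} in Appendix \ref{singularity}. The strategy follows Demailly's proof in the integrable case (Theorem 8.4 in Chapter 3 of \cite{D3}), the difference being that every analytic ingredient must be replaced by its almost-complex counterpart proved earlier in this paper. The two pillars already in place are Lemma \ref{lemma}, which shows $E_c(T)$ is a complete $J$-pluripolar subset (hence of zero Lebesgue measure), and Lemma \ref{estimation lemma}, which gives the crucial H\"older-type upper bound on the partial Legendre transform $U_a$ near a point $y_0$.

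First I would fix a point $y_0\in E_c(T)$ and work locally, so that by the local symplectic property \cite{L2} we may assume $(M,g,J,\omega)$ is an almost K\"ahler $4$-manifold and that $M$ is an open subset of $\mathbb{C}^2$. The key quantitative input is Lemma \ref{estimation lemma}: choosing $a$ slightly less than $c\leq\nu(\varphi_{y_0},T)$ and $\varepsilon$ small, the estimate
\begin{equation}
U_a(y)\leq\rho(y)+((1-\varepsilon)\nu(\varphi_{y_0},T)-a)(\gamma\log\rho_g(y,y_0)+\log\tfrac{2eC}{\varepsilon})
\end{equation}
shows that the $J$-plurisubharmonic function $U_a$ (which is finite and locally bounded below off $E_c$ by part (a) of the claim in Lemma \ref{lemma}, and $\equiv-\infty$ on $E_c$ by part (b)) has a controlled logarithmic pole along $E_c(T)$. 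In particular $e^{-U_a}$ fails to be locally integrable exactly on a set containing $E_c$, and the weight constant is tuned by the Lelong number threshold $c$.

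Next I would invoke Theorem \ref{HBS2}, which is precisely the almost-complex analogue of H\"ormander's result (Theorems 4.4.2--4.4.5 in \cite{Hormander}): for a strictly $J$-plurisubharmonic function, the set of points near which $e^{-\varphi}$ is not locally integrable is a $J$-analytic subset of dimension (complex) $\leq 1$. Applying this to a suitable strictly $J$-plurisubharmonic modification of $U_a$ (adding a strictly plurisubharmonic correction term such as $A|z|$ or $|z|^2$, as in Chirka's observation and Claim \ref{claim 1}, to upgrade plurisubharmonicity to strict plurisubharmonicity without destroying the pole), one identifies $E_c(T)$ locally with such a non-integrability set. The proof of Theorem \ref{HBS2} itself rests on the $L^2$-extension machinery (Propositions \ref{Hor443} and \ref{Hor444}) together with the Nijenhuis--Woolf existence of $J$-holomorphic curves, so the $J$-analytic structure of dimension $\leq 1$ is delivered directly as an intersection of images of $J$-holomorphic curves.

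\textbf{The main obstacle} I expect is the delicate matching between the Lelong-number threshold $c$ and the integrability exponent in the weight: one must check that the non-integrability locus produced by Theorem \ref{HBS2} coincides with $E_c(T)$ and not some larger or smaller sublevel set. This requires letting $a\uparrow c$ and using the monotonicity and semicontinuity of $r^{-2}\sigma_T(B(p,r))$ (Formula (\ref{formula 3})) to control how the pole strength of $U_a$ varies with $a$, exactly as the two-sided bounds in Lemma \ref{estimation lemma} and part (a)--(b) of the claim are designed to do. A secondary technical point is that $\varphi(x,y)=\log\rho_g(x,y)$ is only H\"older (not smooth) in $y$, so the local integrability and strict plurisubharmonicity of the modified weight must be verified with the regularity stated in Lemma \ref{estimation lemma}; once this is settled, globalizing from the local statement to all of $M$ is routine since $M$ is compact and $E_c(T)$ is closed, giving the desired $J$-analytic subset of dimension $\leq 1$. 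The corollary that $E_c(T)$ has isolated singular points after subtracting the $J$-holomorphic curve components (Remark \ref{7.6}, used in the proof of Theorem \ref{1t1}) then follows by applying the decomposition to the current $T=P-\Sigma\nu_iT_{D_i}$.
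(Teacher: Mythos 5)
Your proposal is correct and follows essentially the same route as the paper: the paper defines $Z_{a,b}$ as the non-integrability locus of $e^{-U_a/b}$, gets its $J$-analyticity from Theorem \ref{HBS2}, uses claim (a) of Lemma \ref{lemma} to show points off $E_c$ avoid $Z_{a,b}$ and Lemma \ref{estimation lemma} to show points of $E_c$ lie in $Z_{a,b}$ whenever $b<(c-a)\gamma/4$, and concludes $E_c=\bigcap_{a<c,\,b<(c-a)\gamma/4}Z_{a,b}$. The "delicate matching" you flag is resolved in the paper exactly as you anticipate, by the two-parameter family $(a,b)$ and this intersection formula.
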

 \begin{proof}
 For $a,b>0$, we let $Z_{a,b}$ be the set of points in a neighborhood of which $e^{-U_a/b}$ is not integrable.
  Then $Z_{a,b}$ is $J$-analytic by Theorem \ref{HBS2} in Appendix \ref{singularity},
   and as the family $\{U_a\}$ is increasing in $a$, we have $Z_{a',b'}\supset Z_{a'',b''}$
  if $a'\leq a'', b'\leq b''$.

  Let $y_0\in M$ be a given point. If $y_0\notin E_c$, then $\nu(\varphi_{y_0},T)<c$ by definition of $E_c$.
  Choose $a$ such that $\nu(\varphi_{y_0},T)<a<c$.
  The claim {\bf(a)} in Lemma \ref{lemma} implies that $U_a$ is bounded below in a neighborhood of $y_0$,
  thus $e^{-U_a/b}$ is integrable and $y_0\notin Z_{a,b}$ for $b>0$.

  On the other hand, if $y_0\in E_c$ and if $a<c$, then Lemma \ref{estimation lemma} implies for all $\varepsilon>0$ that
  $$
  U_a(y)\leq(1-\varepsilon)(c-a)\gamma \log \rho_g(y,y_0)+C(\varepsilon)
  $$
   on a neighborhood of $y_0$.
   Hence $e^{-U_a/b}$ is non integrable at $y_0$ as soon as $b<(c-a)\gamma/4$.
   We obtain therefore
   $$
   E_c=\bigcap_{a<c,b<(c-a)\gamma/4}Z_{a,b}.
   $$
  This proves that $E_c$ is a $J$-analytic subset of $M$.
 \end{proof}

  \begin{rem}
  1) For an almost complex 4-manifold $(M,J)$, it has the local symplectic property {\rm \cite{Lejmi}}.
   For any $p\in M$, there exists a locally symplectic form $\omega_p$ on small neighborhood $U_p$.
    Hence on $U_p$ we can define Lelong number $\nu_2(q,p,T)$, see Definition \ref{Lelong 2} in Appendix \ref{Lelong}.
    Thus, we have Theorem \ref{analytic theo} in \ref{Siu} for $(U_p, g_p, J, \omega_p)$, $g_p(\cdot,\cdot)=\omega_p(\cdot,J\cdot)$.
  By Theorem {\rm\ref{comparison theo}} in Appendix \ref{Lelong},
  it is also true for Lelong number $\nu_1(p,T)$ (see Definition \ref{Lelong 1} in Appendix \ref{Lelong}) defined on tamed almost complex $4$-manifold, that is,
    the upper level sets
  $$
  E_c(T)=\{p\in M\mid  \nu_1(p,T)\geq c \}
  $$
   are $J$-analytic subsets of complex dimension$\leq1$ on a closed almost complex $4$-manifold $(M,J)$ which is tamed by a symplectic form $\omega_1$.

 2) It is natural to ask that for bidegree $(1,1)$ or bidegree $(n-1,n-1)$ closed positive currents on the higher dimensional almost K\"{a}hler manifolds,
          could one extend the above theorem?
  \end{rem}

As in classical complex analysis, we have Siu's decomposition formula of closed positive (1,1) currents on almost K\"ahler 4-manifolds.

 \begin{theo}\label{Theorem A}
 If $T$ is a closed positive almost complex $(1,1)$-current on an almost K\"{a}hler 4-manifold $(M,g,J,\omega)$, there is a unique decomposition of
 $T$ as a (possibly finite) weakly convergent series
 $$T=\Sigma_{j\geq 1}\lambda_jT_{\Sigma_j}+R,\,\,\, \lambda_j>0, $$
 where $T_{\Sigma_j}$ is the current of integration over an irreducible $1$-dimensional $J$-analytic set $\Sigma_j\subset M$ and where $R$ is a closed positive almost complex $(1,1)$-current with the property that $dim_{\mathbb{C}}E_c(R)<1$ for every $c>0$.
 \end{theo}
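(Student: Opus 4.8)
The plan is to follow Siu's classical descent, assembling the three almost-complex analogues already established: the $J$-analyticity of the upperlevel sets (Theorem \ref{analytic theo}), the generic Lelong number along an irreducible $J$-analytic set (Lemma \ref{lemma2}), and Elkhadhra's cut-off formula $id_A T = m_A T_A$ (Proposition \ref{Elk theorem 2}), with the cut-off $id_A T$ itself closed and positive by Proposition \ref{Elk lemma 1}. Since the statement is local in character and $(M,g,J,\omega)$ is almost K\"ahler, all the needed tools apply directly.

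First I would enumerate the curves. For each rational $c>0$, Theorem \ref{analytic theo} gives that $E_c(T)=\{x\mid \nu(x,T)\geq c\}$ is a $J$-analytic subset of complex dimension $\leq 1$; its one-dimensional irreducible components form a locally finite family, so letting $c$ range over positive rationals produces an at most countable list $\Sigma_1,\Sigma_2,\dots$ of irreducible $1$-dimensional $J$-analytic sets, namely exactly the $1$-dimensional components of $\bigcup_{c>0}E_c(T)$. To each I attach its generic Lelong number $\lambda_j:=m_{\Sigma_j}=\inf\{\nu(x,T)\mid x\in\Sigma_j\}>0$, well defined by Lemma \ref{lemma2}.

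Next I would peel the curves off one at a time. Set $R_0=T$ and $R_j:=R_{j-1}-\lambda_j T_{\Sigma_j}$. Applying Proposition \ref{Elk theorem 2} to $R_{j-1}$ along $\Sigma_j$ gives $id_{\Sigma_j}R_{j-1}=m_{\Sigma_j}(R_{j-1})\,T_{\Sigma_j}$, so that $R_{j-1}-id_{\Sigma_j}R_{j-1}=id_{M\setminus\Sigma_j}R_{j-1}\geq 0$ is again closed and positive. The point needing care is the identity $m_{\Sigma_j}(R_{j-1})=\lambda_j$: subtracting $\lambda_i T_{\Sigma_i}$ for $i<j$ does not change the generic Lelong number along $\Sigma_j$, because distinct irreducible $J$-analytic curves in a surface meet in a $0$-dimensional set and $\nu(x,T_{\Sigma_i})=0$ for $x\notin\Sigma_i$ (the computation $\nu(p,T_\Sigma)=0$ for $p\notin\Sigma$, $=1$ for $p\in\Sigma$, recorded in Appendix \ref{Lelong}). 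Hence $R_j\geq 0$ for all $j$, and from $\sum_{j\leq N}\lambda_j T_{\Sigma_j}\leq T$ we obtain, on any compact $K$, the bound $\sum_{j\leq N}\lambda_j\int_K T_{\Sigma_j}\wedge\omega\leq \int_K T\wedge\omega<\infty$, where each term is a constant multiple of the area of $\Sigma_j\cap K$. This controls the partial sums and yields weak convergence of $\sum_j\lambda_j T_{\Sigma_j}$, so that $R:=\lim_N R_N=T-\sum_j\lambda_j T_{\Sigma_j}$ is a well-defined closed positive $(1,1)$-current.

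Finally I would verify the residual condition and uniqueness. Since $R\leq T$, positivity of $T-R$ together with the integral definition of the Lelong number forces $\nu(x,R)\leq\nu(x,T)$, hence $E_c(R)\subset E_c(T)$ for every $c>0$. If $E_c(R)$ contained a $1$-dimensional component $A$, then $A$ would coincide with some $\Sigma_j$; but by construction $id_{\Sigma_j}R=0$, so $m_{\Sigma_j}(R)=0$, while $A\subset E_c(R)$ forces $m_A(R)\geq c>0$, a contradiction. Thus $\dim_{\mathbb{C}}E_c(R)<1$ for all $c>0$. Uniqueness is read off directly from the Lelong numbers: at a point $x$ lying on $\Sigma_j$ but on no other curve and off the singular loci, any admissible decomposition satisfies $\nu(x,T)=\lambda_j\,\nu(x,T_{\Sigma_j})+\nu(x,R)=\lambda_j$, which pins down both the family $\{\Sigma_j\}$ and the weights $\lambda_j$. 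The main obstacle is the bookkeeping behind $m_{\Sigma_j}(R_{j-1})=\lambda_j$ and the area estimate guaranteeing convergence; both rest on the facts that the $J$-analytic curves intersect only in dimension zero and that currents of integration carry unit Lelong number, which are precisely the ingredients furnished by Lemma \ref{lemma2}, Proposition \ref{Elk theorem 2}, and the Lelong-number computation of Appendix \ref{Lelong}.
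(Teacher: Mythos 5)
Your proof is correct and follows essentially the same route as the paper's: both carry out Siu's descent, using Theorem \ref{analytic theo} for the $J$-analyticity of the sets $E_c(T)$, Lemma \ref{lemma2} for the generic Lelong numbers $\lambda_j$, and Proposition \ref{Elk theorem 2} to show by induction that $R_N=T-\sum_{1\leq j\leq N}\lambda_j T_{\Sigma_j}$ stays positive, before passing to the weak limit and reading off uniqueness from the Lelong numbers at generic points of the $\Sigma_j$. Your extra bookkeeping (the invariance of the generic Lelong number along $\Sigma_j$ under subtraction of the earlier $T_{\Sigma_i}$, the compact mass bound giving weak convergence, and the contradiction argument for $\dim_{\mathbb{C}}E_c(R)<1$) simply makes explicit the steps the paper states tersely.
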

 \begin{proof}
 {\bf Uniqueness.} If $T$ has such a decomposition, the $1$-dimensional components of $E_c(T)$ are $(\Sigma_j)_{\lambda_j>c}$,
 for $$\nu(p,T)=\Sigma_{j\geq 1}\lambda_j\nu(p,T_{\Sigma_j})+\nu(p,R)$$ is non zero only on $\bigcup\Sigma_j\cup\bigcup E_c(R)$,
 and is equal to $\lambda_j$ generically on $\Sigma_j$
  (more precisely, $\nu(p,T)=\lambda_j$ at every regular point of $\Sigma_j$ which does not belong to any intersection $\Sigma_j\cap\Sigma_k$,
  $k\neq j$ or to $\bigcup E_c(R)$). In particular $\Sigma_j$ and $\lambda_j$ are unique.

  {\bf Existence.} By Theorem \ref{analytic theo}, $E_c(T)$ is a $J$-analytic subset of dimension$\leq 1$.
  For any $p\in M$, by Theorem \ref{HBS2}, there are $1$-dimensional components $(\Sigma_j)_{\lambda_j>c}$ of $E_c(T)$ passing through $p$.
   Let $(\Sigma_j)_{j\geq 1}$ be the countable collection of $1$-dimensional components occurring in one of the sets
  $E_c(T)$, $c\in\mathbb{Q}^*_+$, and let $\lambda_j>0$ be the generic Lelong number of $T$ along $\Sigma_j$.
  Then Proposition \ref{Elk theorem 2} shows by induction on $N$ that
   $$R_N=T-\sum_{1\leq j\leq N}\lambda_jT_{\Sigma_j}$$
   is positive.
   As $R_N$ is a decreasing sequence, there must be a limit $R=\lim_{N\rightarrow+\infty}R_N$ in the weak topology.
   Thus we have the asserted decomposition. By construction,
   $R$ has zero generic Lelong number along $\Sigma_j$, so $dim_{\mathbb{C}}E_c(R)<1$ for every $c>0$.
  \end{proof}

  \begin{rem}\label{7.6}
  Similarly, by Theorem {\rm\ref{comparison theo}}, it is also true for closed positive almost complex $(1,1)$-current
    $T$ on a closed almost complex $4$-manifold $(M,J)$ which is tamed by a symplectic form $\omega_1$.
  \end{rem}

\section{Demailly's approximation theorem on tamed almost complex $4$-manifolds}\label{Demailly approximation}
\renewcommand{\theequation}{C.\arabic{equation}}
\setcounter{equation}{0}
 Let $(M,J)$ be a closed almost complex $4$-manifold and let $T$ be a closed positive current of bidegree $(1,1)$ on $(M,J)$.
   In general $T$ can not be approximated by smooth closed positive currents.
   However, as done in classical complex analysis, we shall see that it is always possible to approximate a closed positive
   current $T$ of type $(1,1)$ by smooth closed real currents admitting a small negative part and that this negative part can be
   estimated in terms of the Lelong numbers of $T$ and the geometry (for complex analysis, see Demailly \cite{D1,D2}).

   In this appendix, we will give a Demailly's approximation theorem on tamed almost complex $4$-manifolds.
   Our approach is along the lines used by Demailly to give a proof of Theorem $1.1$ in \cite{D2}.

\medskip

\subsection{Exponential map associated to the second canonical connection}\label{Exponential}

  In this subsection, we study exponential map associated to the second canonical connection on almost Hermitian manifolds.
   Suppose $(M,g_J,J,F)$ is an almost Hermitian $2n$-manifold.
   Choose a complex coordinate $\{z_i=x_i+\sqrt{-1}y_i\}^n_{i=1}$ around $p\in M$ such that
   $\{\frac{\partial}{\partial z_i}|_p\}^n_{i=1}\subseteq T^{1,0}_pM$ is orthonormal at $p$ with respect to the almost Hermitian metric $h=g_J-\sqrt{-1}F$.
   Let $\{e_i\}^n_{i=1}$ be a unitary frame around $p$ such that $e_i(p)=\frac{\partial}{\partial z_i}|_p$.
   Let $\nabla^1$ be the second canonical connection satisfying $\nabla^1g_J=0$ and $\nabla^1J=0$, hence $
   \nabla^1 F=0$ and $\nabla^1 h=0$ (P. Gauduchon \cite{G2}). In particular, note that if $J$ is integrable, that is, $(M,J)$ is a complex manifold, $\nabla^1$ is Chern connection; if $(M,g_J,J,F)$ is a K\"ahler manifold, $\nabla^1$ is Levi-Civita connection  (P. Gauduchon \cite{Gauduchon}).
   Then locally there exists a matrix of valued $1$-forms $\{\theta^j_i\}$, called the connection $1$-forms, such that
   \begin{equation}\label{unitary frame}
     \nabla^1e_i=\theta^j_ie_j, \,\,\,\theta^j_i(p)=0.
   \end{equation}
    Let $\{\theta^1,\cdot\cdot\cdot,\theta^n\}$ be the dual coframe of $\{e_1,\cdot\cdot\cdot,e_n\}$.
   Then we have $\theta^i(p)=dz_i(p)$ by the choice of $\{z_i\}^n_{i=1}$.
   There holds the following Maurer-Cartan equations \cite{Cha,Gauduchon}:
    \begin{equation}\label{Maurer-Cartan}
   \left\{
  \begin{array}{ll}
  d\theta^i=-\theta^i_j\wedge\theta^j+\Theta^i,  \\

  d\theta^i_j=-\theta^i_k\wedge\theta^k_j+\Psi^i_j,
  \end{array}
  \right.
  \end{equation}
  where
  \begin{equation}\label{tausion}
    \Theta^i=(\Theta^i )^{(2,0)}+(\Theta^i )^{(0,2)}=T^i_{jk}\theta^j\wedge\theta^k+N^i_{\bar{j}\bar{k}}\bar{\theta}^j\wedge\bar{\theta}^k
  \end{equation}
  is the torsion form with vanishing $(1,1)$ part
  and $\Psi^i_j$ is the curvature form (see Tosatti-Weinkove-Yau \cite{TWY}).
   Take exterior derivative of (\ref{Maurer-Cartan}) to get
  \begin{eqnarray*}
    0 &=& -d\theta^i_j\wedge\theta^j+\theta^i_j\wedge d\theta^j+d\Theta^i\\
     &=& -d\theta^i_j\wedge\theta^j-\theta^i_j\wedge\theta^j_k\wedge\theta^k+d\Theta^i+\theta^i_j\wedge\Theta^j\\
     &=&-(d\theta^i_j+\theta^i_k\wedge\theta^k_j)\wedge\theta^j+d\Theta^i+\theta^i_j\wedge\Theta^j  \\
     &=& -\Psi^i_j\wedge\theta^j+d\Theta^i+\theta^i_j\wedge\Theta^j.
  \end{eqnarray*}
   Hence $d\Theta^i=\Psi^i_j\wedge\theta^j-\theta^i_j\wedge\Theta^j$.
   Define $R^j_{ik\bar{l}},~K^i_{jkl}$ and $K^i_{j\bar{k}\bar{l}}$ (see Tosatti-Weinkove-Yau \cite{TWY}) by
   \begin{eqnarray}\label{curvature equ}
   && (\Psi^j_i)^{(1,1)}=R^j_{ik\bar{l}}\theta^k\wedge\bar{\theta}^l,\nonumber\\
   && (\Psi^j_i)^{(2,0)}=K^i_{jkl}\theta^k\wedge\theta^l,\nonumber\\
   && (\Psi^j_i)^{(0,2)}=K^i_{j\bar{k}\bar{l}}\bar{\theta}^k\wedge\bar{\theta}^l,
   \end{eqnarray}
   with $K^i_{jkl}=-K^i_{jlk},~K^i_{j\bar{k}\bar{l}}=-K^i_{j\bar{l}\bar{k}},~K^i_{jkl}=\overline{K^j_{i\bar{l}\bar{k}}},~\delta^{s\bar{j}}\delta_{t\bar{i}}R^t_{sk\bar{l}}=\overline{R^j_{il\bar{k}}}$, where
   \begin{eqnarray}\label{D3}
   K^i_{j\bar{k}\bar{l}}=2T^i_{pj}N^{p}_{\bar{j}\bar{l}}+N^{i}_{\bar{k}\bar{l},j},~K^i_{jkl}=\overline{K^j_{i\bar{l}\bar{k}}},
   \end{eqnarray}
   and
   $\delta^{s\bar{j}}$ is the Kronecker delta and $\delta_{t\bar{i}}$ is its inverse.

    For a local complex frame $$\{ \frac{\partial }{\partial z_1},\frac{\partial }{\partial z_2},\cdots,\frac{\partial }{\partial z_n} \}\subseteq T^{1,0}M,\,\,\,
   \{ \frac{\partial }{\partial \bar{z}_1},\frac{\partial }{\partial \bar{z}_2},\cdots,\frac{\partial }{\partial \bar{z}_n} \}\subseteq T^{0,1}M.$$
   Denote by $\frac{\partial}{\partial z_{\bar{i}}}=\frac{\partial }{\partial \bar{z}_i}$,
       and define $\Gamma_{AB}^C$ as
   \begin{equation}
   \nabla^1_{\frac{\partial }{\partial z_A}}\frac{\partial }{\partial z_B}
           =\Gamma_{AB}^C\frac{\partial}{\partial z_C}, A,B,C\in\{1,2,\cdots,n,\bar{1},\bar{2},\cdots,\bar{n}\}.
   \end{equation}
   Hence, $\overline{\Gamma_{AB}^C}=\Gamma_{\bar{A}\bar{B}}^{\bar{C}}$, $\Gamma_{AB}^C=\Gamma_{BA}^C$.
   Let $h:= g_J-\sqrt{-1}F=\sum_i\theta^i\otimes\bar{\theta}^i$,
   then $h_{ij}=h(\partial/\partial z_i,\partial/\partial z_j)$.

  \begin{lem}\label{Christoffol symbols}
   The only non-vanishing Christoffel symbols are $\Gamma_{ij}^{k}, \Gamma_{\bar{i}\bar{j}}^{\bar{k}}$,
   where $$\displaystyle{\Gamma_{ij}^{k}=\sum_{l=1}^nh^{k\bar{l}}\frac{\partial h_{j\bar{l}}}{\partial z_i}}.$$
  \end{lem}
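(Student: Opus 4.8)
The plan is to read off the entire statement from the three defining properties of the second canonical connection $\nabla^1$: metric compatibility $\nabla^1 g_J=0$, compatibility with the almost complex structure $\nabla^1 J=0$, and the vanishing of the $(1,1)$-part of its torsion $\Theta$. First I would exploit $\nabla^1 J=0$. Because $\nabla^1$ preserves $J$, it preserves the eigenbundle splitting $TM\otimes\mathbb{C}=T^{1,0}M\oplus T^{0,1}M$, so the covariant derivative (in any direction) of a $(1,0)$-field is again of type $(1,0)$ and that of a $(0,1)$-field is again of type $(0,1)$. Written through $\nabla^1_{\partial/\partial z_A}\partial/\partial z_B=\Gamma^C_{AB}\,\partial/\partial z_C$, this says $\Gamma^C_{AB}=0$ unless the output index $C$ has the same type (barred or unbarred) as the differentiated index $B$. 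Hence the only candidates that can survive are $\Gamma^k_{ij}$, $\Gamma^k_{\bar i j}$, $\Gamma^{\bar k}_{i\bar j}$ and $\Gamma^{\bar k}_{\bar i\bar j}$.

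Next I would eliminate the two mixed-direction candidates using the torsion together with the fact that the complexified coordinate fields commute, $[\partial/\partial z_i,\partial/\partial z_{\bar j}]=0$. Evaluating $\Theta$ on the mixed pair $(\partial/\partial z_i,\partial/\partial z_{\bar j})$ picks out exactly the $(1,1)$-part of the torsion, and the bracket term drops out, so $\Theta(\partial/\partial z_i,\partial/\partial z_{\bar j})=\nabla^1_{\partial/\partial z_i}\partial/\partial z_{\bar j}-\nabla^1_{\partial/\partial z_{\bar j}}\partial/\partial z_i=\Gamma^{\bar k}_{i\bar j}\,\partial/\partial z_{\bar k}-\Gamma^k_{\bar j i}\,\partial/\partial z_k$. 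The defining requirement that the $(1,1)$-part of $\Theta$ vanish forces $\Gamma^{\bar k}_{i\bar j}=0$ and $\Gamma^k_{\bar j i}=0$ for all indices; after relabelling this kills both $\Gamma^k_{\bar i j}$ and $\Gamma^{\bar k}_{i\bar j}$. What remains is $\Gamma^k_{ij}$ and its complex conjugate $\Gamma^{\bar k}_{\bar i\bar j}=\overline{\Gamma^k_{ij}}$, which is precisely the asserted list of nonvanishing symbols.

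Finally, to produce the explicit formula I would invoke $\nabla^1 h=0$ (equivalent to $\nabla^1 g_J=0$ and $\nabla^1 J=0$ for $h=g_J-\sqrt{-1}F$). Differentiating $h_{j\bar l}=h(\partial/\partial z_j,\partial/\partial z_{\bar l})$ along $\partial/\partial z_i$ and using the already established vanishing $\nabla^1_{\partial/\partial z_i}\partial/\partial z_{\bar l}=\Gamma^{\bar m}_{i\bar l}\,\partial/\partial z_{\bar m}=0$, I get $\partial h_{j\bar l}/\partial z_i=h(\nabla^1_{\partial/\partial z_i}\partial/\partial z_j,\partial/\partial z_{\bar l})=\Gamma^m_{ij}\,h_{m\bar l}$, and contracting with the inverse $h^{k\bar l}$ yields $\Gamma^k_{ij}=\sum_l h^{k\bar l}\,\partial h_{j\bar l}/\partial z_i$. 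The hard part is really only the bookkeeping: I would check carefully that the complexified coordinate fields genuinely commute so that no bracket contributes, and that $\Theta(\partial/\partial z_i,\partial/\partial z_{\bar j})$ is exactly the $(1,1)$-component of the torsion so that the defining hypothesis applies to it; the conjugation and contraction steps are then routine. I would point out that this argument uses only the three structural conditions and does \emph{not} require any lower-index symmetry $\Gamma^C_{AB}=\Gamma^C_{BA}$, which in general fails because of the nonzero $(2,0)$-torsion $T^i_{jk}$.
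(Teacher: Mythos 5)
Your proof is correct, and its skeleton is the same as the paper's: $\nabla^1J=0$ forces covariant differentiation to preserve the splitting $T^{1,0}M\oplus T^{0,1}M$, and $\nabla^1h=0$ then gives $\partial h_{j\bar l}/\partial z_i=\sum_m\Gamma^m_{ij}h_{m\bar l}$, hence the stated formula after contracting with $h^{k\bar l}$. Where you genuinely differ is the middle step, and there your version is the more rigorous one. The paper eliminates all mixed symbols by writing ``similarly'' after the $\nabla^1J=0$ computation, implicitly leaning on the blanket symmetry $\Gamma^C_{AB}=\Gamma^C_{BA}$ asserted just before the lemma. But type preservation by itself only shows that $\nabla^1_{\partial/\partial\bar z_i}\partial/\partial z_j$ has no $(0,1)$-component and that $\nabla^1_{\partial/\partial z_i}\partial/\partial\bar z_j$ has no $(1,0)$-component; it cannot touch the components of matching type, i.e. $\Gamma^k_{\bar ij}$ and $\Gamma^{\bar k}_{i\bar j}$. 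Your appeal to the vanishing of the $(1,1)$-part of the torsion, evaluated on the commuting pair $(\partial/\partial z_i,\partial/\partial\bar z_j)$, is exactly the missing ingredient that removes these two families, and your closing observation is also correct: the full lower-index symmetry $\Gamma^C_{AB}=\Gamma^C_{BA}$ fails in general for $\nabla^1$ because its $(2,0)$-torsion $T^i_{jk}$ need not vanish, so only the mixed-index symmetry survives, and that symmetry is nothing but the $(1,1)$-torsion condition you invoke. In short, your argument supplies the justification that the paper's ``similarly'' leaves implicit. One caveat you inherit from the paper rather than introduce: for non-integrable $J$ the fields $\partial/\partial z_A$ cannot be simultaneously commuting coordinate fields and a frame of pure type on a whole neighborhood, so all of these identities should be read at the center of the normal coordinates constructed in Appendix C.1, which is how they are used in the subsequent Taylor expansions of the exponential map.
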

  \begin{proof}
   There hold
   $$\displaystyle{\nabla^1_{\frac{\partial}{\partial z_{i}}} \frac{\partial}{\partial z_{j}}
   =\sum_{k}\Gamma_{ij}^k\frac{\partial}{\partial z_{k}}+\sum_{k}\Gamma_{ij}^{\bar{k}}\frac{\partial}{\partial \bar{z}_{k}}},$$
   and
  $$\displaystyle{\nabla^1_{\frac{\partial}{\partial \bar{z}_{i}}} \frac{\partial}{\partial z_{j}}
    =\sum_{k}\Gamma_{i\bar{j}}^k\frac{\partial}{\partial z_{k}}+\sum_{k}\Gamma_{i\bar{j}}^{\bar{k}}\frac{\partial}{\partial \bar{z}_{k}}}.$$
   Since $\nabla^1 J=0$, and $J$ acts on $T^{1,0} M$ being by multiplying $\sqrt{-1}$ and acts on $T^{0,1} M$ by $-\sqrt{-1}$,
      we have $$\sqrt{-1}\nabla^1_{\frac{\partial}{\partial z_{i}}} \frac{\partial}{\partial z_{j}}=\nabla^1_{\frac{\partial}{\partial z_{i}}}
   (J\frac{\partial}{\partial z_{j}})=J(\nabla^1_{\frac{\partial}{\partial z_{i}}} \frac{\partial}{\partial z_{j}}).$$
    Then $$\sqrt{-1}(\sum_k\Gamma_{ij}^k \frac{\partial}{\partial z_k}+\sqrt{-1}\sum_{k}\Gamma_{ij}^{\bar{k}} \frac{\partial}{\partial
   \bar{z}_k})=\sqrt{-1}(\sum_k\Gamma_{ij}^k \frac{\partial}{\partial z_k}-\sqrt{-1}\sum_{k}\Gamma_{ij}^{\bar{k}} \frac{\partial}{\partial \bar{z}_k}),$$
   which implies that $\Gamma_{ij}^{\bar{k}}=0$. Similarly, $\Gamma_{i\bar{j}}^{\bar{k}}$, $\Gamma_{i\bar{j}}^{k}$ vanish.
    Nonzero ones are only $\Gamma_{ij}^{k}$, $\Gamma_{\bar{i}\bar{j}}^{\bar{k}}$.
  Moreover, $$\displaystyle{\frac{\partial}{\partial z_i}h(\frac{\partial}{\partial z_j},
        \frac{\partial}{\partial \bar{z}_j})=h(\sum_l\Gamma_{ij}^l \frac{\partial}{\partial z_l},
       \frac{\partial}{\partial \bar{z}_k})=\sum_l\Gamma_{ij}^l h_{l\bar{k}}}.$$
     Hence, $\displaystyle{\Gamma_{ij}^{k}=\sum_{l=1}^nh^{k\bar{l}}\frac{\partial h_{j\bar{l}}}{\partial z_i}}$.
   \end{proof}
  By (\ref{unitary frame}) and Lemma \ref{Christoffol symbols},
    we have
    \begin{eqnarray}\label{Taylor expansion1}
      e_i &=& e_i(p)+\frac{1}{2}(\nabla^1)^2e_i+O(|z|^3) \nonumber\\
       &=& \frac{\partial}{\partial z_i}|_p+\sum_{j,l,m}(b''_{jilm}z_lz_m+\bar{b}''_{jilm}\bar{z}_l\bar{z}_m
       +c''_{jilm}z_l\bar{z}_m)\frac{\partial}{\partial z_j}+O(|z|^3) .
    \end{eqnarray}
  Without loss of generality, we may assume that $b''_{jilm}=b''_{jiml}$,
   otherwise, if $b''_{jilm}=-b''_{jiml}$ then $\sum_{l,m}b''_{jilm}z_lz_m=0$.
  Also, by (\ref{curvature equ}), the skew symmetric part of $(\nabla^1)^2e_i$ is $(\Psi^i_j)^{(1,1)}=R^i_{jl\bar{m}}\theta^l\wedge\bar{\theta}^m$.
  Hence
  \begin{equation}
    c''_{jilm}=\frac{1}{2}R^i_{jl\bar{m}}.
  \end{equation}
  By (\ref{tausion}), the skew symmetric part is $\Theta^i=T^i_{jk}\theta^j\wedge\theta^k+N^i_{\bar{j}\bar{k}}\bar{\theta}^j\wedge\bar{\theta}^k$.
  Hence,
  \begin{eqnarray}\label{Taylor expansion2}
    \theta^i &=&\theta^i(p)+\nabla^1\theta^i+O(|z|^2)  \nonumber\\
     &=&\sum_j\delta_{ij}dz^j+\sum_{j,l}(a'_{jil}z_ldz_j+\bar{a}''_{jil}\bar{z}_ld\bar{z}_j)+O(|z|^2).
  \end{eqnarray}
  By (\ref{Taylor expansion1}) and (\ref{Taylor expansion2}),
  we can expand $h_{ij}(z)=h(\frac{\partial}{\partial z_i},\frac{\partial}{\partial z_j})$ as follows:
  \begin{eqnarray*}
    h_{ij}(z) &=& \delta_{ij}+\sum_l(a_{jil}z_l+\bar{a}_{jil}\bar{z}_l)+\sum_{l,m}(b'_{jilm}z_lz_m+\bar{b}'_{jilm}\bar{z}_l\bar{z}_m) \\
     &&+  \sum_{l,m}c'_{jilm}z_l\bar{z}_m+O(|z|^3),
  \end{eqnarray*}
   where $a_{jil}=a'_{jil}+a''_{jil}$, $b'_{jilm}=b'_{jiml}$.
   We may always arrange that skew symmetry relation $a_{jil}=-a_{lij}$ holds;
   otherwise the change of variables
   $z_i=z'_i-\frac{1}{4}\sum_{j,l}(a_{jil}+a_{lij})z'_jz'_l$
   yields coordinates $(z'_l)$ with this property.
   By the definition of $a_{jil}$ and
   $$
   \nabla^1\theta^i|_p=d\theta^i|_p=(-\theta^i_j\wedge\theta^j+\Theta^i)|_p
   =T^i_{jl}\theta^j\wedge\theta^l+N^i_{\bar{j}\bar{l}}\bar{\theta}^j\wedge\bar{\theta}^l,
   $$
    it is easy to see that $a'_{jil}=T^i_{jl}$, $\bar{a}''_{jil}=\overline{N^i_{\bar{j}\bar{l}}}$.
  If $h$ is K\"{a}hler, then $a_{jil}=0$; in that case $b'_{jilm}$ is also symmetric in $j,l,m$ and a new change of variables
  $z_i=z'_i-\frac{1}{3}\sum_{j,l,m}b'_{jilm}z'_jz'_lz'_m$ gives $b'_{jilm}=0$ likewise.

  The complex frame of $T^{1,0}_pM$ defined by
  $$\tilde{e}_s=\partial/\partial z_s-\sum_j(a_{jsk}z_j+\sum_m b'_{jskm}z_jz_m)\partial/\partial z_k$$
  satisfies
 \begin{equation}
    <\tilde{e}_s,\tilde{e}_t>_h=\delta_{st}-\sum_{j,k}c_{tsjk}z_j\bar{z}_k+O(|z|^3),
 \end{equation}
 \begin{equation}\label{Taylor expansion4}
   \partial/\partial z_s=\tilde{e}_s+\sum_l(\sum_ja_{jsl}z_j+\sum_{j,k}b_{jslk}z_jz_k+O(z^3))\tilde{e}_l
 \end{equation}
 with $c_{tsjk}=-c'_{tsjk}-\sum a_{jsl}\bar{a}_{ktl}$ and
 $b_{jskl}=b'_{jskl}+\sum a_{lsm}a_{jmk}$.
 Hence, in the K\"{a}hler case, $a_{jsl}=0$ and $b_{jslk}=0$.
 The formula $\partial_{\frac{\partial}{\partial z_j}}<\tilde{e}_s,\tilde{e}_t>_h=<\nabla^1_{\frac{\partial}{\partial z_j}}\tilde{e}_s,\tilde{e}_t>_h$
 with respect to $J(p)$ easily gives the following
 $$
 \nabla^1\tilde{e}_s=-\sum_{t,j,k} c_{tsjk}\bar{z}_kdz_j\otimes\tilde{e}_t+O(|z|^2),
 $$
 \begin{equation}
 (\tilde{\Psi})^{(1,1)}|_p=\sum_{s,t,j,k}c_{tsjk}dz_j\wedge d\bar{z}_k\otimes\tilde{\theta}^s\otimes\tilde{e}_t,
 \end{equation}
 where $\tilde{\theta}^s$ is the dual frame of $\tilde{e}_s$.
 Hence $c_{tsjk}=R^s_{tjk}$.
 \begin{rem}
 If $M$ is a complex manifold, then $N^i_{\bar{j}\bar{k}}=0$.
 By (\ref{D5}), $K^i_{j\bar{k}\bar{l}}=0$, thus $(\tilde{\Psi}^i_j)^{(1,1)}=\tilde{\Psi}^i_j$.
 \end{rem}
 Given a vector field $\zeta=\sum_l\zeta_l\partial/\partial z_l$ in $T^{1,0}M$,
  we denote by $(\xi_m)$ the components of $\zeta$ with respect to the basis $(\tilde{e}_m)$,
  thus $\zeta=\sum_m\xi_m\tilde{e}_m$ in $T^{1,0}M$.
  By (\ref{Taylor expansion4}), we have
  \begin{equation}\label{Taylor expansion6}
    \xi_m=\zeta_m+\sum_{j,l}a_{jml}z_j\zeta_l+\sum_{j,k,l}b_{jmlk}z_jz_k\zeta_l.
  \end{equation}
 By a direct calculation, we have
 \begin{eqnarray*}
    \nabla^1(\partial/\partial z_l)&=& -\sum_{j,k,m} c_{mljk}\bar{z}_kdz_j\otimes \tilde{e}_m+\sum_{j,m}a_{mlj}dz_j\otimes\tilde{e}_m\\
    &&+2\sum_{j,k,m}b_{mljk}z_kdz_j\otimes\tilde{e}_m+O(|z|^2)dz\\
    &=&-\sum_{j,k,m}(c_{mljk}\bar{z}_k-2b_{mljk}z_k)dz_j\otimes\frac{\partial}{\partial z_m}      \\
    &&+\sum_{j,m}(a_{mlj}-\sum_{k,i}a_{ilj}a_{imk}z_k)dz_j\otimes\frac{\partial}{\partial z_m}+O(|z|^2)dz.
 \end{eqnarray*}
       Hence, as in classical complex analysis (cf. (2.5) in Demailly \cite{D2}), we have
       \begin{eqnarray}\label{Taylor expansion7}
          \nabla^1\zeta&=&\sum_md\zeta_m\otimes \frac{\partial}{\partial z_m}
          -\sum_{j,k,l,m}(c_{lmjk}\bar{z}_k-2b_{lmjk}z_k)\zeta_mdz_j\otimes\frac{\partial}{\partial z_l} \nonumber\\
          &&+\sum_{j,l,m} (a_{lmj}-\sum_{k,i}a_{lij}a_{imk}z_k)\zeta_mdz_j\otimes\frac{\partial}{\partial z_l}
       + O(|z|^2)\zeta dz.
       \end{eqnarray}
       Consider a curve $t\rightarrow u(t)$.
       By a substitution of variables $z_j=u_j(t)$, $\zeta_l=\frac{du_l}{dt}$
       in formula (\ref{Taylor expansion7}), the equation $\nabla^1(\frac{du}{dt})=0$ becomes
       \begin{equation}\label{Taylor expansion8}
                 \frac{d^2u_s}{dt^2}=\sum_{j,k,l}(c_{lsjk}\bar{u}_k(t)-2b_{lsjk}u_k(t))\frac{du_j}{dt}\frac{du_l}{dt}
                 + O(|u(t)|^2)(\frac{du}{dt})^2.
        \end{equation}
        Notice that the contribution of the terms $\sum a_{j\bullet l}\zeta_ldz_j$ is zero by the skew symmetry relation.
        The initial condition $u(0)=z$, $u'(0)=\zeta$ gives $u_s(t)=z_s+t\zeta_s+O(t^2|\zeta|^2)$.
         Hence,
         \begin{eqnarray*}
           u_s(t) &=&z_s+t\zeta_s+\sum_{i,j,k}c_{isjk}(\frac{t^2}{2}\bar{z}_k+\frac{t^3}{6}\bar{\zeta}_k)\zeta_i\zeta_j  \\
            &&-2b_{isjk}(\frac{t^2}{2}z_k+\frac{t^3}{6}\zeta_k)\zeta_i\zeta_j+O(t^2|\zeta|^2(|z|+|\zeta|)^2).
         \end{eqnarray*}
  An iteration of this procedure (substitution in (\ref{Taylor expansion8}) followed by an integration) easily shows that
  all terms but the first two in the Taylor expansion of $u_s(t)$ contain $\mathbb{C}$-quadratic factors of the form $\zeta_j\zeta_l$.
   Let us substitute $\zeta_j$ by its expression in terms of $z$, $\xi$ deduced from (\ref{Taylor expansion6}).
   We find that $\exp_z(\zeta)=u(1)$ has a third order expansion
  \begin{eqnarray}\label{D14}
   \exp_z(\zeta)_s&=& K_{p,s}(z,\xi)  \nonumber\\
   &&+\sum_{j,k,l} c_{lsjk}(\frac{1}{2}\bar{z}_k+\frac{1}{6}\bar{\xi}_k)\xi_j\xi_l+O(|\xi|^2(|z|+|\xi|)^2),
   \end{eqnarray}
   where
   \begin{eqnarray}\label{Taylor expansion9}
      K_{p,s}(z,\xi)&=&z_s+\xi_s-\sum_{j,l}a_{jsl}z_j\xi_l+\sum_{i,j,k,l}a_{jil}a_{ksi}z_jz_k\xi_l  \nonumber \\
      &&-\sum_{j,k,l}b_{lsjk}(z_jz_k\xi_l+z_k\xi_j\xi_l+\frac{1}{3}\xi_j\xi_k\xi_l)
   \end{eqnarray}
   is a holomorphic polynomial of degree $3$ in $z$, $\xi$ with respect to complex structure $J(p)$.
   In the K\"{a}hler case we simply have $\xi_l=\zeta_l$ and $K_{p,s}(z,\xi)=z_s+\xi_s$.

   \begin{rem}
  \begin{enumerate}[1]
    \item When $M$ is a complex manifold,
    $$
    N^s_{\bar{i}\bar{j}}=0,~ a_{isj}=T^s_{ij},~ c_{lsij}=(\Psi^s_l)^{(1,1)}=R^s_{li\bar{j}}.
    $$
   \item When $M$ is a quasi-K\"ahler (or almost K\"ahler) manifold,
       $$
       T^s_{ij}=0,~ a_{isj}=\overline{N^s_{\bar{i}\bar{j}}},~c_{lsij}=(\Psi^s_l)^{(1,1)}=R^s_{li\bar{j}}.
       $$
   \item When $M$ is a K\"ahler manifold,
   $$
   a_{isj}=0,~ b_{lsij}=0,~ c_{lsij}=(\Psi^s_l)^{(1,1)}=R^s_{li\bar{j}}.
   $$
  \end{enumerate}
    \end{rem}

   The exponential map is unfortunately non-holomorphic for $z$ fixed with respect to $J(p)\cong J_{st}$.
   However, as done in classical complex analysis, we make it quasi-holomorphic with respect to $\zeta\in T^{1,0}_z M$ as follows: for $z$, $J(p)$ fixed, we consider the formal power series obtained by eliminating all monomials in the Taylor expansion of $\zeta\mapsto \exp_z(\zeta)$ at the origin which are not holomorphic with respect to $\zeta$.
    This defines in a unique way a jet of infinite order along the zero section of $T^{1,0}_zM$.
   There is a smooth map
  \begin{eqnarray*}
  T^{1,0}_zM\rightarrow M,~(z,\zeta)\mapsto {\rm exph}_{z}(\zeta),
   \end{eqnarray*}
   such that its jet at $\zeta=0$ coincides with the ``$J(p)(\cong J_{st})$-holomorphic" part of $\zeta\mapsto \exp_z(\zeta)$.
   Moreover, (\ref{D14}) and (\ref{Taylor expansion9}) imply that
   \begin{eqnarray}\label{D15}
    {\rm exph}_z(\zeta)_s= K_{p,s}(z,\xi)+\frac{1}{2}\sum_{i,j,k}c_{jsik} \bar{z}_k\xi_i\xi_j+O(|\xi|^2(|z|+|\xi|)^2).
    \end{eqnarray}
  By including in $K_{p,s}$ all holomorphic monomials of partial degree at most $2$ in $z$ and $N$ in $\xi$ ($N\geq 2$ being a given integer),
   we get holomorphic polynomials $L_{p,s}(z,\xi)$ of linear part $z_s+\xi_s$ and total degree $N+2$, such that
   \begin{equation}\label{Taylor expansion10}
      {\rm exph}_z(\zeta)_s= K_{p,s}(z,\xi)+O(\bar{z},z\bar{z},\bar{z}\bar{z},|z|^3,\xi^{N-1})\xi^2.
   \end{equation}
  Here a notation as $O(\bar{z},z\bar{z},\bar{z}\bar{z},|z|^3,\xi^{N-1})\xi^2$ indicates an arbitrary function in
  the ideal of $C^\infty$ functions generated by monomials of the form $\bar{z}_k\xi_l\xi_m$, $z_i\bar{z}_j\xi_l\xi_m$,
  $\bar{z}_i\bar{z}_j\xi_l\xi_m$, $z^{\alpha}\bar{z}^{\beta}\xi_l\xi_m$ and $\xi^{\gamma}$,
  for all multi-indices $|\alpha|+|\beta|=3$ and $|\gamma|=N+1$.
  By the implicit function theorem applied to the mapping $L_p=(L_{p,m})_{1\leq m\leq n}$ we thus get (cf. Proposition $2.9$ in Demailly \cite{D2})
  \begin{prop}\label{D prop}
  Suppose $(M,g_J,J,F)$ is an almost Hermitian manifold.
  Let $h=g_J-\sqrt{-1}F$ be an almost Hermitian metric on $T^{1,0}M$.
  There exists a $C^{\infty}$ map
   $$T^{1,0}_pM\rightarrow M,~(p,\zeta)\mapsto {\rm exph}_p(\zeta)$$
   with the following properties:\\
   (1). For every $p\in M$, ${\rm exph}_p(0)=p$ and $d_{\zeta}{\rm exph}_p(0)=Id_{T^{1,0}_pM}$.\\
   (2). For every $p\in M$, the map $\zeta\rightarrow {\rm exph}_p(\zeta)$ has a quasi-holomorphic Taylor expansion at $\zeta=0$ with respect to fixed almost complex structure $J(p)$ on small neighborhood. Moreover, with respect to an almost Hermitian structure $(g_J,J,F)$, there are local normal complex coordinates $(z_1,z_2,\cdots,z_n)$ on $M$ centered at $p$, $z_i(p)=0,~i=1,2,\cdots,n$, and holomorphic normal complex coordinates $(\zeta_j)$ on the fibers of $T^{1,0}M$ near $p$ with respect to the fixed complex structure $J(p)$ such that
   $${\rm exph}_z(\xi)=L_p(z,\rho_p(z,\xi)),$$
   where $L_p(z,\xi)$ is a holomorphic polynomial map of degree $2$ in $z$ and of degree $N$ in $\xi$,
   and where $\rho_p:\mathbb{C}^n\times\mathbb{C}^n\rightarrow\mathbb{C}^n$ is a smooth map such that
   \begin{eqnarray}\label{pro exp1}
   L_{p,m}(z,\xi)&=&z_m+\xi_m-\sum_{j,l} a_{jml}z_j\xi_l+\sum_{i,j,k,l}a_{lmi}a_{jik}z_jz_k\xi_l   \nonumber\\
   &&-\sum_{j,k,l} b_{lmjk}(z_jz_k\xi_l+z_k\xi_j\xi_l+\frac{1}{3}\xi_j\xi_k\xi_l)+O((|z|+|\xi|)^4),
   \end{eqnarray}
   \begin{eqnarray}\label{pro exp2}
   \rho_{p,m}(z,\xi)&=&\xi_m+\sum_{2\leq|\alpha|\leq N}(\sum_k d_{\alpha mk}\xi^{\alpha}\bar{z}_k+\sum_{i,k} e_{\alpha mik}\xi^{\alpha}z_i\bar{z}_k) \nonumber\\
         &&+O(\bar{z}^2,|z|^3,\xi^{N-1})\xi^2.
     \end{eqnarray}
        (3). For $\alpha=(0,\cdots,1_l,\cdots,1_j,\cdots,0)$ of degree 2,
         we have $$d_{\alpha mk}=\frac{1}{2}c_{lmjk},\,\,e_{\alpha mik}=\frac{1}{2}\sum_sa_{lms}c_{jsik}z_s,$$
          where $c_{lmjk}$ is the curvature tensor $R^m_{lj\bar{k}}$, $a_{lmj}=T^m_{lj}+\overline{N^m_{\bar{l}\bar{j}}}$.
       \end{prop}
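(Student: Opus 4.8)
The plan is to follow Demailly's construction (Proposition $2.9$ in \cite{D2}) essentially verbatim, exploiting that the computational core---the third-order Taylor expansion of the $\nabla^1$-exponential map---has already been carried out in the preceding discussion. Recall that the geodesic equation (\ref{Taylor expansion8}) for the second canonical connection, together with the initial data $u(0)=z$, $u'(0)=\zeta$, produces the expansion (\ref{D14})--(\ref{Taylor expansion9}) for $\exp_z(\zeta)_s$, in which the only obstruction to holomorphy in $\zeta$ comes from the antiholomorphic factors $\tfrac{1}{2}\bar z_k+\tfrac{1}{6}\bar\xi_k$ attached to the curvature coefficients $c_{lsjk}$. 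The first step is therefore to define $\mathrm{exph}_p(\zeta)$ as the \emph{quasi-holomorphic part} of $\zeta\mapsto\exp_p(\zeta)$: delete from the formal, infinite-order Taylor expansion at $\zeta=0$ every monomial that is not holomorphic in $\zeta$ with respect to the frozen structure $J(p)$, then realize the resulting jet by a genuine $C^\infty$ map (Borel's lemma applied along the zero section of $T^{1,0}M$). Property (1) is immediate from this definition, since the linear-in-$\zeta$ part of $\exp_p$ is the identity and survives the quasi-holomorphization, giving $\mathrm{exph}_p(0)=p$ and $d_\zeta\mathrm{exph}_p(0)=\mathrm{Id}_{T^{1,0}_pM}$.

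Next I would record the resulting expansion, which is precisely (\ref{D15}) and (\ref{Taylor expansion10}): collecting all holomorphic monomials of partial degree at most $2$ in $z$ and at most $N$ in $\xi$ into a polynomial $L_{p,s}(z,\xi)$, one obtains
$$\mathrm{exph}_z(\xi)_s=L_{p,s}(z,\xi)+O(\bar z,z\bar z,\bar z\bar z,|z|^3,\xi^{N-1})\,\xi^2,$$
with $L_{p,s}$ of the explicit form (\ref{pro exp1}), whose linear part is $z_s+\xi_s$. This is where the change from $\zeta$ to the fibre coordinates $\xi$ through (\ref{Taylor expansion6}) is used, and where the curvature tensor $c_{lmjk}=R^m_{lj\bar k}$ together with the torsion/Nijenhuis coefficient $a_{lmj}=T^m_{lj}+\overline{N^m_{\bar l\bar j}}$ enter the coefficients. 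Note that $\mathrm{exph}_z(\xi)$ is holomorphic in $\xi$ to infinite order but its coefficients depend on $(z,\bar z)$; the role of the factorization is to push all the $\bar z$-dependence and the higher jets into the second map $\rho_p$, leaving $L_p$ genuinely holomorphic in both arguments.

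The heart of the argument is then the implicit function theorem. Since $L_{p,m}(z,\xi)=z_m+\xi_m+O((|z|+|\xi|)^2)$, the map $w\mapsto L_p(z,w)$ has differential equal to the identity at $w=0$ for each fixed $z$, hence is a local diffeomorphism depending smoothly on $z$. Writing the displayed identity as $\mathrm{exph}_z(\xi)=L_p(z,w)$ and solving for $w$ yields a smooth map $w=\rho_p(z,\xi)$ with $\rho_{p,m}(z,\xi)=\xi_m+(\text{corrections})$, of the shape (\ref{pro exp2}); inverting $L_p$ on the error term shows that every correction already lies in the ideal generated by $\bar z_k\xi_l\xi_m$, $z_i\bar z_j\xi_l\xi_m$, $\bar z_i\bar z_j\xi_l\xi_m$, $z^{\alpha}\bar z^{\beta}\xi_l\xi_m$ with $|\alpha|+|\beta|=3$, and $\xi^{\gamma}$ with $|\gamma|=N+1$. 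Comparing the lowest-order antiholomorphic monomials on both sides---the term $\tfrac{1}{2}c_{jmik}\bar z_k\xi_i\xi_j$ from (\ref{D15}) against $L_p$ evaluated on the quadratic part of $\rho_p$, using $\partial L_{p,m}/\partial w_n\approx\delta_{mn}$---then forces the degree-two coefficients claimed in (3): $d_{\alpha mk}=\tfrac{1}{2}c_{lmjk}$, while $e_{\alpha mik}$ arises from the interaction of the $a$-quadratic part of $L_p$ with the curvature term, reproducing the formula of (3).

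I expect the genuinely delicate point to be the bookkeeping underlying the definition of $\mathrm{exph}$ and the control of the error ideal. One must verify that deleting the non-$\zeta$-holomorphic monomials produces a \emph{smooth} map, not merely a formal jet, whose full infinite-order jet agrees with the holomorphic-in-$\zeta$ part of $\exp_p$, and that the remainder after subtracting $L_p$ genuinely lands in the stated ideal rather than acquiring spurious lower-order terms from the inversion. The index algebra identifying $d_{\alpha mk}$ and $e_{\alpha mik}$ is routine once (\ref{D14})--(\ref{Taylor expansion9}) and (\ref{pro exp1}) are in hand; the analytic subtlety is entirely in making the quasi-holomorphization and the implicit-function inversion rigorous while tracking the antiholomorphic remainder, exactly as in Demailly's integrable case, the only new feature being the presence of the torsion and Nijenhuis coefficients $a_{lmj}=T^m_{lj}+\overline{N^m_{\bar l\bar j}}$ that vanish in the Kähler setting.
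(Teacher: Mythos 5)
Your proposal is correct and follows essentially the same route as the paper: the paper defines $\mathrm{exph}$ as the quasi-holomorphic part of the $\nabla^1$-exponential map via the expansions (\ref{D14})--(\ref{Taylor expansion9}), records (\ref{D15})--(\ref{Taylor expansion10}), and then invokes the implicit function theorem on $L_p$, citing Demailly's Proposition 2.9 in \cite{D2} for the remaining details---exactly the steps you outline, including the coefficient identification in (3) and the smoothing of the formal jet. The only difference is one of explicitness: you spell out the Borel-lemma realization of the jet and the error-ideal bookkeeping, which the paper leaves implicit in its reference to Demailly.
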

          \begin{proof}
        The argument is similar to the proof of Proposition $2.9$ in Demailly \cite{D2}.
             \end{proof}

\begin{rem}\label{D5}
Suppose that $(M,g_J,J,F)$ is an almost Hermitian 4-manifold.
For any $p\in M$, there exists a $J$-compatible local symplectic form $\omega_p$ on a small neighborhood $U_p$ such that $F=f_p\omega_p$,
 where $f_p>0$ on $U_p$ and $f_p(p)=1$ (cf. {\rm Lejmi \cite{Lejmi}}). On $U_p$, by Darboux's theorem (cf. {\rm McDuff-Salamon \cite{MS}}), there is a coordinate chart $(V_p,\phi_p)$, where $V_p\subseteq U_p$ is a neighborhood of $p$, $\phi_p:~V_p\rightarrow \phi_p(V_p)\subset \mathbb{R}^4$ is a homeomorphism such that $\phi^{\ast}\omega_0=\omega_p$, and $$\omega_0=\sum_{i=1}^{2}dx_i\wedge dy_i$$
is the standard symplectic form on $\mathbb{R}^4$. Let $J_{st}$ be the standard complex structure on $\mathbb{C}^2\cong\mathbb{R}^4$ with complex coordinates $z_i=x_i+\sqrt{-1}y_i,~i=1,2$, and $J_p=\phi^{\ast}J_{st}$ the induced complex structure on $V_p$. Set $g_p(\cdot,\cdot)=F(\cdot,J\cdot)$. So we can get $g_J=g_pe^D$ on $V_p$, where $D$ is a symplectic $J$-anti-invariant (2,0) tensor (for details, see {\rm Tan-Wang-Zhou \cite{TWZ}}). Therefore, for the almost Hermitian 4-manifold $(M,g_J,J,F)$, any $p\in M$, there exists a small neighborhood $V_p$ such that on $V_p$ there is $F$-compatible complex structure $J_p$, that is, any almost complex 4-manifold has locally complex structure. Let $g_{J_{st}}(\cdot,\cdot)=F(\cdot,J_{st}\cdot)$ on $V_p$, then $g_{J_{st}}(p)=g_J(p)$, $g_{J_{st}}$ is a Hermitian metric on $V_p$.
\end{rem}

  \subsection{Regularization of quasi-J-plurisubharmonic functions on tamed almost Hermitian $4$-manifolds}\label{Regularization}

    In this subsection, we consider regularization of quasi-J-plurisubharmonic functions on almost Hermitian $2n$-manifolds.
   Let $(M,g_J,J,F)$ be an almost Hermitian $2n$-manifold.
   Suppose $\phi$ is a quasi-J-plurisubharmonic function, that is,
   a function which is locally the sum of $\phi_1$ and $\phi_2$ where $\phi_1$ is a smooth function and $\phi_2$ is
   a $J$-plurisubharmonic function.
    In this section, as done in Section $3$ of Demailly's article \cite{D2},
    we consider regularization of quasi-J-plurisubharmonic functions  in almost Hermitian $2n$-manifolds tamed by $\omega_1=F+d_J^{-}(v+\bar{v})$.

   For any $p\in(M,g_J,J,F)$, choose a complex coordinate $$U_p=\{ z_i=x_i+\sqrt{-1} y_i,~i=1,\cdot\cdot\cdot,n \}$$
      around $p$ such that $\{ \frac{\partial}{\partial z_i}|_{p} \}_{i=1,2,\cdot\cdot\cdot,n}\subset T^{1,0}_p M$
      is orthonormal at $p$ with respect to almost Hermitian metric $h=g_J-\sqrt{-1}F$.
      Consider the exponential map: $$T^{1,0}_z M\rightarrow M,~(z,\zeta)\mapsto \exp_z(\zeta),~z\in U_p,~(z,\zeta)\in T^{1,0}_zM.$$

    By (\ref{D14}), we have Taylor expansion of exponential map,
       \begin{eqnarray}\label{E1}
             \textrm{exp}_z(\zeta)_s&=&K_{p,s}(z,\xi)+\sum_{1\leq i,j,k\leq n} c_{jsik}(\frac{1}{2}\bar{z}_k+\frac{1}{6}\bar{\xi}_k)\xi_i\xi_j\nonumber\\
             && +O(|\xi|^2(|z|+|\xi|)^2),
     \end{eqnarray}
   where
      \begin{eqnarray}\label{E2}
         K_{p,s}(z,\xi)&=&z_s+\xi_s-\sum_{1\leq i,j\leq n}a_{isj}z_i\xi_j+\sum_{1\leq i,j,k,l\leq n}a_{ksl}a_{ilj}z_iz_j\xi_k\nonumber\\
            &&-\sum_{1\leq i,j,k\leq n}b_{jski}(z_iz_j\xi_k+z_i\xi_j\xi_k+\frac{1}{3}\xi_i\xi_j\xi_k).
          \end{eqnarray}
   Here $a_{ijs}, b_{iksj}$ and $c_{ijks}$ are given in Appendix \ref{Exponential}.
  However, we make this map quasi-holomorphic as follows:
  \begin{eqnarray}\label{E3}
   \textrm{exph}_z(\zeta)_s&=&K_{p,s}(z,\xi)+\frac{1}{2}\sum_{1\leq i,j,k\leq n} c_{jsik}\bar{z}_k\xi_i\xi_j+O(|\xi|^2(|z|+|\xi|)^2).
   \end{eqnarray}
   Here, for fixed $z\in M$, ${\rm exph}_z(\zeta)$ is holomorphic for $\zeta\in T^{1,0}_z M$

    For a fixed point $p\in M$ and use the coordinate $(p,e_1,\cdot\cdot\cdot,e_n)$ for $T_p^{1,0}M$,
    where $(e_1,\cdot\cdot\cdot,e_n)$ is orthernormal.
    Suppose $(\theta^1,\cdot\cdot\cdot,\theta^n)$ is the dual coframe of $(e_1,\cdot\cdot\cdot,e_n)$.
    As in Appendix \ref{Exponential}, $\zeta\in T^{1,0}_z(M)$, $\zeta=\sum\zeta_i\frac{\partial}{\partial z_i}=\sum\xi_i\tilde{e}_i$,
    \begin{equation}\label{}
      |\zeta|^2=\sum_m|\xi_m|^2-\sum_{j,k,l,m}c_{lmjk}z_j\bar{z}_k\xi_l\bar{\xi}_m+O(|z|^3)|\xi|^2.
    \end{equation}
   The volume form
   \begin{eqnarray}
      \textrm{d}\lambda(\zeta)&=& \frac{1}{2^nn!}(\sqrt{-1}\partial_{J(p)}\bar{\partial}_{J(p)}|\zeta|^2)^n \nonumber\\
      &=&  (1-\sum_{j,k,l}c_{lljk}z_j\bar{z}_k+O(|z|^3))\frac{\sqrt{-1}}{2}d\xi_1\wedge d\bar{\xi}_1
      \wedge\cdot\cdot\cdot\wedge\frac{\sqrt{-1}}{2}d\xi_n\wedge d\bar{\xi}_n.
   \end{eqnarray}

   Choose a smooth cut-off function $\chi:~\mathbb{R}\rightarrow\mathbb{R}$ satisfying
   $$\chi(t)\left\{
    \begin{array}{l}
    >0, ~t<1\\
    =0, ~t\geq 1,
    \end{array}\right.
    ~~\int_{v\in\mathbb{C}^{n}}\chi(|v|^2)~\textrm{d}\lambda(v)=1.$$

   Set
   $$\phi_{\varepsilon}(z)=\frac{1}{\varepsilon^{2n}}\int_{\zeta\in T_z^{1,0}M} \phi(\textrm{exph}_z(\zeta))\cdot\chi(\frac{|\zeta|^2}{\varepsilon^2})~\textrm{d}\lambda(\zeta),\,\,\,\,\,\,\varepsilon>0.$$

    \begin{eqnarray}\label{smoothing}
    \Phi(z,w)=\int_{\zeta\in T_z^{1,0}M}\phi(\textrm{exph}_z(w\zeta))\cdot\chi(|\zeta|^2)~\textrm{d}\lambda(\zeta),
    \end{eqnarray}
        which is smooth on $M\times\{w\in\mathbb{C}\mid 0<|w|<\varepsilon_0\}$ for some $\varepsilon_0>0$.
        Then for $w\in\mathbb{C}$ with $|w|=\varepsilon$, we have $\phi_{\varepsilon}(z)=\Phi(z,w)$.
   In the following,
    we need to compute $(dJd\Phi)^{(1,1)}$ over the set $M\times\{0<|w|<\varepsilon_0\}$ and estimate the negative part when $|w|$ is small.

     In (\ref{smoothing}), we make the change of variables $s=w^{-1}\rho(p,w\zeta)$,
     hence we can write $\rm{exph}_p(w\zeta)=L_p(z,ws)$.
       By (\ref{pro exp1}) and (\ref{pro exp2}), we get
      \begin{eqnarray}\label{sm}
        s_m&=&\xi_m+\sum_{2\leq|\alpha|\leq N} \left( \sum_{k}d_{\alpha mk}w^{|\alpha|-1}\xi^{\alpha}\bar{z}_k+\sum_{j,k}e_{\alpha mjk}w^{|\alpha|-1}\xi^{\alpha}z_j\bar{z}_k \right) \nonumber\\
        && +O(\bar{z}^2,|z|^3,w^{N-1}\xi^{N-1})w\xi^2.
          \end{eqnarray}
      Hence,
     \begin{eqnarray}\label{xim}
          \xi_m&=&s_m-\sum_{2\leq|\alpha|\leq N} \left( \sum_{k}d_{\alpha mk}w^{|\alpha|-1}s^{\alpha}\bar{z}_k+\sum_{j,k}e_{\alpha jkm}w^{|\alpha|-1}s^{\alpha}z_j\bar{z}_k \right)\nonumber\\
           && +O(\bar{z}^2,|z|^3,w^{N-1}s^{N-1})ws^2,
           \end{eqnarray}
           and $\xi=s+O(w^Ns^{N+1})$ for $z=0$.
      Plugging into (\ref{smoothing}), we get
     \begin{eqnarray}\label{smoothing1}
         \Phi(z,w)=\int_{\mathbb{C}^{n}}\phi(L_p(z,ws))\chi(A(z,w,s))B(z,w,s)d\lambda(s).
          \end{eqnarray}
            where
    \begin{eqnarray*}
        &&A(z,w,s)\\
  &=& \sum_{1\leq m\leq n} |s_m|^2-\sum_{1\leq j,k,l,m\leq n} c_{lmjk}z_j\bar{z}_ks_l\bar{s}_m  \\
   && -2Re\sum_{\alpha,k,m} d_{\alpha mk}w^{|\alpha|-1}s^{\alpha}\bar{s}_m\bar{z}_k-2Re\sum_{\alpha,j,k,m}
   e_{\alpha mjk}w^{|\alpha|-1}s^{\alpha}\bar{s}_mz_j\bar{z}_k \\
         &&     +\sum_{\alpha,\beta,j,k,m} d_{\alpha mk}\overline{d_{\beta mj}}w^{|\alpha|-1}w^{|\beta|-1}s^{\alpha}\bar{s}^{\beta}z_j\bar{z}_k  \\
         && +O(z^2,\bar{z}^2,|z|^3,|w|^{N-1}|s|^{N-1})|w||s|^3,
          \end{eqnarray*}
    \begin{eqnarray*}
   &&B(z,w,s)\\
   &=& 1-\sum_{1\leq j,k,l\leq n}c_{lljk}z_j\bar{z}_k\\
   &&-2Re\sum_{\alpha,k,m}d_{\alpha mk}w^{|\alpha|-1}\alpha_ms^{\alpha-1_m}\bar{z}_k  \\
       && -2Re\sum_{\alpha,j,k,m}e_{\alpha mjk}w^{|\alpha|-1}\alpha_ms^{\alpha-1_m}z_j\bar{z}_k \\
      && +\sum_{\alpha,\beta,j,k,l,m} d_{\alpha mk}\overline{d_{\beta lj}}w^{|\beta|-1}\alpha_m\beta_ls^{\alpha-1_m}\bar{s}^{\beta-1_l}z_j\bar{z}_k\\
             && +O(z^2,\bar{z}^2,|z|^3,|w|^{N-1}|s|^{N-1})|w||s|,
           \end{eqnarray*}
   here $(1_m)_{1\leq m\leq n}$ denotes the standard basis of $\mathbb{Z}^n$, hence $s^{1_m}=s_m$.

   Let $(M,g_J,J,F)$ be a $2n$-dimensional almost Hermitian manifold.
   We have the following lemma (cf. Wang-Zhu \cite{WZ2})
   \begin{lem}\label{dJd}
   Suppose $f$ is a smooth function on $M$, then
   \begin{eqnarray*}
     dJdf &=& (dJdf)^{(1,1)}+(dJdf)^{(2,0)+(0,2)} \\
      &=&  2\sqrt{-1}f_{i\bar{j}}\theta^i\wedge\bar{\theta}^j-
      2\sqrt{-1}(\overline{N^k_{\bar{i}\bar{j}}}\bar{f}_k\theta^i\wedge\theta^j+N^k_{\bar{i}\bar{j}}f_k\bar{\theta}^i\wedge\bar{\theta}^j),
   \end{eqnarray*}
   where $\partial_J f=\sum f_k\theta^k$, $\bar{\partial}_J f=\sum \bar{f}_k\bar{\theta}^k$,
    $N^k_{\bar{i}\bar{j}}$ is the Nijenhuis tensor $J$ which is independent of the choice of a metric.
   \end{lem}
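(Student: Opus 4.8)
*Suppose $f$ is a smooth function on a $2n$-dimensional almost Hermitian manifold $(M,g_J,J,F)$. Then*
$$
dJdf = 2\sqrt{-1}\,f_{i\bar{j}}\,\theta^i\wedge\bar{\theta}^j
 - 2\sqrt{-1}\bigl(\overline{N^k_{\bar{i}\bar{j}}}\,\bar{f}_k\,\theta^i\wedge\theta^j
 + N^k_{\bar{i}\bar{j}}\,f_k\,\bar{\theta}^i\wedge\bar{\theta}^j\bigr).
$$

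Let me sketch how I would prove this.

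The plan is to reduce $dJdf$ to the three second-order operators $\partial_J^2 f$, $\partial_J\bar\partial_J f$, $\bar\partial_J^2 f$ and then to recognize the two off-diagonal pieces as Nijenhuis contributions. First I would write $df=\partial_J f+\bar\partial_J f$ using the splitting \eqref{2eq7}, and compute the action of $J$ on $1$-forms from the eigenspace decomposition \eqref{2eq2}: since $T^{1,0}$ is the $(+\sqrt{-1})$-eigenspace of $J$, the induced action on $\Lambda^{1,0}_J$ is multiplication by $-\sqrt{-1}$ and on $\Lambda^{0,1}_J$ by $+\sqrt{-1}$, whence
$$Jdf=-\sqrt{-1}\,(\partial_J f-\bar\partial_J f).$$

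Next I would apply $d$ and expand it by the four-term decomposition $d=A_J+\partial_J+\bar\partial_J+\bar A_J$ of \eqref{2eq4}, \eqref{2eq10}, sorting the result by bidegree. Because $\bar A_J\partial_J f=0$ and $A_J\bar\partial_J f=0$ on a $4$-manifold, the $(2,0)$, $(1,1)$, $(0,2)$ components of $d(\partial_J f-\bar\partial_J f)$ are $\partial_J^2 f-\bar A_J\bar\partial_J f$, $\bar\partial_J\partial_J f-\partial_J\bar\partial_J f$ and $A_J\partial_J f-\bar\partial_J^2 f$. I would then feed in the identities equivalent to $d^2f=0$, namely \eqref{2eq8} in the forms $\partial_J^2 f=-\bar A_J\bar\partial_J f$, $\bar\partial_J^2 f=-A_J\partial_J f$ and \eqref{2eq9} in the form $\bar\partial_J\partial_J f=-\partial_J\bar\partial_J f$; these collapse the three components to $2\partial_J^2 f$, $-2\partial_J\bar\partial_J f$ and $-2\bar\partial_J^2 f$ respectively. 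Multiplying by $-\sqrt{-1}$ gives
$$dJdf=2\sqrt{-1}\,\partial_J\bar\partial_J f-2\sqrt{-1}\,\partial_J^2 f+2\sqrt{-1}\,\bar\partial_J^2 f,$$
which I note is exactly the collapsed form of the $dd^c_Ju$ expansion already recorded in Appendix A.1, consistent with $dJdf=dd^c_J f$.

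It remains to name the three terms. The $(1,1)$ term is the defining expression for the complex Hessian components, $\partial_J\bar\partial_J f=f_{i\bar j}\theta^i\wedge\bar\theta^j$, reproducing $2\sqrt{-1}f_{i\bar j}\theta^i\wedge\bar\theta^j$. For the $(0,2)$ term I would evaluate $\bar\partial_J^2 f=-A_J\partial_J f$ on a pair $\bar e_i,\bar e_j$ of the unitary frame: applying formula \eqref{2eq6} to the real form $df$ and using that the contribution on a $(0,1)$-pair comes only from the $\Lambda^{0,2}_J$-component $A_J\partial_J f$, one gets $\bar\partial_J^2 f(\bar e_i,\bar e_j)=-\,\partial_J f(\mathcal N_J(\bar e_i,\bar e_j))$. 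Since $\mathcal N_J$ maps $T^{0,1}\times T^{0,1}$ into $T^{1,0}$, I expand $\mathcal N_J(\bar e_i,\bar e_j)$ in the frame $\{e_k\}$ and match coefficients to the $(0,2)$-part of the torsion $\Theta^k$ of the second canonical connection, which is the metric-independent quantity $N^k_{\bar i\bar j}$ of Appendix A.1; this yields $\bar\partial_J^2 f=-N^k_{\bar i\bar j}f_k\bar\theta^i\wedge\bar\theta^j$. The conjugate computation on a $(1,0)$-pair $e_i,e_j$ gives $\partial_J^2 f=\overline{N^k_{\bar i\bar j}}\,\bar f_k\,\theta^i\wedge\theta^j$. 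Substituting both into the displayed formula produces the claimed identity.

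The hard part will be the last step: the clean identification of the Nijenhuis tensor $\mathcal N_J$, carrying its $\tfrac14$ normalization from \eqref{2eq5}, with the torsion coefficients $N^k_{\bar i\bar j}$, done so that neither a spurious numerical factor nor a sign survives. This is where I would work at a fixed point $p$ with a frame that is unitary and normal, so that the connection forms vanish there by \eqref{unitary frame}; the Maurer--Cartan equation then reduces $d\theta^k$ at $p$ to its torsion part $(\Theta^k)^{(2,0)}+(\Theta^k)^{(0,2)}$, and the same bookkeeping pins the constant and the signs in front of both off-diagonal terms simultaneously. Once the coefficient is fixed at one point the statement holds identically, being tensorial, and the conjugate-symmetry $\overline{N^k_{\bar i\bar j}}$ in the $(2,0)$-term follows automatically by complex conjugation of the $(0,2)$ computation.
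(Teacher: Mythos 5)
Your overall strategy is the right one, and indeed it is the only route available for comparison: the paper gives no proof of Lemma \ref{dJd} at all, quoting it from \cite{WZ2}. Writing $Jdf=-\sqrt{-1}(\partial_Jf-\bar{\partial}_Jf)$ (correct under the paper's convention $J\alpha(\cdot)=-\alpha(J\cdot)$), expanding $d$ by bidegree, collapsing via the components of $d^2f=0$ from (\ref{2eq8})--(\ref{2eq9}) to $dJdf=2\sqrt{-1}\partial_J\bar{\partial}_Jf-2\sqrt{-1}\partial_J^2f+2\sqrt{-1}\bar{\partial}_J^2f$, and then identifying the off-diagonal pieces through (\ref{2eq6}) in a normal unitary frame is exactly the natural skeleton. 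Your $(0,2)$ computation is correct: $\bar{\partial}_J^2f(\bar{e}_i,\bar{e}_j)=-\partial_Jf(\mathcal{N}_J(\bar{e}_i,\bar{e}_j))$ together with $\mathcal{N}_J(\bar{e}_i,\bar{e}_j)=2N^k_{\bar{i}\bar{j}}e_k$ gives $\bar{\partial}_J^2f=-A_J\partial_Jf=-N^k_{\bar{i}\bar{j}}f_k\bar{\theta}^i\wedge\bar{\theta}^j$.

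The genuine gap is the sign of your $(2,0)$ identity. Since $f$ is real, $\partial_J^2f=\overline{\bar{\partial}_J^2f}$, so conjugating your (correct) $(0,2)$ identity forces $\partial_J^2f=-\overline{N^k_{\bar{i}\bar{j}}}\bar{f}_k\theta^i\wedge\theta^j$, with a minus sign; your claim $\partial_J^2f=+\overline{N^k_{\bar{i}\bar{j}}}\bar{f}_k\theta^i\wedge\theta^j$, which you assert ``follows automatically by complex conjugation,'' is incompatible with it, so the two identities you substitute cannot both hold. Carried out consistently, your own method yields
\[
dJdf=2\sqrt{-1}f_{i\bar{j}}\theta^i\wedge\bar{\theta}^j
+2\sqrt{-1}\bigl(\overline{N^k_{\bar{i}\bar{j}}}\bar{f}_k\,\theta^i\wedge\theta^j
-N^k_{\bar{i}\bar{j}}f_k\,\bar{\theta}^i\wedge\bar{\theta}^j\bigr),
\]
with \emph{opposite} signs on the two Nijenhuis terms. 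This is forced by realness: $f$, $d$, and the action of $J$ on $1$-forms are all real operators, so $dJdf$ is a real $2$-form and its $(0,2)$-part must be the complex conjugate of its $(2,0)$-part. Writing $\beta=N^k_{\bar{i}\bar{j}}f_k\bar{\theta}^i\wedge\bar{\theta}^j$, the displayed combination equals $2\sqrt{-1}(\bar{\beta}-\beta)=4\,{\rm Im}\,\beta$, which is real, whereas the combination printed in Lemma \ref{dJd}, namely $-2\sqrt{-1}(\bar{\beta}+\beta)=-4\sqrt{-1}\,{\rm Re}\,\beta$, is purely imaginary and nonzero whenever $\mathcal{N}_J\neq0$. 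In other words, the statement as printed carries a sign misprint (no choice of conventions can make both Nijenhuis terms appear with the same sign), and your derivation does not expose this but instead reproduces it through a compensating sign error at the conjugation step. A correct write-up should record $\partial_J^2f=-\bar{A}_J\bar{\partial}_Jf=-\overline{N^k_{\bar{i}\bar{j}}}\bar{f}_k\theta^i\wedge\theta^j$ and $\bar{\partial}_J^2f=-A_J\partial_Jf=-N^k_{\bar{i}\bar{j}}f_k\bar{\theta}^i\wedge\bar{\theta}^j$ and substitute, arriving at the sign-corrected formula above.
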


    By Lemma 2.1 of Diederich-Sukhov \cite{DS}, for any $p\in M$,
    there exists a neighborhood $U$ of $p$ and a coordinate map $z: U\rightarrow\mathbb{B}$ such that $z(p)=0$ and $dz(p)\circ J(p)\circ dz^{-1}(0)=J_{st}$.
    Moreover, $z_{\ast}(J):=dz\circ J\circ dz^{-1}$ satisfies $||z_{\ast}(J)-J_{st}||_{C^{\alpha}(\bar{\mathbb{B}})}\leq\lambda_0$ for every $\alpha\geq 0$ and $\lambda_0>0$, where $\mathbb{B}$ is the unit ball in $\mathbb{C}^n$.
     It is easy to see that
     $$\partial_Jf|_p=\partial_{J_{st}}f|_p,\,\,
      \bar{\partial}_Jf|_p=\bar{\partial}_{J_{st}}f|_p,$$
      and
      $$dJdf|_p=2\sqrt{-1}\partial_J\bar{\partial}_Jf|_p=2\sqrt{-1}\partial_{J_{st}}\bar{\partial}_{J_{st}}f|_p.$$
     For more details, please see Diederich-Sukhov \cite{DS}.
      Fix a point $p\in M$, choose a complex coordinate chart $U_p=\{(z_1,\cdot\cdot\cdot,z_n)\in\mathbb{C}^n\}$ around $p$.
      Define two almost complex structures on $U_p\times\mathbb{C}$ as follows:
      $$
     \tilde{J}(z)=J(z)\oplus J_{st},\,\,\,\tilde{J}_0=\tilde{J}(0)=J(0)\oplus J_{st}.
      $$
      It is easy to see that $\tilde{J}_0$ is integrable.
      Return to (\ref{smoothing}),
      $$
      \Phi(z,w)=\int_{\zeta\in T_z^{1,0}M}\phi(\textrm{exph}_z(w\zeta))\cdot\chi(|\zeta|^2)~\textrm{d}\lambda(\zeta).
      $$
      The change of variable $y={\rm exph}_z(w\zeta)$ expresses $w\zeta$ as a smooth function of $y,z$ in neighborhood of the diagonal in $M\times M$.
      Hence $\Phi$ is a smooth over $M\times\{0<|w|<\varepsilon_0\}$ for some $\varepsilon_0>0$.
      By (\ref{smoothing1}), we are going to compute $\partial_{\tilde{J}}\Phi$, $\bar{\partial}_{\tilde{J}}\Phi$ and $\partial_{\tilde{J}}\bar{\partial}_{\tilde{J}}\Phi$.
      Note that
      $$
      (d\tilde{J}d\Phi(z,w))^{(1,1)}|_{(0,w)}= (d\tilde{J}_0d\Phi(z,w))^{(1,1)}|_{(0,w)}
      =2\sqrt{-1}\partial_{\tilde{J}_0}\tilde{\partial}_{\tilde{J}_0}\Phi(z,w))|_{(0,w)},
      $$
      and
      \begin{eqnarray*}
         (d\tilde{J}d\Phi(z,w))^{(2,0)+(0,2)}|_{(0,w)}
         &=& -2\sqrt{-1}(\overline{N^k_{\bar{i}\bar{j}}}\frac{\partial}{\partial \bar{z}_k}\Phi(z,w)|_{(0,w)}dz_i\wedge dz_j \\
         && +N^k_{\bar{i}\bar{j}}\frac{\partial}{\partial z_k}\Phi(z,w)|_{(0,w)}d\bar{z}_i\wedge d\bar{z}_j).
      \end{eqnarray*}
      By Lemma \ref{dJd}, we have
      \begin{eqnarray}
        d\tilde{J}d\Phi(z,w))^{(1,1)}|_{(0,w)} &=&  d\tilde{J}_0d\Phi(z,w))^{(1,1)}|_{(0,w)}  \nonumber\\
         &=& 2\sqrt{-1}\partial_{\tilde{J}_0}\tilde{\partial}_{\tilde{J}_0}\Phi(z,w))|_{(0,w)} \nonumber\\
         &=& 2\sqrt{-1}(\frac{\partial^2}{\partial z_i\partial \bar{z}_j}\Phi(z,w)|_{(0,w)}dz_i\wedge d\bar{z}_j\nonumber\\
         &&+\frac{\partial^2}{\partial z_i\partial \bar{w}}\Phi(z,w)|_{(0,w)}dz_i\wedge d\bar{w} \nonumber\\
         && +\frac{\partial^2}{\partial w\partial \bar{z_j}}\Phi(z,w)|_{(0,w)}dw\wedge d\bar{z}_j \nonumber\\
         &&  +\frac{\partial^2}{\partial w\partial \bar{w}}\Phi(z,w)|_{(0,w)}dw\wedge d\bar{w},
      \end{eqnarray}
      and
      \begin{eqnarray}
        (d\tilde{J}d\Phi(z,w))^{(2,0)+(0,2)}|_{(0,w)} &=&(d\tilde{J}_0d\Phi(z,w))^{(2,0)+(0,2)}|_{(0,w)}  \nonumber\\
         &=& (dJ(p)d\Phi(z,w))^{(2,0)+(0,2)}|_{(0,w)} \nonumber\\
         &=& -2\sqrt{-1}(\overline{N^k_{\bar{i}\bar{j}}}\frac{\partial}{\partial \bar{z}_k}\Phi(z,w)|_{(0,w)}dz_i\wedge dz_j \nonumber\\
                &&+N^k_{\bar{i}\bar{j}}\frac{\partial}{\partial z_k}\Phi(z,w)|_{(0,w)}d\bar{z}_i\wedge d\bar{z}_j).
      \end{eqnarray}
     By the above observation, Proposition $3.8$ of Demailly \cite{D2} can be generalized to almost Hermitian $2n$-manifolds as follows
          \begin{prop}\label{lem e1}
           For any integer $N\geq 2$ and any $(\varrho,\eta)\in T^{1,0}_zU_p\times\mathbb{C}$,
           at $(z,w)\in U_p\times\mathbb{C}$ we have the following estimates

          (1)
           \begin{eqnarray*}
             \partial_{\tilde{J}_0}\Phi_{(p,w)}\cdot(\varrho,\eta) &=& \int_{\zeta\in T^{1,0}_pM}\partial_{\tilde{J}_0}\phi_{(\rm{exph}_z(w\zeta))}\cdot\tau
                       \chi(|\zeta|^2)~\textrm{d}\lambda(\zeta)
                        +O(|w|^N)(\varrho,\eta) , \\
           \end{eqnarray*}

           (2)
            \begin{eqnarray*}
           \partial_{\tilde{J}}\bar{\partial}_{\tilde{J}}\Phi_{(p,w)}(\varrho\wedge\bar{\varrho},\eta\wedge\bar{\eta})
         &=&\partial_{\tilde{J}_0}\bar{\partial}_{\tilde{J}_0}\Phi_{(p,w)}(\varrho\wedge\bar{\varrho},\eta\wedge\bar{\eta})  \\
        &=&\int_{\zeta\in T^{1,0}_pM} \partial_{\tilde{J}_0}\bar{\partial}_{\tilde{J}_0}\phi\cdot(\tau\wedge\bar{\tau}+|w|^2V)_{{\rm exph}_p(w\zeta)}\chi(|\zeta|^2)~\textrm{d}\lambda(\zeta)\\
        && +O(|w|^{N-1})(\varrho\wedge\bar{\varrho},\eta\wedge\bar{\eta}),
           \end{eqnarray*}
        where $\tau$ is a vector field over $TM^{1,0}$,
        $V$ is a $(1,1)-$vector field, both depending smoothly on the parameters $p,w$ and linearly or quadratically on $\varrho,\eta$.
        The vector fields $\tau, V$ are given at $y={\rm exph}_p(w\zeta)$ by
       \begin{eqnarray*}
       &&\tau_y=\partial_{J(p)} {\rm exph}_{(p,w\zeta)}(\varrho^h+\eta\zeta^v+|w|^2\Xi^v_y),\\
         &&V_y=\partial_{J(p)}  {\rm exph}_{(p,w\zeta)}(U^v-|w|^2\Xi^v\wedge\overline{\Xi^v})_y,
            \end{eqnarray*}
     where $\varrho^{h},\zeta^v\in T(TM)_{(p,w\zeta)}$ are respectively the horizontal lifting of $\varrho$ with respect to the Chern connection $\nabla$ with respect to $h$ and $J(p)$, and the vertical vector associated to $\zeta$, and where $\epsilon$ can be arbitrarily small. Here, $\Xi, U$ is defined by
       \begin{eqnarray*}
      &&\Xi_y(\zeta)=\sum_{\alpha,j,l,m}\frac{1}{\chi(|\zeta|^2)}
      \frac{\partial}{\partial\bar{\zeta}_l}(\chi_1(|\zeta|^2)\bar{\zeta}^{\alpha-1_m})
      \overline{d_{\alpha lj}}\frac{\alpha_m}{|\alpha|}\bar{w}^{|\alpha|-2}\varrho_j\frac{\partial}{\partial z_m},\\
           &&U_y(\zeta)=\sum_{l,m}\frac{1}{2}(U_{m,l}(\zeta)+\overline{U_{l,m}}(\zeta))\frac{\partial}{\partial z_m}\wedge\frac{\partial}{\partial \bar{z}_l},\\
      &&U_{m,l}(\zeta)=-\frac{\chi_1(|\zeta|^2)}{\chi(|\zeta|^2)}\{\sum_{j,k}c_{lmjk}\varrho_j\bar{\varrho}_k+2\sum_{\alpha,j,k}e_{\alpha mjk}w^{|\alpha|-1}\frac{\alpha_l}{|\alpha|}\zeta^{\alpha-1_t}\varrho_j\bar{\varrho}_k\\
        &&+2\sum_{\alpha,k}d_{\alpha mk}(|\alpha|-1)w^{|\alpha|-2}\frac{\alpha_l}{|\alpha|}\zeta^{\alpha-1_t}\eta\bar{\varrho}_k
        +\sum_{\alpha,\beta,j,k}d_{\alpha mk}\overline{d_{\beta lj}}w^{|\alpha|-2}\bar{w}^{|\beta|-2}\zeta^{\alpha}\bar{\zeta}^{\beta}\varrho_j\bar{\varrho}_k\}.
           \end{eqnarray*}
         Here, $$\chi_1(t)=\int_{+\infty}^{t}\chi(u)du,$$ and $c_{lmjk}$, $d_{\beta lj}, e_{\alpha mjk}$ are defined in Appendix \ref{Exponential}.
          Moreover, $\alpha,\beta\in\mathbb{N}^n$ run over all multi-indices such that $2\leq|\alpha|,|\beta|\leq N$.
    \end{prop}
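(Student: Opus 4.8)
The plan is to prove Proposition \ref{lem e1} by reducing the computation at the center point $p$ (where $z=0$) to the integrable model structure $\tilde J_0 = J(p)\oplus J_{st}$, exploiting the fact that the $(1,1)$-part of $d\tilde J d\Phi$ and the values of first-order operators $\partial_{\tilde J},\bar\partial_{\tilde J}$ at $p$ coincide with their $\tilde J_0$-counterparts. This is exactly the content of Lemma \ref{dJd} together with the Diederich--Sukhov normalization recalled just before the statement, which guarantees $\partial_{\tilde J}\Phi|_p = \partial_{\tilde J_0}\Phi|_p$ and $d\tilde J d\Phi|_p^{(1,1)} = 2\sqrt{-1}\,\partial_{\tilde J_0}\bar\partial_{\tilde J_0}\Phi|_p$. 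Thus the whole problem becomes a computation for an honest (integrable) complex structure, precisely the situation handled by Demailly in Proposition~3.8 of \cite{D2}, and the task is to transcribe his argument while carefully tracking the extra torsion contributions $a_{jml}=T^m_{jl}+\overline{N^m_{\bar j\bar l}}$ that are present in the almost Hermitian normal coordinates of Appendix \ref{Exponential}.

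First I would start from the representation \eqref{smoothing1}, in which the smoothing $\Phi(z,w)$ has been rewritten, via the substitution $s=w^{-1}\rho_p(z,w\zeta)$ and the quasi-holomorphic exponential map of Proposition \ref{D prop}, as an integral $\int_{\mathbb C^n}\phi(L_p(z,ws))\,\chi(A(z,w,s))\,B(z,w,s)\,d\lambda(s)$ with explicit expansions of the weight $A$ and the Jacobian factor $B$. Because $L_p(z,\cdot)$ is holomorphic in its second argument with respect to $J(p)$, differentiating $\phi\circ L_p$ in $z,w$ produces $\partial_{J(p)}{\rm exph}$ applied to horizontal and vertical lifts of $\varrho,\eta$; this identifies the main term $\tau_y$. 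The second-derivative computation then generates the Hessian term $\tau\wedge\bar\tau$ plus a correction proportional to $|w|^2$, which after an integration by parts in the fiber variable (using $\chi_1(t)=\int_{+\infty}^t\chi(u)\,du$ to move a $\bar\zeta$-derivative off $\chi$) collapses into the vector fields $\Xi$ and $U$ displayed in the statement. The curvature tensor $c_{lmjk}=R^m_{lj\bar k}$ enters through the expansion of the metric $|\zeta|^2$ and of the coefficients $d_{\alpha mk},e_{\alpha mjk}$ recorded in Proposition \ref{D prop}(3).

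The key quantitative point is the order of the error terms. I would argue that every monomial in $A,B$ of the form $w^{|\alpha|-1}$ with $|\alpha|\ge N+1$ contributes only $O(|w|^N)$ to $\partial_{\tilde J_0}\Phi$ and $O(|w|^{N-1})$ to $\partial_{\tilde J_0}\bar\partial_{\tilde J_0}\Phi$, since each fiber integration against $\chi$ or $\chi_1$ is uniformly bounded and the polynomial $L_p$ was truncated at degree $N$ in $\xi$ precisely so that the neglected tail has this order. The antiholomorphic $z$-dependence, encoded by the factors $\bar z_k$ in $\rho_p$, is what produces the genuinely non-holomorphic but smooth correction $|w|^2 V$ rather than an error; keeping track of which terms are $O(|w|^N)$ errors and which assemble into $\Xi,U$ is the bookkeeping heart of the proof.

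The hard part will be verifying that the $(2,0)+(0,2)$ components do not contaminate the stated $(1,1)$-identity: in the almost complex setting $d\tilde Jd\Phi$ has a genuine $(2,0)+(0,2)$ part governed by the Nijenhuis tensor $N^k_{\bar i\bar j}$, and one must check, using Lemma \ref{dJd} and the vanishing $\theta^i_j(p)=0$ of the connection forms at the center, that at $z=0$ this part is expressed purely through first derivatives $\tfrac{\partial}{\partial z_k}\Phi$, $\tfrac{\partial}{\partial\bar z_k}\Phi$ and hence does not enter the $(1,1)$-Hessian estimate. Equivalently, I must justify that evaluating everything at the center point $p$ and only afterwards comparing with $\tilde J_0$ is legitimate to the required order in $|w|$; this is where the quasi-holomorphicity of ${\rm exph}_p$ from Proposition \ref{D prop}(2) and the smallness $\|J-J_{st}\|_{C^\alpha}\le\lambda_0$ of Diederich--Sukhov are both indispensable. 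Once this reduction is secured, the remaining steps are a direct, if lengthy, adaptation of Demailly's integration-by-parts calculation, and I would present only the structural identities, relegating the routine multi-index algebra to the analogy with \cite[Proposition 3.8]{D2}.
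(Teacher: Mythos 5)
Your overall route coincides with the paper's: both arguments reduce to the integrable model $\tilde J_0=J(p)\oplus J_{st}$ at the center point via Lemma \ref{dJd} and the Diederich--Sukhov normalization, then differentiate the representation (\ref{smoothing1}) directly, and integrate by parts in the fiber variable (using $\chi_1$) so that the divergence-type terms reassemble into the correction fields $\Xi$ and $U$, exactly as in Demailly's Proposition 3.8 of \cite{D2}.

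There is, however, a genuine gap in your error analysis. You justify the remainders $O(|w|^N)$ in (1) and $O(|w|^{N-1})$ in (2) by asserting that ``each fiber integration against $\chi$ or $\chi_1$ is uniformly bounded.'' That is false in the situation at hand: $\phi$ is only quasi-$J$-plurisubharmonic, with genuine $-\infty$ singularities (handling such singularities is the entire point of the regularization theorem), so none of the relevant fiber integrals is uniformly bounded. The paper controls them by the mean value property of plurisubharmonic functions (cf. \cite{S1}): $\int_{|s|<1}|\phi(p+ws)|\,d\lambda(s)\le C(1+\log|w|)$, and, after rescaling, $\int_{|\zeta|<1}\partial_{J(p)}\phi({\rm exph}_p(w\zeta))\,d\lambda(\zeta)=O(|w|^{-1})$ and $\int_{|\zeta|<1}\partial_{J(p)}\bar\partial_{J(p)}\phi({\rm exph}_p(w\zeta))\,d\lambda(\zeta)=O(|w|^{-2})$. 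With these (divergent, not bounded) estimates, the direct differentiation of (\ref{smoothing1}) yields only remainders $O(|w|^{N-1}\log|w|)$ in (\ref{par J_0 equ}) and $O(|w|^{N-2}\log|w|)$ in (\ref{par par equ}), which is strictly weaker than the statement. The paper closes this gap by a further step you omit: it reruns the expansion with $N$ replaced by $N+2$, checks that the extra terms (multi-indices $|\alpha|=N+1,N+2$) enter $\Xi$ and $U$ at order $O(|w|^{N-1})$, hence enter $\tau$ and $|w|^2V$ at order $O(|w|^{N+1})$, and pairs them against the $O(|w|^{-1})$ and $O(|w|^{-2})$ derivative integrals to land precisely on $O(|w|^N)$ and $O(|w|^{N-1})$. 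This is not a cosmetic refinement: Theorem \ref{theorem F1} applies the proposition at $N=2$, where your unimproved bound for part (2) reads $O(\log|w|)$ and diverges as $w\to 0$, while the paper's bound $O(|w|)$ vanishes; without the $N\mapsto N+2$ bootstrap (or an equivalent device) the downstream lower bound for $\mathcal{D}^+_J\Phi$ does not follow.
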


      \begin{proof}
               Our approach is similar to the proof of Proposition 3.8 in Demailly \cite{D2}.
               A brute force differentiation of (\ref{smoothing1}) gives
               \begin{eqnarray}\label{partial equ}
                  \partial_{\tilde{J}_0} \Phi_{(p,w)}\cdot(\varrho,\eta)&=&\int_{\mathbb{C}^n} \partial_{\tilde{J}_0}(\phi\circ L_p)_{(0,ws)}\cdot(\varrho,\eta)
                  \chi(A(0,w,s))B(0,w,s) d\lambda(s)       \nonumber\\
                   && - \int_{\mathbb{C}^n}(\phi\circ L_p)_{(0,ws)}E_{(w,s)}\cdot(\varrho,\eta)d\lambda(s) ,
               \end{eqnarray}
    where
    $$
    E_{(w,s)}=-\partial_{\tilde{J}_0}(\chi(A(z,w,s))B(z,w,s))_{(z,w)}.
    $$
    We find
    \begin{eqnarray}\label{E equ}
       E_{(w,s)}\cdot(\varrho,\eta) &=& \sum_{l,m}\frac{\partial^2}{\partial \bar{s}_l\partial s_m}(\chi(|s|^2)
       \sum_{\alpha,j}\overline{d_{\alpha lj}}\bar{w}^{|\alpha|-1}\frac{\alpha_m}{|\alpha|}\bar{s}^{\alpha-1_m}\varrho_j ) \nonumber \\
       &&  +O(|w|^{N-1}|s|^N)\cdot(\varrho,\eta),
    \end{eqnarray}
     \begin{eqnarray}\label{zeta equ}
        \partial_{\tilde{J}_0}\bar{\partial}_{\tilde{J}_0} \Phi_{(p,w)}\cdot(\varrho\wedge\bar{\varrho},\eta\wedge\bar{\eta})
        &=& \int_{\mathbb{C}^n} \partial_{\tilde{J}_0}\bar{\partial}_{\tilde{J}_0} (\phi\circ L_p)_{(0,ws)}
        \cdot(\varrho\wedge\bar{\varrho},\eta s\wedge\overline{\eta s}) \nonumber \\
          &&    \,\,\,\,\,\,\,\,\,\,    \,\,\,\,\,\,\,\,\,\, \cdot\chi(A(0,w,s))B(0,w,s) d\lambda(s) \nonumber\\
        &&- \int_{\mathbb{C}^n}\bar{\partial}_{\tilde{J}_0}(\phi\circ L_p)_{(0,ws)}\cdot(\bar{\varrho},\overline{\eta s}) \cdot E_{(w,s)}\cdot(\varrho,\eta s) d\lambda(s)  \nonumber\\
        && - \int_{\mathbb{C}^n} \partial_{\tilde{J}_0}(\phi\circ L_p)_{(0,ws)}\cdot(\varrho,\eta s)  \cdot \overline{E_{(w,s)}}\cdot (\bar{\varrho},\overline{\eta s})  d\lambda(s)  \nonumber\\
        &&-\int_{\mathbb{C}^n} (\phi\circ L_p)_{(0,ws)}
        \cdot F_{(w,s)}\cdot(\varrho\wedge\bar{\varrho},\eta s\wedge\overline{\eta s}) d\lambda(s), \nonumber\\
        &&
     \end{eqnarray}
     where
     \begin{equation}
       F_{(w,s)}= -\partial_{\tilde{J}_0}\bar{\partial}_{\tilde{J}_0}(\chi(A(z,w,s))B(z,w,s))_{(z,w)}.
     \end{equation}
     We find
     \begin{eqnarray}\label{F equ}
       && F_{(w,s)}\cdot(\varrho\wedge\bar{\varrho},\eta s\wedge\overline{\eta s}) \nonumber\\
        &&~~~= \sum_{l,m}\frac{\partial^2}{\partial \bar{s}_l\partial s_m}(\chi_1(|s|^2)\sum_{j,k}c_{lmjk}\varrho_j\bar{\varrho}_k ) \nonumber\\
        &&~~~+2Re\{\sum_{l,m}\frac{\partial^2}{\partial \bar{s}_l\partial s_m}(\chi_1(|s|^2)
        \sum_{\alpha,j,k}e_{\alpha mjk}w^{|\alpha|-1}\frac{\alpha_l}{|\alpha|}s^{\alpha-1_l}\varrho_j\bar{\varrho}_k )  \nonumber\\
        &&~~~+\sum_{l,m}\frac{\partial^2}{\partial s_l\partial \bar{s}_m}(\chi_1(|s|^2)
        \sum_{\alpha,k}d_{\alpha mk}(|\alpha|-1)w^{|\alpha|-2}\frac{\alpha_l}{|\alpha|}\bar{s}^{\alpha-1_l}\eta\bar{\varrho}_k )\}   \nonumber\\
        &&~~~-\sum_{l,m}\frac{\partial^2}{\partial \bar{s}_l\partial s_m}(\chi_1(|s|^2)
        \sum_{\alpha,k}d_{\alpha mk}\overline{d_{\beta lj}}w^{|\alpha|-1}\bar{w}^{|\beta|-1}s^{\alpha}\bar{s}^{\beta}\varrho_j\bar{\varrho}_k )  \nonumber\\
        &&~~~  +O(|w|^{N-2}|s|^N)(\varrho\wedge\bar{\varrho},\eta s\wedge\overline{\eta s}) .
     \end{eqnarray}
  In all these expansions, the remainder terms $O(\cdot)$ involve uniform constants when
       the origin x of coordinates belongs to a compact subset of a coordinate patch.
       When $U_p$ is very small, without loss of generality, we may assume that $\phi$ is strictly $J$-convex (and $J(p)$-convex).
       By the mean value properties of plurisubharmonic functions (cf. L. Simon \cite{S1}), we have
           $$
           \int_{|s|<1}|\phi(p+ws)|d\lambda(s)\leq C(1+\log|w|)
           $$
   locally uniformly in $p$.
   An integration by parts with compact supports yields
   $$
    \int_{|s|<1}\partial_{\tilde{J}_0}(\phi\circ L_p)_{(0,ws)}O(|w|)d\lambda(s)= \int_{|s|<1}\phi\circ L_p(0,ws)d\lambda(s)=O(\log|w|).
   $$
   Hence, the remainder term $O(|w|^{N-1})$ in $E_{(w,s)}$
    gives contributions of order at most $O(|w|^{N-1}\log|w|)$ in $\partial_{\tilde{J}_0}\Phi$
   as $|w|$ tends to 0;
   the remainder terms $O(|w|^{N-1})$ in $E_{(w,s)}$ and $O(|w|^{N-2})$ in $F_{(w,s)}$
    give contributions of order at most $O(|w|^{N-2}\log|w|)$ in $\partial_{\tilde{J}_0}\bar{\partial}_{\tilde{J}_0}\Phi$
   as $|w|$ tends to 0.

   By (\ref{E equ}), an integration by parts in (\ref{partial equ}) gives
   \begin{eqnarray}\label{par J_0 equ}
     \partial_{\tilde{J}_0}\Phi_{(p,w)}\cdot(\varrho,\eta) &=&\int_{\mathbb{C}^n}\partial_{J(p)}(\phi\circ L_p)\{(\varrho,\eta s)+|w|^2(0,\Xi) \} \nonumber \\
      &&\chi(A(0,w,s))B(0,w,s) d\lambda(s)\nonumber \\
      &&  +O(|w|^{N-1}\log|w|)\cdot(\varrho,\eta),
   \end{eqnarray}
    with
   $$
   \Xi(\zeta)=\sum_{\alpha,j,l,m}\frac{1}{\chi(|s|^2)}
      \frac{\partial}{\partial\bar{s}_l}(\chi_1(|s|^2)\bar{s}^{\alpha-1_m})
      \overline{d_{\alpha lj}}\frac{\alpha_m}{|\alpha|}\bar{w}^{|\alpha|-2}\varrho_j\frac{\partial}{\partial z_m}.
   $$
  The choice $\chi(t)=\frac{C}{(1-t)^2}\exp(\frac{1}{t-1})$ for $t<1$ gives $\chi_1(t)=-C\exp(\frac{1}{t-1})$,
  so $$\chi_1(t)/\chi(t)=(1-t)^2$$ is smooth and bounded, and our vector field $\Xi(\zeta)$ is smooth. We can write
  $$
  \tau=dL_p(0,ws)(\varrho,\eta s+|w|^2\Xi(\zeta)).
  $$
   Since $${\rm exph}_z(\zeta)=L_p(z,\rho_p(z,\xi)),\,\,\,\rho_p(0,\xi)=\xi+O(\xi^{N+1}),$$
   and $$\partial_{J(p)}\rho_p(0,\xi)=d\xi+O(\xi^{N})d\xi$$ by Proposition \ref{D prop},
   we infer that the $(1,0)$-differential of $\rm{exph}$ at $(p,\zeta)\in T^{1,0}M$ is
   $$\partial_{J(p)}\rm{exph}_{(p,\zeta)}=dL_p(0,\xi)+O(\xi^{N})d\xi$$
    modulo the identification of the tangent spaces $T(T^{1,0}M)_{(p,\xi)}$
    and $T(T\mathbb{C}^n)_{(0,\xi)}$ given by the coordinates $(z,\xi)$ on $T^{1,0}M$.
    However, these coordinates are precisely those which realize the splitting
    $$T(T^{1,0}M)_{(p,\xi)}=(T^{1,0}_pM)^h\oplus(T^{1,0}_pM)^v$$ with respect to the
    Chern connection on $U_p$.
    Since $s=\xi+O(w^N\xi^{N+1})$ and $\xi=\zeta$ at $z=0$,
   we get $$\tau=\partial_{J(p)}{\rm exph}_{(p,w\zeta)}(\varrho^h+\eta\zeta^v+|w|^2\Xi(\zeta)^v)+O(|w|^N|\zeta|^{N}).$$
    We can drop the terms $O(|w|^N)$ in $\tau$ because
    \begin{eqnarray}\label{int equ}
      \int_{|\zeta|<1}\partial_{J(p)}\phi({\rm exph}_p(w\zeta))d\lambda(\zeta) &=& \frac{1}{|w|^{2n}}\int_{|\zeta|<|w|}\partial_{J(p)}\phi({\rm exph}_p(\zeta))d\lambda(\zeta)  \nonumber\\
       &=& O(|w|^{-1}).
    \end{eqnarray}
   By (\ref{E equ}) and (\ref{F equ}), an integration by parts in (\ref{zeta equ}) gives
   \begin{eqnarray}\label{par par equ}
     \partial_{\tilde{J}_0}\bar{\partial}_{\tilde{J}_0} \Phi_{(p,w)}(\varrho,\eta)\wedge\overline{(\varrho,\eta)} &=&
      \int_{\mathbb{C}^n}\partial_{\tilde{J}_0}\bar{\partial}_{\tilde{J}_0}(\phi\circ L_p)_{(0,ws)}\cdot\{(\varrho,\eta s)\wedge\overline{(\varrho,\eta s)} \nonumber \\
      &&+|w|^2(0,\Xi(\zeta))\wedge\overline{(\varrho,\eta s)}+|w|^2(\varrho,\eta s)\wedge\overline{(0,\Xi(\zeta))}   \nonumber \\
      &&+|w|(0,U)\}\chi(A(0,w,s))B(0,w,s)d\lambda(s)   \nonumber \\
      &&+O(|w|^{N-2}\log|w|)(\varrho,\eta) \wedge \overline{(\varrho,\eta) },
   \end{eqnarray}
   where
   $$U(\zeta)=\sum_{l,m}\frac{1}{2}(U_{m,l}+\overline{U_{l,m}})\frac{\partial}{\partial z_m}\wedge \frac{\partial}{\partial z_l}$$ is smooth,
   \begin{eqnarray*}
     U_{m,l}(\zeta) &=&-\frac{\chi_1(|s|^2)}{\chi(|s|^2)}\cdot\{\sum_{j,k}c_{lmjk}\varrho_j\bar{\varrho}_k
     +2\sum_{\alpha,j,k}e_{\alpha mjk}w^{|\alpha|-1}\frac{\alpha_l}{|\alpha|}s^{\alpha-1_l}\varrho_j\bar{\varrho}_k  \nonumber\\
      && +2\sum_{\alpha,k}d_{\alpha mk}(|\alpha|-1)w^{|\alpha|-2}\frac{\alpha_l}{|\alpha|}s^{\alpha-1_l}\eta\bar{\varrho}_k \} \nonumber\\
      && +\sum_{\alpha,\beta,j,k}d_{\alpha mk}\overline{d_{\beta lj}}w^{|\alpha|-1}\overline{w}^{|\beta|-1}s^{\alpha}\bar{s}^{\beta}\varrho_j\bar{\varrho}_k.
   \end{eqnarray*}
   We can write
   \begin{eqnarray*}
      &&(\varrho,\eta s)\wedge \overline{(\varrho,\eta s)}+|w|^2(0,\Xi(\zeta))\wedge \overline{(\varrho,\eta s)}
      +|w|^2(\varrho,\eta s)\wedge \overline{(0,\Xi(\zeta))}+|w|(0,U) \\
      &=&  (\varrho,\eta s+|w|^2\Xi(\zeta)\wedge\overline{(\varrho,\eta s+|w|^2\Xi(\zeta))}+(0,U-|w|^2\Xi(\zeta)\wedge\overline{\Xi(\zeta)})).
   \end{eqnarray*}
  Therefore (\ref{par par equ}) implies the formula in Proposition \ref{lem e1} with
  $$
  V=dL_{p(0,ws)}(0,U-|w|^2\Xi\wedge\overline{\Xi}).
  $$
  Finally, we get
    $$
     V=\partial_{\tilde{J}_0}{\rm exph}_{(p,w\zeta)}(U^v-|w|^2\Xi^v\wedge \overline{\Xi^v})+O(|w|^N|\zeta|^N).
     $$
   Also, we can get
   \begin{eqnarray}\label{par par equ}
      \int_{|\zeta|<1} \partial_{\tilde{J}_0}\bar{\partial}_{\tilde{J}_0}{\rm exph}_p(w\zeta)d\lambda(\zeta)&=&
       \frac{1}{|w|^{2n}} \int_{|\zeta|<|w|} \partial_{\tilde{J}_0}\bar{\partial}_{\tilde{J}_0}{\rm exph}_p(\zeta)d\lambda(\zeta) \nonumber\\
      &=&  O(|w|^{-2}).
   \end{eqnarray}
   After substituting $\zeta$ to $s$ in the formal expression of $\Xi$ and $U$,
   we get precisely the formula given in Proposition \ref{lem e1}.
   As done in the proof of Proposition $3.8$ in \cite{D2},
   the remainder term $O(|w|^{N-1}\log|w|)$ in (\ref{par J_0 equ}) (resp. $O(|w|^{N-2}\log|w|)$ in (\ref{par par equ}) )
   is in fact of the type  $O(|w|^N)$ (resp. $O(|w|^{N-1})$).
   To see this, we increase $N$ by two units and estimate the additional terms in the expansions,
    due to the contribution of all multi-indices $\alpha$ with $|\alpha|=N+1$ or $N+2$.
    It is easily seen that the additional terms in $\Xi$ and $U$ are $O(|w|^{N-1})$,
    so they are $O(|w|^{N+1})$ in $\tau$ and $|w|^2V$.
    The contribution of these terms to $\partial_{J(p)}\Phi_{(p,w)}$ and $\partial_{J(p)}\bar{\partial}_{J(p)}\Phi_{(p,w)}$
    are thus of the forms
    $$
    \int_{|\zeta|<1}\partial_{J(p)}\phi({\rm exph}_p(w\zeta))O(|w|^{N+1})d\lambda(\zeta)=O(|w|^N),
    $$
    $$
    \int_{|\zeta|<1}\partial_{J(p)}\bar{\partial}_{J(p)}\phi({\rm exph}_p(w\zeta))O(|w|^{N+1})d\lambda(\zeta)=O(|w|^{N-1}).
    $$
    This completes the proof of Proposition \ref{lem e1}.
      \end{proof}

      By Lemma \ref{dJd}, (\ref{par J_0 equ}) and (\ref{int equ}), we have
      \begin{col}
     Let $N=2$, we have
     \begin{eqnarray*}
       ( \frac{1}{2}d\tilde{J}d\Phi(z,w)_{(0,w)})^{(0,2)}(\bar{\varrho},0)\wedge (\bar{\varrho},0)&=&
       \sqrt{-1}\int_{\zeta\in T^{1,0}_pM}-\sum_k\frac{\partial}{\partial z_k}(\phi\circ L_p(z,w))N^k(p) \\
        && \{[(\bar{\varrho},0)+|w|^2(0,\overline{\Xi})]\wedge[(\bar{\varrho},0)+|w|^2(0,\overline{\Xi})]\}_{(0,w)} \\
        && +O(|w|^2)\\
        &=& \sqrt{-1}\int_{\zeta\in T^{1,0}_pM}-\sum_{k,i,j}\frac{\partial}{\partial z_k}(\phi\circ L_p(z,w))N^k_{\bar{i}\bar{j}}
        \bar{\varrho}_i\wedge\bar{\varrho}_j \\
        &&+O(|w|).
     \end{eqnarray*}
      \end{col}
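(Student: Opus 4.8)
The plan is to obtain the identity by reading the anti-holomorphic part of the regularized Hessian off Lemma \ref{dJd} and then feeding in the first-order integral representation (\ref{par J_0 equ}) already produced in the proof of Proposition \ref{lem e1}. First I would regard $\Phi$ as a smooth function on $(U_p\times\mathbb{C},\tilde{J})$ with $\tilde{J}=J\oplus J_{st}$ and apply Lemma \ref{dJd} to it. The structural point, peculiar to the non-integrable setting, is that the $(2,0)+(0,2)$-component of $d\tilde{J}d\Phi$ carries no second derivatives: it is governed entirely by the Nijenhuis tensor of $\tilde{J}$ paired with the first-order derivatives of $\Phi$. Since $J_{st}$ on the $\mathbb{C}$-factor is integrable, the Nijenhuis tensor of $\tilde{J}$ along the slice $\{0\}\times\mathbb{C}$ reduces to $N^k_{\bar{i}\bar{j}}(p)$ with indices ranging only over the $M$-directions, and the $w$-slot contributes nothing. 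Contracting the $(0,2)$-piece at $(0,w)$ with the anti-holomorphic vectors $(\bar{\varrho},0)$ therefore leaves, by Lemma \ref{dJd}, exactly $-\sqrt{-1}\sum_{k,i,j}N^k_{\bar{i}\bar{j}}(p)\,\tfrac{\partial\Phi}{\partial z_k}(0,w)\,\bar{\varrho}_i\wedge\bar{\varrho}_j$, the factor $\tfrac{1}{2}$ absorbing the $2$ in the lemma.

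Next I would substitute for the holomorphic first derivative $\tfrac{\partial\Phi}{\partial z_k}(0,w)$ the integral formula (\ref{par J_0 equ}) taken at $N=2$. As noted at the end of the proof of Proposition \ref{lem e1}, the remainder there is in fact of genuine order $O(|w|^N)=O(|w|^2)$ rather than $O(|w|^{N-1}\log|w|)$; this is precisely what produces the $O(|w|^2)$ error in the first displayed equality. Inserting the direction $(\varrho,\eta)=(e_k,0)$ into (\ref{par J_0 equ}) replaces $\tfrac{\partial\Phi}{\partial z_k}$ by the integral of $\partial_{J(p)}(\phi\circ L_p)$ paired with the corrected vector $(e_k,0)+|w|^2(0,\Xi)$; tracking how this correction is transported through $d\,{\rm exph}$ into the Nijenhuis contraction yields the conjugated bracketed factors $(\bar{\varrho},0)+|w|^2(0,\overline{\Xi})$ appearing in the statement, each of the two factors of the wedge carrying its own $\Xi$-correction.

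Finally, to reach the leading-order formula I would discard the $|w|^2(0,\overline{\Xi})$ corrections. Expanding $[(\bar{\varrho},0)+|w|^2(0,\overline{\Xi})]\wedge[(\bar{\varrho},0)+|w|^2(0,\overline{\Xi})]$ gives the pure term $(\bar{\varrho},0)\wedge(\bar{\varrho},0)$ together with cross terms and a quartic term, each carrying at least one factor of $|w|^2$ and at least one vertical entry $(0,\overline{\Xi})$. Only the pure term survives in the leading coefficient (the vertical entries do not feed the $M$-indices of $N^k_{\bar{i}\bar{j}}$), and each omitted term, after integration against $\partial_{J(p)}\phi({\rm exph}_p(w\zeta))$, is controlled by (\ref{int equ}): it contributes $O(|w|^2)\cdot O(|w|^{-1})=O(|w|)$. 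This gives the second displayed equality, with the error degraded from $O(|w|^2)$ to $O(|w|)$, and the surviving integrand $-\sum_{k,i,j}\tfrac{\partial}{\partial z_k}(\phi\circ L_p)\,N^k_{\bar{i}\bar{j}}\,\bar{\varrho}_i\wedge\bar{\varrho}_j$.

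I expect the main obstacle to be the error accounting in the last two steps: reconciling the two orders $O(|w|^2)$ and $O(|w|)$, and confirming that the $\Xi$-correction and the higher wedge terms genuinely fall under the mean-value estimate (\ref{int equ}) uniformly as $w\to 0$. A secondary subtlety, which underlies the whole argument, is justifying that only the holomorphic first derivative enters the $(0,2)$-part at $(0,w)$ — that is, that the contribution is through $N^k_{\bar{i}\bar{j}}(p)$ with no input from the $\mathbb{C}$-direction — which rests on the integrability of $J_{st}$ and on applying Lemma \ref{dJd} correctly to the product structure $\tilde{J}$.
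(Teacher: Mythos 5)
Your proposal is correct and takes essentially the same route as the paper: the paper obtains this corollary precisely by combining Lemma \ref{dJd} (which reduces the $(0,2)$-part of $d\tilde{J}d\Phi$ to the Nijenhuis tensor paired with first derivatives of $\Phi$, with the $\mathbb{C}$-factor contributing nothing since $J_{st}$ is integrable), the integral representation (\ref{par J_0 equ}) at $N=2$ with its improved $O(|w|^{N})$ remainder, and the mean-value estimate (\ref{int equ}) to reduce the $|w|^2(0,\overline{\Xi})$ correction terms to $O(|w|)$. Your error accounting ($O(|w|^2)$ in the first equality, degraded to $O(|w|^2)\cdot O(|w|^{-1})=O(|w|)$ in the second) matches the paper's exactly.
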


   \subsection {Regularization of closed positive $(1,1)$-currents on tamed almost complex $4$-manifolds}\label{Demailly}

   In this subsection, we devote to studying regularization of closed positive (1,1) currents on tamed almost complex 4-manifolds. It is similar to J.-P. Demailly's result \cite{D1,D2} that we will see that it is always possible to approximate a closed positive almost complex
 $(1,1)$ current $T$ on almost Hermitian 4-manifold $(M,g_J,J,F)$ by smooth closed real currents admitting a small negative part, and that this negative part
 can be estimated in terms of the Lelong numbers of $T$ and geometry of $M$.
 Let $(M,g_J,J,F)$ be an almost Hermitian 4-manifold tamed by a symplectic form $\omega_1=F+d^{-}_J(v+\bar{v})$.
 In general, $\partial_J\bar{\partial}_Jf$ is not $d$-closed since $J$ is not integrable.
   In Section \ref{2}, we have defined an operator
     \begin{equation}
  \mathcal{D}^+_J: C^\infty(M)\longrightarrow\Omega^+_J(M).
     \end{equation}
     For any $f\in C^\infty(M)$, $\mathcal{D}^+_J(f)\in\Omega^+_J(M)$ is $d$-closed.
   Let $T$ be a closed strictly positive current of bidegree $(1,1)$ on $(M,g_J,J,F)$ tamed by $\omega_1$.
   Let $\widetilde{\omega}$ be a smooth closed $(1,1)$-form representing the same $\mathcal{D}^+_J$-cohomology class as $T$
    and let $\psi=\mathcal{D}^+_J(f)$ be a quasi-$J$-positive $(1,1)$-current (that is, a $(1,1)$-form which is
   locally the sum of a positive $(1,1)$-current and a smooth  $(1,1)$-form)
    such that $T=\widetilde{\omega}+\mathcal{D}^+_J(f)$. Such a function $f$, is called a quasi-$J$-plurisubharmonic function.
    Such a decomposition exists since we can always find an open covering $(\Omega_k)$ where $\Omega_k$ are $J$-pseudoconvex domains
      such that $T=\mathcal{D}^+_J(f_k)$
   over $\Omega_k$ (see Lemma \ref{current app} or Theorem \ref{app 1} in Appendix A),
   and costruct a global $f=\sum\varsigma_kf_k$ by means of a partion of unity $(\varsigma_k)$ (note that $f-f_k$ is smooth on $\Omega_k$).
   Notice that for any $p\in M$, there exists a $J$-compatible symplectic form $\omega_p$ on a small neighborhood
   $U_p$ which is $J$-pseudoconvex.
   By the construction of $\omega_p$ (cf. Lejmi \cite{L2}),
   there exists real $1$-form $\alpha$ on $U_p$ such that $\omega_p=d\alpha$.
   Hence, by Lemma \ref{current app} (that is Theorem \ref{app 1} in Appendix \ref{Hormander}), there is a real function
   $f_p$ on $U_p$ which is strictly $J$-plurisubharmonic
   such that $\omega_p=\widetilde{\mathcal{D}}^+_J(f_p)=d\mathcal{\widetilde{W}}(f_p)$ with respect to metric $g_p(\cdot,\cdot)=\omega_p(\cdot,J\cdot)$.
   Since $(U_p,\omega_p)$ is a symplectic $4$-manifold, thus $\mathcal{\widetilde{W}}(f_p)=\mathcal{W}(f_p)$ (see Section \ref{2}),
   \begin{equation}\label{local rep}
     \omega_p=d\mathcal{W}(f_p)=\mathcal{D}^+_J(f_p).
   \end{equation}
   Therefore, we have the following lemma,
   \begin{lem}\label{exist lem}
   Suppose that $(M,J)$ is an almost complex $4$-manifold.
   For any $p\in M$, there exist a small neighborhood $U_p$ and a smooth strictly $J$-plurisubharmonic function $f_p$ on $U_p$ such that $\mathcal{D}^+_J(f_p)$
   is a strictly positive closed $(1,1)$-form on $U_p$.
   \end{lem}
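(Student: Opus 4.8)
The plan is to read the statement off directly from Lejmi's local symplectic property together with the solvability of the $\widetilde{\mathcal{W}},d^-_J$-problem established in Appendix \ref{Hormander}; this is precisely the content of equation (\ref{local rep}). Fix $p\in M$. By the local symplectic property \cite{L2} there is a neighbourhood $U_p$ of $p$ carrying a $J$-compatible symplectic form $\omega_p$; after shrinking I may assume $U_p$ is a star-shaped, strictly $J$-pseudoconvex domain and equip it with the almost K\"ahler structure $(U_p,g_p,J,\omega_p)$, where $g_p(\cdot,\cdot)=\omega_p(\cdot,J\cdot)$. Because $\omega_p$ is $J$-compatible it is $J$-invariant, hence a smooth, closed, strictly positive real form of type $(1,1)$. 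By the Poincar\'e Lemma $\omega_p=d\alpha$ for a real $1$-form $\alpha$ on $U_p$, and since $\omega_p$ is of type $(1,1)$ we get $d^-_J\alpha=0$ and $\omega_p=d^+_J\alpha$.

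Next I would solve the $\widetilde{\mathcal{W}},d^-_J$-problem of Theorem \ref{app 1} on $U_p$ against a strictly $J$-plurisubharmonic weight: as $d^-_J\alpha=0$, there is $f_p\in L^2_2(U_p)_0$ with $\widetilde{\mathcal{W}}(f_p)=\alpha$ (after replacing $\alpha$ by its projection perpendicular to $\ker d$, as in the proof of Lemma \ref{current app}), so that $\omega_p=d\alpha=d\widetilde{\mathcal{W}}(f_p)=\widetilde{\mathcal{D}}^+_J(f_p)$. Since $d\omega_p=0$, Remark \ref{2r6} gives $\widetilde{\mathcal{W}}(f_p)=\mathcal{W}(f_p)$ and $\widetilde{\mathcal{D}}^+_J(f_p)=\mathcal{D}^+_J(f_p)$ for the metric $g_p$; therefore $\mathcal{D}^+_J(f_p)=\omega_p$ is a smooth, strictly positive, closed $(1,1)$-form on $U_p$, which is exactly (\ref{local rep}).

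It remains to verify that $f_p$ is strictly $J$-plurisubharmonic. Shrinking $U_p$ once more, I would choose Darboux coordinates $(z_1,z_2)$ for $\omega_p$ with $J(p)=J_{st}$ at $p$; since $\mathcal{D}^+_J(f_p)=\omega_p$ is a symplectic form compatible with both $J$ and $J(p)$, in these coordinates it is the standard symplectic form, forcing $\mathcal{D}^+_J(f_p)=2\sqrt{-1}\,\partial_{J(p)}\bar{\partial}_{J(p)}f_p$ and $f_p=|z_1|^2+|z_2|^2$ up to a harmonic correction, whence $\sqrt{-1}\,\partial_J\bar{\partial}_Jf_p>0$ on $U_p$. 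If $f_p$ is only of class $L^q_2$, I would instead approximate it by smooth strictly $J$-plurisubharmonic functions via Proposition \ref{plurisubharmonic prop}, exactly as in the last step of the proof of Lemma \ref{current app}. The one genuinely delicate point is this final identification, which combines interior regularity for the $\widetilde{\mathcal{W}}$-operator with the almost K\"ahler identity $\widetilde{\mathcal{W}}=\mathcal{W}$ rather than any new estimate; once it is in place, every other step is a formal consequence of the machinery already developed.
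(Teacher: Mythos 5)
Your proposal is correct and follows essentially the same route as the paper: local symplectic property to get $\omega_p=d\alpha$ on a strictly $J$-pseudoconvex $U_p$, solvability of the $\widetilde{\mathcal{W}},d^-_J$-problem (Theorem \ref{app 1}, via Lemma \ref{current app}) to write $\omega_p=\widetilde{\mathcal{D}}^+_J(f_p)$, and the almost K\"ahler identity $\widetilde{\mathcal{W}}=\mathcal{W}$ to conclude $\omega_p=\mathcal{D}^+_J(f_p)$, which is exactly the paper's equation (\ref{local rep}). Your final Darboux-coordinate verification that $f_p$ is strictly $J$-plurisubharmonic reproduces the argument the paper gives inside the proof of Lemma \ref{current app}, so nothing is missing.
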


   Now suppose that $(M,g_J,J,F)$ is an almost Hermitian $4$-manifold tamed by $\omega_1=F+d^-_J(v+\bar{v})$
   where $v\in \Omega^{0,2}_J(M)$.
   Let $T=\widetilde{\omega}+\mathcal{D}^+_J(\phi)$ be a closed $(1,1)$-current on $M$, where $\widetilde{\omega}$ is a smooth closed $(1,1)$-form
   on $M$ and $\phi\in L^q_2(M)$ for some fixed $q\in(1,2)$.
    It is easy to see that
    \begin{equation}\label{lelong lelong}
      \nu_1(T,p)=\nu_1(\mathcal{D}^+_J(\phi),p),\,\,\,p\in M,
    \end{equation}
   where $\nu_1$ is the Lelong number defined in Appendix \ref{Lelong} (cf. Definition \ref{Lelong 1}).

  As done in Appendix \ref{Exponential}, for almost Hermitian 4-manifold $(M,g_J,J,F)$,
  we choose the second canonical connection $\nabla^1$ with respect to the almost Hermitian structure $(g_J,J,F)$.
  Then, for the coframe $\{ \theta^1,\theta^2 \}$ of the metric $g=g_J-\sqrt{-1}F$ on $M$,
  the curvature form of $\nabla^1$ is given by
   \begin{eqnarray*}
        && (\Psi^j_i)^{(1,1)}=R^j_{ik\bar{l}}\theta^k\wedge\bar{\theta}^l,~1\leq i,j,k,l \leq 2,\\
     && (\Psi^j_i)^{(2,0)}=K^i_{ikl}\theta^k\wedge\theta^l,~1\leq i,j,k,l \leq 2,\\
     && (\Psi^j_i)^{(0,2)}=K^i_{j\bar{k}\bar{l}}\bar{\theta}^k\wedge\bar{\theta}^l,~1\leq i,j,k,l \leq 2,
       \end{eqnarray*}
      with $K^i_{jkl}=-K^i_{jlk},~K^i_{j\bar{k}\bar{l}}=-K^i_{j\bar{l}\bar{k}}$ and $R^i_{jk\bar{l}}=-R^j_{il\bar{k}}$.
          Denote by $R^{\nabla^1}$ the (1,1) part of the curvature form $\Psi$ of $\nabla^1$, hence $R^{\nabla^1}=R^j_{ik\bar{l}}\theta^k\wedge\bar{\theta}^l,~1\leq i,j,k,l\leq 2$.
           Using Taylor expansion of exponential map (cf Appendix \ref{Exponential}),
           we can make regularization of quasi-$J$-plurisubharmonic functions.
           Suppose that $(M,g_J,J,F)$ is an almost Hermitian $4$-manifold tamed by a symplectic form $\omega_1=F+d^-_J(v+\bar{v})$,
           $v\in \Lambda^{0,1}(M)$.
           Let $\phi\in L^q_2(M)$ for some fixed $q\in(1,2)$ be a quasi-$J$-plurisubharmonic function,
           then $d^{1,1}_J(\phi)\in\Lambda^{1,1}_{\mathbb{R}}(M)\otimes L^q$ is a closed $(1,1)$-current.
           As done in Appendix \ref{Exponential}, $\forall p\in M$, choose a strictly $J$-pseudoconvex neighborhood
            $U_p=\{(z_1,z_2)\in \mathbb{C}^2\mid \,\, z_i(p)=0,\,\,i=1,2\}$ of $p$.
            Then
            $$
            \phi_{\varepsilon}(z)=\frac{1}{\varepsilon^4}\int_{\zeta\in T^{1,0}_zM}\phi({\rm exph}_z(\zeta))\chi(\frac{|\zeta|}{\varepsilon^2})d\lambda(\zeta),\,\,\,\varepsilon>0,
            $$
            $$
            \Phi(z,w)=\int_{\zeta\in T^{1,0}_zM}\phi({\rm exph}_z(w\zeta))\chi(|\zeta|^2)d\lambda(\zeta).
            $$
            Here $d\lambda$ denotes the Lebesgue measure on $\mathbb{C}^2$.
            The change of variable $y={\rm exph}_z(w\zeta)$ expresses $ws$ as a smooth function of $y,z$ in a neighborhood of the diagonal in $M\times M$.
            Hence $\Phi$ is smooth over $M\times \{0<|w|<\varepsilon_0\}$ for some $\varepsilon_0>0$.
            Let $\tilde{J}=J\oplus J_{st}$, $\tilde{J}_0=J(p)\oplus J_{st}$ on $U_p\times\mathbb{C}$,
            as done in Appendix \ref{Regularization}, we have the following formula:
            \begin{eqnarray}
              \mathcal{D}^+_{\tilde{J}}(\phi)|_{(p,w)}(\zeta\wedge\bar{\zeta},\eta\wedge\bar{\eta}) &=&
                \int_{\zeta\in T^{1,0}_pM} \mathcal{D}^+_{\tilde{J}_0}\phi(\tau\wedge\bar{\tau}+|w|^2V)_{{\rm exph}_p(w\zeta)}
                \chi(|\zeta|^2)d\lambda(\zeta)\nonumber \\
               && +O(|w|^{N-1})(\zeta\wedge\bar{\zeta},\eta\wedge\bar{\eta}).
            \end{eqnarray}
           Where at $y={\rm exph}_p(w\zeta)$,
           $$\tau_y=\partial_{J(p)}{\rm exph}_{(p,w\zeta)}(\varrho^h+\eta\zeta^v+|w|^2\Xi^v_y),$$
           $$V_y=\partial_{J(p)}{\rm exph}_{(p,w\zeta)}(U^v-|w|^2\Xi^v\wedge \overline{\Xi^v})_y.$$
         For more details, see Appendix \ref{Regularization}.
            The following theorem is similar to Theorem $4.1$ in Demailly \cite{D2}.

     \begin{theo}\label{theorem F1}
       Let $(M,g_J,J,F)$ be an almost Hermitian $4$-dimensional manifold tamed by the symplectic form $\omega_1=F+d^{-}_J(v+\bar{v})$,
        $\nabla^{1}$ the second canonical connection on $TM$.
        Fix a smooth semipositive $(1,1)-$form $u$ on $M$ such that the (1,1) curvature form $R^{\nabla^1}$ of $\nabla^{1}$ satisfies
         $$(R^{\nabla^1}+u\otimes Id_{TM})(\varrho\otimes\xi,\varrho\otimes\xi)\geq 0$$
        $\forall \varrho,\xi\in TM^{1,0}$ such that $\langle \varrho,\xi\rangle=0$.
        Let $T=\widetilde{\omega}+\mathcal{D}^+_J(\phi)$ be a closed real current where $\widetilde{\omega}$ is a smooth closed real $(1,1)-$form and $\phi$ is quasi-J-plurisubharmonic.
         Suppose that $T\geq\gamma$ for some real $(1,1)-$form $\gamma$ with continuous coefficients.
         As $w$ tends to $0$ and $p$ runs over $M$, there is a uniform lower bound
     $$\widetilde{\omega}_p(\zeta\wedge\bar{\zeta})+\mathcal{D}^+_J\Phi_{(p,w)}(\varrho\wedge\bar{\varrho},\eta\wedge\bar{\eta})\geq \gamma_p(\varrho\wedge\bar{\varrho})-\lambda(p,|w|)u_p(\varrho\wedge\bar{\varrho})-\delta(|w|)|\varrho|^2-\frac{1}{\pi}K(|\varrho||\eta|+|\eta|^2),$$
       where $(\varrho,\eta)\in TM^{1,0}\times\mathbb{C}$, $K>0$
       is a sufficiently large constant, $\delta(t)$ a continuous increasing function with $\displaystyle{\lim_{t\rightarrow 0}\delta(t)=0}$,
        and $$\lambda(p,t)=t\frac{\partial}{\partial t}(\Phi(p,t)+Kt^2),$$ where
        $$\Phi(p,w)=\int_{s\in T^{1,0}_pM}\phi(exph_p(ws))\cdot\chi(|s|^2)d\lambda(s).$$
        The above derivative $\lambda(p,t)$ is a nonnegative continuous function on $M\times (0,\varepsilon_0)$ which is increasing in $t$ and such that
      $$\displaystyle{\lim_{t\rightarrow 0}\lambda(p,t)=\nu_1(p,T)}.$$
          In particular, the currents $T_{\varepsilon}=\widetilde{\omega}+\mathcal{D}^+_J(\Phi(\cdot,\varepsilon))$ are smooth closed real currents converging weakly to $T$ as $\varepsilon$ tends to $0$, such that $$T_{\varepsilon}\geq\gamma-\lambda(\cdot,\varepsilon)u-\delta(\varepsilon)F.$$
           \end{theo}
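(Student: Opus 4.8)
The plan is to follow Demailly's proof of Theorem 4.1 in \cite{D2}, transplanting every step to the almost complex category by means of the quasi-holomorphic exponential map $\mathrm{exph}$ and the operator $\mathcal{D}^+_J$. Since both the asserted lower bound and the definition of $\lambda(p,t)$ are local, I would fix $p\in M$, choose a strictly $J$-pseudoconvex complex coordinate neighborhood $U_p$ centered at $p$ as in Appendix \ref{Regularization}, and work on $U_p\times\mathbb{C}$ with the two almost complex structures $\tilde J = J\oplus J_{st}$ and $\tilde J_0 = J(p)\oplus J_{st}$; recall $\tilde J_0$ is integrable and $\tilde J$ agrees with it to first order at $p$. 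Writing $T = \widetilde\omega + \mathcal{D}^+_J(\phi)$ locally and regularizing $\phi$ to $\Phi(\cdot,w)$, the starting point is the integral representation of $\mathcal{D}^+_{\tilde J}\Phi_{(p,w)}$ established in Proposition \ref{lem e1}, namely that $\mathcal{D}^+_{\tilde J}\Phi_{(p,w)}(\varrho\wedge\bar\varrho,\eta\wedge\bar\eta)$ equals $\int_{\zeta} \mathcal{D}^+_{\tilde J_0}\phi\cdot(\tau\wedge\bar\tau + |w|^2 V)_{\mathrm{exph}_p(w\zeta)}\chi(|\zeta|^2)\,d\lambda(\zeta)$ up to an error $O(|w|^{N-1})$, with $\tau$ and $V$ the explicit vector fields produced there.

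The analysis then splits into the two terms of the integrand. For the first, I would use that the positivity hypothesis $T\geq\gamma$ gives $\mathcal{D}^+_J(\phi)\geq\gamma-\widetilde\omega$ as currents, so that after pulling back along the $J(p)$-holomorphic map $\zeta\mapsto\mathrm{exph}_p(w\zeta)$ the contribution of $\mathcal{D}^+_{\tilde J_0}\phi\cdot(\tau\wedge\bar\tau)$ is bounded below by $(\gamma-\widetilde\omega)_p(\varrho\wedge\bar\varrho)$ modulo a smooth error $\delta(|w|)|\varrho|^2$ with $\delta(|w|)\to 0$, the error measuring the deviation of $\gamma$, $\widetilde\omega$ and the frame from their values at $p$ over the ball of radius $|w|$; adding back the exact term $\widetilde\omega_p(\varrho\wedge\bar\varrho)$ then produces the $\gamma_p(\varrho\wedge\bar\varrho)$ appearing in the statement. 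For the second term I would insert the explicit formulas for $V$ and $U$ from Proposition \ref{lem e1}: the leading part involves the curvature coefficients $c_{lmjk} = R^m_{lj\bar k}$, while the remaining pieces carry the torsion/Nijenhuis coefficients $a_{lmj} = T^m_{lj} + \overline{N^m_{\bar l\bar j}}$ and powers of $w$. Here the curvature hypothesis is used exactly as in the integrable case but restricted to orthogonal pairs: since $\langle\varrho,\xi\rangle=0$ forces $(R^{\nabla^1}+u\otimes\mathrm{Id}_{TM})(\varrho\otimes\xi,\varrho\otimes\xi)\geq 0$, the quadratic form in $|w|^2V$ is bounded below by $-\lambda(p,|w|)u_p(\varrho\wedge\bar\varrho)$, and the mixed and pure contributions in the fiber variable $\eta$ are absorbed into $-\tfrac{1}{\pi}K(|\varrho||\eta|+|\eta|^2)$ for $K$ large, these last terms being precisely the ones created by the $dw$, $d\bar w$ directions and the non-integrability of $J$.

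The delicate step, and the one I expect to be the main obstacle, is the study of the scalar function $\lambda(p,t) = t\,\partial_t(\Phi(p,t)+Kt^2)$. Monotonicity in $t$ and continuity should follow, as in the integrable case, from the fact that after adding $Kt^2$ the radial mean value $t\mapsto\Phi(p,e^t)+Ke^{2t}$ becomes convex, a consequence of the sub-mean-value property of quasi-$J$-plurisubharmonic functions together with the chosen curvature offset, so that its derivative is nondecreasing and nonnegative. The genuinely new point is the identification $\lim_{t\to 0}\lambda(p,t) = \nu_1(p,T)$, where $\nu_1$ is the Lelong number of Definition \ref{Lelong 1} defined through the taming form $\omega_1$ and the geodesic balls of $g_J$. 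I would obtain this by rewriting $t\,\partial_t\Phi(p,t)$ as a trace integral of $T$ over shrinking balls, then invoking the comparison theorem (Theorem \ref{comparison theo}) and $f_p(p)=1$ to match it with $\nu_1(p,T)$; the additive term $Kt^2$ contributes $2Kt^2\to 0$ and hence does not affect the limit.

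Finally, I would assemble the global statement. The forms $T_\varepsilon = \widetilde\omega + \mathcal{D}^+_J(\Phi(\cdot,\varepsilon))$ are smooth because $\Phi(\cdot,\varepsilon)$ is smooth, are real $(1,1)$-forms, and are $d$-closed because $\mathcal{D}^+_J$ applied to a function is $d$-closed (Definition \ref{2d5} and Remark \ref{2r6}); weak convergence $T_\varepsilon\to T$ follows from $\Phi(\cdot,\varepsilon)\to\phi$ in $L^1_{loc}$ together with the continuity of $\mathcal{D}^+_J$ on currents. The pointwise lower bound for $T_\varepsilon$ is the special case $\eta=0$ of the main inequality, in which the fiber terms $\tfrac{1}{\pi}K(|\varrho||\eta|+|\eta|^2)$ drop out and the residual $\delta(|w|)|\varrho|^2$ is reabsorbed into $\delta(\varepsilon)F$ since $F_p(\varrho\wedge\bar\varrho)$ is comparable to $|\varrho|^2$; this yields $T_\varepsilon\geq\gamma-\lambda(\cdot,\varepsilon)u-\delta(\varepsilon)F$, completing the argument.
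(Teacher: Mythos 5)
Your proposal follows essentially the same route as the paper's proof: both transplant Demailly's Theorem 4.1 argument using the integral representation of Proposition \ref{lem e1}, control the curvature term $|w|^2V$ by contracting $R^{\nabla^1}$ against the positive form $\mathcal{D}^+_{J(p)}\phi$ exactly as in Demailly's Lemma 4.4, identify $\lambda(p,t)=t\partial_t(\Phi(p,t)+Kt^2)$ with a trace integral of the current and hence with $\nu_1(p,T)$ via Theorem \ref{comparison theo}, and assemble the global statement as you do. The only organizational difference is that the paper first normalizes locally, adding a potential $\phi_p$ from Lemma \ref{exist lem} so that $\phi$ becomes strictly $J$-plurisubharmonic near $p$ and the discrepancy $\gamma-\widetilde{\omega}$ is absorbed up to $\delta F$, instead of carrying $\gamma-\widetilde{\omega}$ through the two terms of the integrand as you propose; this is the same argument with different bookkeeping.
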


    \begin{proof}
  Our approach is along the lines used by Demailly to give a proof of Theorem 4.1 in Demailly \cite{D2}
  by replacing $\sqrt{-1}\partial\bar{\partial}\phi$ with $\mathcal{D}^+_J(\phi)$ .
  It suffices to prove the estimate for $|w|<\varepsilon(\delta)$, with $\delta>0$ fixed in place $\delta(|w|)$.
  Also, the estimates are local on $M$.
  For any $p\in M$, choose a small neighborhood $U_p$ which is strictly $J$-pseudoconvex, and there exists a symplectic form $\omega_p$ on $U_p$.
  We may assume that $U_p$ is very small, hence on $U_p$ there exists Darboux coordinate $(z_1,z_2)$, $z_i(p)=0$, $i=1,2$, for $\omega_p$.
  If we change $\phi$ into $\phi+\phi_p$ with a small function $\phi_p$ such that $\mathcal{D}^+_J(\phi_p)$ is strictly positive (or negative) on
  $U_p$ due to Lemma \ref{exist lem}, then $\widetilde{\omega}$ is changed into $\widetilde{\omega}-\mathcal{D}^+_J(\phi_p)$
  and $\Phi$ into $\Phi+\Phi_p$, where $\Phi_p$ is a smooth function on $U_p\times\mathbb{C}$
  such that $\Phi_p(z,w)=\phi_p(z)+O(|w|^2)$.
  It follows that the estimate remains unchanged up to a term $O(1)|\eta|^2$.
  We can thus work on a small coordinate open set $\Omega\subset U_p\subset M$
   and choose $\phi_p$ such that $\gamma-(\widetilde{\omega}-\mathcal{D}^+_J(\phi_p))$ is positive definite and small at $p$,
   say equal to $\frac{\delta}{4}F_p$.
   After shrinking $\Omega$ and making $\phi\mapsto\phi+\phi_p$, we may in fact suppose that $T=\widetilde{\omega}+\mathcal{D}^+_J(\phi)$
  on $\Omega_{p,\delta}\subset\Omega$ where $\Omega$ satisfies $\gamma_p-\widetilde{\omega}_p=\frac{\delta}{4}F_p$
  and $\gamma-\frac{\delta}{2}F\leq\widetilde{\omega}\leq\gamma$ on $\Omega_{p,\delta}$.
   In particular, $\mathcal{D}^+_J(\phi)\geq\gamma-\alpha$, $\mathcal{D}^+_J(\phi)$ is strictly positive on $\Omega_{p,\delta}$
   and also $\phi$ is a strictly $J$-plurisubharmonic function (cf. Lemma \ref{current app}).
    As done in classical complex analysis (cf. Demailly \cite{D2}), all we have to show is
  $$\mathcal{D}^+_J(\Phi_{(p,w)})(\varrho\wedge\bar{\varrho},
  \eta\wedge\bar{\eta})\geq-\lambda(p,|w|)u_p(\varrho\wedge\bar{\varrho})
  -\frac{\delta}{2}|\varrho|^2-K(|\varrho||\eta|+|\eta|^2),
  $$
for $|w|<w_0(\delta)$ small.
Let $$\chi_1(t)=\int_{+\infty}^t \chi(t),$$
we apply Proposition \ref{lem e1} at order $N=2$, $|\alpha|=2$.
 Similar to the argument in Appendix \ref{Regularization} (cf. (\ref{par par equ})),
 we have
 \begin{eqnarray}\label{equ small}
      \int_{|\zeta|<1} \mathcal{D}^+_{J(p)}\phi({\rm exph}_p(w\zeta))d\lambda(\zeta)&=&
       \frac{1}{|w|^{4}} \int_{|\zeta|<|w|}\mathcal{D}^+_{J(p)}\phi(\rm{exph}_p(\zeta))d\lambda(\zeta) \nonumber\\
      &=&  O(|w|^{-2}).
   \end{eqnarray}
   Notice that $0\leq-\chi_1\leq\chi$. As done in the proof of Theorem $4.1$ in \cite{D2},
   we use the fact that $\tau=\varrho+\eta\zeta+O(|w|)$.
 Consider $J_{st}, \partial_{st}$ and $\overline{\partial}_{st}$,
 by (\ref{equ small}), we can neglect all terms of the form $\mathcal{D}^+_{J(p)}(\phi)(\tau\wedge\bar{\tau}+|w|^2V)_{{\rm exph}_p(w\zeta)}O(|w|^3)$
 under the integral sign.
 Up to such terms, in terms of Proposition \ref{D prop},
 $\mathcal{D}^+_{J(p)}(\phi)(\tau\wedge\bar{\tau}+|w|^2V)_{{\rm exph}_p(w\zeta)}\chi(|\zeta|^2)$
 is equal to
 \begin{eqnarray*}
&& -|w|^2\chi_1(|\zeta|^2)Re\sum_{l,m}\mathcal{D}^+_{J(p)}(\phi)_{\bar{l}m}
\{\frac{\chi(|\zeta|^2)}{-|w|^2\chi_1(|\zeta|^2)}\bar{\tau}_l\tau_m+\sum_{j,k}c_{jklm}\varrho_j\bar{\varrho}_k  \nonumber\\
&&\,\,\,\,\,\,\,\,\,\,\,\,\,\,\,\,\,\,\,\,\,\,\,\,\,\,\,\,\,\,\,\,\,\,\,\,\,\,\,\,\,\,\,\,\,\,\,\,\,\,\,\,\,\,
+2\sum_{|\alpha|=2,k}d_{\alpha km}(|\alpha|-1)w^{|\alpha|-2}\frac{\alpha_l}{|\alpha|}\zeta^{\alpha-1_l}\eta\bar{\varrho}_k \}\nonumber\\
&&\geq-|w|^2\chi_1(|\zeta|^2)\sum_{l,m}\mathcal{D}^+_{J(p)}(\phi)_{\bar{l}m}\{\frac{1}{|w|^2}\bar{\tau}_l\tau_m
     +\sum_{j,k}c_{jklm}(\varrho_j\bar{\varrho}_k+\frac{1}{2}\zeta_j\eta\bar{\varrho}_k+\frac{1}{2}\bar{\zeta}_k\varrho_j\bar{\eta})\}  \nonumber\\
     &&=-|w|^2\chi_1(|\zeta|^2)\sum_{l,m}\mathcal{D}^+_{J(p)}(\phi)_{\bar{l}m}\{\frac{1}{|w|^2}\bar{\tau}_l\tau_m
     +\sum_{j,k}c_{jklm}\tau_j\bar{\tau}_k\nonumber\\
    &&\,\,\,\,\,\,\,\,\,\,\,\,\,\,\,\,\,\,\,\,\,\,\,\,\,\,\,\,\,\,\,\,\,\,\,\,\,\,\,\,\,\,\,\,\,\,\,\,\,\,\,\,\,\,\,\,\,\,\,\,\,\,\,\,\,\,\,\,\,\,\,\,
    -\sum_{j,k}c_{jklm}(\frac{1}{2}\zeta_j\eta\bar{\varrho}_k+\frac{1}{2}\bar{\zeta}_k\varrho_j\bar{\eta}+\zeta_j\bar{\zeta}_k\eta\bar{\eta})\},
\end{eqnarray*}
 where $\mathcal{D}^+_{J(p)}(\phi)_{\bar{l}m}=\mathcal{D}^+_{J(p)}(\phi)(\frac{\partial}{\partial \bar{z}_l}\wedge\frac{\partial}{\partial z_m})$.
 By (\ref{equ small}), the mixed terms $\varrho_j\bar{\eta}$, $\eta\bar{\varrho_k}$ give rise to contributions bounded below by
  $-K'(|\varrho||\eta|+|\eta|^2)$.
  Hence, we get the estimate (cf. ($4.3)$ in Demailly \cite{D2})
\begin{eqnarray}\label{f1}
&&\mathcal{D}^+_J(\Phi_{(p,w)})(\varrho\wedge\bar{\varrho},\eta\wedge\bar{\eta})\nonumber\\
&\geq& |w|^2\int_{\mathbb{C}^{2}}-\chi_1(|\zeta|^2)\sum_{j,k,l,m}\mathcal{D}^+_{J(p)}(exph_p(w\zeta))_{\bar{l}m}
(c_{jklm}+\frac{1}{|w|^2}\delta_{jm}\delta_{kl})\tau_j\bar{\tau}_k~d\lambda(\zeta) \nonumber\\
&&-K'(|\varrho||\eta|+|\eta|^2),
\end{eqnarray}
where $c_{jklm}$ is the curvature of $\nabla^1$ with respect to the metric $g_J$.
Similar to the argument of Lemma 4.4 in Demailly \cite{D2}, since $\mathcal{D}^+_{J(p)}(\phi)$ is strictly positive,
we have
\begin{eqnarray*}
\sum_{j,k,l,m}\mathcal{D}^+_{J(p)}(\phi)_{\bar{l}m}(c_{jklm}+M_{\varepsilon}\delta_{jm}\delta_{kl})\tau_j\bar{\tau}_k+
\sum_{l}\mathcal{D}^+_{J(p)}(\phi)_{l\bar{l}}(u(\tau\wedge\bar{\tau})+\varepsilon|\tau|^2)\geq 0,
\end{eqnarray*}
for a constant $M_{\varepsilon}>0$.
Combining this with (\ref{f1}) for $|w|^2<\frac{1}{M_{\varepsilon}}$, we have
\begin{eqnarray*}
&&\mathcal{D}^+_J(\Phi_{(p,w)})(\varrho\wedge\bar{\varrho},\eta\wedge\bar{\eta})\\
&\geq&-\left[2|w|^2\int_{\mathbb{C}^{2}}-\chi(|\zeta|^2)\sum_{l}\mathcal{D}^+_{J(p)}(\phi)_{l\bar{l}}
(exph_p(w\zeta))~d\lambda(\zeta)\right](u_p(\varrho\wedge\bar{\varrho})+\varepsilon|\varrho|^2)\\
&&-K''(|\varrho||\eta|+|\eta|^2).
\end{eqnarray*}
Change variables $\zeta\rightarrow s$ defined by $exph_p(w\zeta)=p+ws$, and choose $\varepsilon\ll\delta$, we get
$$\mathcal{D}^+_J(\Phi_{(p,w)})(\varrho\wedge\bar{\varrho},\eta\wedge\bar{\eta})\geq-\lambda_{\Omega}(p,|w|)u_p(\varrho\wedge\bar{\varrho})-\frac{\delta}{3}|\varrho|^2
-K(|\varrho||\eta|+|\eta|^2),$$
where
$$\lambda_{\Omega}(p,|w|)=2|w|^2\int_{\mathbb{C}^{2}}-\chi_1(s^2)\sum_{l}\mathcal{D}^+_{J(p)}(\phi)_{l\bar{l}}(p+ws)~d\lambda(s).$$
More details, see the proof of Theorem $4.1$ in Demailly \cite{D2}.

Recall that the Lelong number $\nu_1(p,T)=\lim_{r\rightarrow 0}\nu_1(p,\omega_1,r,T)$, where $T=\tilde{\omega}+\mathcal{D}^+_J(\phi)$, $\tilde{\omega}$ is smooth closed $(1,1)$-form
$$\nu_1(p,\omega_1,r,T)=\int_{B(p,r)} T\wedge\omega_1.$$
More details, see Definition \ref{Lelong 1} in Appendix \ref{Lelong}.

Hence $$\nu_1(p,T)=\lim_{r\rightarrow 0}\nu_1(p,\omega_1,r,T)=\lim_{r\rightarrow 0}\nu_1(p,F,r,\mathcal{D}^+_J(\phi)).$$
By remark \ref{D5} and Theorem \ref{comparison theo}, we have
\begin{eqnarray*}
\lim_{|w|\rightarrow 0} \nu_1(p,F,r,\mathcal{D}^+_J\phi)&=&
\lim_{r\rightarrow 0}\frac{2}{ r^2}\int_{B(p,r)} \sum_{1\leq l\leq 2}\mathcal{D}^+_{J(p)}(\phi)_{l\bar{l}}(p+ws)  d\lambda(s)\\
&=& \lim_{r\rightarrow 0} \nu'_1(p,r,\mathcal{D}^+_J(\phi)),
\end{eqnarray*}
where
$$\nu'_1(p,r,\mathcal{D}^+_J(\phi))=\frac{2}{r^2}|w|^2\int_{|s|<r} \sum_{1\leq l\leq 2} \mathcal{D}^+_{J(p)}(\phi)_{l\bar{l}}(p+ws) d\lambda(s).$$
Since $$-\chi_1(|s|^2)=2\int^{\infty}_{|s|} \chi(r^2) r dr,$$ by Fubini formula
$$\lambda_{\Omega}(p,|w|)=\int_{0}^{1} \nu'_1(p,|w|r,\mathcal{D}^+_J(\phi))\chi(r^2)r dr,$$
$$\lambda_{\Omega}(p,t)=\int_{\mathbb{R}^4} \nu'_1(p,t|s|,\mathcal{D}^+_J(\phi))\chi(|s|^2) d\lambda(s).$$
Hence $\lambda_{\Omega}(p,t)$ is smooth, increasing in $t$ and
$$\lim_{t\rightarrow 0}\lambda_{\Omega}(p,t)=\nu_1(p,\mathcal{D}^+_J(\phi))=\nu_1(p,T).$$
Recall that, in Theorem \ref{theorem F1},
$$\lambda(p,t)=\frac{\partial}{\partial\log t}(\Phi(p,t)+Kt^2)$$
is a nonnegative increasing function of $t$, since $\Phi(p,t)+Kt^2$ is plurisubharmonic in $t$.

Putting $\varrho=0$, Proposition \ref{lem e1} gives
\begin{eqnarray*}
\frac{\partial^2\Phi}{\partial w\partial\bar{w}}(p,w)=\int_{\mathbb{C}^{n}}\partial_{st}\bar{\partial}_{st}\phi_{{\rm exph}_p(w\zeta)}
(\zeta\wedge\bar{\zeta})\chi(|\zeta|^2)~d\lambda(\zeta)+O(1).
\end{eqnarray*}
Change coordinates so that ${\rm exph}_p(w\zeta)=p+ws$ where $\zeta=s+O(w^2s^3)$.
 Similar to Equality $(4.5)$ in Demailly \cite{D2}, since $\frac{\partial^2}{\partial w\partial\bar{w}}=t^{-1}\frac{\partial}{\partial t}(t\frac{\partial}{\partial t})$ for a function of $w$ depending only on $t=|w|$, a multiplication by $t$ followed by an integration implies
\begin{eqnarray}
t\frac{\partial\Phi(p,t)}{\partial t}=\int_{\mathbb{C}^{2}}\nu_1(p,t|s|,\mathcal{D}^+_J(\phi))\chi(|s|^2)d\lambda(s)+O(t^2)=\lambda_{\Omega}(p,t)+O(t^2).
\end{eqnarray}

Hence, $\lambda_{\Omega}(p,t)-\lambda(p,t)=O(t^2)$ and the first estimate in Theorem \ref{theorem F1}.
$\phi_{\varepsilon}$ converges to $\phi$ in $L_{loc}^1$, so $T_{\varepsilon}$ converges weakly to $T$.
Also, $\phi_{\varepsilon}+K\varepsilon^2$ is increasing in $\varepsilon$ by the above arguments.
We may assume that $(M,g_J,J,F)$ be a closed almost Hermitian $4$-manifold tamed by $\omega_1=F+d^-_J(v+\bar{v})$.
Hence $\lambda(p,|w|),~\delta(t)$ is well-defined on the whole $M$ when $|w|$ is very small. Then, $\lim_{t\rightarrow 0}\delta(t)=0,~\lim_{t\rightarrow 0}\lambda(p,t)=0,~\forall p\in M$.
The proof is completed.
\end{proof}

\begin{rem}\label{rem 8}
The estimates obtained in Theorem \ref{theorem F1} can be improved by setting
$$\tilde{\Phi}(p,w)=\Phi(p,w)+|w|,~\tilde{\lambda}(p,t)=t\frac{\partial}{\partial t}(\tilde{\Phi}(p,t)).$$ Similar to Remark $4.7$ in Demailly {\rm\cite{D2}}, we have
\begin{eqnarray}\label{rem 8.1}
\widetilde{\omega}_p(\varrho\wedge\bar{\varrho})+\mathcal{D}^+_J\tilde{\Phi}_{(p,w)}
(\varrho\wedge\bar{\varrho},\eta\wedge\bar{\eta})\geq\gamma_{p}(\varrho\wedge\bar{\varrho})
-\tilde{\lambda}(p,|w|)u_p(\varrho\wedge\bar{\varrho})-\tilde{\delta}(|w|)|\varrho|^2,
\end{eqnarray}
where $\displaystyle{ \lim_{t\rightarrow 0}\tilde{\lambda}(p,t)=\nu_1(p,T) }$, and $\displaystyle{ \lim_{t\rightarrow 0}\tilde{\delta}(t)=0 }$, $\tilde{\delta}$ being continuous and increasing.
\end{rem}

   \subsection{Approximation theorem on tamed almost complex four manifolds}\label{Approximation}

   This subsection is devoted to proving approximation theorem on tamed closed almost complex 4-manifolds.
   If $T$ is a closed positive or almost positive current on a tamed almost complex manifold $M$,
   we denote by $E_c(T)$ the $c$-upper level set of Lelong numbers:
   $$E_c(T)=\{ p\in M \mid  \nu_1(p,T)\geq c \},~c>0.$$

  As done in classical complex analysis, we have the following theorem:
 \begin{theo}\label{theorem B}
 (see Theorem $6.1$ in Demailly {\rm \cite{D2}})
 Let $T$ be a closed positive almost complex $(1,1)$ current on closed almost Hermitian 4-manifold $(M,g_J,J,F)$
 tamed by a symplectic form $\omega_1=F+d^{-}_J(v+\bar{v})$ and let $\widetilde{\omega}$ be a smooth real (1,1)-form in the same $\mathcal{D}^+_J$-cohomology class as $T$, that is, $T=\widetilde{\omega}+\mathcal{D}^+_J(\phi)$ where $\phi$ is in $L^q_2(M)_0$ for some fixed $q\in(1,2)$. Let $\gamma$ be a continuous real $(1,1)$-form such that $T\geq\gamma$. Let $\nabla^1$ be the second canonical connection on $TM$ with respect to the metric $g_J$ such that the corresponding (1,1) curvature form $R^{\nabla^1}$ of $\nabla^1$ satisfies
 $$(R^{\nabla^1}+u\otimes Id_{TM})(\varrho\otimes\xi,\varrho\otimes\xi)\geq 0,\,\,\,\forall \varrho,\xi\in TM^{1,0}$$
 with $<\varrho,\xi>_{g_J}=0$ for some continuous $(1,1)$-form $u$ on $M$. Then there is a family of closed positive almost complex $(1,1)$ currents
 $T_\varepsilon=\widetilde{\omega}+\mathcal{D}^+_J(\phi_\varepsilon), \varepsilon\in(0,\varepsilon_0)$ such that $\phi_\varepsilon$ is smooth over $M$, increases with $\varepsilon$, and converges to $\phi$ as $\varepsilon$ tends to zero (in particular, $T_\varepsilon$ is smooth and converges weakly to $T$ on $M$), and such that
\begin{enumerate}[1)]
 \item $T_\varepsilon\geq \gamma-\lambda_\varepsilon u-\delta_\varepsilon F$ where:
 \item $\lambda_\varepsilon(p)$ is an increasing family of continuous function on $M$ such that  $lim_{\varepsilon\rightarrow 0}\lambda_\varepsilon(p)=\nu_1(p,T)$ at every point $p\in M$,
 \item $\delta_\varepsilon$ is an increasing family of positive constants such that $lim_{\varepsilon\rightarrow 0}\delta_\varepsilon=0$.
\end{enumerate}
 \end{theo}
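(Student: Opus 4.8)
The plan is to obtain Theorem~\ref{theorem B} as the global packaging of the local, two–variable estimate of Theorem~\ref{theorem F1}, exactly as Demailly passes from his Theorem~4.1 to Theorem~6.1 in \cite{D2}. The crucial simplification in our setting is that the quasi–holomorphic exponential map ${\rm exph}$ of Proposition~\ref{D prop}, built from the second canonical connection $\nabla^1$, is defined globally on a neighbourhood of the zero section of $T^{1,0}M$; hence the smoothing
\[
\phi_\varepsilon(z)=\Phi(z,\varepsilon),\qquad \Phi(z,w)=\int_{\zeta\in T^{1,0}_zM}\phi({\rm exph}_z(w\zeta))\,\chi(|\zeta|^2)\,d\lambda(\zeta),
\]
is itself a global smooth function on $M$ for $\varepsilon\in(0,\varepsilon_0)$, and no gluing of locally defined regularizations is required. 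First I would record the global decomposition $T=\widetilde{\omega}+\mathcal{D}^+_J(\phi)$ with $\phi\in L^q_2(M)_0$ quasi–$J$–plurisubharmonic (available by Lemma~\ref{current app} together with a partition of unity, as explained before Theorem~\ref{theorem F1}), set $T_\varepsilon=\widetilde{\omega}+\mathcal{D}^+_J(\phi_\varepsilon)$, and verify the soft properties: $\phi_\varepsilon$ is smooth by the regularization of Appendix~\ref{Regularization}; $\phi_\varepsilon\to\phi$ in $L^1_{\mathrm{loc}}$ (hence $T_\varepsilon\to T$ weakly, since $\mathcal{D}^+_J$ is continuous for the weak topology) by the sub–mean–value property of $J$–plurisubharmonic functions; and $t\mapsto\Phi(p,t)+Kt^2$ is subharmonic, so $\phi_\varepsilon+K\varepsilon^2$ increases with $\varepsilon$. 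Replacing $\phi_\varepsilon$ by $\phi_\varepsilon+K\varepsilon^2$ (which changes nothing in the limit $\varepsilon\to0$) yields the required increasing family.

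The heart of the argument is the positivity estimate, which I would deduce by restricting the estimate of Theorem~\ref{theorem F1} to the slice $\{w=\varepsilon\}$ at $\eta=0$. Since $\phi_\varepsilon(z)=\Phi(z,\varepsilon)$, the pure $M$–directional $(1,1)$–part of $\mathcal{D}^+_{\widetilde{J}}\Phi_{(p,\varepsilon)}$ coincides with $\mathcal{D}^+_J(\phi_\varepsilon)$ on $M$; this step must be checked with care because $\mathcal{D}^+_J$ is genuinely second order and, unlike the integrable case, is not $dd^c_J$, but it is controlled through the local identity $\mathcal{D}^+_J=\widetilde{\mathcal{D}}^+_J$ on almost–K\"ahler charts (Remark~\ref{2r6}). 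Taking $\eta=0$ in Theorem~\ref{theorem F1} then gives, for every $p\in M$ and $\varrho\in T^{1,0}_pM$,
\begin{eqnarray*}
T_\varepsilon(\varrho\wedge\bar\varrho) &=& \widetilde{\omega}_p(\varrho\wedge\bar\varrho)+\mathcal{D}^+_J\Phi_{(p,\varepsilon)}(\varrho\wedge\bar\varrho,0) \\
&\geq& \gamma_p(\varrho\wedge\bar\varrho)-\lambda(p,\varepsilon)\,u_p(\varrho\wedge\bar\varrho)-\delta(\varepsilon)\,|\varrho|^2.
\end{eqnarray*}
Writing $|\varrho|^2=F_p(\varrho\wedge\bar\varrho)$ and setting $\lambda_\varepsilon(p)=\lambda(p,\varepsilon)$, $\delta_\varepsilon=\delta(\varepsilon)$, this is precisely $T_\varepsilon\geq\gamma-\lambda_\varepsilon u-\delta_\varepsilon F$, which is assertion (1).

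It remains to identify the limiting behaviour of $\lambda_\varepsilon$ and $\delta_\varepsilon$. From $\lambda(p,t)=t\,\partial_t(\Phi(p,t)+Kt^2)$ I would use the computation in the proof of Theorem~\ref{theorem F1} expressing $\lambda(p,t)$ as an average of the truncated trace integrals $\nu'_1(p,t|s|,\mathcal{D}^+_J(\phi))$ against $\chi$, from which $\lambda_\varepsilon$ is continuous, increasing in $\varepsilon$ (subharmonicity), and $\lim_{\varepsilon\to0}\lambda_\varepsilon(p)=\nu_1(p,\mathcal{D}^+_J(\phi))$; since $\widetilde{\omega}$ is smooth, $\nu_1(p,\mathcal{D}^+_J(\phi))=\nu_1(p,T)$, giving assertion (2). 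For $\delta_\varepsilon$ I would invoke the sharpened version of Remark~\ref{rem 8} (adding the term $|w|$ to $\Phi$), which produces a continuous increasing $\widetilde{\delta}(t)$ with $\widetilde{\delta}(0^+)=0$, yielding assertion (3).

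The main obstacle I anticipate is the uniformity over the compact manifold $M$. Theorem~\ref{theorem F1} was proved chart by chart, after correcting $\phi$ on each $U_p$ by a local strictly $J$–plurisubharmonic potential $\phi_p$ (Lemma~\ref{exist lem}) so as to make $T$ locally equal to $\widetilde{\omega}+\mathcal{D}^+_J(\psi)$ with $\psi$ strictly $J$–plurisubharmonic and to absorb the curvature term via $R^{\nabla^1}+u\otimes {\rm Id}\geq0$. To promote these local bounds to the single global inequality above — with one constant $K$, one continuous function $\delta(\varepsilon)\downarrow0$, and the genuine Lelong number $\nu_1(p,T)$ of Definition~\ref{Lelong 1} appearing in the limit — I would cover $M$ by finitely many such charts, take $K$ to be the maximum of the finitely many local constants and $\delta_\varepsilon$ the maximum of the finitely many local $\delta$'s, and use the comparison Theorem~\ref{comparison theo} to match the locally computed trace densities (defined via the local Darboux forms $\omega_p$) with the intrinsic Lelong number $\nu_1(\cdot,T)$ defined via $\omega_1$. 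Verifying that these patched constants remain uniform, and that the pointwise monotone limit of $\lambda_\varepsilon$ is exactly $\nu_1(\cdot,T)$ independently of the chart used, is the delicate point of the proof.
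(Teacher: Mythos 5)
Your proposal is correct as a proof of the literal statement, but it follows a genuinely different route from the paper's own proof. You obtain the theorem by slicing the two-variable estimate of Theorem \ref{theorem F1} at $|w|=\varepsilon$, $\eta=0$: indeed the ``in particular'' clause of Theorem \ref{theorem F1} already asserts that $T_\varepsilon=\widetilde{\omega}+\mathcal{D}^+_J(\Phi(\cdot,\varepsilon))$ is globally smooth and satisfies $T_\varepsilon\geq\gamma-\lambda(\cdot,\varepsilon)u-\delta(\varepsilon)F$ with $\lambda(p,t)\to\nu_1(p,T)$, and your corrections (replacing $\phi_\varepsilon$ by $\phi_\varepsilon+K\varepsilon^2$ for monotonicity, harmless since $\mathcal{D}^+_J$ annihilates constants; your worry about chart-by-chart uniformity is in fact already absorbed into the global statement of Theorem \ref{theorem F1} on closed $M$) settle points 1)--3). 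The paper instead follows Demailly's Theorem 6.1 and applies a Kiselman-type Legendre transform: it sets $\phi_{c,\varepsilon}=\inf_{|w|<1}\bigl(\widetilde{\Phi}(p,\varepsilon w)+\tfrac{\varepsilon}{1-|w|^2}-c\log|w|\bigr)$ with $\widetilde{\Phi}(p,w)=\Phi(p,w)+|w|$, analyzes where the infimum is attained using convexity of $\widetilde{\Phi}$ in $\log|w|$, applies the estimate (\ref{rem 8.1}) of Remark \ref{rem 8} to every competitor in the infimum, and concludes $\widetilde{\omega}+\mathcal{D}^+_J\phi_{c,\varepsilon}\geq\gamma-\min\{\tilde{\lambda}(\cdot,\varepsilon),c\}u-\tilde{\delta}(\varepsilon)F$, extended across $E_c(T)$ by continuity. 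What this extra work buys is exactly what your direct smoothing cannot give: the negative part is truncated at the level $c$ and the Lelong numbers of the regularized currents drop by $c$, i.e.\ $\nu_1(x,T_{c,\varepsilon})=(\nu_1(x,T)-c)_+$, which is what Section \ref{last section} actually uses (choosing $c$ with $t_0-cK>0$ to get $T_{c,\varepsilon}\geq t_1F>0$); with your $\phi_\varepsilon=\Phi(\cdot,\varepsilon)$ the term $\lambda_\varepsilon u$ remains of size $\nu_1(p,T)$ at points of positive Lelong number, so the strict positivity needed downstream would fail there. Conversely, your family is smooth on all of $M$, whereas the paper's $\phi_{c,\varepsilon}$ is only shown to be smooth off $E_c(T)$, so your argument actually matches the wording of the statement (global smoothness and $\lambda_\varepsilon\to\nu_1(\cdot,T)$ untruncated) more faithfully than the paper's own proof does.
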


\begin{proof}
Our approach is along lines used by Demailly to give a proof of Theorem 6.1 in \cite{D2}.
As done in Theorem \ref{theorem F1} and Remark \ref{rem 8},
for a quasi-$J$-plurisubharmonic function $\phi$ on $M$, we have $\phi_{\varepsilon}$ defined on a small neighborhood of the diagonal
 of $M\times M$ and $\Phi$ on $M\times\{0<|w|<\varepsilon_0\}$.
Let $\phi_{c,\varepsilon}$ be the Legendre transform
$$\phi_{c,\varepsilon}=\inf_{|w|<1}(\widetilde{\Phi}(p,\varepsilon w)+\frac{\varepsilon}{1-|w|^2}-c\log|w|),$$
where $\widetilde{\Phi}(p,w)=\Phi(p,w)+|w|$. The sequence $\phi_{c,\varepsilon}$ is increasing in $\varepsilon$ and
$$\lim_{\varepsilon\rightarrow 0_{+}}\phi_{c,\varepsilon}(p)=\widetilde{\Phi}(p,0_{+})=\Phi(p,0_{+})=\phi(p),$$
where $\varepsilon\rightarrow 0_{+}$ means the limit from the right at $0$.
Moreover, as $\widetilde{\Phi}(p,w)$ is convex and increasing in $t=\log|w|$, the function
$$\Phi_{c,\varepsilon}(p,t):=\widetilde{\Phi}(p,\varepsilon t)+\frac{\varepsilon}{1-t^2}-c\log t$$
is strictly convex in $\log t$ and tends to $+\infty$ as $t$ tends to 1. Then the infimum is attained for $t=t_0(x)\in [0,1)$ given either by the zero of the $\frac{\partial}{\partial \log t}$ derivative:
$$\tilde{\lambda}(x,\varepsilon t)+\frac{2\varepsilon t^2}{(1-t^2)^2}-c=0$$
when $\displaystyle{\nu_1(p,T)=\lim_{t\rightarrow 0_{+}}\tilde{\lambda}(p,t)<c}$, or by $t_0(p)=0$ when $\nu_1(p,T)\geq c$.

Since the $\frac{\partial}{\partial \log t}$ derivative is itself strictly increasing in $t$, the implicit function theorem shows that $t_0(p)$ depends smoothly on $p$ on $M\backslash E_c(T)=\{ \nu_1(p,T)<c \},$ hence $\phi_{c,\varepsilon}=\Phi_{c,\varepsilon}(p,t_0(p))$ is smooth on $M\backslash E_c(T)$.

Fix a point $p\in M\backslash E_c(T)$ and $t_1>t_0(p)$. For all $z$ in a neighborhood $V$ of $p$ we still have $t_0(z)<t_1$, hence on $V$, we have
$$\phi_{c,\varepsilon}(z)=\inf_{|w|<t_1}(\widetilde{\Phi}(z,\varepsilon w)+\frac{\varepsilon}{1-|w|^2})-c\log|w|.$$

By (\ref{rem 8.1}), all functions involved in that infimum have a complex Hessian in $(z,w)$ bounded below by
$$\gamma_z-\widetilde{\omega}-\tilde{\lambda}(z,\varepsilon t_1)u_z-\tilde{\delta}(\varepsilon t_1)w_z.$$
By taking $t_1$ arbitrarily close to $t_0(p)$ and by shrinking $V$, the lower bound comes arbitrarily close to
$$\gamma_p-\widetilde{\omega}_p-\tilde{\lambda}(p,\varepsilon t_0(x))u_p-\tilde{\delta}(\varepsilon t_0(p))w_p\geq \gamma_p-\widetilde{\omega}_p-\min\{ \tilde{\lambda}(p,\varepsilon),c \}u_p-\tilde{\delta}(\varepsilon)w_p,$$
since $$\tilde{\lambda}(p,\varepsilon t_0(p))=c-2\varepsilon t_0(p)^2/(1-t_0(p)^2)^2\leq c, $$
and $\tilde{\lambda}(p,t),~\tilde{\delta}(t)$ are increasing in $t$. Hence we have
$$\widetilde{\omega}+\mathcal{D}^+_J\phi_{c,\varepsilon}\geq \gamma-\min\{ \tilde{\lambda}(\cdot,\varepsilon),c \}u-\tilde{\delta}(\varepsilon)w$$
on $M\backslash E_c(T)$. However, as the lower bound is a continuous $(1,1)$-form and $\phi_{c,\varepsilon}$ is quasi-$J$-plurisubharmonic, the lower bound extends to $M$ by continuity and $M$ is closed. Hence, 1), 2), 3) are proved. This completes the proof of Theorem \ref{theorem B}.
\end{proof}

\begin{rem}\label{last remar}
In Section \ref{last section}, we consider closed positive current
  $T=\widetilde{\omega}+\widetilde{\mathcal{D}}^+_J(\phi)$ on closed Hermitian $4$-manifold $(M,g_J,J,F)$ tamed by
  $\omega_1=F+d^-_J(v+\bar{v})$, $v\in\Omega^{0,1}_J(M)$.
  Here $\widetilde{\omega}$ is a closed smooth $(1,1)$-form, $\widetilde{\mathcal{D}}^+_J$ is defined in Section \ref{2},
  $\phi\in L^q_2(M)$ for some fixed $q\in(1,2)$.
  We would like point out that Theorem \ref{theorem B} also holds for $\widetilde{\mathcal{D}}^+_J$.
  In fact, the approximation theorem is locally proved.
  For $\forall p\in M$, there exists a symplectic $\omega_p$ on a strictly $J$-pseudoconvex domain $U_p$.
  Notice that it is often convenient to work with smooth forms and then prove statements about
  currents by using an approximation of a given current by smooth forms (cf.{\rm \cite{GH, S1}}).
  By Lemma \ref{current app} or Theorem \ref{app 1} in Appendix A,
   we can solve $\mathcal{\widetilde{W}}$, $d^-_J$-problem on strictly $J$-pseudoconvex symplectic domain $(U_p,\omega_p)$.
   Hence there is a $\phi_p\in L^2_2(U_p)$ such that $\mathcal{\widetilde{W}}(\phi)|_{U_p}=\mathcal{W}(\phi_p)$ and
   $\widetilde{\mathcal{D}}^+_J(\phi)|_{U_p}=\mathcal{D}^+_J(\phi_p)$ since $d\omega_p=0$ (cf. Remark \ref{2r6}).
   \end{rem}

\end{appendices}

 \vspace{3mm}\par\noindent {\bf{Acknowledgements.}} The first author would like to thank Professor Xiaojun Huang for his support.
   The second author would like to thank Fudan University and Professor Jiaxing Hong for hosting his visit in the autumn semester in 2013,
   East China Normal University and Professor Qing Zhou for hosting his visit in the spring semester in 2014,
        Wuhan University and Professor Hua Chen for hosting his visit in the autumn semester in 2015.
        The authors would like to thank Xiaowei Xu for useful conversations.
        The authors also had illuminating conversations with their colleague Shouwen Fang.
        The authors are also grateful to the referees for their valuable comments and suggestions.

  \vskip 24pt

 \noindent Qiang Tan\\
 Faculty of Science, Jiangsu University, Zhenjiang, Jiangsu 212013, China\\
 e-mail: tanqiang@ujs.edu.cn\\

 \vskip 6pt

 \noindent Hongyu Wang\\
 School of Mathematical Sciences, Yangzhou University, Yangzhou, Jiangsu 225002, China\\
 e-mail: hywang@yzu.edu.cn\\

 \vskip 6pt

 \noindent Jiuru Zhou\\
 School of Mathematical Sciences, Yangzhou University, Yangzhou, Jiangsu 225002, China\\
 e-mail: zhoujr1982@hotmail.com

 \vskip 6pt

 \noindent Peng Zhu\\
 School of Mathematics and Physics, Jiangsu University of Technology, Changzhou, Jiangsu 213001, China\\
 e-mail: zhupeng@jsut.edu.cn

\end{document}